\documentclass[a4paper,11pt,oneside,reqno]{amsart}
\usepackage{amsmath,amsfonts,amsthm,amssymb,dsfont,tikz-cd}
\usepackage{geometry,enumitem,hyperref}
\title{Raising to powers on the unit circle}
\author{Yilong Zhang}
\address{Mathematisches Institut, Universität Bonn, Endenicher Allee 60, D-53115 Bonn, Germany}
\email{s58yzhan@uni-bonn.de}
\date{\today}

\newenvironment{claim}[1]{\par\noindent\textbf{Claim.}\space#1}{}
\newenvironment{pfcl}[1]{\par\noindent\emph{Proof.}\space#1}{}

\theoremstyle{plain}
\newtheorem{theorem}{Theorem}[section]
\newtheorem{lemma}[theorem]{Lemma}
\newtheorem{proposition}[theorem]{Proposition}
\newtheorem{fact}[theorem]{Fact}
\newtheorem{corollary}[theorem]{Corollary}
\newtheorem{conjecture}[theorem]{Conjecture}
\newtheorem{maintheorem}{Theorem}

\theoremstyle{definition}
\newtheorem{definition}[theorem]{Definition}

\DeclareMathOperator{\cl}{cl}
\DeclareMathOperator{\Cl}{Cl}
\DeclareMathOperator{\Th}{Th}
\DeclareMathOperator{\ex}{ex}
\DeclareMathOperator{\kex}{kex}
\DeclareMathOperator{\acl}{acl}
\DeclareMathOperator{\dcl}{dcl}
\DeclareMathOperator{\trd}{trd}
\DeclareMathOperator{\mr}{MR}
\DeclareMathOperator{\ld}{ld}
\DeclareMathOperator{\md}{md}
\DeclareMathOperator{\rcf}{RCF}
\DeclareMathOperator{\rc}{rc}
\DeclareMathOperator{\ac}{ac}
\DeclareMathOperator{\pc}{pc}
\DeclareMathOperator{\st}{st}
\DeclareMathOperator{\fin}{fin}
\DeclareMathOperator{\rich}{rich}
\DeclareMathOperator{\tp}{tp}

\DeclareMathOperator{\rk}{rk}

\DeclareMathOperator{\zar}{Zar}

\DeclareMathOperator{\oring}{oring}
\def\indep{\mathrel{\raise0.2ex\hbox{\ooalign{\hidewidth$\vert$\hidewidth\cr\raise-0.9ex\hbox{$\smile$}}}}}

\newcommand{\A}{\mathcal{A}}
\newcommand{\B}{\mathcal{B}}
\newcommand{\C}{\mathcal{C}}

\newcommand{\F}{\mathcal{F}}
\newcommand{\I}{\mathcal{I}}

\newcommand{\M}{\mathcal{M}}

\newcommand{\R}{\mathcal{R}}
\newcommand{\U}{\mathcal{U}}
\newcommand{\W}{\mathcal{W}}
\newcommand{\X}{\mathcal{X}}
\newcommand{\Y}{\mathcal{Y}}
\newcommand{\Z}{\mathcal{Z}}
\newcommand{\bC}{\mathbb{C}}
\newcommand{\bN}{\mathbb{N}}
\newcommand{\bQ}{\mathbb{Q}}
\newcommand{\bR}{\mathbb{R}}
\newcommand{\bZ}{\mathbb{Z}}
\newcommand{\ba}{\boldsymbol{a}}
\newcommand{\bb}{\boldsymbol{b}}
\newcommand{\bc}{\boldsymbol{c}}
\newcommand{\bd}{\boldsymbol{d}}

\newcommand{\br}{\boldsymbol{r}}
\newcommand{\bs}{\boldsymbol{s}}
\newcommand{\bt}{\boldsymbol{t}}
\newcommand{\bu}{\boldsymbol{u}}
\newcommand{\bv}{\boldsymbol{v}}
\newcommand{\bw}{\boldsymbol{w}}
\newcommand{\bx}{\boldsymbol{x}}
\newcommand{\by}{\boldsymbol{y}}
\newcommand{\bz}{\boldsymbol{z}}

\newcommand{\Cf}{\mathcal{C}^{\fin}}
\newcommand{\KVS}{K\operatorname{-VS}}
\newcommand{\ag}[1]{{\langle#1\rangle}}
\newcommand{\se}{\subseteq}

\begin{document}

\begin{abstract}
  We study the expansion of the real field by the graphs of power functions on the unit circle. Under a natural number-theoretic conjecture, we prove that adding such dense subsets does not increase the topological complexity of definable sets: every open definable set remains semialgebraic. The proof uses a two-sorted structure that separates the linear and algebraic data, inspired by Zilber's raising to powers. Using Hrushovski's amalgamation method, we construct and axiomatize a class of rich structures, and then show that the intended structure is a model. This provides a new example of a tame expansion of the real field by dense trajectories.
\end{abstract}

\maketitle

\section{Introduction}

A central theme in tame geometry is understanding the definable sets in expansions of the real field $\bR_{\oring}=(\bR,<,+,\cdot,0,1)$. By Tarski's theorem, the definable sets in $\bR_{\oring}$ are precisely the semialgebraic sets. A natural question then arises: what happens when we expand the real field by adding new, analytically natural functions or relations? This paper investigates this question for expansions by trajectories on the torus, specifically by the graphs of the power functions on the unit circle.

For a real number $r$, consider the trajectory
\[\Gamma_r:=\{(e^{2\pi i t},e^{2\pi i r t})\in S^1\times S^1\se\bR^4\mid t\in \bR\}.\]
These sets are well-studied geometric objects. When $r$ is irrational, $\Gamma_r$ is a dense subset of the torus. The structure we study is the expansion of the real field by this entire family of trajectories, i.e., $(\bR_{\oring},(\Gamma_r)_{r\in\bR})$.

Our work fits into the broader program, pioneered by Chris Miller, of classifying expansions of the real field by trajectories of linear vector fields. In \cite{Mi11}, Miller showed that an expansion by locally closed trajectories is either d-minimal or defines the set of integers. However, the situation of expansions by dense trajectories was unclear. In contrast to Miller's result, our setting presents a starkly different yet well-behaved case. The central result of this paper is that, under a natural number-theoretic conjecture, adding all these dense trajectories introduces no new topological complexity: every open definable set remains semialgebraic. This establishes a new example of a non-trivial tame expansion by dense sets, complementing the existing classification.

More precisely, the essential property we use is the following fact observed by Kronecker \cite{Kr84}:

\begin{fact}\label{fact:Kronecker}
  Let $a_1,...,a_n\in\bR$ such that $(1,a_1,...,a_n)$ is $\bQ$-linearly independent. Let $p$ be the following map:
  \begin{align*}
    \bR\,&\longrightarrow\;(\bR/\bZ)^n\\
    x\,&\longmapsto([a_1x],...,[a_nx])
  \end{align*}
  Then $p(\bN)$ is dense in $(\bR/\bZ)^n$.
\end{fact}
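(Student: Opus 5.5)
The plan is to reduce the statement to Weyl's equidistribution criterion, or equivalently to a purely algebraic description of the closure of the orbit. Set $\alpha=([a_1],\dots,[a_n])\in(\bR/\bZ)^n$. Then $p(k)=k\alpha$ for $k\in\bN$, so $p(\bN)$ is the forward orbit of $0$ under the translation $T(x)=x+\alpha$ on the compact group $G=(\bR/\bZ)^n$.

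\emph{Step 1: reduce to the closed subgroup generated by $\alpha$.} Let $H=\overline{\bZ\alpha}$; it is a closed subgroup of $G$. The closure $F=\overline{\bN\alpha}$ satisfies $F+\alpha\se F$. Compactness shows that $F$ is a group: pick $k_j\to\infty$ with $k_j\alpha$ convergent, so that differences $(k_{j+1}-k_j)\alpha\to 0$ with $k_{j+1}-k_j\to\infty$. Hence for any $m\ge1$, $-m\alpha$ is a limit of $(k_{j+1}-k_j-m)\alpha\in\bN\alpha$. So $-\alpha\in F$, and therefore $F=H$. It thus suffices to show $H=G$.

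\emph{Step 2: use Pontryagin duality and characters.} Suppose $H\neq G$. Then $G/H$ is a nontrivial compact abelian group, so it has a nontrivial continuous character. This pulls back to a nontrivial character $\chi$ of $G$ that is trivial on $H$. Every character of $(\bR/\bZ)^n$ has the form $\chi(x)=e^{2\pi i(m_1x_1+\dots+m_nx_n)}$ with $(m_1,\dots,m_n)\in\bZ^n\setminus\{0\}$. Since $\chi(\alpha)=1$, we get $m_1a_1+\dots+m_na_n\in\bZ$, say equal to $m_0$. This gives a nontrivial $\bQ$-linear relation $m_0\cdot1-\sum m_ia_i=0$, which contradicts the independence of $(1,a_1,\dots,a_n)$.

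The main obstacle is the existence of a separating character in Step 2. One can quote Peter--Weyl or Pontryagin duality here. To keep things elementary, I would instead prove density directly via Weyl's criterion. For each nonzero $m\in\bZ^n$, the quantity $\theta=\sum m_ia_i$ is irrational, so
\[\frac1N\sum_{k<N}e^{2\pi i k\theta}=\frac{1}{N}\cdot\frac{e^{2\pi i N\theta}-1}{e^{2\pi i\theta}-1}\to0.\]
By Stone--Weierstrass, trigonometric polynomials are dense in $C(G)$, so the averages $\frac1N\sum_{k<N}f(k\alpha)$ converge to $\int_G f$ for every continuous $f$. Now take $f$ to be a nonnegative bump supported in an arbitrary open set $U$ with $\int f>0$. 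Then $f(k\alpha)>0$ for some $k\in\bN$, so $p(\bN)\cap U\neq\emptyset$. This route bypasses Step 1 entirely and is the version I would write out.
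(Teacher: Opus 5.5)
Your proof is correct, but there is no argument in the paper to compare it with. The paper records this statement as a classical fact attributed to Kronecker \cite{Kr84} and uses it only as a black box: to see that $\bR^K$ satisfies KR, and in Section~\ref{sec:mod} to get density of $\exp(\ba\bN_{>0})$ for a generic point $\ba$ of a free $D$. So your proposal supplies a proof the paper omits.

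Of your two routes, the Weyl-criterion one is the better one to write out.
\begin{itemize}
\item The hypothesis enters only through the fact that $\theta=\sum m_ia_i\notin\bZ$ for $m\in\bZ^n\setminus\{0\}$. This keeps the geometric sums bounded, and Stone--Weierstrass plus a bump function does the rest.
\item It proves more than the statement asks, namely equidistribution of $(k\alpha)_{k\geq 1}$. So every open set is visited by infinitely many $k$, which gives directly the $\bN_{>0}$ version the paper needs in Section~\ref{sec:mod}.
\end{itemize}

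The duality route costs more in two ways. First, you need that characters of a compact abelian group separate points. Second, you need your Step 1 to pass from $\overline{\bZ\alpha}$ to $\overline{\bN\alpha}$. In Step 1, say explicitly that you pass to a sparse subsequence, e.g.\ $k_{j+1}\geq 2k_j$, so that $k_{j+1}-k_j\to\infty$; a merely convergent subsequence does not give this by itself.

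In exchange, duality gives structural information the averaging argument does not. The orbit closure $\overline{\bZ\alpha}$ is exactly the annihilator of $\{m\in\bZ^n : m_1a_1+\dots+m_na_n\in\bZ\}$. Hence, without the independence hypothesis, the closure is a finite union of cosets of a subtorus rather than the whole torus.
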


For each $\ba=(a_1,...,a_n)\in\bR^n$, let $\Gamma_{\ba}^n$ be the following trajectory on the $n$-torus:
\[\Gamma_{\ba}^n:=\{(e^{2\pi ia_1t},...,e^{2\pi ia_nt})\in(S^1)^n\mid t\in \bR\}.\]
We will show that every open definable set in $(\bR_{\oring},(\Gamma_{\ba}^n)_{n\in\bN,\ba\in\bR^n})$ is semialgebraic.

To obtain this result, we reframe the problem using a two-sorted structure that separates the algebraic and linear data. For a subfield $K\se\bR$ extending $\bQ$, we define
\[\bR^K:=(\bR_{\KVS},\bR_{\oring},\exp),\]
where the first sort is $\bR$ viewed as a $K$-vector space, the second sort is the real ordered field, and $\exp:\bR_{\KVS}\to S^1\se(\bR_{\oring})^2$ is given by $\exp(t)=e^{2\pi it}$. This structure, inspired by Zilber's work on raising to powers \cite{Zi03}, is a powerful setting because it defines all $(\Gamma_r)_{r\in K}$ and allows us to apply tools from both geometric stability and tame geometry. We prove the following main theorem assuming the Schanuel conjecture for $K$-powers (SC$_K$), a weaker form of the Schanuel conjecture (SC). By Bays, Kirby, and Wilkie \cite{Ki10}, SC$_K$ holds for a generic $K$.

\begin{maintheorem}
  Assume SC$_K$ holds for some finitely generated $K\se\bR$. Then every open subset of $(\bR_{\oring})^n$ definable in $\bR^K$ is semialgebraic.
\end{maintheorem}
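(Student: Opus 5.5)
The plan follows the strategy announced in the abstract. We build, by Hrushovski amalgamation, a class of ``rich'' two-sorted structures of the shape of $\bR^K$, axiomatize it, and show that $\bR^K$ is a model. Then we analyse definable sets in models of that theory, deducing openness-tameness from quantifier reduction together with a density property.

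\emph{Step 1: predimension and strong embeddings.} Consider two-sorted structures $(V,F,\exp)$ with $V$ a $K$-vector space, $F$ a real closed field, and $\exp:V\to S^1(F)$ a group homomorphism with kernel a cyclic group $\cong\bZ$. For finite-dimensional $K$-subspaces $A\se V$ define
\[\delta(A)=\trd(\exp(A))-\dim_K(A),\]
with the appropriate relative version $\delta(A/B)$. SC$_K$ says precisely that $\delta(A)\ge 0$ in $\bR^K$, modulo the kernel, for finitely generated $K$. Define strong embeddings $A\le B$ by $\delta(B'/A)\ge0$ for all $A\se B'\se B$. Using submodularity of $\delta$, we check the usual amalgamation properties (free amalgam over strong substructures), and form the class of rich models: those in which every strong finite extension of a strong substructure is realized. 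Because $K$ is finitely generated, the ``kernel'' and torsion data are controlled, so the class is first-order axiomatizable. The axioms are: (i) $\delta\ge0$; (ii) a density/existential closedness axiom, saying that every rotund subvariety $W\se V^n\times (F^2)^n$ of the right dimension, defined over parameters and free in the appropriate sense, has points $(\bar v,\exp\bar v)$ in every open box (we need the ordered, topological version, not just existence).

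\emph{Step 2: $\bR^K$ is rich.} Assuming SC$_K$, we get $\delta\ge0$. The existential closedness in open boxes should follow from Fact~\ref{fact:Kronecker}. Given $\bar v$ realizing a generic $K$-linear type, the image of the corresponding linear subspace under $\exp$ is dense in the relevant subtorus. Intersecting with a rotund variety of complementary dimension and using transversality (implicit function theorem in $\bR$), we obtain solutions near any prescribed point. We also need that finitely many relevant ``special'' subtori are avoided, and here finite generation of $K$ is used again.

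\emph{Step 3: from near model completeness to open sets.} In the rich theory, every formula is equivalent to a Boolean combination of existential formulas over strong closures, so every definable $X\se F^n$ is a finite union of sets of the form $\pi\bigl(\{(\bar y,\bar v): (\bar y,\exp\bar v)\in W,\ \bar v\in L\}\bigr)$ minus similar sets, with $W$ semialgebraic and $L$ $K$-linear. By density (Step 2), each such projection differs from a semialgebraic set $S$ only by a set with empty interior. Moreover $X$ is dense in $S$ on an open subset and codense elsewhere. Hence the interior of $X$ is the interior of a semialgebraic set up to a nowhere dense set. For open $X$ we conclude that $X$ equals the interior of a semialgebraic set and is therefore semialgebraic.

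The main obstacle I expect is Step 2, specifically checking that the intended structure satisfies the density form of existential closedness with parameters from $\bR$. This needs a uniform Kronecker-type argument combined with transversality, and a careful treatment of the case where the generic point of $W$ lies over a subtorus coset defined over the parameters. I would first try to reduce to the free rotund case by a finite induction on dimension, using SC$_K$ to rule out the degenerate cases.
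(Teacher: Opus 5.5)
The overall architecture matches the paper's. However, Step 3, where the theorem is actually derived, does not work. Your near-model-completeness statement is asserted rather than derived, and the paper never needs it. More seriously, the topological argument built on it fails, for two reasons.

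First, the invariant ``differs from a semialgebraic set by a set with empty interior'' is not preserved under Boolean combinations. Take $r\in K\setminus\bQ$ and $h$ a semialgebraic homeomorphism of an interval $I$ onto an arc. Then $X=\{y:\exists v\,(\exp(v)=1\wedge\exp(rv)=h(y))\}$ is countable and dense in $I$ by Kronecker. So $X$ and $I\setminus X$ each differ from $\emptyset$ by a set with empty interior, while their union is $I$. Also $X\triangle S$ is somewhere dense for every semialgebraic $S$, so the nowhere-dense version of your claim is false.

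Second, even granting such an approximation, it could not make an open definable set semialgebraic. An interval minus a Cantor set is open and differs from the interval by a nowhere dense set, and excluding such definable sets is exactly the content of the theorem.

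What is needed is \emph{exact} agreement on a fixed dense set of generic points, which is automatically preserved under Boolean combinations. The paper obtains this from the Boxall--Hieronymi criterion (Fact~\ref{le:openCore}). Fix a finitely generated strong $\A$ containing the parameters, and let $D_n$ be the image under $(p_{\st}\circ\ex)^n$ of the $\Cl$-independent tuples over $V^\A$. Then $D_n$ is dense (Lemma~\ref{le:open1}). On $D_n$ the semialgebraic type over $\A$ determines the full type (Lemma~\ref{le:open3}). This is because Lemma~\ref{le:addKernel} and Proposition~\ref{prop:vsToField} yield an isomorphism of finitely generated strong substructures, which is elementary by back-and-forth.

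Steps 1 and 2 also need repair.
\begin{itemize}
\item Your $\delta(A)=\trd(\exp(A))-\dim_K(A)$ is not the right predimension: in $\bR^K$, when $[K:\bQ]=\infty$, SC$_K$ forces $\trd(\exp(A))=\infty$ for every nonzero $K$-subspace $A$. It must be $\ld^K(\bx)+\trd(\ex(\bx))-\ld^\bQ(\bx)$ on finite tuples, with $\bQ$-span as pregeometry.
\item SC$_K$ gives only $\delta\geq-\trd(K)$, not $\delta\geq 0$, so one must localize at a finitely generated strong $\I\leq\bR^K$.
\item ``Kernel $\cong\bZ$'' is not elementary. In the saturated models where richness is verified, the kernel is a nonstandard $\bZ$-group. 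You then need the separate scheme KR: density of $\{(\ex(r_1z),\dots,\ex(r_nz)):z\in\kex\}$ in $S^n$ whenever $(1,r_1,\dots,r_n)$ is $\bQ$-independent. Your EC cannot supply this, since pairs describing kernel points are never free. Yet KR is what realizes kernel extensions and lets one correct preimages by kernel elements (Lemmas~\ref{le:extendKernel} and~\ref{le:addKernel}).
\item EC must also avoid finitely many proper affine $K$-subspaces of $D$, in order to realize $K$-linear types.
\item First-order expressibility of $\delta\geq0$ and of strong pairs rests on weak CIT (with Laurent's theorem for parameters from $\I$), not on finite generation of $K$.
\item The ``transversality'' in Step 2 is exactly where transcendence enters. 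The paper combines rotundity with Ax--Schanuel to show that $(\bx,\by)\mapsto\exp(-\bx)\by$ has generically zero-dimensional fibers on an open piece of $D\times W$. Hence its image has interior and meets the Kronecker-dense set $\exp(D')$, where $D'\se D$ is an open semigroup avoiding the $D_i+\ba_i$. The implicit function theorem gives nothing without this input.
\end{itemize}
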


Since every definable set is defined using finitely many symbols, we obtain the desired result. Note that SC implies SC$_K$ for every subfield $K$ of $\bC$.

\begin{corollary}
  Assume SC$_K$ holds for every finitely generated $K\se\bR$. Then every open subset of $(\bR_{\oring})^n$ definable in $\bR^\bR$ is semialgebraic. In particular, every open definable set in $(\bR_{\oring},(\Gamma_{\ba}^n)_{n\in\bN,\ba\in\bR^n})$ is semialgebraic.
\end{corollary}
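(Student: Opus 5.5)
The corollary reduces to the Main Theorem by a finiteness argument followed by a definability check. Let $U\se(\bR_{\oring})^n$ be open and definable in $\bR^\bR$, say by a formula $\varphi(\bx,\bc)$ with parameters $\bc$. Only finitely many symbols of the language of $\bR^\bR$ occur in $\varphi$. The only symbols that depend on the field of scalars are the scalar multiplications $\lambda\cdot$ for $\lambda\in\bR$ on the vector-space sort. So $\varphi$ mentions only finitely many scalars $\lambda_1,\dots,\lambda_k$. Set $K:=\bQ(\lambda_1,\dots,\lambda_k)$, which is finitely generated. Then $\varphi$ is also a formula in the language of $\bR^K$, with the same parameters, since parameters from either sort are elements of $\bR$. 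The reduct of $\bR^\bR$ to that language is exactly $\bR^K$, and interpretation of a formula depends only on the symbols it uses, so $U$ is definable in $\bR^K$. By hypothesis SC$_K$ holds, and the Main Theorem gives that $U$ is semialgebraic.

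For the second statement, I show that every $\Gamma^m_{\ba}$ is definable in $\bR^\bR$, with $\ba=(a_1,\dots,a_m)$. It is defined as the set of $(z_1,\dots,z_m)\in(\bR_{\oring})^{2m}$ such that there is $t$ in the vector-space sort with $z_j=\exp(a_j\cdot t)$ for each $j$. This is an existential formula that uses the scalar multiplications $a_j\cdot$ and $\exp$. Hence any set definable in $(\bR_{\oring},(\Gamma_{\ba}^n)_{n,\ba})$ is definable in $\bR^\bR$. More precisely, if a formula in the smaller language is built from $\bR_{\oring}$-symbols and predicates $\Gamma$, I replace each predicate by its defining formula; this yields a formula of $\bR^\bR$ defining the same set. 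Applying the first part to an open set definable this way shows it is semialgebraic.

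The only delicate point is the reduct claim: that restricting scalars from $\bR$ to $K$ gives precisely $\bR^K$, and that the definability of a set with parameters is unaffected. This holds because the vector-space sort of $\bR^K$ is the underlying group $(\bR,+)$ with only the $K$-scalars named, while $\exp$ and the field sort are unchanged. So no genuine obstacle arises. All the real difficulty lies in the Main Theorem, which the corollary simply invokes.
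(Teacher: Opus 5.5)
Your argument is the paper's own: a definable set uses only finitely many scalar symbols, so it is definable in $\bR^K$ for a finitely generated $K$, and each $\Gamma^n_{\ba}$ is existentially definable in $\bR^\bR$; you have just written out the reduct and substitution steps in more detail. One small technicality: the construction in Section 3 assumes $K$ properly contains $\bQ$, so you should adjoin an irrational such as $\sqrt{2}$ to $K$ in case all the $\lambda_i$ are rational. This is harmless, because the hypothesis covers every finitely generated $K$ and enlarging $K$ preserves definability.
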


Our proof proceeds by constructing and axiomatizing a class of models for the potential theory of $\bR^K$. The main technical tool is the Hrushovski construction, where a predimension function $\delta$ governs the interaction between the $K$-linear structure and the algebraic structure. The Schanuel-type conjecture SC$_K$ precisely states that this predimension function is bounded below, ensuring our intended structure $\bR^K$ can be seen as a rich structure in an elementary class.

More precisely, we apply the Hrushovski construction to the following setting:
\begin{itemize}
  \item a $K$-vector space $V$,
  \item a real closed field $R$,
  \item its unit circle $S=\{(x,y)\in R^2\mid x^2+y^2=1\}$ as a subgroup of $(R^2)^\times$,
  \item a surjective group homomorphism $\ex:V\to S$ whose kernel is a $\bZ$-group,
  \item the class $\C$ of all such structures $(V,R,\ex)$, and
  \item the predimension function $\delta(\bx):=\ld^K(\bx)+\trd(\ex(\bx))-\ld^\bQ(\bx)$,
\end{itemize}
where $\ld^K$ denotes the $K$-linear dimension and $\trd$ the transcendence degree.

\emph{Strong embeddings} between structures in $\C$ are embeddings with non-negative predimension. Fix a small structure $\I\in\C$ and let $\C_\I$ consist of all structures in $\C$ strongly extending $\I$. We provide a first-order axiomatization $T_\I$ of the class $\C_\I$, incorporating a fact about atypical intersections, a weaker form of the Conjecture on Intersections with Tori (CIT).

We call a structure $\U\in\C_\I$ \emph{rich} if for every strong embedding $\X\to\Y$ between small structures, every strong embedding $\X\to\U$ extends to a strong embedding $\Y\to\U$. We show that $\C_\I$ is an amalgamation class, and hence contains a rich structure. The axiomatization of richness consists of two parts: an axiom scheme KR that captures the Kronecker-type behaviour of the trajectories, and an axiom scheme EC that captures existential closedness of rich structures.

\begin{maintheorem}\label{thm:axiomatization}
  Rich structures in $\C_\I$ are axiomatized by $T_\I\cup\operatorname{KR}\cup\operatorname{EC}$.
\end{maintheorem}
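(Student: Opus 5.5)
The plan is the standard two-direction argument for axiomatizing Hrushovski-style rich structures, following Zilber's treatment of raising to powers. First we show every rich $\U\in\C_\I$ satisfies $T_\I\cup\operatorname{KR}\cup\operatorname{EC}$. Then we show every $\aleph_0$-saturated model of this theory is rich. Since richness for small structures is determined up to back-and-forth, this gives the axiomatization.

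\emph{Soundness.} A rich $\U$ lies in $\C_\I$, so it satisfies $T_\I$ by the earlier axiomatization of $\C_\I$. For EC, take a finite system: a $K$-linear subspace $L\se V^n$ defined over $\U$, and an algebraic variety $W\se S^n$ that is free and rotund, in the sense that $\delta\ge0$ on all quotients. Realize a generic point of $L\times W$ in an extension $\Y$ of a finitely generated strong substructure $\X\le\U$ containing the parameters. Rotundity is exactly the statement that $\X\le\Y$ is strong. Richness then embeds $\Y$ strongly into $\U$ over $\X$, which yields a solution.

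For KR the scheme should express the Kronecker density. For any $\bQ$-linearly independent data, the image under $\ex$ of $\bZ$-translates, or more precisely of the kernel-twisted $K$-span, is dense in the relevant subtorus with respect to the order of $R$. We prove it by amalgamating a new point whose image lies in a prescribed open box. We check that this extension is strong: the point is generic over the box, so $\trd$ goes up by the number of new $\bQ$-independent coordinates and $\delta$ does not drop. Richness then realizes the point.

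\emph{Completeness.} Let $M$ be an $\aleph_0$-saturated model. We must show that for finitely generated strong $\X\le M$ and a strong extension $\X\le\Y$, the embedding extends. Decompose $\Y$ over $\X$ into a chain of minimal strong extensions, and treat each one by type.

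\begin{enumerate}
  \item Purely field-theoretic extensions, adding new elements of $R$, are handled by the real closed field axioms together with saturation.
  \item Extensions adding a kernel element or $\bQ$-dependent new points are handled by the $\bZ$-group axioms on the kernel.
  \item Extensions adding new $K$-linear points whose $\ex$-images are algebraic over $\X$, so that $\delta=0$ with $\trd$ small, are handled by EC. The type is determined by the locus $W$ plus its order-type (cut) in $R$. Saturation plus a density statement lets us pick a realization with the right cut.
\end{enumerate}

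The role of KR is that EC alone gives a point on $W$ but not one in the correct semialgebraic cell. KR combined with openness of cells lets us perturb the solution into any prescribed open region. We also need that the realized point spans a strong substructure. We get this by choosing it generic in $W$ relative to finitely many atypical components. These are finitely many by the weak CIT fact built into $T_\I$, which is expressible first-order.

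The main obstacle is this last step. We must show that a solution produced by EC plus KR can be chosen so that the generated substructure is strong in $M$, and simultaneously lies in the prescribed order-cut. Strongness is not first-order on its face. The plan is to use the uniform finiteness of atypical intersections from $T_\I$ to reduce "$\X\bx\le M$" to avoiding finitely many proper subvarieties given by Zilber–Bays–Kirby style uniformity. Those subvarieties are nowhere dense in $W(R)$, so KR density in the order topology leaves room in every open cell. With that in place, a back-and-forth between saturated models yields richness.
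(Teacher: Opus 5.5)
Your soundness sketch is fine in outline (the paper leaves that direction implicit). The completeness direction, however, has genuine gaps.

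\textbf{The case list omits the purely transcendental extension.} A ``purely field-theoretic'' extension adding only new elements of $R$ cannot occur. Since $\ex$ is onto $S$ and $p_{\st}$ identifies $R$ with $S$ minus a point, every proper strong extension adds new elements of $V$. By Proposition \ref{prop:minimalExtension} the three cases are kernel, prealgebraic ($\delta=0$), and purely transcendental ($\delta=1$). You never treat the last one, and it is where the real work lies. You must find $a\in V^\U$ with $\ex(a)$ in a prescribed cut over $R^\X$ and $a\notin\Cl(V^\X)$, i.e.\ $\delta(\bw,a/V^\X)>0$ for tuples $\bw$ of every length. This is an infinite conjunction (a partial type, Proposition \ref{prop:ptType}). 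It is not the avoidance of finitely many proper subvarieties of a fixed $W$, so ``nowhere density'' does not give finite satisfiability.

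The missing idea is Corollary \ref{cor:existStrong}. Take an $N$-dimensional strong pair with strict rotundity $\dim M(D)+\dim W^M>k$ on every proper quotient. Realize a generic point $\ba$ of it, with $\ex(a_1)$ in the prescribed arc, in a $\delta=0$ extension $\X'\ge\X$. Embed $\X'$ strongly into $\U$ by the prealgebraic case. Submodularity then gives $\delta(\bw,a_1/V^\X)>0$ for all $\bw$ of length at most $N-2$, and saturation finishes.

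Conversely, the step you flag as the main obstacle is not one. For a $\delta=0$ extension, strongness of the realized copy is automatic, since $V^\X\le V^\U$ and $V^\X\le_0\ag{V^\X,\ba}^K$ (Fact \ref{le:prealg}).

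\textbf{The role of KR is misassigned.} Adding kernel elements does not move $\ex(\bx)$ at all: $\ex(\bx+\bz)=\ex(\bx)$ for $\bz\se\kex$. So KR cannot push an EC-solution into a prescribed cell of $W$. If EC only speaks about algebraic varieties, the order data of the type are simply not captured by your scheme. The paper instead formulates EC for semialgebraically connected Nash manifolds $W_\bb$, which is first-order by Corollary \ref{cor:rotundDefinable}. Then, by analytic cell decomposition and Proposition \ref{prop:rotundStrong}, each finite fragment of $\tp^{\KVS}(\bb'/V^\X)\cup\tp^{\rcf}(\ex(\bb')/R^\X)$ is a strong pair minus finitely many proper affine subspaces.

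What KR is actually needed for is the job you give to the $\bZ$-group axioms. Those axioms say nothing about the real-closed-field type of $(\ex(rx))_{r\in K}$ over $R^\X$ for $x\in\kex$. That type must be controlled in two places:
\begin{enumerate}
  \item to embed a kernel extension (Lemma \ref{le:extendKernel});
  \item in all three cases, to extend the embedding of the new generators to the whole $\ag{\cdot}^{\pc}$-closure. Preimages of new points of $S$ are determined only modulo $\kex$, and must be corrected by kernel elements so that $\ex$ agrees on their $K$-spans (Lemma \ref{le:addKernel}, Proposition \ref{prop:vsToField}).
\end{enumerate}
This closure step is also missing from your plan.
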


We prove that the intended structure $\bR^K$ always satisfies $\operatorname{KR}$ and $\operatorname{EC}$. The property EC is shown by analyzing unlikely intersections using the Ax-Schanuel theorem.

The final step uses a criterion of Boxall and Hieronymi \cite{Bo12}, which allows us to transfer the equivalence of types of generic points into the definability of open sets, yielding the main statement of open definable sets for rich structures in $\C_\I$. Combining this with the fact that $\bR^K$ satisfies the theory of rich structures, we obtain the main theorem.

This paper is structured as follows. In Section \ref{sec:pre}, we recall some facts in model theory and arithmetic geometry, mainly focusing on the Hrushovski construction and the exponential function. Section \ref{sec:con} is devoted to the setup and the basic properties of the construction. In Section \ref{sec:C_I}, we find the axioms stating that the predimension function is non-negative, thus axiomatizing the class $\C_\I$. In Section \ref{sec:kro}, we study the axioms for the Kronecker property. Section \ref{sec:str} is devoted to strong pairs, objects that characterize strong embeddings. Section \ref{sec:axi} is devoted to the complete axiomatization of rich structures. In Section \ref{sec:ope}, we examine the open definable sets in a rich structure. In Section \ref{sec:mod}, we show that $\bR^K$ is a model of the theory of rich structures.

\subsection*{Acknowledgements}

I would like to thank my advisor, Prof. Philipp Hieronymi, for suggesting this topic and for his continuous guidance and support. I am grateful to the Mathematical Institute of the University of Bonn for its support during this research. The author was partially supported by the Deutsche Forschungsgemeinschaft (DFG, German Research Foundation) under Germany's Excellence Strategy -- EXC-2047/2 -- 390685813.

\subsection*{Notations}

We use curly letter $\X,\Y$ for structures and $E^\X$ for the interpretation of $E$ in $\X$. We use bold letters $\ba,\bb$ for finite tuples, and we write $\ba\bb$ for a concatenation of two tuples. Let $\ld^K,\md,\trd$ denote the $K$-linear dimension for some field $K$, the multiplicative degree, and the transcendence degree respectively.

Let $S$ be a set. For a subset $W\se S^{n+m}$ and a tuple $\ba\in S^m$, we write $W(-,\ba)$ or $W_{\ba}$ for the \emph{fiber} of $\ba$, i.e.,
\[W(-,\ba)=W_{\ba}=\{\,\bx\in S^n\mid(\bx,\ba)\in W\,\}.\]

\section{Preliminaries}\label{sec:pre}

\subsection{Geometric structures}

In this subsection, we review some basic facts of geometric structures. We focus on strongly minimal and o-minimal structures. Standard references are \cite{Ma02} and \cite{Dr98}.

\subsubsection{Strongly minimal structures}

\begin{definition}
  A structure $\A$ is \emph{minimal} if every definable subset of $A$ is finite or cofinite. A theory $T$ is \emph{strongly minimal} if every model of $T$ is minimal.
\end{definition}

Strongly minimal theories form the most fundamental class of theories, including vector spaces and algebraically closed fields.

\begin{fact}
  The algebraic closure operator $\acl$ in a strongly minimal structure is a pregeometry.
\end{fact}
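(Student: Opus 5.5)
We must verify the three defining properties of a pregeometry for $\acl$ in a strongly minimal structure $\A$: reflexivity and monotonicity, finite character, and exchange. The first two, together with idempotence ($\acl(\acl(X))=\acl(X)$), hold for $\acl$ in any structure and need no minimality.

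For reflexivity, note that $x=a$ defines $\{a\}$, so $X\se\acl(X)$. For monotonicity, a formula over $X$ is also a formula over any $Y\supseteq X$, so $X\se Y$ gives $\acl(X)\se\acl(Y)$. Finite character holds because a formula uses only finitely many parameters.

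For idempotence, suppose $b$ is algebraic over $c_1,\dots,c_k\in\acl(X)$ via $\phi(x,\bc)$ with $|\phi(\A,\bc)|=m$. Each $c_i$ is algebraic over $X$ via some formula $\psi_i(y,\ba)$ with finitely many solutions. Consider the formula over $\ba$
\[\exists \by\,\Big(\bigwedge_i\psi_i(y_i,\ba)\wedge\phi(x,\by)\wedge\exists^{\le m}z\,\phi(z,\by)\Big).\]
It is satisfied by $b$. It has finitely many solutions, since there are finitely many choices of $\by$ and each contributes at most $m$ values of $x$.

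The main step is exchange: if $b\in\acl(Xa)\setminus\acl(X)$, then $a\in\acl(Xb)$. Choose $\phi(x,y)$ over $X$ such that $\phi(a,b)$ holds and $|\phi(a,\A)|=n$. Replace $\phi$ by $\phi(x,y)\wedge\exists^{\le n}z\,\phi(x,z)$, so that every fiber $\phi(a',\A)$ has size at most $n$.

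Suppose for contradiction that $a\notin\acl(Xb)$. Then $\phi(\A,b)$ is infinite, since it contains $a$ and is definable over $Xb$. Strong minimality makes it cofinite. Let $k=|\A\setminus\phi(\A,b)|$. The formula $\chi(y):=|\A\setminus\phi(\A,y)|\le k$ is first-order over $X$ and holds of $b$.

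Since $b\notin\acl(X)$, the set $\chi(\A)$ is infinite. Pick distinct $b_0,\dots,b_n\in\chi(\A)$. The intersection $\bigcap_i\phi(\A,b_i)$ is cofinite, hence nonempty, provided $\A$ is infinite. Any $a'$ in it satisfies $|\phi(a',\A)|\ge n+1$, contradicting the bound on fibers.

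The one subtlety is the case where $\A$ is finite, where everything is algebraic over $\emptyset$ and exchange is trivial. So we may assume $\A$ is infinite. Here we need minimality to be preserved under adding parameters, which is why we are careful that $\chi$ is over $X$ only.
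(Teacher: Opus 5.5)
Your argument is correct and is essentially the standard proof (Marker's exchange lemma for minimal structures, with the roles of $a$ and $b$ swapped); the paper gives no proof of its own and simply records this as a Fact, citing Marker. One small correction to your closing remark: minimality already covers sets defined with arbitrary parameters such as $Xb$, so there is nothing to preserve. The actual reason $\chi$ must be over $X$ alone is that $\chi(b)$ together with $b\notin\acl(X)$ is what forces $\chi(\A)$ to be infinite.
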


Let $\A$ be a sufficiently saturated strongly minimal structure, and let $d$ denote the dimension function induced by its algebraic closure.

\begin{fact}
  Let $\ba\se A$ and $B\se A$. Then $\mr(\ba/B)=d(\ba/B)$.
\end{fact}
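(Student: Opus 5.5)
The plan is to reduce the statement about types to one about formulas and to argue by induction on dimension. By saturation, for a formula $\varphi(\bx)$ over $B$ set
\[\dim(\varphi):=\max\{d(\bc/B)\mid \bc\models\varphi\}.\]
The main step is to prove, for every $B$-formula, that $\mr(\varphi)=\dim(\varphi)$. Taking the minimum over $\varphi\in\tp(\ba/B)$ then gives $\mr(\ba/B)=d(\ba/B)$. One direction is immediate: every formula in the type is realized by $\ba$, so its dimension is at least $d(\ba/B)$. The other direction needs one formula in the type of dimension exactly $d(\ba/B)$. To get it, choose a maximal $\acl$-independent subtuple of $\ba$ over $B$, write down algebraic formulas witnessing that each remaining coordinate lies in $\acl$ of $B$ together with that subtuple, and use the resulting conjunction, which bounds the dimension of its realizations.

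The base case is the one-variable situation. Here $\acl(B)$ is the set of $B$-definable finite sets, and by strong minimality every $B$-formula in one variable is finite or cofinite. So $\mr(\varphi)=0$ exactly when $\varphi$ is algebraic, which happens exactly when all realizations lie in $\acl(B)$. Similarly $\mr(\varphi)=1$ exactly when $\varphi$ is cofinite, which by saturation happens exactly when it has a realization outside $\acl(B)$.

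For the induction in $n$ variables I would prove both inequalities by induction on $k$.

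\emph{Lower bound} ($\dim(\varphi)\ge k\Rightarrow\mr(\varphi)\ge k$). Take $\bc\models\varphi$ with $d(\bc/B)=k$, say with $c_1$ generic over $B$. Choose infinitely many distinct realizations $c_1^{(i)}$ of the generic type over $B$; these exist by saturation. Using a $B$-automorphism moving $c_1$ to $c_1^{(i)}$, the formulas $\varphi(\bx)\wedge x_1=c_1^{(i)}$ are pairwise disjoint, and each has dimension $k-1$ over $Bc_1^{(i)}$ by additivity of $d$. By induction each has Morley rank at least $k-1$, so $\mr(\varphi)\ge k$.

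\emph{Upper bound} ($\mr(\varphi)\ge k+1\Rightarrow\dim(\varphi)\ge k+1$). Suppose $\varphi$ contains infinitely many pairwise disjoint sets $\psi_i(\bx,\bc_i)$, each of rank $\ge k$. By induction, with parameters $B\bc_i$, each $\psi_i$ has a realization $\be_i$ with $d(\be_i/B\bc_i)\ge k$. Assume towards a contradiction that $\dim(\varphi)=k$, so each $\be_i$ has $d(\be_i/B)=k$ exactly. I expect this to be the main obstacle: turning infinitely many disjoint $k$-dimensional pieces into a single point of dimension $k+1$.

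My first attempt at the obstacle is a fibre argument. Project $\varphi$ onto a suitable set of $k$ coordinates. Over a generic point of that projection, the fibre of $\varphi$ is finite, and by compactness its size is uniformly bounded, say by $N$. Each $\psi_i$, having dimension $k$ over $B\bc_i$, must meet the fibre over some generic point of the projection. Since the $\psi_i$ are disjoint, having more than $N$ of them meet the same fibre is impossible. The remaining work is to choose a single generic point, independent from finitely many of the $\bc_i$, over which more than $N$ of the $\psi_i$ meet the fibre; this is where the independence calculus of the pregeometry (exchange and additivity) is used. That produces the contradiction.

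Alternatively, I would simply cite the standard treatment in \cite{Ma02}, where this equality is established along exactly these lines.
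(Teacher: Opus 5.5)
Your reduction to formulas and your lower bound are fine. One small repair in the reduction: add to each algebraic formula $\chi_j$ a clause bounding its number of solutions. Then every realization of the conjunction, not only $\ba$, has its remaining coordinates algebraic over its chosen subtuple.

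The gap is the step of the upper bound that you leave as ``remaining work''. After fixing the coordinate set $I$ by pigeonhole, you need one $\bu\in A^k$, generic over $B$, that lies in $\pi_I(\psi_i)$ for more than $N$ indices $i$. All you know is that each $\pi_I(\psi_i)$ contains \emph{some} point generic over $B\bc_i$.

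Exchange and additivity cannot produce such a $\bu$. They hold just as well for $\dcl$ in a saturated real closed field, and there the step fails. Take $\varphi$ to be $x=x$, $k=1$, and the $\psi_i$ pairwise disjoint open intervals: no point lies in two of them, and indeed $\mr(x=x)=\infty$ while the dimension is $1$. So strong minimality must enter exactly here, and your upper-bound argument does not use it anywhere beyond the one-variable case.

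The missing idea is uniqueness of the generic type. In a strongly minimal theory, any two $\acl$-independent $k$-tuples over a set $C$ have the same type over $C$. This follows by induction on $k$ from the uniqueness of the non-algebraic $1$-type over $C$, together with homogeneity of the monster model. Consequently, a $C$-definable subset of $A^k$ that contains one point generic over $C$ contains all of them.

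Apply this to $\pi_I(\psi_i)$ with $C=B\bc_i$, and take $\bu$ generic over $B\bc_1\cdots\bc_{N+1}$. Then $\bu\in\pi_I(\psi_i)$ for every $i\le N+1$, so the fibre $\{\bx\models\varphi : \pi_I(\bx)=\bu\}$ meets $N+1$ pairwise disjoint sets. But $\bu$ is also generic over $B$, so that fibre has at most $N$ points, which is the contradiction.

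With this lemma added your argument goes through. The paper itself gives no proof of this statement; it simply quotes it as a standard fact from Marker, which is your fallback option.
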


\begin{fact}
  Suppose $X\se A^n$ is a definable set over a small set $B\se A$. Then
  \[\mr(X)=\max_{\bx\in X}\mr(\bx/B).\]
\end{fact}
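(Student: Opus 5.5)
The statement to prove is the last Fact: for $X\se A^n$ definable over a small $B$, $\mr(X)=\max_{\bx\in X}\mr(\bx/B)$. I will use the preceding Fact, $\mr(\ba/B)=d(\ba/B)$. I will also use the standard definition $\mr(\ba/B)=\min\{\mr(Y)\mid Y \text{ an } L_B\text{-definable set containing }\ba\}$. Saturation of $\A$ guarantees that the maximum over $\bx\in X$ is attained and that the Morley rank of $X$ is computed inside $\A$.

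For the inequality $\ge$, take any $\bx\in X$. Since $X$ is $B$-definable and contains $\bx$, the definition of $\mr(\bx/B)$ gives $\mr(\bx/B)\le\mr(X)$. Taking the supremum over $\bx\in X$ yields $\max_{\bx\in X}\mr(\bx/B)\le\mr(X)$.

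For $\le$, argue by induction on $\alpha=\mr(X)$, which is finite (at most $n$) in a strongly minimal theory. I would show that if $\mr(X)\ge\alpha$ then some $\bx\in X$ has $\mr(\bx/B)\ge\alpha$. The cleanest route avoids ranks of formulas over parameters outside $B$ and argues by compactness. Consider the partial type
\[
p(\bx)=\{\bx\in X\}\cup\{\bx\notin Y\mid Y\ B\text{-definable},\ \mr(Y)<\mr(X)\}.
\]
Suppose $p$ were inconsistent. Then $X$ would be covered by finitely many $B$-definable sets of rank $<\mr(X)$. Morley rank of a finite union is the maximum of the ranks, so this is a contradiction. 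Hence $p$ is consistent, and by saturation it is realized by some $\bx\in X$. Every $B$-definable set containing this $\bx$ has rank $\ge\mr(X)$, so $\mr(\bx/B)\ge\mr(X)$.

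The main obstacle is justifying the finite-union property $\mr(Y_1\cup Y_2)=\max(\mr(Y_1),\mr(Y_2))$, together with finiteness and well-definedness of Morley rank in the strongly minimal setting. If the Morley rank of $X$ is computed with respect to arbitrary parameters, I would invoke the standard fact from \cite{Ma02} that it equals the rank computed in the saturated model. Alternatively, one can bypass this using the dimension formula: interpret $\mr(X)$ as the maximal $d$-dimension of its points, which follows from additivity of $d$ and the fibration argument, inducting on $n$ by projecting to the first coordinate.
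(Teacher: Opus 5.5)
Your argument is correct and is the standard compactness proof from Marker, which the paper simply cites for this Fact without proving it: $\mr(\bx/B)\le\mr(X)$ because the formula defining $X$ lies in $\tp(\bx/B)$. For the other direction, the partial type avoiding every $B$-definable set of rank $<\mr(X)$ is consistent by monotonicity and the finite-union property of Morley rank, and a realization of it attains the maximum. The announced induction on $\alpha$ and the appeal to $\mr(\ba/B)=d(\ba/B)$ are never used and can be dropped.
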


A \emph{generic point} of $X$ over $B$ is an element of $X$ achieving the maximum Morley rank. We write $\dim(X)$ for the Morley rank of $X$. In particular, if $\A=(A,+,\cdot,0,1)$ is an algebraically closed field, then $d$ is the transcendence degree, and the Morley rank of an algebraic set equals to its Krull dimension.

\subsubsection{o-minimal structures}

\begin{definition}
  A structure $\R=(R,<,...)$ where $<$ is a linear order is \emph{o-minimal} if every definable subset of $R$ is a finite union of intervals and points.
\end{definition}

Examples of o-minimal structures include real closed fields and the expansion of the real field by restricted analytic functions $\bR_{\operatorname{an}}$.

\begin{fact}
  The definable closure operator $\dcl$ in an o-minimal structure is a pregeometry.
\end{fact}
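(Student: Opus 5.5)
We must show that in an o-minimal structure $\R=(R,<,\dots)$ the operator $\dcl$ is a pregeometry, i.e.\ it satisfies reflexivity, monotonicity, idempotence, finite character and exchange. The first four hold in any structure. If $a\in A$, then $a$ is defined by $x=a$. Enlarging the parameter set preserves definability. A defining formula uses finitely many parameters, which gives finite character. For idempotence, suppose $b\in\dcl(\dcl(A))$ is defined by $\varphi(x,\bc)$ with each $c_i\in\dcl(A)$ defined by $\psi_i(y,\ba_i)$. Then $b$ is the unique $x$ with $\exists\by\,(\bigwedge_i\psi_i(y_i,\ba_i)\wedge\varphi(x,\by))$. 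So the whole content lies in exchange: if $b\in\dcl(Aa)\setminus\dcl(A)$, then $a\in\dcl(Ab)$.

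For exchange, I will use the monotonicity theorem for definable unary functions, which I take as the standard first step of o-minimality (see \cite{Dr98}). It says that any $A$-definable $f:R\to R$ admits finitely many $A$-definable points $a_1<\dots<a_k$ such that on each open interval between consecutive points $f$ is constant or strictly monotone (and continuous). The $A$-definability of the breakpoints is what matters here. They can be taken to be the boundary points of the $A$-definable sets on which $f$ is locally constant, locally increasing or locally decreasing. Each such set is a finite union of intervals and points, so its finitely many endpoints are $A$-definable: the $i$-th endpoint is definable from the set by an explicit formula.

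Now let $b=f(a)$ with $f$ an $A$-definable function; this is possible since $b\in\dcl(Aa)$ is defined by some $\varphi(x,a,\ba)$. Without loss of generality $\varphi$ defines a total function (set $f(t)=t$ where $\varphi(-,t,\ba)$ fails to have a unique solution). Apply monotonicity to $f$. If $a$ is one of the breakpoints $a_i$, then $a\in\dcl(A)$, so $b=f(a)\in\dcl(A)$, a contradiction. Hence $a$ lies in an open interval $I=(a_i,a_{i+1})$ with $A$-definable endpoints (allowing $\pm\infty$). If $f$ is constant on $I$, then $b$ is the unique value of $f$ on $I$, and this value is $A$-definable, again a contradiction. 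Therefore $f|_I$ is strictly monotone, hence injective, and $a$ is the unique $x\in I$ with $f(x)=b$. This formula uses only parameters from $A$ together with $b$, so $a\in\dcl(Ab)$.

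The main obstacle is the monotonicity theorem itself. Its proof goes through the lemma that an $A$-definable $f$ on an interval is, on some subinterval, constant or injective. One then shows that an injective $f$ is, on some subinterval, strictly monotone, and that it is continuous on a further subinterval. Each of these steps uses o-minimality together with a counting or density argument on the finitely many boundary points. If the paper only cites \cite{Dr98}, I would simply invoke monotonicity from there rather than reprove it.
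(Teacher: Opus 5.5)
Your proof is correct and is the standard argument: exchange follows from the monotonicity theorem once the breakpoints are taken in $\dcl(A)$. The paper gives no proof of its own and simply quotes this fact, pointing to the standard references \cite{Dr98} and \cite{Ma02}. One tacit assumption: your argument needs a densely ordered $R$, as in the convention of \cite{Dr98} (the local-constancy description of the breakpoints breaks down at isolated points), whereas the paper's definition of o-minimality omits density; this is harmless, since the paper only applies the fact to o-minimal expansions of ordered fields.
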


\begin{definition}
  Suppose $\R$ is an o-minimal structure, and $X\se\R^n$ is a definable set. The \emph{topological dimension} of $X$, denoted by $\dim(X)$, is the maximum $k\leq n$ such that the image $\pi(X)$ of some coordinate projection $\pi:R^n\to R^k$ has non-empty interior.
\end{definition}

Let $\R$ be a sufficiently saturated o-minimal structure, and let $d$ denote the dimension function induced by its definable closure.

\begin{fact}
  Suppose $X\se R^n$ is a definable set over a small set $B\se R$. Then
  \[\dim(X)=\max_{\bx\in X}d(\bx/B).\]
  A \emph{generic point} of $X$ over $B$ is an element of $X$ achieving the maximum $\dcl$-dimension.
\end{fact}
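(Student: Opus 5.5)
We prove the last Fact: for an $L(B)$-definable $X\se R^n$ in a sufficiently saturated o-minimal $\R$, $\dim(X)=\max_{\bx\in X}d(\bx/B)$. We show both inequalities, using cell decomposition and the fact that $\dcl$ is a pregeometry.

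\emph{Upper bound: $d(\bx/B)\le\dim(X)$ for every $\bx\in X$.} Let $\bx\in X$ with $d(\bx/B)=k$, and choose coordinates $x_{i_1},\dots,x_{i_k}$ forming a $\dcl$-basis of $\bx$ over $B$. Let $\pi$ be the projection onto these coordinates. Then $\pi(\bx)$ is $\dcl$-independent over $B$.

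We claim that any $B$-definable set $Y\se R^k$ containing a $\dcl$-independent tuple $\bc$ has non-empty interior. Argue by induction on $k$ using cell decomposition over $B$. The cells are $B$-definable, so $\bc$ lies in some $B$-definable cell $C$. If $C$ were not open, one of its coordinates would be given on $C$ by a $B$-definable function of the earlier coordinates. That coordinate of $\bc$ would then lie in $\dcl(B\cup\{\text{earlier coordinates}\})$, contradicting independence. Hence $C$ is open, and so $Y$ has non-empty interior.

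Applying the claim to $Y=\pi(X)$ shows $\pi(X)$ has interior, so $\dim(X)\ge k$.

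\emph{Lower bound: some $\bx\in X$ has $d(\bx/B)\ge\dim(X)$.} Let $k=\dim(X)$ and let $\pi$ be a coordinate projection with $\pi(X)$ having non-empty interior. Since $\pi(X)$ is $B$-definable, cell decomposition over $B$ gives an open $B$-definable cell $C\se\pi(X)$.

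We find a $\dcl$-independent tuple $\bc\in C$ over $B$ by induction along the cell structure. An open cell is a box-like region between $B$-definable continuous functions. Having chosen $c_1,\dots,c_{j-1}$, the fiber of $C$ over them is an open interval with endpoints in $\dcl(Bc_1\dots c_{j-1})$. This set is small, so by saturation the interval contains an element $c_j\notin\dcl(Bc_1\dots c_{j-1})$; such an element exists because a non-trivial interval in a saturated o-minimal structure is infinite, and realizing the non-algebraic type only requires omitting the small set of algebraic points.

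Then $\bc$ is independent over $B$, so $d(\bc/B)=k$. Pick $\bx\in X$ with $\pi(\bx)=\bc$; then $d(\bx/B)\ge d(\bc/B)=k$.

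The main technical point is the claim in the upper bound, that independent points can only lie in open cells. It rests on cells being $B$-definable, that is, on cell decomposition over parameters, which holds because $\dcl(B)$ is an elementary substructure in an o-minimal expansion of an ordered group. In a general o-minimal structure one instead uses that cell decomposition is uniform in parameters.

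The same argument shows the maximum is attained, so "generic point" is well defined.
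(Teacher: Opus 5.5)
Your proof is correct. The paper gives no proof of this statement. It quotes it as a standard fact from the o-minimality literature, with \cite{Dr98} as the reference. Your argument is essentially the textbook one, and both halves check out.
\begin{itemize}
\item A tuple that is $\dcl$-independent over $B$ can only lie in an open $B$-definable cell. A non-open cell expresses some coordinate as a $B$-definable function of the earlier ones. This gives $d(\bx/B)\le\dim X$.
\item Conversely, an open $B$-definable cell inside $\pi(X)$ contains a point independent over $B$. You build it coordinate by coordinate, using saturation to avoid the small set $\dcl(Bc_1\dots c_{j-1})$ inside each fiber interval. That point lifts to some $\bx\in X$.
\end{itemize}

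Three small precision points.
\begin{itemize}
\item The key input is that cells can be chosen $B$-definable when the sets being decomposed are $B$-definable. This is part of the standard cell decomposition theorem for arbitrary o-minimal structures, because the construction is canonical in the input sets. Citing that is cleaner than your detour through $\dcl(B)\preceq\R$. It is also cleaner than appealing to uniformity in parameters, which by itself already presupposes the parameter-free version.
\item The ``induction on $k$'' announced in your claim is never used; the argument there is direct.
\item In the lower bound you tacitly use two standard facts: a finite union of non-open cells in $R^k$ has empty interior, and nonempty open intervals are infinite because the order is dense.
\end{itemize}
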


The above fact also holds for the real ordered field $\bR_{\oring}=(\bR,<,+,\cdot,0,1)$:

\begin{fact}
  Suppose $X\se\bR^n$ is a semialgebraic set defined over a countable set $B\se\bR$. Then
  \[\dim(X)=\max_{\bx\in X}\,\trd(\bx/B).\]
\end{fact}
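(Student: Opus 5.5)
We need to prove the last fact: for $X\se\bR^n$ semialgebraic and defined over a countable $B\se\bR$, $\dim(X)=\max_{\bx\in X}\trd(\bx/B)$. The plan is to reduce to the o-minimal fact just stated, using that $\bR$ has uncountable transcendence degree over the countable field $\bQ(B)$. Since $\bR_{\oring}$ is not saturated, we cannot quote the saturated o-minimal fact directly. Instead we argue with cell decomposition and a transcendence-basis count. Throughout we use that in a real closed field $\dcl(B)$ is the real closure of $\bQ(B)$, so $\dcl$-dimension over $B$ equals $\trd$ over $B$.

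\emph{Upper bound.} Let $\bx\in X$ with $\trd(\bx/B)=k$. Choose coordinates $x_{i_1},\dots,x_{i_k}$ algebraically independent over $\bQ(B)$, and let $\pi$ be the corresponding coordinate projection. Then $\pi(\bx)\in\pi(X)$, and $\pi(X)$ is semialgebraic over $B$.

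Suppose $\pi(X)$ had empty interior. Then $\dim\pi(X)<k$, so by semialgebraic cell decomposition over $B$ (cells are defined over $B$), $\pi(X)$ is contained in a finite union of zero sets of nonzero polynomials with coefficients in $\bQ(B)$. Concretely, each cell of dimension $<k$ lies in the graph of a $B$-definable function of fewer coordinates, and the graph of a semialgebraic function is contained in the zero set of a nonzero polynomial over the real closure of $\bQ(B)$. A point of such a zero set has algebraically dependent coordinates over $\bQ(B)$, contradicting the choice of $\pi(\bx)$. Hence $\pi(X)$ has nonempty interior and $\dim X\ge k$.

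\emph{Lower bound.} Let $d=\dim X$ and pick a projection $\pi$ with $\pi(X)$ having nonempty interior, so $\pi(X)$ contains an open box $U\se\bR^d$. Since $\bQ(B)$ is countable, its algebraic closure is countable, and we can choose $\bt\in U$ with $\trd(\bt/B)=d$. We do this inductively: in each coordinate an open interval is uncountable, so it contains an element transcendental over the countable field generated so far. Any $\bx\in X$ with $\pi(\bx)=\bt$ then satisfies $\trd(\bx/B)\ge d$.

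Combining the two bounds gives $\dim(X)=\max_{\bx\in X}\trd(\bx/B)$. The only delicate point is the upper bound's claim that a lower-dimensional semialgebraic set over $B$ lies in a proper $B$-algebraic subvariety. I would justify it by semialgebraic cell decomposition over $B$ together with the fact that semialgebraic functions are algebraic, i.e.\ each satisfies a nonzero polynomial relation with coefficients in $\bQ(B)$.
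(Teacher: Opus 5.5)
Your proof is correct. The paper gives no proof of this statement. It records it as a known fact: the specialization to $\bR_{\oring}$ of the preceding fact for sufficiently saturated o-minimal structures, with countability of $B$ implicitly standing in for saturation.

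Your argument makes that transfer explicit without passing to a saturated elementary extension. The upper bound $\trd(\bx/B)\le\dim X$ holds for arbitrary $B$, by cell decomposition over $B$. Countability enters only in the lower bound, where you find a point of an open box that is generic over $B$. That is exactly the piece of saturation the o-minimal version relies on.

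The step you flag as delicate follows directly from the observation you make at the start. If a cell lies in the graph $y_j=f(y_1,\dots,y_{j-1})$ of a $B$-definable function, then $y_j\in\dcl(B,y_1,\dots,y_{j-1})$. That set is the real closure of $\bQ(B,y_1,\dots,y_{j-1})$, so the coordinates are algebraically dependent over $\bQ(B)$. You do not need a polynomial relation with controlled coefficients.

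If you do argue via ``semialgebraic functions are algebraic'', the standard statement only gives a nonzero polynomial with real coefficients, which would not yield dependence over $\bQ(B)$. You would then need an extra step to take the polynomial over $\bQ(B)$. One way is to read it off a quantifier-free definition of the graph with parameters in $B$: every basic piece $\{P=0,\,Q_1>0,\dots\}$ of a graph has $P\neq 0$, since otherwise the piece would be open.
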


\subsubsection{Vector spaces}

Let $K$ be a field and let $(V,+,0,(\lambda_k)_{k\in K})$ be a $K$-vector space, where $\lambda_k$ is the scalar multiplication by $k$. The algebraic closure operator in $V$ is the $K$-linear span, and $\emptyset$-definable sets are given by Boolean combinations of the following sets:

\begin{definition}
  A subspace $D$ of $V^n$ is \emph{$K$-linear} if $D=\ker(M_D:V^n\to V^m)$ for some $M_D\in K^{m\times n}$ of rank $m$. We define $\dim D:=n-m$, which equals the Morley rank of $D$.
\end{definition}

Note that $D$ is definably isomorphic to $V^{\dim D}$ and hence has Morley degree $1$.

\begin{fact}
  Let $V_0\se V$ be a $K$-subspace, and $\bx\in V^n$. We have
  \begin{align*}
    \ld^K(\bx)&=\min\{\dim D\mid\bx\in D\text{ for some }K\text{-linear }D\},\\
    \ld^K(\bx/V_0)&=\min\{\dim D\mid\bx\in D+\by\text{ for some }K\text{-linear }D\text{ and some }\by\se V_0\}.
  \end{align*}
\end{fact}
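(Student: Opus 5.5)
We argue the two equalities separately. In each, the inequality ``$\le$'' comes from exhibiting a suitable $D$, and ``$\ge$'' comes from a rank count. Throughout, the key observation is that a $K$-linear $D=\ker M_D$ with $M_D\in K^{m\times n}$ of rank $m$ is the solution set of $m$ independent $K$-linear equations. Hence a tuple $\bx$ lies in $D$ exactly when the $m$ rows of $M_D$ are linear relations satisfied by $\bx$.

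\emph{First equality.} Let $k=\ld^K(\bx)$ and consider the relation space
\[L=\{\lambda\in K^n\mid \textstyle\sum_i\lambda_ix_i=0\}.\]
The map $K^n\to V$, $\lambda\mapsto\sum\lambda_ix_i$, has image $\operatorname{span}_K(\bx)$, which has dimension $k$. By rank--nullity, $\dim_K L=n-k$.

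For ``$\le$'', choose a basis of $L$ as the rows of a matrix $M\in K^{(n-k)\times n}$ of rank $n-k$. Then $D=\ker M$ contains $\bx$ and $\dim D=k$.

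For ``$\ge$'', let $\bx\in D=\ker M_D$ with $M_D$ of rank $m$. The rows of $M_D$ lie in $L$ and are linearly independent, so $m\le n-k$. Therefore $\dim D=n-m\ge k$.

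\emph{Second equality.} We pass to the quotient $K$-vector space $V/V_0$ and let $\bar\bx$ be the image of $\bx$. Since
\[\ld^K(\bx/V_0)=\dim_K\bigl(\operatorname{span}(\bx)+V_0\bigr)/V_0=\ld^K(\bar\bx),\]
this is just the first equality applied in $V/V_0$. It remains to check that the two minimisation conditions agree.

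Suppose $\bx\in D+\by$ with $\by\in V_0^n$. Then $M_D\bx=M_D\by\in V_0^m$, so $M_D\bar\bx=0$, that is, $\bar\bx\in\bar D$, where $\bar D$ is the kernel of $M_D$ on $(V/V_0)^n$.

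Conversely, suppose $M_D\bar\bx=0$. Then $M_D\bx=\bz\in V_0^m$. Because $M_D$ has full row rank $m$, it has a right inverse $N\in K^{n\times m}$ with $M_DN=I_m$. Put $\by=N\bz\in V_0^n$. Then $M_D(\bx-\by)=\bz-\bz=0$, so $\bx\in D+\by$.

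The two conditions therefore define the same set of $D$, and the first equality for $\bar\bx$ in $V/V_0$ gives the second equality.

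The only step requiring care is this converse direction of the translation, which needs full row rank of $M_D$ to produce the right inverse $N$ and hence lift $\bz$ to $\by\in V_0^n$. Everything else is linear algebra over $K$.
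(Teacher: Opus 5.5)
Your proof is correct and complete. The paper states this as a standard Fact and gives no proof, so there is nothing to compare against. Your argument is exactly the routine linear algebra the paper leaves implicit: rank--nullity on the relation space $L\subseteq K^n$ gives the first equality. For the second, you reduce to $V/V_0$, using a right inverse of the full-row-rank matrix $M_D$ to lift $M_D\bx\in V_0^m$ back to a translate $\by\in V_0^n$.

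Your bounds are proved for every full-rank presentation $M_D$, not for the subspace $D$ as such. This handles the degenerate case $V_0=V$, where $V/V_0=0$ and $n-m$ is no longer determined by the kernel alone.
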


\begin{lemma}
  Let $A,B\se V^n$ be $K$-linear subspaces. Then $A+B$ is also a $K$-linear subspace.
\end{lemma}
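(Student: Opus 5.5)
The plan is to realize $A+B$ as the kernel of a single matrix with entries in $K$, working in the dual picture where $K$-linear subspaces of $V^n$ correspond to subspaces of $K^n$. Write $A=\ker(M_A)$ and $B=\ker(M_B)$ with $M_A\in K^{m_A\times n}$ and $M_B\in K^{m_B\times n}$ of full row rank. Let $R_A,R_B\se K^n$ be their row spaces. For a subspace $W\se K^n$, set
\[W^\perp_V:=\{\bx\in V^n\mid \textstyle\sum_i w_ix_i=0 \text{ for all } \bw\in W\}.\]
Then $A=(R_A)^\perp_V$ and $B=(R_B)^\perp_V$. The candidate is
\[A+B=(R_A\cap R_B)^\perp_V.\]
Choosing a basis of $R_A\cap R_B$ as the rows of a matrix $M$ gives a full-rank matrix over $K$ with $A+B=\ker M$, which is the claim.

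The inclusion $A+B\se (R_A\cap R_B)^\perp_V$ is immediate, since every $\bw\in R_A\cap R_B$ kills both $A$ and $B$. The reverse inclusion is the real content, and I expect it to be the main obstacle, because $V$ is an arbitrary $K$-vector space rather than $K$ itself.

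To handle it, first establish a structural description. For any subspace $W\se K^n$ of dimension $k$, choose a basis $\bu_1,\dots,\bu_{n-k}$ of the annihilator $W^0\se K^n$, meaning the vectors $\bu$ with $\sum_i w_iu_i=0$ for all $\bw\in W$. The claim is
\[W^\perp_V=\Big\{\textstyle\sum_j \bu_j v_j\;\Big|\; v_j\in V\Big\}.\]
This follows by choosing coordinates: after a change of basis of $K^n$ (an invertible $K$-matrix acting on $V^n$), we may assume $W$ is spanned by the first $k$ standard vectors. Then both sides equal $\{0\}^k\times V^{n-k}$. In other words, $W^\perp_V=W^0\otimes_K V$ inside $K^n\otimes_K V=V^n$.

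With this description the reverse inclusion reduces to linear algebra over $K$:
\[(R_A\cap R_B)^0=R_A^0+R_B^0,\]
which is standard for finite-dimensional spaces. Tensoring with $V$ then gives
\[(R_A\cap R_B)^\perp_V=(R_A^0+R_B^0)\otimes V=R_A^0\otimes V+R_B^0\otimes V=A+B.\]
The middle equality holds because the image of a sum of subspaces under $-\otimes_K V$ is the sum of the images. This is clear from the explicit spanning description.

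Finally, the dimension count follows from the same description: $\dim(A+B)=n-\dim(R_A\cap R_B)$. This is consistent with the definition of $\dim D$ for $K$-linear $D$, though it is not needed for the statement itself.
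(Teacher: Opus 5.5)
Your proof is correct and follows the same route as the paper: both identify $A+B$ with $\ker(R_A\cap R_B)$, the common kernel of the intersection of the row spaces, and take a basis of $R_A\cap R_B$ as the defining full-rank matrix over $K$. The paper only asserts the equality $\ker(R_A)+\ker(R_B)=\ker(R_A\cap R_B)$ in $V^n$, while you prove the nontrivial inclusion using $W^\perp_V=W^0\otimes_K V$ and $(R_A\cap R_B)^0=R_A^0+R_B^0$; this is a worthwhile addition, since $V$ need not be finite-dimensional over $K$.
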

\begin{proof}
  Let $R(M_A)$ and $R(M_B)$ be the row spaces of $M_A$ and $M_B$ respectively. Then $A+B=\ker(R(M_A))+\ker(R(M_B))=\ker(R(M_A)\cap R(M_B))$, which is a $K$-linear subspace defined by a basis of $R(M_A)\cap R(M_B)$.
\end{proof}

\begin{corollary}\label{co:maxQ-linear}
  Let $K'$ be a subfield of $K$. Then every $K$-linear subspace $D\se V^n$ contains a unique maximal $K'$-linear subspace $D_{K'}$.
\end{corollary}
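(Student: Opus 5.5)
The plan is to take $D_{K'}$ to be the sum of all $K'$-linear subspaces contained in $D$, and to use the preceding lemma (with $K'$ in place of $K$) to see that this sum is itself $K'$-linear. Uniqueness is then immediate: a maximal $K'$-linear subspace must contain every $K'$-linear subspace of $D$, hence equals $D_{K'}$.

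The argument runs in three steps.

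First, observe that $D$ contains at least one $K'$-linear subspace, namely $\{0\}=\ker(I_n)$. Here $I_n\in K'^{n\times n}$ has rank $n$, and the convention $\dim D = n-m$ gives dimension $0$.

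Second, show that the family of $K'$-linear subspaces of $D$ is closed under finite sums. If $A,B\se D$ are $K'$-linear, then the lemma applied over the field $K'$ (its proof uses only the row spaces of $M_A$ and $M_B$ over $K'$) shows that $A+B$ is $K'$-linear. Moreover $A+B\se D$, because $D$ is closed under addition.

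Third, produce a maximal element. A $K'$-linear subspace is determined by the row space $R(M_A)\se K'^n$, and inclusion reverses: $A\se A'$ if and only if $R(M_{A'})\se R(M_A)$. Strictly increasing chains of $K'$-linear subspaces therefore correspond to strictly decreasing chains of subspaces of the finite-dimensional space $K'^n$, and such chains have length at most $n$. Choose a $K'$-linear $A\se D$ whose $\dim A$ is maximal, which is possible since dimensions are bounded by $n$. For any $K'$-linear $B\se D$, the sum $A+B$ is $K'$-linear and contained in $D$ by the second step. Since $A\se A+B$, maximality of $\dim A$ forces the row spaces of $A$ and $A+B$ to coincide, so $A+B=A$ and hence $B\se A$. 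Thus $A$ contains every $K'$-linear subspace of $D$, and $D_{K'}:=A$ is the unique maximal one.

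The main point to verify is the correspondence "equal row spaces implies equal kernels." This holds because $\ker(M)$ depends only on the row space of $M$, and it justifies using dimension as a measure of size. One also needs that a kernel taken in $V^n$ recovers the row space, i.e.\ that inclusion of kernels in $V^n$ reflects reverse inclusion of row spaces. This holds as long as $V\neq 0$, since $K'$-linear relations on $V^n$ are then detected by the annihilator; if $V=0$ the statement is trivial. Beyond this check the argument is routine.
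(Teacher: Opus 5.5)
Your proposal is correct and matches the paper's argument. The paper states this as an immediate corollary of the preceding lemma applied over $K'$: the $K'$-linear subspaces of $D$ are closed under sums, so the one of maximal dimension contains all the others and is the unique maximal one. Your extra check that, when $V\neq 0$, a kernel in $V^n$ determines its row space is a detail the paper leaves unstated; it is exactly what justifies your dimension-counting step.
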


\subsection{Hrushovski construction}

In this subsection, we state the concepts and facts of Hrushovski's predimension construction method.

\subsubsection{Predimension}

Let $A$ be a set and let $\cl$ be a modular pregeometry on $A$. Consider a function $\delta:\mathcal{P}_{\fin}(A)\to\bZ$, where $\mathcal{P}_{\fin}(A)$ consists of all finite tuples of $A$.

\begin{definition}
  A function $\delta$ is \emph{preserved by $\cl$} if for all finite tuples $\bx,\by\se A$ with $\cl(\bx)=\cl(\by)$ we have $\delta(\bx)=\delta(\by)$.
\end{definition}

For a function $\delta$ preserved by $\cl$ we may extend its definition to subsets of finite $\cl$-dimension by setting $\delta(\cl(\bx)):=\delta(\bx)$.

\begin{definition}
  A function $\delta$ preserved by $\cl$ is a \emph{predimension function} if for every finite dimensional $\cl$-closed $X$ and $Y$, the submodularity condition holds:
  \[\delta(X\cup Y)+\delta(X\cap Y)\leq\delta(X)+\delta(Y).\]
\end{definition}

Let $\delta$ be a predimension function preserved by $\cl$.

\begin{definition}
  \label{de:loc}
  The \emph{localization of $\delta$ at $\by$} is the function $\delta(\bx/\by):=\delta(\bx\by)-\delta(\by)$.
\end{definition}

\begin{definition}
  Let $Y$ be an arbitrary $\cl$-closed subset of $A$. The \emph{localization of $\delta$ at $Y$} is the function $\delta_Y:\mathcal{P}_{\fin}(A)\to\bZ\cup\{-\infty\}$ given by
  \[\delta(\bx/Y):=\inf\{\delta(\bx/Y')\mid\text{finite dimensional cl-closed }Y'\text{ satisfying }\cl(\bx)\cap Y\se Y'\se Y\}.\]
\end{definition}

\begin{definition}
  \label{def:ss}
  Let $Y\se X$ be $\cl$-closed subsets of $A$. We say $Y$ is \emph{strong} in $X$ if $\delta(\bx/Y)\geq 0$ for every $\bx\se X$, denoted by $Y\leq X$.
\end{definition}

We collect some basic properties of strong subsets.

\begin{fact}\label{prop:str}
  Let $X,Y,Z$ be $\cl$-closed subsets of $A$.
  \begin{enumerate}
    \item (Subset) If $X\se Y\se Z$ and $X\leq Z$, then $X\leq Y$.
    \item (Transitivity) If $X\leq Y$ and $Y\leq Z$, then $X\leq Z$.
    \item (Union) Let $((X_i)_{i\in I},\leq)$ be a directed set. Then $X_j\leq\bigcup_{i\in I}X_i$ for every $j\in I$.
    \item (Intersection) If $X\leq Z$ and $Y\leq Z$, then $X\cap Y\leq Z$.
  \end{enumerate}
\end{fact}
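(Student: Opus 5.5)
Each of the four parts reduces to the definition of $Y\leq X$ as $\delta(\bx/Y)\geq 0$ for all $\bx\se X$, to submodularity of $\delta$, and to the fact that $\delta(\bx/Y)$ is an infimum over finite-dimensional closed $Y'$ with $\cl(\bx)\cap Y\se Y'\se Y$. First I would isolate a monotonicity lemma: for finite-dimensional closed $Y_1\se Y_2$ and any $\bx$, we have $\delta(\bx/Y_2)\leq\delta(\bx/Y_1)$. To see this, put $X'=\cl(\bx Y_1)$. Applying submodularity to $X'$ and $Y_2$ gives
\[\delta(\cl(\bx Y_2))+\delta(X'\cap Y_2)\leq \delta(X')+\delta(Y_2).\]
The set $X'\cap Y_2$ contains $Y_1$, so a further step is needed to pass from $\delta(X'\cap Y_2)$ to $\delta(Y_1)$. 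I would handle this by choosing the witnessing $Y'$ inside the infimum so that it contains $\cl(\bx Y_1)\cap Y$, which is finite dimensional and closed, and by using that submodularity also yields $\delta(Z/W)\le\delta(Z/W')$ for $W'\se W$ with $Z\cap W=Z\cap W'$. With this lemma, the infimum defining $\delta(\bx/Y)$ is a directed (decreasing) limit, and this is the key tool for the rest.

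For (1), let $\bx\se Y$. Every candidate $Y'$ in the infimum for $\delta(\bx/X)$ (closed and finite dimensional with $\cl(\bx)\cap X\se Y'\se X$) is also a candidate when $X$ is regarded inside $Y$, since the definition only refers to $X$. So the value is the same, and the hypothesis $X\leq Z$ applied to $\bx\se Z$ gives $\delta(\bx/X)\geq0$. For (3), any $\bx\se\bigcup_i X_i$ lies in some $X_k$ with $k\geq j$ by directedness. Then $\delta(\bx/X_j)\geq 0$ follows from $X_j\leq X_k$, which I take to be part of what ``directed set under $\leq$'' means.

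For (2), take $\bx\se Z$. By the monotonicity lemma, choose a finite-dimensional closed $Y'\se Y$ with $\cl(\bx)\cap Y\se Y'$ such that $\delta(\bx/Y)=\delta(\bx/Y')\geq0$. This is possible because $\delta$ is integer-valued and bounded below by $0$ on this family, so the infimum is attained and stable on enlargements. Then I write
\[\delta(\bx/X')=\delta(\bx Y'/X')-\delta(Y'/\bx X')\geq \delta(\bx/Y')+\delta(Y'/X').\]
Here $X'$ is chosen large inside $X$ to contain $\cl(\bx Y')\cap X$. The inequality is obtained by using submodularity with $\cl(\bx X')$ and $Y'$ to bound $\delta(Y'/\bx X')\le\delta(Y'/X')$ after adjusting; equivalently, I expand the chain $\delta(\bx Y'X')-\delta(X')$. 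Both terms on the right are $\geq0$, the second because $X\leq Y$ and $Y'\se Y$.

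For (4), take $\bx\se Z$ and let $W$ be a suitable finite-dimensional subset of $X\cap Y$. Submodularity applied to $\cl(\bx X_0)$ and $\cl(\bx Y_0)$, with $X_0,Y_0$ finite-dimensional witnesses in $X$ and $Y$, gives
\[\delta(\bx/W)\geq\delta(\bx/X_0)+\delta(\bx/Y_0)-\delta(\bx X_0\cap \bx Y_0 \text{ stuff}).\]
A cleaner route is to note that $X\cap Y\leq X$ by (1) and (2): for $\bx\se X$, submodularity with $Y$ gives $\delta(\bx/X\cap Y)\ge\delta(\bx/Y)\geq0$ once the witnesses are aligned. Then transitivity with $X\leq Z$ finishes the argument. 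The main obstacle throughout is the bookkeeping with infinite-dimensional $Y$: showing that the infimum is achieved by sufficiently large finite-dimensional $Y'$ and that submodularity transfers to the localized quantities. I would settle this once in the monotonicity lemma and then use it uniformly.
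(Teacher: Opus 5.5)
The paper quotes this as a standard fact of the Hrushovski construction and gives no proof, so your argument has to stand on its own. Parts (1) and (3) are correct. The route you finally take in (4) is also correct, for a simpler reason than submodularity. If $\bx\se X$ then $\cl(\bx)\cap(X\cap Y)=\cl(\bx)\cap Y$, so every admissible witness for $\delta(\bx/X\cap Y)$ is also one for $\delta(\bx/Y)$. Hence $\delta(\bx/X\cap Y)\geq\delta(\bx/Y)\geq0$ by $Y\leq Z$, since $\bx\se X\se Z$. This gives $X\cap Y\leq X$ directly; the appeal to ``(1) and (2)'' for it is circular. But (4) then rests on transitivity, and your proof of (2) fails as written.

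\textbf{The gap in (2).} The displayed inequality $\delta(\bx/X')\geq\delta(\bx/Y')+\delta(Y'/X')$ is false. Adjoin to $Y'$ elements of $Y$ that are generic over everything in sight, each raising $\delta$ of every set involved by exactly $1$ (purely transcendental elements in the paper's setting). All your constraints on $Y'$ and $X'$ survive, and $\delta(\bx/X')$ and $\delta(\bx/Y')$ do not change. But $\delta(Y'/X')$ grows by one per adjoined element, so the right side eventually exceeds the left.

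The bound you cite, $\delta(Y'/\bx X')\leq\delta(Y'/X')$, is true, but it only yields $\delta(\bx/X')\geq\delta(\bx Y'/X')-\delta(Y'/X')=\delta(\bx/\cl(X'Y'))$. The term $\delta(Y'/X')$ cancels rather than survives. Moreover, that bound is exactly where $X\leq Y$ enters, not in the term $\delta(Y'/X')\geq0$. Submodularity for $\cl(\bx X')$ and $\cl(X'Y')$ reduces the bound to $\delta(\cl(\bx X')\cap\cl(X'Y'))\geq\delta(X')$. This follows from $X\leq Y$ once $X'$ is an admissible witness for $\cl(\bx X')\cap\cl(X'Y')$ over $X$, i.e.\ once $\cl(\bx X')\cap X=X'$.

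Deriving that identity from the admissibility condition $\cl(\bx)\cap X\se X'$ requires modularity of $\cl$, which you never invoke. Your monotonicity lemma has the same defect. It is false for arbitrary finite-dimensional $Y_1\se Y_2$, and the hypothesis $Z\cap W=Z\cap W'$ you add gives the needed $\cl(ZW')\cap W=W'$ only through modularity.

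\textbf{The repair.} With these corrections, (2) closes for every admissible $X'$: $\cl(X'Y')$ is an admissible witness for $\bx$ over $Y$, so $\delta(\bx/X')\geq\delta(\bx/Y)\geq0$. Equivalently, put $W=\cl(\bx X')$ and split $\delta(\bx/X')=\delta(W/W\cap Y)+\delta(W\cap Y/X')$. Bound the first term using $Y\leq Z$ and the second using $X\leq Y$. Neither attainment of the infimum nor your special choice of $X'$ is needed.
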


\begin{definition}
  Let $Y\leq X$. We say $X$ is \emph{prealgebraic} over $Y$ if for every $\by\se Y$ there is $\bz\se Y$ such that $\delta(\by\bz/X)=0$, denoted by $Y\leq_0 X$.
\end{definition}

\begin{fact}\label{le:prealg}
  Let $X\se Y\se Z$ be $\cl$-closed subsets of $A$.
  \begin{enumerate}
    \item If $X\leq_0 Y$ and $Y\leq_0 Z$, then $X\leq_0 Z$.
    \item Suppose $X\leq_0 Y$. Then $X\leq Z$ if and only if $Y\leq Z$.
  \end{enumerate}
\end{fact}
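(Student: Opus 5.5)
Throughout I read $X\leq_0 Y$ as: $X\leq Y$, and every finite tuple $\by\se Y$ can be enlarged by some $\bz\se Y$ so that $\delta(\by\bz/X)=0$. The plan is to reduce both items to computations with finite-dimensional $\cl$-closed sets. Two tools do all the work.

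The first is monotonicity of localization. If $B\se B'$ are $\cl$-closed, then $\delta(\bx/B')\leq\delta(\bx/B)$. For finite-dimensional $B,B'$ this comes from submodularity applied to $\cl(\bx B)$ and $B'$. For infinite $B$ it then follows from the definition as an infimum.

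The second is a chain rule for finite-dimensional bases $X'$: $\delta(\bx\by/X')=\delta(\bx/\cl(\by X'))+\delta(\by/X')$. This is immediate from Definition~\ref{de:loc}.

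To pass from an infinite base to a finite one, I use that $\delta$ takes integer values. When the base is strong in the ambient set, the infimum defining $\delta(\bx/Y)$ is taken over integers that are $\geq 0$. Hence it is attained by some finite-dimensional $Y'$, and by monotonicity it is attained by every larger finite-dimensional $Y''\se Y$ as well. The main obstacle is carrying out this bookkeeping carefully, in particular choosing the finite part of $X$ large enough that the chain rule holds simultaneously for all tuples involved. I expect that choosing a single sufficiently large finite-dimensional $X'\se X$, on which all the relevant infima are attained, handles this.

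For (2), suppose first that $Y\leq Z$. Then $X\leq Z$ follows from $X\leq Y\leq Z$ by Transitivity in Fact~\ref{prop:str}. Conversely, suppose $X\leq Z$ and let $\by\se Z$. Take any finite-dimensional $Y'$ with $\cl(\by)\cap Y\se Y'\se Y$, and write $Y'=\cl(\by')$. By prealgebraicity choose $\bw\se Y$ with $\delta(\by'\bw/X)=0$. The chain rule, applied over a large enough finite part of $X$, gives
\[\delta(\by/\cl(\by'\bw))=\delta(\by\by'\bw/X)-\delta(\by'\bw/X)=\delta(\by\by'\bw/X)\geq 0,\]
where the last inequality is $X\leq Z$. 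By monotonicity, $\delta(\by/Y')\geq\delta(\by/\cl(\by'\bw))\geq 0$. Taking the infimum over $Y'$ yields $\delta(\by/Y)\geq 0$, so $Y\leq Z$.

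For (1), $X\leq Z$ holds by Transitivity. Let $\by\se Z$. Since $Y\leq_0 Z$, pick $\bz\se Z$ with $\delta(\by\bz/Y)=0$. Since $Y\leq Z$, this infimum is attained at some finite-dimensional $\cl(\by')\se Y$, so $\delta(\by\bz/\cl(\by'))=0$. Since $X\leq_0 Y$, pick $\bw\se Y$ with $\delta(\by'\bw/X)=0$. Monotonicity gives
\[0\leq\delta(\by\bz/Y)\leq\delta(\by\bz/\cl(\by'\bw))\leq\delta(\by\bz/\cl(\by'))=0.\]
The chain rule then gives $\delta(\by\bz\by'\bw/X)=0+0=0$. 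Hence the tuple $\bz\by'\bw\se Z$ witnesses prealgebraicity for $\by$ over $X$, which proves $X\leq_0 Z$.
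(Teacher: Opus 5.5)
The paper states this as a standard fact of the Hrushovski machinery and gives no proof. Your strategy is the standard one and can be completed: make infima attained by integrality, apply the chain rule over a finite part of $X$, and compare localizations by monotonicity. However, your first tool is false as stated. Suppose $\delta(\bx/B)<0$ for some finite-dimensional $B$ (i.e.\ $B$ is not strong), and set $B'=\cl(\bx B)$. Then $\delta(\bx/B')=0>\delta(\bx/B)$. Submodularity for $\cl(\bx B)$ and $B'$ only yields $\delta(\bx/B')\leq\delta(\cl(\bx B))-\delta(\cl(\bx B)\cap B')$. To compare this with $\delta(\bx/B)$ you need $\cl(\bx B)\cap B'=B$, and by modularity this holds exactly when $\cl(\bx)\cap B'\se B$. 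That is precisely why the paper's definition of $\delta(\bx/Y)$ ranges only over $Y'\supseteq\cl(\bx)\cap Y$. You must add this hypothesis and check it at each use. It does hold in all your applications, since every larger set you pass to lies in $Y$ and contains $\cl(\by)\cap Y$ (in (1): $\cl(\by\bz)\cap Y$).

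Second, over a finite-dimensional $X'\se X$ the chain rule gives $\delta(\by\by'\bw/X')-\delta(\by'\bw/X')=\delta(\by/\cl(\by'\bw X'))$, not $\delta(\by/\cl(\by'\bw))$. So your displays in (2) and (1) are not literally right, and ``large enough'' has to be made specific. Choose $X'$ with two properties. First, it should contain $\cl(\by\by'\bw)\cap X$ (in (1): $\cl(\by\bz\by'\bw)\cap X$), so that $X\leq Z$ gives $\delta(\by\by'\bw/X')\geq 0$. Second, it should contain a finite-dimensional $X_0\supseteq\cl(\by'\bw)\cap X$ with $\delta(\by'\bw/X_0)=0$; such $X_0$ exists because an integer-valued infimum equal to $0$ is attained. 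Monotonicity together with $\delta(\by'\bw/X)=0$ then gives $\delta(\by'\bw/X')=0$. Now replace $\cl(\by'\bw)$ by $\cl(\by'\bw X')$ throughout; it is still a finite-dimensional subset of $Y$ containing $Y'$ (resp.\ $\cl(\by')$). In (2) this gives $\delta(\by/Y')\geq\delta(\by/\cl(\by'\bw X'))=\delta(\by\by'\bw/X')\geq 0$. In (1) your sandwich gives $\delta(\by\bz/\cl(\by'\bw X'))=0$, hence $\delta(\by\bz\by'\bw/X')=0$ for every such $X'$. So $\delta(\by\bz\by'\bw/X)\leq 0$, with equality by $X\leq Z$. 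With these two repairs the argument is complete.
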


\begin{definition}
  Let $X\se A$. The \emph{hull} of $X$ in $A$, denoted by $[X]_A$, is the smallest $\cl$-closed strong subset of $A$ containing $X$. We omit the subscript $A$ when it is clear from the context.
\end{definition}

\begin{fact}
  \label{le:min}
  Suppose $\delta:\mathcal{P}_{\fin}(A)\to\bZ$ is bounded below. Then $[X]$ exists for every $X\se A$. Furthermore, every $\bx\se A$ can be extended to a finite tuple $\bx\by$ such that $[\bx]=\cl(\bx\by)$.
\end{fact}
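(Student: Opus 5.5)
Suppose $\delta$ is bounded below, say $\delta\geq m$ on all finite tuples. Here $\delta$ is integer-valued, so the infimum of $\delta$ over finite-dimensional closed sets containing a given set is attained. The plan is to realize $[X]$ as the intersection of all $\cl$-closed strong subsets of $A$ containing $X$. That family is nonempty, since $A\leq A$ holds trivially: $\delta(\bx/A)$ is an infimum over $Y'$ containing $\cl(\bx)\cap A=\cl(\bx)$, so $\delta(\bx/Y')=0$. By the Intersection property in Fact~\ref{prop:str}, extended to arbitrary intersections, the intersection is again strong. Strictly speaking that property is stated only for two sets, so I will first handle finite $X$ via the second claim and then pass to general $X$.

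The core step is the finite case. Let $\bx\se A$ be finite. Among all finite tuples $\by$, choose one minimizing $\delta(\bx\by)$; this is possible because the values lie in $\bZ$ and are bounded below by $m$. Among these minimizers, choose one with $\dim\cl(\bx\by)$ minimal. Put $C=\cl(\bx\by)$. I first show $C\leq A$. Take a finite $\bz$ and any finite-dimensional closed $Y'$ with $\cl(\bz)\cap C\se Y'\se C$. Submodularity applied to $\cl(\bz Y')$ and $C$, whose intersection contains $Y'$, together with modularity of $\cl$, gives
\[\delta(\bz/Y')\geq\delta(\bz C)-\delta(C)\geq 0,\]
the last inequality by minimality of $\delta(C)$. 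This needs a small argument: it is cleanest to show first that $\delta(\bz\, Y')-\delta(Y')\geq \delta(\bz C)-\delta(C)$, by submodularity for $X=\cl(\bz Y')$ and $Y=C$ with $X\cap Y\supseteq Y'$, and then to apply monotonicity in the intersection. I expect this to be the main obstacle. Submodularity involves $\delta(X\cap Y)$, and $X\cap Y$ may be strictly larger than $Y'$, so I must use that $\delta(X\cap Y/Y')\geq$ something.

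To resolve this, I will choose $Y'$ as large as allowed: take $Y'=\cl(\bz)\cap C$ directly, or more precisely $X\cap C$ where $X=\cl(\bz Y')$. This shows the infimum is attained at $Y'\supseteq X\cap C$, and then submodularity gives exactly $\delta(X)-\delta(X\cap C)\geq\delta(X\cup C)-\delta(C)\geq0$.

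Next, minimality. Suppose $Z\leq A$ is closed with $\bx\se Z$. Then $Z\cap C\leq A$ by Intersection, and $Z\cap C$ is finite-dimensional. Submodularity together with $\delta(C/ Z\cap C)\geq 0$ from strength yields $\delta(Z\cap C)\leq\delta(C)$. Hence $Z\cap C$ is again a minimizer containing $\bx$, and its dimension is at most that of $C$. The choice of $\dim C$ minimal then forces $Z\cap C=C$, that is, $C\se Z$. So $C=[\bx]=\cl(\bx\by)$, which proves the second claim.

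For arbitrary $X$, set $[X]=\bigcup_{\bx\se X}[\bx]$. The hulls $[\bx]$ are monotone in $\bx$ by minimality, so this union is directed. By the Union property in Fact~\ref{prop:str} and transitivity with $[\bx]\leq A$, each finite tuple in the union is strong in $A$. More precisely, $\delta(\bz/[X])$ is computed from finite pieces lying inside some single $[\bx]$, so the union is strong in $A$. It is $\cl$-closed because it is a directed union of closed sets. It is contained in every closed strong $Z\supseteq X$, since $[\bx]\se Z$ for each finite $\bx\se X$ by the finite case. Hence it is the smallest such set, and $[X]$ exists.
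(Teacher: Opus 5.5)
The paper states this Fact without proof, citing it as standard. Your strategy is the standard one and is sound: take $C=\cl(\bx\by)$ minimizing $\delta$ and then $\dim$, show $C$ is strong and lies inside every closed strong $Z\supseteq\bx$, then take a directed union for infinite $X$. The minimality step is correct.

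There is, however, a gap in the step $C\leq A$ as you wrote it. Your proposed resolution does not prove what is needed. If you replace $Y'$ by $Y''=\cl(\bz Y')\cap C$, you get $\delta(\bz/Y'')\ge 0$. To transfer this back to the given $Y'$ you would need $\delta(Y')\le\delta(Y'')$, and $\delta$ has no such monotonicity. Also, $\cl(\bz)\cap C$ is the smallest admissible $Y'$, not the largest. Finally, the claim that ``the infimum is attained at $Y'\supseteq X\cap C$'' is asserted without justification.

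The missing line is the modular law, which makes the obstacle you raise disappear. If $\cl(\bz)\cap C\subseteq Y'\subseteq C$, then $\cl(\bz)\cap Y'=\cl(\bz)\cap C$, so
\[\dim\cl(\bz Y')-\dim\cl(\bz C)=\dim Y'-\dim C.\]
Hence $\dim(\cl(\bz Y')\cap C)=\dim\cl(\bz Y')+\dim C-\dim\cl(\bz C)=\dim Y'$. Since $Y'\subseteq\cl(\bz Y')\cap C$, this gives $\cl(\bz Y')\cap C=Y'$ exactly. With this, submodularity applied to $\cl(\bz Y')$ and $C$ yields $\delta(\bz/Y')\ge\delta(\bx\by\bz)-\delta(C)\ge 0$ for \emph{every} admissible $Y'$. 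That is precisely what $\delta(\bz/C)\ge 0$ requires.

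A smaller point concerns the passage to arbitrary $X$. The Union property as stated in Fact~\ref{prop:str} gives $[\bx]\le U:=\bigcup_{\bx'}[\bx']$. Combined with transitivity and $[\bx]\le A$, this does not yield $U\le A$. Your ``more precisely'' argument is the right one, but it needs an explicit admissibility check. Let $Y'$ be finite-dimensional with $\cl(\bz)\cap U\subseteq Y'\subseteq U$, so $Y'\subseteq[\bx']$ for some $\bx'$ by directedness. Then $\cl(\bz)\cap[\bx']\subseteq\cl(\bz)\cap U\subseteq Y'$, so $Y'$ is admissible for $[\bx']$. Hence $\delta(\bz/Y')\ge\delta(\bz/[\bx'])\ge 0$. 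With these two repairs the proof is complete.
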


\begin{definition}
  The \emph{dimension function associated to $\delta$} is $d:\mathcal{P}_{\fin}(A)\to\bN$ given by $d(\bx)=\delta([\bx])$.
\end{definition}

Let $\Cl$ denote the pregeometry induced by $d$. Clearly,
\[\Cl(X)=\bigcup\{\by\se A\mid\delta(\by/[X])=0\}.\]
In other words, $\Cl(X)$ is the largest prealgebraic set over $[X]$.

\subsubsection{Amalgamation}

Let $L$ be a (possibly multi-sorted) language and let $\C$ be a class of $L$-structures. Fix a relation symbol $\Delta\in L$. Assume $\C$ is closed under images, i.e., for every $\A,\B\in\C$ if $\A$ is embedded into $\B$ then the image of $\A$ is a structure in $\C$.

Let $\cl$ and $\delta$ be a modular pregeometry and a predimension function defined uniformly on $\Delta^\C$, i.e., for each $\X\in\C$ we have a modular pregeometry $\cl^\X$ on $\Delta^\X$ and a predimension function $\delta^\X$ on $\Delta^\X$ preserved by $\cl^\X$, such that for every embedding $i:\X_1\to\X_2$ between structures in $\C$ and every $\bx\se\Delta^{\X_1}$ we have $i(\cl^{\X_1}(\bx))=\cl^{\X_2}(i(\bx))$ and $\delta^{\X_1}(\bx)=\delta^{\X_2}(i(\bx))$.

\begin{definition}
  \label{de:se}
  Let $\X,\Y$ be structures in $\C$.

  If $\X$ is a substructure of $\Y$ and $\Delta^\X$ is strong in $\Delta^\Y$, then we say $\Y$ is a \emph{strong extension} of $\X$, denoted by $\X\leq\Y$.

  An embedding $\iota:\X\to\Y$ is a \emph{strong embedding} if $\iota(\X)\leq\Y$.
\end{definition}

\begin{definition}
  Let $\X\in\C$. We say $\X$ is \emph{finitely $\leq$-generated} if for every chain
  \[\X_0\leq\X_1\leq\X_2\leq...\]
  of structures in $\C$ with $\X=\bigcup_{i<\omega}\X_i$ there is $k<\omega$ such that $\X_k=\X$.
\end{definition}

Let $\Cf$ be the subclass of $\C$ consisting of all finitely $\leq$-generated structures.

\begin{definition}
  A structure $\U\in\C$ is \emph{rich} if for every $\X,\Y\in\Cf$ and strong embeddings $f:\X\to\Y,\,g:\X\to\U$, there is a strong embedding $h:\Y\to\U$ such that $h\circ f=g$.
  \begin{center}\begin{tikzcd}
    \X \arrow[rd, "g"] \arrow[d, "f"']\\
    \Y \arrow[r, "h", dashed] & \U
  \end{tikzcd}\end{center}
\end{definition}

\begin{definition}
  $\C$ has the \emph{amalgamation property for strong embeddings} (\emph{APS}) if for all structures $\X,\Y,\Z\in\C$ and strong embeddings $f_1:\X\to\Y$ and $f_2:\X\to\Z$, there exist $\W\in\C$ and strong embeddings $g_1:\Y\to\W$, $g_2:\Z\to\W$, such that $g_1\circ f_1=g_2\circ f_2$.
  \begin{center}\begin{tikzcd}
    \X \arrow[d, "f_1"'] \arrow[r, "f_2"] & \Z \arrow[d, "g_2", dashed]\\
    \Y \arrow[r, "g_1", dashed] & \W
  \end{tikzcd}\end{center}
\end{definition}

\begin{definition}\label{def:amalgamationClass}
  $\C$ is an \emph{amalgamation class} if it is non-empty and satisfies the following conditions:
  \begin{enumerate}
    \item The cardinality of structures in $\Cf$ is bounded by some cardinal $\kappa$.
    \item $\C$ is closed under the union of every $\leq$-chain.
    \item $\C$ has the amalgamation property for strong embeddings.
  \end{enumerate}
\end{definition}

\begin{fact}
  \label{le:rs}
  Suppose that $\C$ is an amalgamation class. Then $\C$ contains a rich structure.
\end{fact}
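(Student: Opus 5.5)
The plan is to build a rich structure as the union of a countable (or $\kappa$-length) strong chain $\X_0\leq\X_1\leq\cdots$ in $\C$. At each stage we realise one required extension problem, and amalgamation ensures that these problems can be solved inside the chain.

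\emph{Setting up the bookkeeping.} First, by condition (1) of Definition \ref{def:amalgamationClass}, the structures in $\Cf$ have cardinality at most $\kappa$. Hence, up to isomorphism, there is only a set of pairs $(\X,\Y)$ with $\X,\Y\in\Cf$ together with a strong embedding $f:\X\to\Y$. Fix a set of representatives $\mathcal{P}$ of such triples $(\X,\Y,f)$. Start with any $\U_0\in\C$, which exists since $\C$ is non-empty.

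\emph{One step.} Given $\U_\alpha$, consider all tasks $(\X,\Y,f,g)$ with $(\X,\Y,f)\in\mathcal{P}$ and $g:\X\to\U_\alpha$ a strong embedding. For a single task, apply APS to $f:\X\to\Y$ and $g:\X\to\U_\alpha$. This yields $\W\in\C$ with strong embeddings $g_1:\Y\to\W$ and $g_2:\U_\alpha\to\W$ such that $g_1\circ f=g_2\circ g$. Since $\C$ is closed under images, we may replace $\W$ by an isomorphic copy in which $g_2$ is the inclusion. Then $\U_\alpha\leq\W$, and $g_1$ is a strong embedding solving the task over $\U_\alpha$.

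Enumerate all tasks over $\U_\alpha$ and handle them one by one along a transfinite chain. At limit stages take unions: these stay in $\C$ by condition (2), and each earlier member is strong in the union by Fact \ref{prop:str}(3). Transitivity (Fact \ref{prop:str}(2)) keeps everything strong over $\U_\alpha$. The result is $\U_{\alpha+1}\geq\U_\alpha$ in which every task over $\U_\alpha$ is solved by a strong embedding into $\U_{\alpha+1}$. By transitivity, that embedding is also strong into any later strong extension.

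\emph{Iterating and verifying richness.} Iterate along a regular cardinal $\lambda>\kappa$, taking unions at limits, and put $\U=\bigcup_{\alpha<\lambda}\U_\alpha\in\C$. By Fact \ref{prop:str}(3) and transitivity, each $\U_\alpha\leq\U$.

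Now let $g:\X\to\U$ be strong with $\X\in\Cf$. Since $|\X|\leq\kappa<\lambda$ and $\lambda$ is regular, $g(\X)\se\U_\alpha$ for some $\alpha$. By Fact \ref{prop:str}(1), $g(\X)\leq\U_\alpha$, so $g$ is one of the tasks over $\U_\alpha$. It is therefore solved by some $h:\Y\to\U_{\alpha+1}$, which is strong in $\U_{\alpha+1}$ and hence in $\U$ by transitivity. This $h$ witnesses richness.

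\emph{Main obstacle.} The delicate point is this last step, which needs two things. The first is that the strong image $g(\X)$ lands strongly inside some stage; the Subset property handles this. The second is that strongness is preserved under unions of chains, which is where the directed-union property of strong subsets is essential. The cardinality bound is used to make the set of tasks small and to guarantee the cofinality argument.
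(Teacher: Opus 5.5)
Your proof is correct. It is the standard Fra\"{\i}ss\'e--Hrushovski limit construction (a strong chain of regular length $\lambda>\kappa$, solving every extension task at each successor stage via APS, with the Subset, Transitivity and Union properties handling strongness), and that is the usual argument behind this statement, which the paper records as a Fact from the literature without proof. One small correction: renaming $\W$ so that $g_2$ becomes an inclusion requires $\C$ to be closed under isomorphic copies, a standard implicit convention, not closure under images as the paper defines it; likewise, matching an arbitrary $f:\X\to\Y$ to its representative in $\mathcal{P}$ uses that isomorphisms preserve strongness.
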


To show a class has APS, we often construct free amalgams of structures.

\begin{definition}
  A theory $T$ has the \emph{free amalgamation property} if for every structures $\X,\Y,\Z\vDash T$ and embeddings $f_1:\X\to\Y$, $f_2:\X\to\Z$, there exist $\W\vDash T$ and embeddings $g_1:\Y\to\W$, $g_2:\Z\to\W$, such that
  \begin{itemize}
    \item $g_1\circ f_1=g_2\circ f_2$, and
    \item $g_1(Y)$ and $g_2(Z)$ are free over $g_1\circ f_1(X)$, i.e., $g_1(Y)\indep^{\operatorname{acl}}_{g_1\circ f_1(X)}g_2(Z)$.
  \end{itemize}
\end{definition}

Strongly minimal theories and o-minimal theories all have the free amalgamation property \cite[Lemma 1.2]{Pi88}.

\subsubsection{Rich structures}

This part is devoted to useful properties of rich structures. We are in the same setting as the previous section. Further assume that $\C$ is an amalgamation class so that it contains a rich structure.

\begin{definition}
  We say $\C$ is \emph{prime} if $\C$ contains a structure $\I$ such that for every $\X\in\C$ we have $\I\leq\X$. Such $\I$ is called the \emph{initial structure}.
\end{definition}

\begin{definition}
  Let $\I\in\Cf$. The \emph{localization} of $\C$ at $\I$ is the subclass
  \[\C_\I:=\{\X\in\C\mid\I\leq\X\}\]
  associated with
  \begin{itemize}
    \item language $L(\I)$,
    \item pregeometry $\cl_\I$ given by the localization of $\cl$ at $\Delta^\I$, and
    \item predimension function $\delta_\I$ given by the localization of $\delta$ at $\Delta^\I$.
  \end{itemize}
\end{definition}

\begin{definition}
  We say $\C$ is \emph{abundant} if for every $\X\leq\A\in\C$ and $a\in\A$ the following hold:
  \begin{itemize}
    \item If $\X\in\Cf$ then there is $\Y\in\Cf$ such that $a\in\Y$ and $\X\leq\Y\leq\A$.
  \end{itemize}
\end{definition}

For the rest of this subsection, suppose $\C$ is prime and abundant.

For $\A_1,\A_2\in\C$, let $\F(\A_1,\A_2)$ be the following set of partial isomorphisms:
\[\F(\A_1,\A_2)=\{\,f:\X_1\xrightarrow{\sim}\X_2\mid\X_i\leq\A_i,\,\X_i\in\Cf\,\}.\]
\begin{fact}
  \label{le:bf}
  Let $\U_1,\U_2$ be rich structures in $\C$. Then $\F(\U_1,\U_2)$ forms a back-and-forth system.
\end{fact}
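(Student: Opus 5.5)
We take $\F(\U_1,\U_2)$ to consist of the partial isomorphisms $f:\X_1\xrightarrow{\sim}\X_2$ with $\X_i\leq\U_i$ and $\X_i\in\Cf$. Three things must be checked: the system is non-empty, and it has both the forth and the back property. Since the situation is symmetric in $\U_1$ and $\U_2$ (if $f\in\F(\U_1,\U_2)$ then $f^{-1}\in\F(\U_2,\U_1)$), it suffices to prove non-emptiness and the forth property.

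\emph{Non-emptiness.} Because $\C$ is prime, the initial structure $\I$ is strong in every member of $\C$. If $\I\in\Cf$, the identification of $\I$ inside $\U_1$ and inside $\U_2$ gives an element of $\F(\U_1,\U_2)$. Otherwise we argue from richness. Strong embeddings $\I\to\U_1$ and $\I\to\U_2$ exist because $\I\leq\U_i$. Abundance applied to $\I\leq\U_1$ yields a finitely $\leq$-generated strong substructure containing $\I$. We transport it into $\U_2$ by richness of $\U_2$, taking $\X=\I$ (prime) as the base.

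\emph{Forth property.} Let $f:\X_1\to\X_2$ be in $\F(\U_1,\U_2)$ and let $a\in\U_1$.
\begin{enumerate}
  \item By abundance applied to $\X_1\leq\U_1$ with $\X_1\in\Cf$, there is $\Y_1\in\Cf$ with $a\in\Y_1$ and $\X_1\leq\Y_1\leq\U_1$.
  \item The inclusion $\X_1\to\Y_1$ is a strong embedding between members of $\Cf$.
  \item The map $\X_1\xrightarrow{f}\X_2\hookrightarrow\U_2$ is a strong embedding, since its image $\X_2$ is strong in $\U_2$.
  \item By richness of $\U_2$ there is a strong embedding $h:\Y_1\to\U_2$ extending $f$.
  \item Put $\Y_2:=h(\Y_1)$. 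Then $\Y_2\leq\U_2$, because $h$ is strong. Also $\Y_2\in\Cf$: $\C$ is closed under images, and being finitely $\leq$-generated is preserved under isomorphism.
  \item Hence $h:\Y_1\xrightarrow{\sim}\Y_2$ lies in $\F(\U_1,\U_2)$, extends $f$, and has $a$ in its domain.
\end{enumerate}

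The back property follows by applying the same argument to $f^{-1}$.

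The only delicate point is bookkeeping: we must confirm that $\Y_2$ really belongs to $\Cf$ and is strong in $\U_2$. Strongness comes from the definition of strong embedding, and membership in $\Cf$ is isomorphism-invariant. Everything else is a direct application of abundance followed by richness.
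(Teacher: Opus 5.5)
Your forth step is exactly the standard argument behind this statement, which the paper records as a Fact without proof. You apply abundance to $\X_1\leq\U_1$ with $\X_1\in\Cf$ to get $\Y_1\in\Cf$ with $a\in\Y_1$ and $\X_1\leq\Y_1\leq\U_1$. Then you apply richness of $\U_2$ to the inclusion $\X_1\to\Y_1$ and to $\X_1\xrightarrow{f}\X_2\hookrightarrow\U_2$. With the back step by symmetry, that part is correct.

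The step that fails is the ``otherwise'' branch of your non-emptiness argument. Abundance is only available for a base $\X\in\Cf$, and richness only for strong embeddings $\X\to\Y$ with $\X,\Y\in\Cf$. So if $\I\notin\Cf$ you can apply neither to $\I\leq\U_i$, and that branch proves nothing.

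Fortunately the branch is vacuous. In the paper's definition, $\I\leq\X$ means $\I$ is a substructure of $\X$ and is strong in it. Suppose $\X_0\leq\X_1\leq\cdots$ is a chain in $\C$ with $\bigcup_i\X_i=\I$. Primeness gives $\I\leq\X_0\subseteq\I$, hence $\X_0=\I$, i.e.\ $\I\in\Cf$. In the application one works in the localization $\C_\I$ at some $\I\in\Cf$, so this holds there by construction anyway. Replace your case distinction by this observation. Then the identity on $\I$ is an element of $\F(\U_1,\U_2)$, and your proof is complete.
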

\begin{corollary}\label{le:elemEq}
  Let $\U_1,\U_2$ be rich structures in $\C$. If $f:\X_1\xrightarrow{\sim}\X_2$ is a partial isomorphism where $\X_1\leq\U_1,\X_2\leq\U_2$, then $f$ is elementary. In particular, $\U_1$ and $\U_2$ are elementarily equivalent.
\end{corollary}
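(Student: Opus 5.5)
The approach is to derive Corollary \ref{le:elemEq} from Fact \ref{le:bf} via the usual Ehrenfeucht--Fraïssé argument: a non-empty back-and-forth system of partial isomorphisms between two structures consists of partial elementary maps. The work is to put the given $f$ inside such a system and to handle the fact that $\X_1,\X_2$ need not be finitely $\leq$-generated.

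\textbf{Case $\X_1\in\Cf$.} Then $\X_2\in\Cf$ as well, since $\X_2\cong\X_1$ and finite $\leq$-generation is an isomorphism invariant. So $f\in\F(\U_1,\U_2)$.

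By Fact \ref{le:bf}, $\F(\U_1,\U_2)$ is a back-and-forth system. Concretely, for $g\in\F$ and $a\in U_1$ there is $g'\supseteq g$ in $\F$ with $a\in\operatorname{dom}(g')$, and symmetrically for the range. In a multi-sorted language this holds for elements of each sort.

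A standard induction on formulas then shows that every $g\in\F$ preserves all formulas. Atomic formulas are preserved because $g$ is an isomorphism of substructures. The connectives are routine. For an existential quantifier $\exists y\,\varphi(\bx,y)$ with a witness $a$ in $\U_1$, extend $g$ to some $g'\ni a$ and apply the induction hypothesis to $g'$; the converse direction uses the back condition.

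Hence $f$ is elementary, i.e.\ $(\U_1,\bx)\equiv(\U_2,f(\bx))$ for every tuple $\bx$ from $X_1$.

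\textbf{General case.} Let $\X_1\leq\U_1$ be arbitrary. To check that $f$ preserves a formula $\varphi(\bx)$, fix the finite tuple $\bx$ from $X_1$. By abundance applied to the initial structure, iterated over the finitely many elements of $\bx$ and with $\A=\X_1$, there is $\Y\in\Cf$ with $\I\leq\Y\leq\X_1$ and $\bx\subseteq Y$. Here $\I\leq\X_1$ because $\C$ is prime.

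By transitivity (Fact \ref{prop:str}), $\Y\leq\U_1$. Since $f$ is an isomorphism, $f(\Y)\leq\X_2\leq\U_2$, so $f(\Y)\leq\U_2$. The strong relation is defined via $\delta$ and $\cl$, which are preserved by isomorphism, so $f(\Y)\leq\X_2$ indeed holds.

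Thus $f|_\Y\in\F(\U_1,\U_2)$, and by the first case $\U_1\vDash\varphi(\bx)$ if and only if $\U_2\vDash\varphi(f(\bx))$.

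\textbf{The ``in particular''.} Apply the above to the identity map $\I\to\I$. Both $\U_i$ lie in $\C$, so by primeness (with $\I$ identified with its copies in each $\U_i$) we have $\I\leq\U_i$. This map is therefore elementary, and taking sentences gives $\U_1\equiv\U_2$.

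If the language is taken without constants for $\I$, one can instead use the empty substructure, when it lies in $\Cf$. Alternatively, note that $\I$ embeds as the same structure in both, so the argument still applies.

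\textbf{Main obstacle.} The delicate step is the reduction to $\Cf$. It requires $\I$ itself to be in $\Cf$, so that abundance can start from it, and it requires that hulls/abundance produce $\Y\leq\X_1$ rather than only $\Y\leq\U_1$. Applying abundance with $\A=\X_1$ handles the second requirement. Everything else is the routine back-and-forth induction.
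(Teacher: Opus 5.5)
Your proof is correct and follows the route the paper intends: the corollary is read off from Fact~\ref{le:bf} by the standard induction showing that members of a back-and-forth system are elementary, with $\mathrm{id}_\I$ witnessing $\U_1\equiv\U_2$. Your extra step handles a strong $\X_1\leq\U_1$ that is not in $\Cf$, by using abundance to find some $\Y\in\Cf$ with $\bx\se Y$ and $\Y\leq\X_1$; this step is sound and fills in a detail the paper leaves implicit, and as you note it needs $\I\in\Cf$, which holds for $\C_\I$.
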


\begin{definition}
  A structure $\A\in\C$ is \emph{$\leq$-existentially closed} if given a quantifier-free $L(\A)$-formula $\phi(\bx)$, a strong extension $\B$ of $\A$, and a tuple $\bb$ in $\B$ such that $\B\vDash\phi(\bb)$, we can find $\ba$ in $\A$ such that $\A\vDash\phi(\ba)$.
\end{definition}

\begin{fact}
  Let $\U$ be a rich structure in $\C$. Then $\U$ is $\leq$-existentially closed.
\end{fact}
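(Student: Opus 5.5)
The plan is to prove that $\U$ is $\leq$-existentially closed directly from richness. The idea is to realize the witness $\bb$ inside a finitely $\leq$-generated strong substructure of $\B$ that extends a finitely $\leq$-generated strong substructure of $\U$ containing the parameters of $\phi$, and then pull that substructure back into $\U$ over its base.

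\emph{Step 1: choosing $\X$.} Let $\phi(\bx)$ be quantifier-free over a finite tuple $\ba_0$ from $\U$, let $\U\leq\B$, and let $\bb\in\B$ with $\B\vDash\phi(\bb)$. We need some $\X\in\Cf$ with $\ba_0\se\X\leq\U$. For this, use primeness and abundance (the standing assumptions of this subsection) and apply abundance repeatedly.
\begin{itemize}
\item Start from the initial structure $\I$, which satisfies $\I\leq\U$; we take $\I\in\Cf$, as is implicit in the localization setting.
\item Add the elements of $\ba_0$ one at a time.
\end{itemize}
Each application of abundance gives some $\Y\in\Cf$ containing the new element, with the previous structure $\leq\Y\leq\U$. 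After finitely many steps this yields $\X\in\Cf$ with $\ba_0\se\X\leq\U$.

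\emph{Step 2: choosing $\Y$.} Since $\U\leq\B$ and $\X\leq\U$, transitivity (Fact~\ref{prop:str}) gives $\X\leq\B$. Applying abundance again, now inside $\B$ and adding the elements of $\bb$ one at a time, we obtain $\Y\in\Cf$ with $\X\leq\Y\leq\B$ and $\bb\se\Y$.

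\emph{Step 3: pulling back into $\U$.} We have the inclusion $f:\X\to\Y$, which is strong, and the inclusion $g:\X\to\U$, which is strong. Richness then gives a strong embedding $h:\Y\to\U$ with $h\circ f=g$, so $h$ fixes $\ba_0$ pointwise. Quantifier-free formulas are preserved by embeddings and by passing to substructures. Hence $\Y\vDash\phi(\bb)$, because $\Y$ is a substructure of $\B$ and the parameters lie in $\X\se\Y$, and then $\U\vDash\phi(h(\bb))$. Setting $\ba=h(\bb)$ finishes the proof.

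\emph{Main obstacle.} The only delicate point is Step 1: producing finitely $\leq$-generated strong substructures that contain given finite tuples. This needs abundance together with a finitely $\leq$-generated base $\I$, and it needs care that the iterated applications stay strong in the ambient structure, which transitivity and the subset property of Fact~\ref{prop:str} handle. If $\I\notin\Cf$, I would instead work in the localized class $\C_\I$, where $\I$ plays the role of the empty structure, and run the same argument.
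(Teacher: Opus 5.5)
Your argument is correct, and it is the standard proof of this fact; the paper quotes the fact from the Hrushovski-construction literature without proof. The steps are: pick $\X\in\Cf$ with $\X\leq\U$ containing the parameters (primeness plus abundance), extend it to $\Y\in\Cf$ with $\X\leq\Y\leq\B$ containing $\bb$ (transitivity plus abundance), and use richness to embed $\Y$ into $\U$ over $\X$. Your caveat about $\I\in\Cf$ is unnecessary under the paper's literal definition of primeness: if $\X_0\leq\X_1\leq\cdots$ has union $\I$, then $\I\leq\X_0\se\I$ forces $\X_0=\I$, so the initial structure is automatically finitely $\leq$-generated.
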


\subsection{Torus and exponential}

Let $K$ be an algebraically closed field of characteristic $0$. Consider the algebraic $n$-torus $(K^\times)^n$. Every torus has an algebraic group structure induced by the multiplicative group. The following fact gives a characterization of their algebraic subgroups.

For every $(k\times n)$-integer matrix $M=(m_{ij})$, define a map from the $n$-torus to the $k$-torus:
\begin{align*}
  (-)^M:\;\;(K^\times)^n\;\;&\longrightarrow\;\;(K^\times)^k\\
  (x_1,...,x_n)&\longmapsto\left(\prod_{i=1}^n x_i^{m_{1i}},...,\prod_{i=1}^n x_i^{m_{ki}}\right).
\end{align*}

\begin{fact}[{\cite[Section 3.2]{Bo06}}]
  Every algebraic subgroup $A$ of the $n$-torus $(K^\times)^n$ is defined by a system of equations written as follows:
  \[\bx^M=\mathds{1},\]
  where $\bx$ is an $n$-tuple of variables and $M$ is a $(k\times n)$-integer matrix where $k\leq n$. Moreover, $M$ can be taken to have full rank, and then the dimension of $A$ equals $n-k$.
\end{fact}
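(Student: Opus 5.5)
The statement is the cited fact that every algebraic subgroup $A$ of $(K^\times)^n$ is cut out by equations $\bx^M=\mathds{1}$ with $M$ an integer matrix of full rank $k$, and then $\dim A=n-k$. The plan is to pass through characters: a subgroup is determined by the characters that vanish on it. Concretely, to $A$ we attach the lattice
\[\Lambda_A:=\{\bm\in\bZ^n\mid \bx^{\bm}=1\text{ for all }\bx\in A\}\subseteq\bZ^n.\]
We then show $A=\{\bx\mid \bx^{\bm}=1\text{ for all }\bm\in\Lambda_A\}$. Given this, any $\bZ$-basis of $\Lambda_A$, placed as the rows of $M$, gives the system $\bx^M=\mathds{1}$ with $M$ of full rank $k=\operatorname{rk}\Lambda_A\le n$.

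The key step, and the main obstacle, is the equality $A=A^{\perp\perp}$. First, the coordinate ring of $(K^\times)^n$ is $K[x_1^{\pm1},\dots,x_n^{\pm1}]$, whose $K$-basis is the characters $\bx^{\bm}$. The ideal $I(A)$ is stable under translation by points $\mathbf a\in A$, since $f(\bx)\mapsto f(\mathbf a\bx)$ rescales each monomial by $\mathbf a^{\bm}$. Next we use Artin/Dedekind independence of distinct characters restricted to $A$, via a minimal-support argument. Take $f=\sum c_{\bm}\bx^{\bm}\in I(A)$ with minimal support. Comparing $f(\mathbf a\bx)$ with $\mathbf a^{\bm_0}f(\bx)$ shows that all monomials of $f$ agree as characters on $A$. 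Hence $I(A)$ is spanned by binomials $\bx^{\bm}-\bx^{\bm'}$ with $\bm-\bm'\in\Lambda_A$. So $A$ is the zero set of $\{\bx^{\bm}-1\mid \bm\in\Lambda_A\}$, which is the claimed equality. Here we use that $A$ is Zariski closed, so $A=V(I(A))$.

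For the dimension, apply Smith normal form: $\Lambda_A=P\cdot\operatorname{diag}(d_1,\dots,d_k,0,\dots)Q$ with $P,Q$ unimodular. The monomial automorphism $\bx\mapsto\bx^Q$ of $(K^\times)^n$ carries $A$ to the set
\[\{y_1^{d_1}=\dots=y_k^{d_k}=1\}.\]
This set is a finite union of translates of $(K^\times)^{n-k}$, since $K$ is algebraically closed of characteristic $0$. Hence $\dim A=n-k$, and this is independent of the chosen full-rank $M$.
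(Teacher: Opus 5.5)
Your argument is correct and is the standard proof of the cited result (the paper gives no proof of its own and only cites \cite{Bo06}): take the lattice $\Lambda_A$ of characters trivial on $A$, use translation-invariance of $I(A)$ and independence of characters on $A$ to show $A$ is cut out by $\{\bx^{\lambda}=1\mid \lambda\in\Lambda_A\}$, and apply Smith normal form to a basis matrix $M$ (not to $\Lambda_A$ itself, as you wrote) to get $\dim A=n-\rk M$. The one compressed step is the passage from ``every inclusion-minimal-support element of $I(A)$ has all its monomials in one coset of $\Lambda_A$'' to ``$I(A)$ is spanned by binomials $\bx^{\lambda}-\bx^{\mu}$ with $\lambda-\mu\in\Lambda_A$''; it needs either the standard fact that a subspace of $K[x_1^{\pm1},\dots,x_n^{\pm1}]$ is spanned by its inclusion-minimal-support elements, or Dedekind's lemma applied directly to the coset decomposition of an arbitrary $f\in I(A)$.
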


For an algebraic subgroup $A\se(K^\times)^n$, we write $M_A$ for a full-rank integer matrix such that $A$ is defined by equations $M_A(\bx)=\mathds{1}$.

The Conjecture on Intersections with Tori is a special case of what is now called the Zilber--Pink conjecture. It predicts that atypical components of intersections are controlled by finitely many proper algebraic subgroups. It was proposed by Zilber in 2002 \cite[Conjecture 1]{Zi02}.

\begin{conjecture}[CIT]
  \label{con:CIT}
  Given an algebraic variety $W\se K^n$ defined over $\bQ$, there is a finite collection $\mu(W)$ of proper algebraic subgroups of $(K^\times)^n$, satisfying the following property:

  If $S$ is an atypical component of the intersection of $W$ and a proper algebraic subgroup $A\se(K^\times)^n$, i.e.,
  \[\dim S>\dim W+\dim A-n,\]
  then $S$ is contained in some $B\in\mu(W)$.
\end{conjecture}

For our purpose, the following weaker form is sufficient.

\begin{fact}[Weak CIT, {\cite[Theorem 4.6]{Ki09}}]
  \label{fact:WCIT}
  Given a constructible family $(W_{\bc})_{\bc\in P}$ in $K^n$, i.e., constructible $W\se K^{n+m}$ and $P\se K^m$, there is a finite set $\mu(W)$ of proper algebraic subgroups of $(K^\times)^n$ satisfying the following property:

  For every $\bc\in P$ and every proper algebraic subgroup $A\se(K^\times)^n$, if $S$ is an atypical component of the intersection of $W_{\bc}$ and a coset $\alpha A$, then there exist $B\in\mu(W)$ and a constant $\beta$ such that $S$ is contained in $\beta B$. Moreover, $S$ is typical with respect to $\beta B$.
\end{fact}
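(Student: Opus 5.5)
The plan is to follow Zilber's and Kirby's route. First prove a pointwise statement using Ax's theorem, the functional Schanuel statement for the exponential in differential fields. Then obtain the finite, uniform set $\mu(W)$ by a compactness argument.

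\emph{Step 1 (remove $A$).} Let $S$ be an atypical component of $W_{\bc}\cap\alpha A$. Let $\gamma H$ be the smallest coset of an algebraic subgroup containing $S$. Since $\gamma H\se\alpha A$, we have $\dim H\le\dim A$. Also $S$ is contained in a component of $W_{\bc}\cap\gamma H$, and that component is in turn contained in $W_{\bc}\cap\alpha A$. Hence $S$ is itself a component of $W_{\bc}\cap\gamma H$, and it is still atypical there. So it suffices to treat irreducible $S\se W_{\bc}$ whose minimal coset $\gamma H$ satisfies $\dim S>\dim W_{\bc}+\dim H-n$. The goal is to find $\beta B$ with $B$ from a finite list, $S\se\beta B$, and $S$ typical in $\beta B$.

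\emph{Step 2 (pointwise statement via Ax).} Fix such an $S$. Take a differential field $F\supseteq K$ with constant field $K$, a point $\bs\in S(F)$ with $\trd(\bs/K)=\dim S$, and $\bx\in F^n$ with $D x_i=Ds_i/s_i$. Ax's theorem gives
\[\trd(\bx,\bs/K)\ \ge\ n+\ld^\bQ(\bx/K)\quad\text{minus the appropriate correction}.\]
Combined with $\bs\in W_{\bc}$ and the atypicality inequality, this forces $\bs$ to satisfy a multiplicative relation $\bs^{M}\in K$ with $M$ nonzero. Since $\bs$ is generic in $S$, the whole of $S$ lies in a coset $\beta B$ of a proper subgroup $B$. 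Choosing $B$ minimal and iterating the dimension count inside $\beta B$, one obtains that $S$ is typical with respect to $\beta B$ inside the family. This argument works over any algebraically closed field of characteristic $0$, and the integer matrix $M_B$ is field-independent.

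\emph{Step 3 (uniformity by compactness).} This is the main obstacle: showing that only finitely many $B$ occur as $\bc$ ranges over $P$ and $S$ ranges over all such components. The components $S$ of $W_{\bc}\cap\gamma H$ move in families of bounded degree once $H$ is fixed, but $H$ is a priori unbounded. I would argue by contradiction. Suppose that for each finite set of subgroups there is a counterexample $(\bc_k,S_k)$. For each fixed integer matrix, the condition ``$S\se\beta B$ and $S$ is typical in $\beta B$'' is first order in the parameters, so the failures form a type. Pass to an ultraproduct of copies of $K$, which is again algebraically closed of characteristic $0$. There one gets a point $\bc^*$ and an irreducible $S^*\se W_{\bc^*}$ that is atypical in the sense of Step 1. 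The bounded-degree issue is handled by encoding $S$ via a generic point together with the minimal coset, whose codimension is at most $n$. However, $S^*$ is not contained, typically, in any coset of a standard subgroup $B$. This contradicts Step 2 applied in the ultraproduct.

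The delicate point is making the ``minimal coset'' and ``component'' notions uniformly definable. I would first try to do this by bounding the degree of minimal $H$ in terms of $\deg W$ via Bézout-type estimates. If that fails, I would work with generic points of $S$ in a saturated model instead of with $S$ itself.
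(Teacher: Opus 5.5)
There is a genuine gap. It lies in Step 3, which carries all of the content.

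Step 1 is fine, but Step 2 proves nothing new. $S\se\alpha A$ with $A$ proper already places $S$ in a coset of a proper subgroup. If you take $\beta B$ to be the minimal coset $\gamma H$ from Step 1, then $S$ is automatically typical in it: $\alpha A\cap\gamma H=\gamma H$, so typicality reduces to $\dim S=\dim(W_{\bc}\cap\gamma H)$ near $S$, which holds because $S$ is a component. So Ax's theorem is not needed where you invoke it, and the statement is really only about $B$ ranging over a \emph{finite} list.

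Your compactness argument in an ultraproduct of copies of $K$ cannot deliver that finiteness. The proposed B\'ezout bound is false. Take $W=\{x_3=1\}\se(K^\times)^3$ and any one-dimensional subtorus $A\se W$. Then $S=A$ is an atypical component of $W\cap A$ (as $1>2+1-3$), its minimal coset is $A$ itself, and $A$ can have arbitrarily large degree. Three consequences follow:
\begin{enumerate}
\item Neither the $S$'s nor their minimal cosets lie in a constructible family.
\item ``$S$ lies in a coset of some proper subgroup'' is an infinite disjunction rather than a formula, so the failures do not form a type in the field language.
\item In the ultraproduct, the limiting ``coset'' is cut out by a nonstandard integer matrix, which is not an algebraic subgroup of the ultrapower at all.
\end{enumerate}
There is then no standard atypical configuration for Step 2 to contradict. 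Passing to generic points in a saturated algebraically closed field runs into the same wall.

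The missing idea is to linearize before using compactness. The tangent spaces $\ker M_A\se K^n$ of algebraic subgroups are infinitely many, but they all lie in one constructible family: the $K$-linear subspaces $L\se K^n$ (a Grassmannian). So consider the family $V_{\bc,L,\bu}=(\bu+L)\times W_{\bc}$. Work over $\bC$; the general case follows by the Lefschetz principle. Given an atypical component $S$ of $W_{\bc}\cap\alpha A$, choose $\bu$ with $\exp(\bu)=\alpha$. Let $U$ be a local analytic component of $\Gamma_{\exp}\cap V_{\bc,\ker M_A,\bu}$ lying over $S$. Then $\dim U=\dim S$ and $\dim V_{\bc,\ker M_A,\bu}-\dim U=\dim A+\dim W_{\bc}-\dim S<n$, which is exactly atypicality. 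The uniform Ax--Schanuel fact stated just after this one then gives $B$ in the finite list $\mu(V)$ with $S\se\beta B$. This is the route the paper indicates; it does not reprove either fact but cites both from \cite{Ki09}.

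Compactness and Ax's theorem belong inside the proof of that uniform statement, carried out in a saturated differential field. There, ``$\bs$ lies in a constant coset of $H$'' becomes the linear condition that the logarithmic derivative of $\bs$ lies in $\ker M_H$. The limit subspace $L$ may fail to be $\bQ$-rational, and Ax's theorem is exactly what rules out such a limit counterexample.

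Finally, the ``moreover'' clause should come from induction on the ambient dimension, not from choosing $B$ minimal, which would destroy finiteness. If $S$ is still atypical in $\beta B$, apply the result to the constructible family $(\beta^{-1}W_{\bc}\cap B)_{\bc,\beta}$ inside each of the finitely many $B$, and take the union of the resulting lists.
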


The weak CIT is a direct consequence of the so-called uniform Schanuel property, which is stated in Kirby's paper \cite{Ki09} in the context of differential fields. The following fact is its special case for the complex exponential $\exp:\bC^n\to(\bC^\times)^n$.

\begin{fact}[Uniform Ax-Schanuel, {\cite[Theorem 4.3]{Ki09}}]
  Given a constructible family $(V_{\bc})_{\bc\in P}$ in $\bC^n\times (\bC^\times)^n$, there is a finite set $\mu(V)$ of proper algebraic subgroups of $(\bC^\times)^n$ satisfying the following property:

  For every $\bc\in P(\bC)$ and irreducible analytic variety $U\se\Gamma_{\exp}\cap V_{\bc}$, if $\dim V_{\bc}-\dim U<n$, then there exist $B\in\mu(V)$ and a constant $\beta$ such that $p(U)\se\beta B$, where $p$ is the projection $\bC^n\times (\bC^\times)^n\to(\bC^\times)^n$.
\end{fact}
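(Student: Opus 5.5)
The plan is to deduce the uniform statement from Ax's differential-algebraic form of Ax--Schanuel by a compactness argument in differentially closed fields. This is the route Kirby takes in \cite{Ki09}. The first step is to make the analytic statement algebraic. Given $\bc\in P(\bC)$ and an irreducible analytic $U\se\Gamma_{\exp}\cap V_{\bc}$ of dimension $k$, I will pass to the field $F$ of meromorphic functions on (a smooth part of) $U$. This field carries $k$ commuting derivations $D_1,\dots,D_k$ coming from local coordinates, and its constants are $C=\bC$. The coordinate functions $(\bx,\by)$ on $U$ satisfy $D_j y_i=y_iD_jx_i$, because $U\se\Gamma_{\exp}$. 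They also satisfy $\trd(\bx\by/C)\le\dim V_{\bc}$, because $U\se V_{\bc}$ and $V_{\bc}$ is defined over $C$. Finally, the Jacobian matrix $(D_jx_i)$ has rank $k=\dim U$. Ax's theorem (1971) states: if
\[\trd(\bx\by/C)-\rk(D_jx_i)_{i,j}<n,\]
then there is a nonzero $\mathbf m\in\bZ^n$ with $\mathbf m\cdot\bx\in C$. Exponentiating, $\by^{\mathbf m}$ is constant on $U$, so $p(U)$ lies in a coset of the proper subgroup $\{\by^{\mathbf m}=1\}$. This settles the non-uniform case.

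The second step is uniformity, obtained by bounding $\mathbf m$. I will work in the theory of differentially closed fields of characteristic $0$ with $k$ commuting derivations, for each $k\le 2n$ (finitely many cases). For fixed $N$, consider the condition $\Phi_N(\bc)$:
\begin{quote}
there exist $\bx,\by$ with $D_jy_i=y_iD_jx_i$, $(\bx,\by)\in V_{\bc}$, $\bc$ constant, $\rk(D_jx_i)=k>\dim V_{\bc}-n$, and $\mathbf m\cdot\bx\notin C$ for every nonzero $\mathbf m\in\bZ^n$ with $|\mathbf m|\le N$.
\end{quote}
This is first-order. Here I use that $V$ is a constructible family, so $\dim V_{\bc}$ is definable in $\bc$, and that the algebraic condition $\trd\le\dim V_{\bc}$ is replaced by the membership $(\bx,\by)\in V_{\bc}$. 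Suppose no bound $N$ worked. Then every finite subset of $\{\Phi_N:N\in\bN\}$ would be realized, so by compactness some point in a saturated model realizes all $\Phi_N$ at once. That point is a counterexample to Ax's theorem, which holds in any differential field. Hence some $N$ works, and $\mu(V)$ is the finite set of proper subgroups $\{\by^{\mathbf m}=1\}$ with $0<|\mathbf m|\le N$.

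The step I expect to be the main obstacle is the translation between the two settings in both directions. The hypothesis $\dim V_{\bc}-\dim U<n$ must yield the differential inequality, with rank measured by derivations. Conversely, the first-order condition must be set up so that analytic counterexamples are genuinely instances of $\Phi_N$. This needs care in choosing $k=\dim U$ derivations that are independent on $U$, so that the Jacobian rank equals $\dim U$; there is a subtlety at singular points, which I handle by restricting to a dense open smooth subset. It also needs the observation that if $\bc$ ranges over the parameters of the family, then $\bc\in C$ may be imposed. With those points in place, the bound $N$ depends only on $V$ and $n$, as required.
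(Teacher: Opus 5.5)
Your proposal is correct and takes essentially the route behind the citation. The paper gives no proof of its own, and Kirby's Theorem 4.3 is deduced from Ax's differential-field theorem by exactly this compactness argument; the complex-analytic form follows by treating germs of meromorphic functions at a smooth point of $U$ as a differential field with constants $\bC$. One thing to tidy: make $\bx,\by$ free variables of the type together with $\bc$, rather than existentially quantified inside each $\Phi_N$ (or apply saturation over $\bc$ once more), so that a single tuple witnesses $\mathbf m\cdot\bx\notin C$ for all $\mathbf m\neq 0$ simultaneously.
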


The following is the Ax-Schanuel theorem in its original form. It can be deduced from the above uniform version. We say functions $f_1,...,f_n\in\bC[[x_1,...,x_m]]$ are \emph{$\bQ$-linearly dependent modulo $\bC$} if some of their $\bQ$-linear combination is in $\bC$.

\begin{fact}[Ax-Schanuel, \cite{Ax71}]
  \label{fact:AxSch}
  Let $f_1,...,f_n\in\bC[[x_1,...,x_m]]$ be $\bQ$-linearly independent modulo $\bC$. Then
  \[\trd(f_1,...,f_n,e^{f_1},...,e^{f_n}/\bC)\geq n+\rk J(f_1,...,f_n),\]
  where $J(f_1,...,f_n)=\left(\frac{\partial f_i}{\partial x_j}\right)$.
\end{fact}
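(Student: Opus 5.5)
We deduce Fact~\ref{fact:AxSch} from the uniform Ax-Schanuel theorem, applied to a trivial family consisting of a single variety. Argue by contradiction: suppose $f_1,\dots,f_n$ are $\bQ$-linearly independent modulo $\bC$, but
\[t:=\trd(f_1,\dots,f_n,e^{f_1},\dots,e^{f_n}/\bC)<n+r,\qquad r:=\rk J(f_1,\dots,f_n).\]

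\emph{Step 1: reduce to convergent series.} The hypothesis and the conclusion are statements about the differential field $\bC(f_1,\dots,f_n,e^{f_1},\dots,e^{f_n})$ with derivations $\partial/\partial x_j$. This field is finitely generated as a differential field over $\bC$. Every relevant datum has an algebraic description:
\begin{itemize}
\item the exponential relation is $\partial(e^{f_i})=e^{f_i}\,\partial f_i$;
\item $\bQ$-linear dependence modulo $\bC$ means some integer combination has all partial derivatives zero;
\item the rank of the Jacobian is the rank of a matrix with entries in the field;
\item the transcendence degree over $\bC$ is intrinsic to the field.
\end{itemize}
By the Seidenberg embedding theorem, we may embed this field over $\bC$ into a field of meromorphic functions on some open polydisc $\Omega\se\bC^m$. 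Shrinking $\Omega$, we may assume that all $f_i$ are holomorphic on $\Omega$, that the Jacobian has constant rank $r$ on $\Omega$, and that the $e^{f_i}$ are genuine exponentials. For the last point, $g_i:=e^{f_i}$ satisfies $\partial g_i=g_i\,\partial f_i$, so $g_i=c_ie^{f_i}$ for constants $c_i\neq 0$; multiplying by $c_i^{-1}$ changes neither the transcendence degree over $\bC$ nor the algebraic relations among the constants. I expect this to be the only delicate point: the formal-to-analytic transfer must preserve the four quantities listed above.

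\emph{Step 2: set up the intersection.} Let $V\se\bC^n\times(\bC^\times)^n$ be the $\bC$-Zariski locus of the point $(\bff,e^{\bff})$, where $\bff=(f_1,\dots,f_n)$. This is an irreducible variety with $\dim V=t$, and we take the family to be this single $V$ (so $P$ is a point). Consider the holomorphic map
\[\varphi:\Omega\to\bC^n\times(\bC^\times)^n,\qquad x\mapsto(\bff(x),e^{\bff(x)}).\]
Its image lies in $\Gamma_{\exp}$, and it lies in $V$ because every polynomial vanishing at $(\bff,e^{\bff})$ vanishes identically on $\Omega$.

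\emph{Step 3: extract an analytic component.} The map $\varphi$ has constant rank $r$, since the $\bff$-part already has rank $r$ and the $e^{\bff}$-part is a function of it. By the constant rank theorem, after shrinking $\Omega$ the image $\varphi(\Omega)$ is a connected $r$-dimensional complex submanifold. Let $U$ be an irreducible analytic variety of dimension $r$ containing a neighbourhood of this image inside $\Gamma_{\exp}\cap V$; for example, take the irreducible component of the germ and pass to a small open set. Then
\[\dim V-\dim U=t-r<n,\]
so the uniform Ax-Schanuel theorem applies. It gives a proper algebraic subgroup $B\se(\bC^\times)^n$ and a constant $\beta$ with $p(U)\se\beta B$.

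\emph{Step 4: conclude.} Since $B$ is proper, its defining matrix $M_B$ has a nonzero row $(m_1,\dots,m_n)\in\bZ^n$, and
\[\prod_i e^{m_if_i(x)}=\gamma\quad\text{for all }x\in\Omega,\]
for some constant $\gamma$. Hence $\sum_i m_if_i\in\log\gamma+2\pi i\bZ$. This function is continuous on the connected set $\Omega$ with values in a discrete set, so it is constant. Transported back to the original formal setting (the statement "all partials vanish" is preserved by the embedding), this is a nontrivial $\bQ$-linear combination of the $f_i$ lying in $\bC$. That contradicts the hypothesis, which proves the inequality.
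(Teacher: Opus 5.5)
Your proposal is correct and takes the route the paper indicates ("it can be deduced from the above uniform version"). You apply the uniform Ax--Schanuel to the single variety $V$ given by the $\bC$-locus of $(\boldsymbol{f},e^{\boldsymbol{f}})$, with $U$ the local image of $x\mapsto(\boldsymbol{f}(x),e^{\boldsymbol{f}(x)})$, which is atypical exactly when the inequality fails. The one point to tighten is Step 1: Seidenberg's theorem does not directly embed the field over $\bC$, since it concerns differential fields finitely generated over $\bQ$. Instead, embed a finitely generated differential subfield containing $\boldsymbol{f}$, $e^{\boldsymbol{f}}$ and the coefficients of generators of the ideal of relations. This is enough, because you only need $\trd(\cdot/\bC)$ not to increase, and any embedding of differential fields preserves the Jacobian rank and $\bQ$-linear dependence modulo constants.
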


The Mordell--Lang property plays an important role in solving finite rank subgroups of an algebraic group. We are interested in the case of characteristic zero, which is exactly the following theorem proved by Laurent \cite{La83}.

\begin{fact}
  \label{fact:ml}
  Let $\Gamma$ be a finite rank subgroup of $(K^\times)^n$ and let $W$ be a subvariety of $(K^\times)^n$. Then there exist a natural number $r$, elements $\gamma_1,...,\gamma_r$ of $\Gamma$, and algebraic subgroups $A_1,...,A_r$ of $(K^\times)^n$, such that $\gamma_i A_i\se W$ and
  \[W\cap\Gamma=\bigcup_{i=1}^r\gamma_i(A_i\cap\Gamma).\]
\end{fact}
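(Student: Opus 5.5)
The plan is the classical route through unit equations. We reduce the problem to linear equations whose unknowns range over the group $\Gamma$, and then run an induction on $\dim W$.

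\emph{Step 1: linearization.} Write $W$ as the common zero set of finitely many Laurent polynomials $P_j(\bx)=\sum_{\bm\in E_j}c_{j,\bm}\bx^{\bm}$, where each $E_j\se\bZ^n$ is finite. For $\gamma\in\Gamma$ we have $\gamma\in W$ iff $\sum_{\bm\in E_j}c_{j,\bm}\gamma^{\bm}=0$ for every $j$. This is a linear equation $\sum_i c_iu_i=0$ with unknowns $u_i=\gamma^{\bm_i}$ ranging over the finite rank group $\Gamma^{E_j}\se(K^\times)^{|E_j|}$.

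\emph{Step 2: the unit equation.} I would invoke the Evertse--Schlickewei--Schmidt theorem, which is the finite-rank version of the subspace theorem argument for $S$-units. It says that $c_1u_1+\dots+c_ku_k=0$ has only finitely many \emph{non-degenerate} solutions up to scaling in a finite rank subgroup of $K^\times$, where non-degenerate means that no proper subsum vanishes. Every solution partitions $\{1,\dots,k\}$ into classes on which the subsum vanishes non-degenerately, and there are finitely many partitions. So for each partition $\pi$ and each class $I\in\pi$, the ratios $u_i/u_{i'}$ with $i,i'\in I$ lie in a finite set. Translating back, each ratio is $\gamma^{\bm_i-\bm_{i'}}$. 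Hence $\Gamma\cap W$ is covered by finitely many sets of the form $\Gamma\cap\gamma_0H$, where $H$ is the algebraic subgroup cut out by the characters $\bx^{\bm_i-\bm_{i'}}$ and $\gamma_0\in\Gamma$.

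\emph{Step 3: induction on $\dim W$.} Each piece is $\Gamma\cap(W\cap\gamma_0H)$. Translating by $\gamma_0^{-1}$, we may assume $\gamma_0=\mathds{1}$. Within $H$, on each class the monomials $\bx^{\bm_i}$ with $i\in I$ are proportional to a fixed one, so the restriction of $W$ to $H$ is described by fewer effective monomial relations. Now split into two cases.
\begin{itemize}
\item If $H\se W$, the piece is $\Gamma\cap H$, which is already of the required form $\gamma_i(A_i\cap\Gamma)$.
\item If $H\not\se W$, then $W\cap H$ has smaller dimension than $W$. Alternatively, if $H$ is not connected, we pass to its finitely many cosets of $H^\circ$, and in each coset that meets $\Gamma$ we pick a point of $\Gamma$. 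We then apply the induction hypothesis to $W\cap\gamma H^\circ$, viewed inside the torus $\gamma H^\circ\cong(K^\times)^{\dim H}$, with the finite rank group $\gamma^{-1}\Gamma\cap H^\circ$.
\end{itemize}
The base case $\dim W=0$ is trivial: $W$ is finite, and we take $A_i=\{\mathds{1}\}$.

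The main obstacle is Step 2. The full strength of the finiteness theorem for non-degenerate solutions in a finite rank group, rather than merely a finitely generated one, is a deep Diophantine input, and I would take it as a black box. The reduction from finite rank to finitely generated is not automatic: the division hull of a finitely generated group is not finitely generated, so one genuinely needs the finite rank version or a specialization argument. A secondary technical point is bookkeeping in Step 3. We must make sure the recursion terminates by tracking the pair $(\dim W,\text{number of monomials})$ lexicographically. We must also check that the subgroups $A_i$ produced really satisfy $\gamma_iA_i\se W$. This is arranged by keeping only those cosets on which all equations vanish identically, and recursing on the rest.
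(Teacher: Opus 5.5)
Your proposal is correct, but it takes a different route from the paper. The paper gives no proof of this fact: it simply cites Laurent's theorem. You instead derive it from the finiteness of non-degenerate solutions of $c_1u_1+\dots+c_ku_k=0$ in a finite rank group (Evertse--Schlickewei--Schmidt). This is the standard derivation, and your black box has the same depth as what the paper cites, so nothing is lost. What your route buys is transparency. The subgroups $A_i$ are visibly cut out by the characters $\bx^{\boldsymbol{m}_i-\boldsymbol{m}_{i'}}$ coming from the equations of $W$. Via ESS you would even get a bound on $r$ depending only on the rank of $\Gamma$ and the number of monomials, though the paper never needs this.

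Your Step 3, however, is unnecessary, and one of its claims is false. For each realized datum (partitions $\pi_j$ and ratio values), take $\gamma_0$ to be a point of $\Gamma\cap W$ realizing that datum. Then for every $\bx\in\gamma_0H$ and every class $I\in\pi_j$ with representative $i_I$ you get
\[\sum_{i\in I}c_{j,i}\bx^{\boldsymbol{m}_i}=\bx^{\boldsymbol{m}_{i_I}}\gamma_0^{-\boldsymbol{m}_{i_I}}\sum_{i\in I}c_{j,i}\gamma_0^{\boldsymbol{m}_i}=0.\]
So $\gamma_0H\se W$ automatically, and $\Gamma\cap W=\bigcup\gamma_0(H\cap\Gamma)$ over the finitely many realized data. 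This is already the statement, and your second bullet never arises.

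As written, that bullet's justification is false in general: ``if $H\not\se W$ then $W\cap H$ has smaller dimension than $W$'' fails, for instance, when $W\subsetneq H$. If you kept the recursion, termination would have to come from a different source. Each $P_j$ restricted to $\gamma H^\circ$ has at most $|\pi_j|<|E_j|$ terms, so the monomial count strictly drops; alternatively, $\dim H<n$. With either, your lexicographic measure does decrease, but not for the reason stated.
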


The famous Schanuel conjecture concerns about the algebraic relation of the complex exponential.

\begin{conjecture}[SC]
  \label{con:SC}
  For all $\bx\se\bC$, we have $\trd(\bx,e^{\bx})\geq\ld^\bQ(\bx)$.
\end{conjecture}

In order to obtain a lower bound of the predimension function, we need the following weaker form of the Schanuel conjecture, called the Schanuel conjecture for $K$-powers.

\begin{conjecture}[SC$_K$]
  \label{con:SCK}
  Let $K$ be a subfield of $\bC$ of finite transcendence degree. Then for all $\bx\se\bC$, we have
  \[\ld^K(\bx)+\trd(e^{\bx})-\ld^\bQ(\bx)\geq -\trd(K).\]
\end{conjecture}

By Bays, Kirby, and Wilkie \cite[Theorem 1.3]{Ki10}, SC$_K$ holds in the following special case.

\begin{fact}
  Let $K=\bQ(\bb)$ where $\bb\se\bC$ is an exponentially algebraically independent tuple. Then for all $\bx\se\bC$, we have
  \[\ld^K(\bx)+\trd(e^{\bx})-\ld^\bQ(\bx)\geq 0.\]
\end{fact}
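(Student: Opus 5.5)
This Fact is the result of Bays, Kirby and Wilkie \cite[Theorem 1.3]{Ki10}, and the paper only uses it. Here is how I would reprove it with the exponential-algebraic closure pregeometry $\operatorname{ecl}$ on $(\bC,+,\cdot,\exp)$. Throughout, let $L:=\operatorname{ecl}(\emptyset)$, which is an algebraically closed subfield with $\exp(L)\se L$. The one structural input I would assume is Kirby's Schanuel-type inequality relative to $\operatorname{ecl}$-closed sets, which refines Ax-Schanuel (Fact \ref{fact:AxSch}) and which I have not checked against the literature:
\[\trd(\bz,e^{\bz}/L)-\ld^\bQ(\bz/L)\geq \dim^{\operatorname{ecl}}(\bz/L)\quad\text{for every tuple }\bz\se\bC.\]
The hypothesis on $\bb=(b_1,\dots,b_k)$ says exactly that $\dim^{\operatorname{ecl}}(\bb/L)=k$.

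\emph{Step 1 (reductions).} Fix $\bx$ and put $m:=\ld^K(\bx)$. Choose a $K$-basis $\by=(y_1,\dots,y_m)$ of the $K$-span of $\bx$. Every coordinate of $\bx$ is a $\bQ(\bb)$-combination of $\by$. Conversely each $y_j$ lies in the field $\bQ(\bb,\bx)$, hence in $\operatorname{ecl}(\bb\bx)$. It follows that
\[\operatorname{ecl}(\bb\bx)=\operatorname{ecl}(\bb\by)\quad\text{and}\quad\trd(\bx/L(\bb))\leq m.\]
I would also split off the part of the $\bQ$-span of $\bx$ that lies in $L$. Exponentials of elements of $L$ lie in $L$, so this part should contribute to $\ld^\bQ(\bx)$ and to $\trd(e^{\bx})$ in a way that has to be checked case by case. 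After this reduction I may assume $\ld^\bQ(\bx/L)=\ld^\bQ(\bx)$.

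\emph{Step 2 (Schanuel applied to $\bz=\bx\bb$).} The left-hand side is bounded by
\[\trd(\bx,\bb,e^{\bx},e^{\bb}/L)\leq \trd(\bb/L)+\trd(\bx/L(\bb))+\trd(e^{\bx})+\trd(e^{\bb})\leq 2k+m+\trd(e^{\bx}).\]
On the right-hand side, $\dim^{\operatorname{ecl}}(\bx\bb/L)\geq k$. Comparing the two sides gives
\[\trd(e^{\bx})+m\geq \ld^\bQ(\bx\bb/L)-k.\]
So the target inequality $\ld^K(\bx)+\trd(e^{\bx})-\ld^\bQ(\bx)\geq0$ follows once I show
\[\ld^\bQ(\bx\bb/L)\geq \ld^\bQ(\bx/L)+k,\]
that is, that $\bb$ stays $\bQ$-linearly independent over $L+\operatorname{span}_\bQ(\bx)$.

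\emph{Step 3 (main obstacle).} This $\bQ$-independence can genuinely fail, since nothing prevents $\bx$ from containing $\bQ$-combinations of the $b_i$, and I expect the real work to be here. I would first try to absorb the failure by a counting argument. If $\ld^\bQ(\bx\bb/L)=\ld^\bQ(\bx/L)+k-r$, then $r$ independent $\bQ$-combinations of the $b_i$ lie in $L+\operatorname{span}_\bQ(\bx)$. Their exponentials are then algebraic over $L(e^{\bx})$, so the crude bound $\trd(e^{\bb})\leq k$ used in Step 2 improves by $r$. I expect this to exactly compensate the deficiency.

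If that bookkeeping does not close, the fallback is to replace $\bx$ by a $\bQ$-complement of $\operatorname{span}_\bQ(\bb)$ inside $\operatorname{span}_\bQ(\bx)$. I would then use the additivity of both sides of the target inequality under such splittings, together with submodularity of $\delta$, to reduce to the case where the independence holds.
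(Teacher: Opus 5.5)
The paper gives no proof of this Fact; it imports it from Bays--Kirby--Wilkie. Your strategy is sound, and the input you flag is indeed a theorem of Kirby: the Schanuel property relative to an $\operatorname{ecl}$-closed subfield. Your Step 3 counting also closes. If $r$ independent $\bQ$-combinations of $\bb$ lie in $L+\operatorname{span}_\bQ(\bx)$, then $\trd(e^{\bb}/L(\bb,\bx,e^{\bx}))\leq k-r$, which cancels exactly the drop of $r$ in $\ld^\bQ(\bx\bb/L)$, so no fallback is needed.

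The genuine gap is Step 1: you cannot reduce to $\ld^\bQ(\bx/L)=\ld^\bQ(\bx)$. Once Step 3 is done, Steps 2--3 only give $\ld^K(\bx)+\trd(e^{\bx}/L)\geq\ld^\bQ(\bx/L)$, which falls short by $\dim_\bQ\big(\operatorname{span}_\bQ(\bx)\cap L\big)$. Splitting off $X_0:=\operatorname{span}_\bQ(\bx)\cap L$ would require $\delta(X_0/\text{rest})\geq 0$, and this fails in general. Take $\bx=(2\pi i,2\pi i b_1)$. The Fact asserts that $e^{2\pi i b_1}$ is transcendental, whereas the inequality for the remaining tuple $(2\pi i b_1)$ is vacuous. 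Here $2\pi i\in K\cdot 2\pi i b_1$ and $e^{2\pi i}=1$, so the $L$-part contributes $0+0-1=-1$. The content of the statement sits precisely in this interaction, so it cannot be discarded.

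The missing idea is to keep $X_0$ and instead sharpen the crude bound $\trd(\bx/L(\bb))\leq m$ from Step 2.
\begin{itemize}
\item Since $\bb$ is algebraically independent over $L$, the fields $K=\bQ(\bb)$ and $L$ are linearly disjoint over $\bQ$.
\item Hence a $\bQ$-basis $\bu$ of $X_0$ is $K$-independent, and you may take it as part of your $K$-basis: $\by=\bu\by'$ with $\by'$ drawn from $\bx$, $n_0:=\dim_\bQ X_0$ and $|\by'|=m-n_0$.
\item Then $\bx\se L(\bb)(\by')$, so $\trd(\bx/L(\bb))\leq m-n_0$.
\end{itemize}
Now assume $\bx$ is $\bQ$-independent of length $n$, which is harmless. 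Kirby's inequality for $\bx\bb$, with $\ld^\bQ(\bx\bb/L)=(n-n_0)+(k-r)$, reads
\[
k\leq k+(m-n_0)+\trd(e^{\bx}/L)+(k-r)-\big((n-n_0)+(k-r)\big).
\]
That is, $\ld^K(\bx)+\trd(e^{\bx}/L)\geq\ld^\bQ(\bx)$. Since $\trd(e^{\bx})\geq\trd(e^{\bx}/L)$, this gives the Fact.
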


\section{Construction}\label{sec:con}

Let $K$ be a finitely generated subfield of $\bR$ properly containing $\bQ$.

\begin{definition}
  A \emph{$K$-powered field} is a tuple $(V,R,\ex)$ containing:
  \begin{itemize}
    \item a $K$-vector space $V$,
    \item a real closed field $R$,
    \item its induced algebraically closed field $R^2$,
    \item its unit circle $S=\{(x,y)\in R^2\mid x^2+y^2=1\}$ as a subgroup of $(R^2)^\times$,
    \item a surjective group homomorphism $\ex:V\to S$ with kernel $\kex$ a $\bZ$-group, i.e., an elementary extension of $(\bZ,+,0,1)$.
  \end{itemize}
  Let $\C$ be the class of all $K$-powered fields.
\end{definition}

Recall that stereographic projection gives a definable homeomorphism:
\begin{align*}
  p_{\st}:S\setminus\{(1,0)\}&\longrightarrow\;R\\
  (x,y)\;&\longmapsto\frac{y}{1-x}.
\end{align*}
Let $\ag{X}^K$ denote the $K$-vector space generated by $X\se V$, let $\ag{X}^{\rc}$ denote the real closure generated by $X\se R$, let $\ag{X}^{\ac}$ denote the algebraic closure generated by $X\se R^2$, and let $\ag{X}^S:=\ag{X}^{\ac}\cap S$. For every $X\se S$ we have $\ag{X}^{\ac}\cap R=\ag{p_{\st}(X)}^{\rc}$, so we will write $\ag{X}^{\rc}$ with no ambiguity.

\begin{definition}
  Let $\A$ be a $K$-powered field, and let $X\se V^\A$. The $K$-powered field generated by $X$ in $\A$, denoted by $\ag{X}^{\pc}_\A$, is the smallest substructure of $\A$ containing $X\cup\kex^\A$ that is also a $K$-powered field.
\end{definition}

\begin{lemma}\label{le:pc-closure}
  $\B=\ag{X}^{\pc}_\A$ is obtained by iterating $\ag{\cdot}^K$ and $\ex^{-1}(\ag{\ex(\cdot)}^S)$.
\end{lemma}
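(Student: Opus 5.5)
We build the closure as a union of an increasing chain and then check that the limit is a $K$-powered field, is contained in $\B$, and contains $X\cup\kex^\A$. Put $V_0:=\ag{X\cup\kex^\A}^K$. Given $V_i$, let
\[V_{i+1}:=\ag{\ex^{-1}(\ag{\ex(V_i)}^S)}^K,\]
and set $V_\omega:=\bigcup_i V_i$ and $R_\omega:=\ag{\ex(V_\omega)}^{\rc}$. The structure we aim for is $\B':=(V_\omega,R_\omega,\ex\restriction V_\omega)$. The claim is then that $\B'=\B$.

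First, $\B'\se\B$, by induction on $i$. Any $K$-powered substructure of $\A$ containing $X\cup\kex^\A$ has first sort closed under $\ag{\cdot}^K$. Its second sort is real closed, so its unit circle $S'$ is closed under $\ag{\cdot}^S$ taken inside $R'^2$. The field $R'$ is real closed and contained in $R^\A$, so its algebraic closure $R'^2$ is relatively algebraically closed in $(R^\A)^2$. Hence $\ag{\cdot}^S$ computed in $\A$ agrees with the one computed inside $R'^2$. Finally, surjectivity of $\ex$ onto $S'$ forces $\ex^{-1}(S')$ to lie in its first sort; here we use that $\ex^{-1}(s)$ is a coset of $\kex^\A$ and $\kex^\A$ is contained in the substructure. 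So each $V_i$, and $R_\omega$, lies inside $\B$.

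Conversely, we check that $\B'$ is a $K$-powered field. The sort $V_\omega$ is a $K$-vector space, being a directed union of subspaces. The field $R_\omega$ is real closed by construction. The key point is surjectivity: $\ex(V_\omega)$ must equal the unit circle of $R_\omega$. For this we show $\ex(V_\omega)=\ag{\ex(V_\omega)}^S$. The inclusion $\ag{\ex(V_i)}^S\se\ex(V_{i+1})$ holds because $\ex$ is surjective onto $S^\A$. Taking unions, and using that algebraic closure has finite character, gives $\ag{\ex(V_\omega)}^S=\ex(V_\omega)$. Using the remark $\ag{Y}^{\ac}\cap R=\ag{p_{\st}(Y)}^{\rc}$, the unit circle of $R_\omega$ is exactly $\ag{\ex(V_\omega)}^{\ac}\cap S=\ag{\ex(V_\omega)}^S$. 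The kernel is $\kex^\A$, which is a $\bZ$-group because $\kex^\A\se V_0$. The map $\ex$ remains a homomorphism. Hence $\B'$ is a $K$-powered field containing $X\cup\kex^\A$, and by minimality $\B=\B'$.

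The main obstacle is the surjectivity check. It requires care with how $S$ sits inside $R^2$: the relevant point is that $\ag{Y}^{\ac}\cap S$ is the unit circle of the real closure generated by the coordinates of $Y$. That is where the stereographic remark and the real-closedness of $R_\omega$ are used. The remaining steps are routine.
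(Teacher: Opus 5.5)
Your proposal is correct and follows the paper's own construction: start from $\ag{X\cup\kex^\A}^K$, alternate $\ag{\cdot}^K$ with $\ex^{-1}(\ag{\ex(\cdot)}^S)$, and take the union. You also spell out the two checks the paper leaves to ``verify'': that every $K$-powered substructure containing $X\cup\kex^\A$ contains each stage (because a real closed subfield gives a relatively algebraically closed $R'^2$ and $\kex^\A$ lies in it), and that $\ex$ is surjective onto the unit circle of $\ag{\ex(V_\omega)}^{\rc}$.
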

\begin{proof}
  Define $(V_n,R_n)$ inductively:
  \begin{itemize}
    \item $V_1:=\ag{X\cup\kex^\A}^K$.
    \item $R_n:=\ag{\ex(V_n)}^{\rc}$.
    \item $S_n:=\ag{\ex(V_n)}^S=(R_n)^2\cap S$.
    \item $V_{n+1}:=\ag{\ex^{-1}(S_n)}^K$.
  \end{itemize}
  Note that in order to make $\ex$ surjective, $V_{n+1}$ must contain some (and hence all) preimage of $S_n$. Take $V^\B=\bigcup_{i<\omega}V_i$ and $R^\B=\bigcup_{i<\omega}R_i$. Verify that $\ex^\B$ is surjective.
\end{proof}

As a consequence, for $\A\se\B\in\C$ with $\kex^\A=\kex^\B$ and $X\se V^\A$, the $K$-powered field generated by $X$ in $\A$ is the same as in $\B$. We may omit the subscript if it is clear from the context.

To prepare for the Hrushovski construction, we take $\Delta$ to be the sort $V$. Let $\cl$ be the $\bQ$-linear span on $V$ and define the following predimension function:
\[\delta(\bx):=\ld^K(\bx)+\trd(\ex(\bx))-\ld^\bQ(\bx),\]
for finite tuple $\bx\se V$. It is straightforward to verify that $\delta$ is a predimension function preserved by $\cl$.

Observe that $\ld^\bQ(\bx/\kex)=\md(\ex(\bx))$ and localizating $\delta$ is the same as localizing each component, i.e.,
\[\delta(\by/X)=\ld^K(\by/X)+\trd(\ex(\by)/\ex(X))-\ld^\bQ(\by/X).\]

The following lemma implies that, to test the strongness of an extension of a $K$-powered field, we only need to test its generating set.

\begin{lemma}
  Let $\A\in\C$, and let $X$ be a strong $\bQ$-subspace of $V^\A$ such that $\ag{X}^{\pc}_\A=\A$. Then $V^\A$ is prealgebraic over $X$.
\end{lemma}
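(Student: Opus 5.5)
The plan is to show directly that every finite tuple of $V^\A$ extends to a finite tuple $\by\bz$ with $\delta(\by\bz/X)\le 0$. Since $X\le V^\A$, this forces $\delta(\by\bz/X)=0$, which is exactly prealgebraicity. By Lemma~\ref{le:pc-closure} (with $\A=\ag{X}^{\pc}_\A$), $V^\A=\bigcup_n V_n$, where $V_1=\ag{X\cup\kex^\A}^K$ and $V_{n+1}=\ag{\ex^{-1}(S_n)}^K$ with $S_n=\ag{\ex(V_n)}^S$. So every element of $V^\A$ is reached from $X$ by finitely many of three kinds of steps:
\begin{enumerate}
\item adjoining an element of $\kex^\A$;
\item adjoining an element $a$ whose $\ex(a)$ is algebraic over $\ex$ of previously obtained elements;
\item adjoining a $K$-linear combination of $X$ and previously obtained elements.
\end{enumerate}
Note that the previously obtained elements of $V_n$ generate $\ag{\ex(V_n)}^{\ac}$, hence $S_n$.

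The key computation concerns adding one element $e$ to a finite tuple $\bc$, with everything computed over $X$. The increment is
\[\delta(\bc e/X)-\delta(\bc/X)=\ld^K(e/X\bc)+\trd(\ex(e)/\ex(X\bc))-\ld^\bQ(e/X\bc).\]
Each of the three quantities on the right lies in $\{0,1\}$, and $\ld^K(e/X\bc)\le\ld^\bQ(e/X\bc)$. If $\ld^\bQ(e/X\bc)=0$, then $e$ is a $\bQ$-combination of $X\bc$. Hence $\ex(e)$ is algebraic over $\ex(X\bc)$, since $\ex$ of a $\bQ$-combination is a root of a product of powers, and the increment is $0$. In the case $\ld^\bQ(e/X\bc)=1$, the increment is $\le 0$ provided at least one of $\ld^K(e/X\bc)$, $\trd(\ex(e)/\ex(X\bc))$ vanishes.

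I then check that each kind of step has this property. For steps of kind (1), $\ex(e)=1$, so the transcendence term is $0$. For steps of kind (2), algebraicity of $\ex(e)$ over $\ex(\bc)$ likewise gives transcendence term $0$. For steps of kind (3), $e\in\ag{X\bc}^K$ gives $\ld^K(e/X\bc)=0$.

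By induction on $n$, and on the length of the generating sequence inside each $V_n$, every finite $\by\se V^\A$ is therefore contained in a finite tuple $\by\bz$ produced by such steps, starting from the empty tuple over $X$. Since each step increments $\delta$ by at most $0$, this gives $\delta(\by\bz/X)\le 0$, and strongness of $X$ gives equality.

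The main obstacle is bookkeeping rather than substance. One point is to make sure every element of $V_{n+1}$ is genuinely produced from a \emph{finite} witness tuple in $V_n$. This holds because $\ex(a)$ algebraic over $\ex(V_n)$ means algebraic over $\ex$ of finitely many elements of $V_n$. The other point is that localization at the infinite-dimensional $X$ is handled via the displayed formula $\delta(\by/X)=\ld^K(\by/X)+\trd(\ex(\by)/\ex(X))-\ld^\bQ(\by/X)$. That formula lets the one-step increment be computed as above without passing to finite-dimensional approximations of $X$.
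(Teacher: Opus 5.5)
Your proof is correct and uses the same idea as the paper's: via the iteration of Lemma~\ref{le:pc-closure}, every stage of building $V^\A$ from $X$ has either a vanishing $\ld^K$-term ($K$-spans) or a vanishing $\trd$-term (kernel elements and preimages of algebraic points of $S$), so $\delta(\cdot/X)$ never increases and strongness of $X$ forces equality. The paper packages this as the chain $X\le_0 V_1\le_0\ex^{-1}(S_1)\le_0 V_2\le_0\cdots$ and invokes transitivity of $\le_0$ (Fact~\ref{le:prealg}); you instead telescope single-element increments directly, which also makes explicit that $X\le_0 V_1$ is really two steps: first adjoining kernel elements, then taking the $K$-span.
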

\begin{proof}
  In the proof of Lemma \ref{le:pc-closure}, we have the following prealgebraic chain:
  \[X\leq_0 V_1\leq_0\ex^{-1}(S_1)\leq_0 V_2\leq_0\ex^{-1}(S_2)\leq_0...,\]
  because either $\ld^K$ or $\trd$ vanishes. Fact \ref{le:prealg} yields $X\leq_0 V^\A$.
\end{proof}

Combining with Fact \ref{le:prealg}, we have the following.

\begin{corollary}\label{cor:strongStructure}
  Let $\A\se\B\in\C$, and let $X\leq V^\A$ such that $\ag{X}^{\pc}_\A=\A$. Then $\A\leq\B$ if and only if $X\leq V^\B$.
\end{corollary}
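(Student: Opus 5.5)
The argument combines the preceding lemma (that $V^\A$ is prealgebraic over $X$) with part (2) of Fact \ref{le:prealg}, applied inside the ambient pregeometry on $V^\B$. First I will unpack the definition: $\A\leq\B$ means, by Definition \ref{de:se}, that $\A$ is a substructure of $\B$ and $\Delta^\A=V^\A$ is strong in $\Delta^\B=V^\B$. Since $\A\se\B$ is given, the claim reduces to showing that $V^\A\leq V^\B$ if and only if $X\leq V^\B$. All sets involved are $\cl$-closed, i.e.\ $\bQ$-subspaces of $V^\B$. Moreover, $\delta^\A$ agrees with $\delta^\B$ on tuples from $V^\A$ by the uniformity assumption on embeddings, so strongness computed in $\A$ is the same as strongness computed in $\B$.

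Next I will apply the preceding lemma to $\A$ and $X$. Its hypotheses hold because $X$ is a strong $\bQ$-subspace of $V^\A$ and $\ag{X}^{\pc}_\A=\A$. It gives $X\leq_0 V^\A$. Here the relation $\leq_0$ is computed with $\delta^\A$, which coincides with $\delta^\B$ restricted to $V^\A$, so it holds equally when $V^\A$ is viewed as a subset of $V^\B$.

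Finally I will invoke Fact \ref{le:prealg}(2) with the chain $X\se V^\A\se V^\B$ of $\cl$-closed subsets of $A:=V^\B$. This yields $X\leq V^\B$ if and only if $V^\A\leq V^\B$, which is exactly the required equivalence.

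The only point needing care is the passage between $\A$ and $\B$. Localizations $\delta(\bx/Y)$ are defined as infima over finite-dimensional closed $Y'$ with $\cl(\bx)\cap Y\se Y'\se Y$. For tuples $\bx$ in $V^\A$ and subsets $Y\se V^\A$, these infima only involve data inside $V^\A$, so they are unchanged when computed in $\B$. This is where I expect any subtlety to lie, but it is routine given the uniformity of $\cl$ and $\delta$ under embeddings.
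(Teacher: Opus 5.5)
Your proposal is correct and matches the paper's argument exactly: the paper also takes $X\leq_0 V^\A$ from the preceding lemma and applies Fact \ref{le:prealg}(2) to the chain $X\se V^\A\se V^\B$. Your point that $\delta$, and its localizations at subsets of $V^\A$, are the same whether computed in $\A$ or in $\B$ is what justifies moving $\leq_0$ into $V^\B$; the paper uses this without stating it.
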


Given a covering map from a $\bQ$-vector space to a unit circle, we can always extend it to a $K$-powered field, without changing the kernel.

\begin{proposition}\label{prop:extendToStructure}
  Let $V_0'$ be a $K$-vector space, let $V_0$ be a $\bQ$-subspace of $V_0'$, let $R_0$ be a real closed field, let $S_0$ be its unit circle, and let $\ex_0:V_0\to S_0$ be a group homomorphism such that $\ag{V_0}^K=V_0'$ and $\ag{\ex_0(V_0)}^S=S_0$, and $\ker(\ex_0)$ is a $\bZ$-group. Then there is a structure $\A=(V^\A,R^\A,\ex^\A)\in\C$ extending $(V_0',R_0,\ex_0)$ such that $V_0$ is strong in $V^\A$, $\ker(\ex^\A)=\ker(\ex_0)$, and $\ag{V_0}^{\pc}_\A=\A$.
\end{proposition}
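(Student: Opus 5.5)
We build $\A$ as the union of an $\omega$-chain obtained by alternately closing under $K$-linear span and under the real-closure/preimage step, as in Lemma \ref{le:pc-closure}. The difference is that preimages must now be freely adjoined rather than found inside an ambient structure. The kernel is never enlarged, and every adjoined element is chosen so that it is prealgebraic over what came before. Strongness of $V_0$ in the final $V^\A$ then follows from Fact \ref{le:prealg} together with the union property in Fact \ref{prop:str}.

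\emph{Step 1: setup and invariant.} Start with $W_0:=V_0$, the $\bQ$-space on which $\ex_0$ is defined, the ambient $K$-space $V_0'$, and the field $R_0$. At stage $n$ we will have the following data:
\begin{itemize}
  \item a $\bQ$-subspace $W_n$ of a $K$-vector space $V_n'=\ag{W_n}^K$;
  \item a real closed field $R_n$;
  \item a homomorphism $\ex_n:W_n\to S_n$ with kernel $\ker(\ex_0)$ and $\ag{\ex_n(W_n)}^S=S_n$.
\end{itemize}
We also maintain $W_{n-1}\leq_0 W_n$, with $\delta$ computed via $\ld^K$ in $V_n'$.

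\emph{Step 2: extend the map to the $K$-span.} Given stage $n$, pick a $\bQ$-basis $(v_i)$ of $V_n'$ modulo $W_n$. Let $R_{n+1}$ be the real closure of $R_n(t_i)_i$, with the $t_i$ algebraically independent over $R_n$. Set $\ex_{n+1}(v_i):=p_{\st}^{-1}(t_i)$ and extend $\bQ$-linearly. Since $S_{n+1}$ is divisible, we choose compatible roots of the generic points: for each $v_i$ we pick a coherent system of $m$-th roots in $S_{n+1}$, which is possible because $S$ of a real closed field is divisible. The generic choice guarantees that the kernel does not grow. This is the same as taking $\ex_{n+1}$ on $\bQ v_i$ to be an isomorphism onto a divisible hull of a multiplicatively independent element.

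For a tuple $\by$ from $V_n'$, we have $\ld^K(\by/W_n)=0$, while $\trd(\ex(\by)/\ex(W_n))=\ld^\bQ(\by/W_n)$. Hence $\delta(\by/W_n)=0$, so this step is prealgebraic.

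\emph{Step 3: surject onto the new circle.} $S_{n+1}:=R_{n+1}^2\cap S$ now contains points outside $\ex(V_n')$. Take a $\bQ$-basis $(s_j)$ of $S_{n+1}$ modulo the divisible hull of $\ex(V_n')$, viewing $S_{n+1}$ modulo torsion as a $\bQ$-space. Adjoin new $K$-independent formal vectors $u_j$ and define $V_{n+1}'':=V_n'\oplus\bigoplus_j K u_j$. Set $\ex(u_j):=s_j$ together with a compatible choice of roots, and let $W_{n+1}:=V_n'+\sum_j\bQ u_j$.

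Torsion of $S$ is the image of $\bQ\cdot\ker$ only if $\ker(\ex_0)$ already contains enough elements. This holds because $\ker(\ex_0)$ is a $\bZ$-group and $\ex_0$ already hits the roots of unity. Indeed, $\ag{\ex_0(V_0)}^S=S_0$ contains all torsion, and $V_0$ is a $\bQ$-space, so the torsion is already covered.

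Here the new $u_j$ contribute $\ld^K=\ld^\bQ$ while $\trd=0$, so again $\delta(\by/V_n')=0$. Then we take the $K$-span and return to Step 2.

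\emph{Step 4: conclusion and main obstacle.} Let $\A$ be the union of the chain. It lies in $\C$, since $\ex$ is surjective, the kernel equals $\ker(\ex_0)$ and is hence a $\bZ$-group, and $R^\A$ is real closed as a union of real closed fields. By construction $\ag{V_0}^{\pc}_\A=\A$. By Fact \ref{le:prealg}(1) along the chain and the union property, $V_0\leq_0 V^\A$; in particular $V_0\leq V^\A$.

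The main obstacle is Step 2. We must make sure that the homomorphism extends from the $\bQ$-space $W_n$ to its $K$-span with exactly the claimed $\trd$ and with no new kernel. The difficulty is that the divisible subgroup generated by a generic circle point must be chosen coherently, and multiplicative independence of the new values from $\ex(W_n)$ must hold. We would argue the latter by algebraic independence of the $t_i$ over $R_n$.
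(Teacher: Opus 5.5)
Your proposal is correct and follows essentially the same route as the paper. You alternately extend $\ex$ from the current $\bQ$-space to its $K$-span by sending a $\bQ$-basis to algebraically independent circle points, then freely adjoin $K$-independent preimages of a multiplicative basis of the new circle; each step is prealgebraic, so strongness of $V_0$ passes to the union, and your explicit choice of coherent roots fills in a detail the paper leaves implicit.

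The one point to tighten is the torsion remark in Step 3. The inclusion $\operatorname{Tor}(S_0)\se\ex_0(V_0)$ does not follow from $S_0$ containing all roots of unity. It follows from the $\bZ$-group property: $1\in\ker(\ex_0)$ is not divisible in $\ker(\ex_0)$ by any $m>1$, so $\ex_0(1/m)$ is a primitive $m$-th root of unity.
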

\begin{proof}
  First note that since $\ker(\ex_0)$ is a $\bZ$-group, we have $\operatorname{Tor}(S_0)\se\ex_0(V_0)$. Therefore, the map $\ex^\A$ will be surjective as long as its image contains a set whose divisible hull is $S^\A$.
  
  Inductively construct $R_n,S_n,V_n,V_n'$ and $\ex_n:V_n\to S_n,\ex_n':V_n'\to S_{n+1}$ such that
  \begin{itemize}
    \item $S_n$ is the unit circle of $R_n^2$,
    \item $\ag{V_n}^K=V_n'$ and $\ag{\ex_n(V_n)}^S=S_n$,
    \item $\ex_n'$ extends $\ex_n$ and $\ex_{n+1}$ extends $\ex_n'$,
    \item $V_n$ is strong in $V_n'$ and $V_n'$ is strong in $V_{n+1}$,
    \item $\ker(\ex_n)=\ker(\ex_n')=\ker(\ex_0)$.
  \end{itemize}
  \begin{enumerate}
    \item We first extend $\ex_0$ to a homomorphism $\ex_0':V_0'\to S_1$ for some extension $S_1\se R_1^2$ of $S_0\se R_0^2$. We want $V_0$ to be strong, i.e.,
    \[\delta(\bx/V_0)=\ld^K(\bx/V_0)+\trd(\ex(\bx)/\ex(V_0))-\ld^\bQ(\bx/V_0)\geq 0,\]
    for all $\bx\se V_0'$. Take a $\bQ$-basis $\{a_i\mid i\in I_0\}$ of $V_0'$ over $V_0$. Since $\ag{V_0}^K=V_0'$, the image $\{\ex(a_i)\}_{i\in I_0}$ must be algebraically independent over $\ag{\ex(V_0)}^S=S_0$. There is a type $p((y_i)_{i\in I_0})$ stating that $(y_i)_{i\in I_0}$ is contained in $S$ and is algebraically independent over $S_0$. Let $R_1$ be the real closed field generated by $R_0$ and a realization of $p$, and let $\ex_0'$ send $a_i$'s to the realization. Verify $\ker(\ex_0)=\ker(\ex_0')$.
    \item Suppose we already have $\ex_{n-1}':V_{n-1}'\to S_n$. We want a surjective homomorphism $\ex_n:V_n\to S_n$ extending $\ex_{n-1}'$, where $V_n$ is a $\bQ$-subspace of some $K$-vector space $V_n'$ satisfying $\ag{V_n}^K=V_n'$. Take a multiplicative basis $\{b_i\mid i\in J_n\}$ of $S_n$ over $\ex(V_{n-1}')$. Since $\ag{\ex(V_{n-1}')}^S=S_n$, if some $a_i$'s are mapped to $b_i$'s, then $a_i$'s must be $K$-independent over $V_{n-1}'$, (o.w., $V_{n-1}'$ cannot be strong). There is a type $q((x_i)_{i\in J_n})$ stating that $(x_i)_{i\in J_n}$ are $K$-independent over $V_{n-1}'$. Let $V_n'$ be the $K$-vector space generated by $V_{n-1}'$ and a realization $(a_i)_{i\in J_n}$ of $q$, let $V_n$ be the $\bQ$-subspace generated by $V_{n-1}'$ and $(a_i)_{i\in J_n}$, and let $\ex_n$ send $a_i$'s to $b_i$'s. Verify $\ker(\ex_{n-1}')=\ker(\ex_n)$.
    \item Go back to step 1 substituting $(V_0,R_0,\ex_0)$ with $(V_n,R_n,\ex_n)$.
  \end{enumerate}
  Take $V^\A=\bigcup_{n<\omega}V_n,\,R^\A=\bigcup_{n<\omega}R_n,\,\ex^\A=\bigcup_{n<\omega}\ex_n$.
\end{proof}

Plugging in $V_1=K,R_0=\bQ^{\rc}$, and
\[\ex_0:V_0=\bQ\to\bQ/\bZ\to\operatorname{Tor}(S),\]
we obtain a naive $K$-powered field.

\begin{corollary}
  The class $\C$ is non-empty.
\end{corollary}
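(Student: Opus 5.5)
We apply Proposition \ref{prop:extendToStructure} to a concrete starting datum, the one indicated just before the statement, and check its hypotheses one by one. Take $V_0'=K$ as a $K$-vector space and $V_0=\bQ\se K$ as a $\bQ$-subspace, so that $\ag{V_0}^K=K=V_0'$. Let $R_0=\bQ^{\rc}$, the real algebraic numbers, with unit circle $S_0$. Define $\ex_0:\bQ\to S_0$ by $q\mapsto(\cos 2\pi q,\sin 2\pi q)$. This is well defined because these coordinates are real algebraic numbers, and it is a group homomorphism onto $\operatorname{Tor}(S_0)$ with kernel exactly $\bZ$.

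Next we verify the remaining hypotheses. The kernel $\bZ$ is trivially a $\bZ$-group, being $(\bZ,+,0,1)$ itself. For the condition $\ag{\ex_0(V_0)}^S=S_0$, note that $\ag{X}^S$ is the set of points of $S$ in the algebraic closure generated by $X$. Since $X=\operatorname{Tor}(S_0)$ consists of algebraic points, its algebraic closure is $\ag{\bQ}^{\ac}=(\bQ^{\rc})^2$, the algebraic closure of $\bQ$ inside $R_0^2$. Intersecting with $S$ gives exactly $S_0$. One should double-check that $\ag{\cdot}^{\ac}$ here means the relative algebraic closure inside $R_0^2$, which is $\bQ^{\rc}(i)$; this is the only point needing care, and it is immediate.

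Proposition \ref{prop:extendToStructure} now yields $\A\in\C$ extending $(K,\bQ^{\rc},\ex_0)$. Hence $\C\neq\emptyset$. There is no real obstacle here: the only possible subtlety is the well-definedness of $\ex_0$ and the identification of $\ag{\operatorname{Tor}(S_0)}^S$ with $S_0$, and both are elementary.
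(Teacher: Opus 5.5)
Your proposal is correct and is the paper's argument. The paper also plugs $V_0'=K$, $V_0=\bQ$, $R_0=\bQ^{\rc}$ and $\ex_0:\bQ\to\bQ/\bZ\to\operatorname{Tor}(S)$ into Proposition~\ref{prop:extendToStructure}, and your checks (kernel exactly $\bZ$, and $\ag{\operatorname{Tor}(S_0)}^S=S_0$ because the algebraic closure inside $R_0^2=\bQ^{\rc}(i)$ is all of it) simply spell out the hypotheses the paper leaves implicit.
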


Similarly, given a strong set in a $K$-powered field, we can always find a strong $K$-powered subfield containing it preserving the kernel.

\begin{lemma}\label{le:minimalStrong}
  Let $\A\in\C$, and let $X$ be a strong $\bQ$-subspace of $V^\A$ containing $\bZ$. There is a $K$-powered subfield $\B\leq\A$ such that $\B=\ag{X}^{\pc}_\B$ and $\kex^\B=X\cap\kex^\A$.
\end{lemma}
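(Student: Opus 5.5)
The plan is to rerun the construction of Lemma~\ref{le:pc-closure} inside $\A$, starting from $X$ instead of $X\cup\kex^\A$. At the steps where that lemma takes full preimages under $\ex$, I will instead pick preimages carefully, so that no kernel elements outside $X$ are added.

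\emph{Step 1: the kernel candidate.} First check that $Z_0:=X\cap\kex^\A$ is a $\bZ$-group. It is a subgroup of $\kex^\A$ containing $\bZ$. It is also pure. Indeed, take $k\in Z_0$ and $n\ge 1$, and write $k=nm+r$ with $m\in\kex^\A$ and $0\le r<n$. Then $m=(k-r)/n\in X$, because $X$ is a $\bQ$-subspace containing $\bZ$, and so $m\in Z_0$. A pure subgroup of a $\bZ$-group that contains $1$ is an elementary substructure, by quantifier elimination for Presburger arithmetic with divisibility predicates. Also note $\bQ\se X$, so $\operatorname{Tor}(S^\A)=\ex(\bQ)\se\ex(X)$.

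\emph{Step 2: inductive construction.} Put $W_1:=X$ and build an increasing chain of $\bQ$-subspaces $W_n\se W_n':=\ag{W_n}^K\se W_{n+1}$ of $V^\A$. Set $S_n:=\ag{\ex(W_n')}^S$. Choose a multiplicative basis $(b_i)$ of $S_n$ over $\ex(W_n')$ modulo torsion. Pick one preimage $a_i\in V^\A$ of each $b_i$, which is possible since $\ex^\A$ is surjective. Let $W_{n+1}$ be the $\bQ$-span of $W_n'$ and the $a_i$. Since the torsion already lies in $\ex(X)$, we get $\ex(W_{n+1})=S_n$.

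\emph{Step 3: the chain is prealgebraic.} Each step $W_n\se W_n'\se W_{n+1}$ is prealgebraic. For $\by$ in the $K$-span, $\ld^K(\by/W_n)=0$ and $\trd(\ex\by/\ex W_n)\le\ld^\bQ(\by/W_n)$, so $\delta\le 0$. For the preimages, $\trd=0$ and $\ld^K\le\ld^\bQ$, so again $\delta\le 0$. Strongness of $X$ in $V^\A$, propagated up the chain by Fact~\ref{le:prealg}, then gives $\delta=0$ at each step. Hence each $W_n$ and $W_n'$ is strong in $V^\A$.

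\emph{Step 4: the kernel does not grow.} This is the key step and the main obstacle.
\begin{itemize}
\item Suppose $k\in W_n'\setminus W_n$ with $k\in\kex^\A$. Then
\[\delta(k/W_n)=0+0-1=-1,\]
contradicting $W_n\le V^\A$.
\item Suppose $v+\sum q_ia_i\in\kex^\A$ with $v\in W_n'$. Applying $\ex$ gives $\ex(v)\prod b_i^{q_i}=1$. Multiplicative independence modulo torsion over $\ex(W_n')$ forces all $q_i=0$.
\end{itemize}
By induction, $\kex^\A\cap\bigcup_nW_n=Z_0$.

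\emph{Step 5: assembling $\B$.} Let $V^\B:=\bigcup_nW_n$, which is a $K$-space, and $R^\B:=\bigcup_n\ag{\ex(W_n')}^{\rc}$, with $\ex^\B$ the restriction of $\ex^\A$. Then $\ex^\B$ maps onto $S^\B$, its kernel is $Z_0$, and $R^\B$ is real closed, so $\B\in\C$. The chain is prealgebraic, so $X\le_0V^\B$, and Fact~\ref{le:prealg} gives $V^\B\le V^\A$, i.e.\ $\B\le\A$. Finally, $\kex^\B=Z_0\se X$ and $\B$ is built by iterating $\ag{\cdot}^K$ and taking preimages of $\ag{\ex(\cdot)}^S$. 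Hence by Lemma~\ref{le:pc-closure} applied in $\B$, we get $\ag{X}^{\pc}_\B=\B$.
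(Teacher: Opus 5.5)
Your proposal is correct and follows essentially the same route as the paper. Starting from $X$, you alternate $K$-spans with a single chosen preimage of each element of a multiplicative basis of $\ag{\ex(\cdot)}^S$. Strongness shows that the $K$-span steps add no kernel elements, and multiplicative independence shows that the preimage steps add none. You also check two points the paper leaves implicit: that $X\cap\kex^\A$ is again a $\bZ$-group, and that $\operatorname{Tor}(S^\A)=\ex(\bQ)\se\ex(X)$, which is what makes $\ex^\B$ surjective.
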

\begin{proof}
  We construct $S_n,V_n,V_n'$ similar to the previous lemma. Let $V_0=X$.
  \begin{enumerate}
    \item Let $V_n'=\ag{V_n}^K$. Since $V_n$ is strong, we know $V_n'$ is strong and $V_n'\cap\kex^\A=V_n\cap\kex^\A$.
    \item Let $S_n=\ag{\ex(V_{n-1}')}^S$. Take a multiplicative basis $\{b_i\mid i\in I_n\}$ of $S_n$ over $\ex(V_{n-1}')$. For each $i$, take an element $a_i$ in $\ex^{-1}(b_i)$. Let $V_n=\ag{(a_i)_{i\in I_n}}^\bQ$. Verify that $V_n$ is strong and $V_n\cap\kex^\A=V_{n-1}'\cap\kex^\A$.
  \end{enumerate}
  Take $V^\B=\bigcup_{n<\omega}V_n,\,S^\B=\bigcup_{n<\omega}S_n,\,R^\B=\ag{S^\B}^{\rc}$.
\end{proof}

\subsection{Localization}

Fix a $K$-powered field $\I$ with $\kex^\I=\bZ$ generated by a finite-dimensional strong $\bQ$-subspace $X$. We may take $X$ to be $\bQ$-independent from $\kex^\I$. Let $\bb$ be a $\bQ$-basis of $X$, i.e., $X=\ag{\bb}^\bQ$. Consider the localization of $\C$ at $\I$:
\[\C_\I=\{\A\in\C\mid\I\leq\A\}.\]
Let $\C_\I'$ consist of all $K$-powered fields extending $\I$, i.e., $\C_\I'=\{\A\in\C\mid\I\se\A\}$. Clearly, $\C_\I'$ can be axiomatized by a set of axioms $T_\I'$. For each $\A\in\C_\I'$, Corollary \ref{cor:strongStructure} gives that $\I\leq\A$ if and only if $\ag{\bb}^\bQ\leq V^\A$, which is further characterized by the following lemma.

\begin{lemma}\label{le:equivC_I}
  Let $\A\in\C_\I'$. Then $\bb$ is $\bQ$-independent from $\kex^\A$, and we have $\I\leq\A$ if and only if the following conditions hold:
  \begin{enumerate}
    \item $\ld^K(\bx/\bb)=\ld^\bQ(\bx/\bb)=\ld^\bQ(\bx)$ for all $\bx\se\kex^\A$,
    \item $\delta(\bx/\bb\cup\kex^\A)\geq 0$ for all $\bx\se V^\A$.
  \end{enumerate}
\end{lemma}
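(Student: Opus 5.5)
The plan is to first settle the independence claim, then prove the equivalence by splitting the localization $\delta(\bx/\bb)$ along the kernel, with submodularity handling one direction. By Corollary \ref{cor:strongStructure}, $\I\leq\A$ is equivalent to $\ag{\bb}^\bQ\leq V^\A$, that is, $\delta(\bx/\bb)\geq 0$ for all finite $\bx\se V^\A$.

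For the independence claim: since $\A$ extends $\I$, the restriction of $\ex^\A$ to $V^\I$ is $\ex^\I$. Hence $\kex^\A\cap V^\I=\kex^\I=\bZ$. Because $\bb$ was chosen $\bQ$-independent from $\kex^\I$, any $\bQ$-relation between $\bb$ and $\kex^\A$ would produce a nonzero element of $\ag{\bb}^\bQ$ lying in $\kex^\A\cap V^\I=\bZ$. This contradicts the choice of $X$.

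For the forward direction, assume $\ag{\bb}^\bQ\leq V^\A$.

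For (1), take $\bx\se\kex^\A$. Then $\ex(\bx)$ is torsion, so $\trd(\ex(\bx)/\ex(\bb))=0$, and therefore
\[\delta(\bx/\bb)=\ld^K(\bx/\bb)-\ld^\bQ(\bx/\bb)\geq 0.\]
The reverse inequality $\ld^K\leq\ld^\bQ$ always holds, so the two are equal. The equality $\ld^\bQ(\bx/\bb)=\ld^\bQ(\bx)$ is exactly the independence just shown.

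For (2), $\delta(\bx/\bb\cup\kex^\A)$ is an infimum over finite $\by\se\kex^\A$ of $\delta(\bx/\bb\by)=\delta(\bx\by/\bb)-\delta(\by/\bb)$. By (1), $\delta(\by/\bb)=0$. Strongness gives $\delta(\bx\by/\bb)\geq0$, so every term of the infimum is non-negative.

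For the converse, assume (1) and (2), and let $\bx\se V^\A$ be finite. Choose a finite $\by\se\kex^\A$ realizing the infimum in (2); this is possible because the relevant quantities are integer-valued. More carefully, choose $\by$ spanning $\cl(\bx\bb)\cap\kex^\A$ modulo $\bb$. Then
\[\delta(\bx/\bb)=\delta(\bx\by/\bb)-\delta(\by/\bx\bb)=\delta(\by/\bb)+\delta(\bx/\bb\by)-\delta(\by/\bx\bb).\]
By (1), $\delta(\by/\bb)=0$, and by (2), $\delta(\bx/\bb\by)\geq0$. The remaining term $\delta(\by/\bx\bb)$ is $\leq 0$: by the choice of $\by$, it equals $\ld^K(\by/\bx\bb)$ minus zero. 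I expect bounding this term to be the main obstacle.

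The alternative route is submodularity applied to $\cl(\bx\bb)$ and $\cl(\bb\cup\kex^\A)$. Their intersection is $\cl(\bb\by)$ with $\by\se\kex^\A$, and their union-closure is $\cl(\bx\bb\cup\kex^\A)$. Submodularity, localized at $\bb$, gives
\[\delta(\bx/\bb)\geq\delta(\bx/\bb\cup\kex^\A)+\delta(\by/\bb),\]
which is $\geq 0+0$ by (2) and (1). The infinite-dimensionality of $\kex^\A$ is handled through the inf definition of localization, by passing to large finite subsets of the kernel. I would present this submodularity argument as the proof.
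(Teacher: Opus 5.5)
Your proof is correct and is essentially the paper's argument. The paper notes that (1) says $\ag{\bb}^\bQ\leq_0\ag{\bb\cup\kex^\A}^\bQ$ and (2) says $\ag{\bb\cup\kex^\A}^\bQ\leq V^\A$, then invokes Fact \ref{le:prealg}(2); you instead reprove that transfer by hand via $\delta(\bx/\bb)=\delta(\bx/\bb\by)+\delta(\by/\bb)$, with $\by$ spanning $\cl(\bx\bb)\cap\kex^\A$ modulo $\bb$.

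The term $\delta(\by/\bx\bb)$ that you flagged as the main obstacle is identically $0$, since $\by\se\cl(\bx\bb)$ and $\delta$ is preserved by $\cl$. So your first route already closes, and it is the same computation as your submodularity route.
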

\begin{proof}
  Indeed, $\ag{\bb}^\bQ\cap\kex^\A=\ag{\bb}^\bQ\cap\kex^\I=0$, i.e., $\bb$ is $\bQ$-independent from $\kex^\A$. Condition 1 is equivalent to $\ag{\bb}^\bQ\leq_0\ag{\bb\cup\kex^\A}^\bQ$, and Condition 2 is equivalent to $\ag{\bb\cup\kex^\A}^\bQ\leq V^\A$. The conclusion follows from Fact \ref{le:prealg}.
\end{proof}

\subsection{Finite generation and minimal extensions}

In this subsection, we define explicitly what it means for a $K$-powered field to be finitely generated. By examining minimal strong extensions in $\C_\I$, we will see that this definition coincides with the notion of finite $\leq$-generation.

\begin{definition}
  A $K$-powered field $\A$ is \emph{finitely generated} if $\ld^\bQ(\kex^\A)$ is finite and $\A=\ag{\bx}^{\pc}_\A$ for some finite $\bx\se V^\A$.
\end{definition}

\begin{proposition}\label{prop:minimalExtension}
  Let $\A\leq\B\in\C$ be a proper minimal strong extension. Then exactly one of the following holds:
  \begin{enumerate}
    \item (kernel) $\ld^\bQ(\kex^\B/V^\A)=1$ and $\B=\ag{V^\A}^{\pc}_\B$.
    \item (prealgebraic) $\kex^\A=\kex^\B$ and $\B=\ag{V^\A,\ba}^{\pc}_\B$ for some $\bQ$-independent tuple $\ba\se V^\B$ over $V^\A$ with $\delta(\ba/V^\A)=0$.
    \item (purely transcendental) $\kex^\A=\kex^\B$ and $\B=\ag{V^\A,x}^{\pc}_\B$ for every $x\in V^\B\setminus V^\A$.
  \end{enumerate}
\end{proposition}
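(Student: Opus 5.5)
Here "minimal" means: there is no $\C$-structure $\A'$ with $\A<\A'<\B$, all inclusions strong. The plan is a case split on whether the kernel grows, and then on whether some new element has predimension $0$ over $V^\A$. In each case we build an intermediate strong $K$-powered field with Lemma \ref{le:minimalStrong} or Corollary \ref{cor:strongStructure}, and minimality forces it to equal $\A$ or $\B$.

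\emph{Kernel case.} Suppose $\kex^\B\not\se V^\A$. Pick $k\in\kex^\B\setminus V^\A$ and set $X=\ag{V^\A,k}^\bQ$. Since $\ex(k)=1$ and $\ex(V^\A)=S^\A$ is already relatively algebraically closed in $S$, we get
\[\delta(k/V^\A)=\ld^K(k/V^\A)-1\le 0.\]
Because $\A\le\B$, this is $0$, so $X$ is prealgebraic over $V^\A$; by Fact \ref{le:prealg}, $X\le V^\B$. Lemma \ref{le:minimalStrong} then gives $\B'\le\B$ with $\B'=\ag{X}^{\pc}_{\B'}$ and $\kex^{\B'}=X\cap\kex^\B$. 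This $\B'$ properly contains $\A$ (it contains $k$) and strongly extends $\A$ (subset property, Fact \ref{prop:str}), so minimality gives $\B'=\B$. Hence
\[\kex^\B=X\cap\kex^\B=\kex^\A\oplus\bQ k\cap\kex^\B,\]
so $\ld^\bQ(\kex^\B/V^\A)=1$. Also $\B=\ag{X}^{\pc}$, and the pc-closure of $V^\A$ in $\B$ contains $\kex^\B$ by definition, hence contains $k$; so $\B=\ag{V^\A}^{\pc}_\B$. Cases (2) and (3) require $\kex^\A=\kex^\B$, so they are excluded here.

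\emph{Kernel unchanged, prealgebraic case.} Now assume $\kex^\A=\kex^\B$. If some finite $\bQ$-independent $\ba$ over $V^\A$ has $\delta(\ba/V^\A)=0$, choose such an $\ba$ of minimal length. Then $\ag{V^\A,\ba}^\bQ$ is strong in $V^\B$: for any $\bx$, submodularity and $\delta\ge0$ over $V^\A$ give $\delta(\bx/V^\A\ba)\ge 0$. Lemma \ref{le:minimalStrong} produces an intermediate strong structure, which minimality forces to be $\B$, giving (2). Exclusivity with (3): under (3) every $x\notin V^\A$ has $\delta(x/V^\A)\ge1$. Otherwise $\ag{V^\A,x}^\bQ$ would be prealgebraic, hence strong, and its pc-hull would give (2) rather than (3). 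More precisely, in (3) no tuple has predimension $0$: otherwise the pc-closure of $V^\A\ba$ is a proper strong substructure, unless it is all of $\B$, which contradicts the choice of a single $x$.

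\emph{Purely transcendental case.} Assume $\delta(\ba/V^\A)\ge1$ for all nonempty independent $\ba$ over $V^\A$. Take any $x\in V^\B\setminus V^\A$. We claim $\ag{V^\A,x}^\bQ\le V^\B$. For $\by$ independent over $V^\A x$, with $\by$ nonempty,
\[\delta(\by/V^\A x)=\delta(x\by/V^\A)-\delta(x/V^\A).\]
The first term is at least $1$, but $\delta(x/V^\A)\le 1$ always holds: $\ld^K$ contributes at most $1$, $\trd$ at most $1$, and $-\ld^\bQ$ contributes exactly $-1$. So the difference is $\ge 0$. Applying Lemma \ref{le:minimalStrong} (with the kernel unchanged) and minimality gives $\B=\ag{V^\A,x}^{\pc}_\B$ for every such $x$, which is (3).

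\emph{Expected obstacle.} The delicate part is the bookkeeping of the kernel in the kernel case. We need that $X$ really generates, which uses Lemma \ref{le:pc-closure}: the pc-closure automatically contains the whole kernel. We also need that $X\cap\kex^\B$ has $\bQ$-dimension exactly one over $\kex^\A$. Finally, mutual exclusivity between (2) and (3) should be argued carefully via the prealgebraic pc-hull.
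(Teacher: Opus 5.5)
Your case split, and the mechanism in each case, are the paper's: you produce an intermediate strong $K$-powered subfield with Lemma~\ref{le:minimalStrong} or Corollary~\ref{cor:strongStructure}, and minimality forces it to be $\B$. Your argument that at least one of (1)--(3) holds is correct. There is one harmless slip: the splitting $\kex^\B=\kex^\A\oplus(\bQ k\cap\kex^\B)$ is false in general. For example, if $k\notin 2\kex^\B$ then $\frac{k+1}{2}\in X\cap\kex^\B$ is not of that form, since that would force $\frac12\in\kex^\A$. You only need $\kex^\B\se V^\A+\bQ k$, which you have.

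The step that fails is your proof that (2) and (3) exclude each other. Being generated over $\A$ by a tuple $\ba$ with $\delta(\ba/V^\A)=0$ is compatible with being generated by a single element, because the pc-closure contains $K$-spans. A typical such tuple is $\ba=(a,ra)$ with $r\in K\setminus\bQ$ and $\ex(ra)$ algebraic over $R^\A(\ex(a))$, so that $\ld^K=1$, $\trd=1$, $\ld^\bQ=2$. For it, $\ag{V^\A,\ba}^{\pc}_\B=\ag{V^\A,a}^{\pc}_\B$.

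So ``which contradicts the choice of a single $x$'' is a non sequitur. Your earlier sentences only use $\delta(x/V^\A)\ge 1$, and that holds for every $x\in V^\B\setminus V^\A$ whenever the kernel does not grow, hence in both cases.

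What your analysis actually shows is that the two hypotheses are exclusive: either some nonempty $\bQ$-independent $\ba$ over $V^\A$ has $\delta(\ba/V^\A)=0$, or all such tuples have $\delta\ge 1$. The paper's proof is in the same position, since its cases 2 and 3 are separated by their hypotheses rather than by their conclusions.

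To get a genuine ``exactly one'', record in (3) the extra fact your argument proves: $\ag{V^\A,x}^\bQ\le V^\B$ for every $x\in V^\B\setminus V^\A$. Then (2) and (3) clash at once. Take $x=a_1$ and use $\delta(a_1/V^\A)=1$. This holds because $\trd(\ex(a_1)/R^\A)=1$; otherwise $\ex(a_1)\in\ex(V^\A)$, giving $a_1\in V^\A+\kex^\B=V^\A$. Then
\[0=\delta(\ba/V^\A)=\delta(\ba/\ag{V^\A,a_1}^\bQ)+\delta(a_1/V^\A)\ge 1 .\]
Only the ``at least one'' direction is used later, e.g.\ in Theorem~\ref{thm:implyRich}.
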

\begin{proof}$\,$
  \begin{enumerate}
    \item Suppose $\kex^\A\neq\kex^\B$. Corollary \ref{cor:strongStructure} gives $\A\leq\ag{V^\A}^{\pc}_\B\leq\B$. Since $\A\leq\B$ is minimal, it must be that $\B=\ag{V^\A}^{\pc}_\B$. Take $b\in\kex^\B\setminus V^\A$. Notice that $X:=\ag{V^\A,b}^\bQ$ is strong in $V^\B$. By Lemma \ref{le:minimalStrong}, there is a $K$-powered subfield $\C\leq\B$ such that $\C=\ag{X}_\C^{\pc}$ and $\kex^\C=X\cap\kex^\B$. Minimality gives $\B=\C$ and hence $\kex^\B\se X$.
    \item Suppose $\kex^\A=\kex^\B$ and there is a $\bQ$-independent tuple $\ba\se V^\B$ over $V^\A$ with $\delta(\ba/V^\A)=0$. By Fact \ref{le:prealg}, we have $\A\leq\ag{V^\A,\ba}^{\pc}_\B\leq\B$. Minimality implies $\B=\ag{V^\A,\ba}^{\pc}_\B$.
    \item Suppose $\kex^\A=\kex^\B$ and every $\bQ$-independent tuple $\by\se V^\B$ over $V^\A$ gives $\delta(\by/V^\A)>0$. Let $x\in V^\B\setminus V^\A$. Clearly,
    \[\ld^K(x/V^\A)=\trd(\ex(x)/R^\A)=\ld^\bQ(x/V^\A)=1.\]
    Hence, $\delta(\by/\ag{V^\A,x}^\bQ)=\delta(\by,x/V^\A)-\delta(x/V^\A)\geq 0$ for every $\bQ$-independent tuple $\by\se V^\B$ over $V^\A$. In other words, $\ag{V^\A,x}^\bQ$ is strong in $V^\B$. Corollary \ref{cor:strongStructure} implies $\A\leq\ag{V^\A,x}^{\pc}_\B\leq\B$ and minimality implies $\B=\ag{V^\A,x}^{\pc}_\B$.\qedhere
  \end{enumerate}
\end{proof}

\begin{lemma}
  A $K$-powered field $\A\in\C_\I$ is finitely generated if and only if it is finitely $\leq$-generated.
\end{lemma}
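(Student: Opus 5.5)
We prove the two directions separately, working inside $\C_\I$, where every structure is a strong extension of $\I$.

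\emph{Finitely generated implies finitely $\leq$-generated.} Suppose $\ld^\bQ(\kex^\A)$ is finite and $\A=\ag{\bx}^{\pc}_\A$ with $\bx$ finite. Let $\X_0\leq\X_1\leq\dots$ be a chain in $\C$ with union $\A$.
\begin{enumerate}
  \item Since $\kex^\A$ has finite $\bQ$-dimension and $\bx$ is finite, some $\X_k$ contains $\bx$ together with a $\bQ$-basis of $\kex^\A$.
  \item Each $\X_k$ is a substructure, so $\kex^{\X_k}=\kex^\A\cap V^{\X_k}$. Hence $\kex^{\X_k}=\kex^\A$.
  \item By the remark after Lemma \ref{le:pc-closure}, the $K$-powered field generated by $\bx$ in $\X_k$ equals the one generated in $\A$, namely $\A$ itself.
  \item Since $\X_k\se\A$ is a $K$-powered field containing $\bx\cup\kex^\A$, we get $\X_k=\A$.
\end{enumerate}

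\emph{Finitely $\leq$-generated implies finitely generated.} We argue by contraposition: assume $\A\in\C_\I$ is not finitely generated and build a strong chain of proper substructures with union $\A$.
\begin{enumerate}
  \item Build an increasing sequence of finite-dimensional $\bQ$-subspaces $X_n\leq V^\A$ with union $V^\A$. Start from $X_0=\ag{\bb}^\bQ+\bZ$, which is finite-dimensional and strong because $\I\leq\A$.
  \item At stage $n$, enumerate $V^\A$ (or a cofinal part in the uncountable case, where we should instead restrict to the countable case or argue via $\omega$-cofinality). Replace $X_n$ together with the next element by its hull, using Fact \ref{le:min}. This requires $\delta_\I$ to be bounded below on $\A$, which holds since $\I\leq\A$ makes $\delta(\cdot/\bb)\geq0$.
  \item The hull of a finite set is $\ag{\text{finite tuple}}^\bQ$, so each $X_n$ is finite-dimensional and strong.
  \item Apply Lemma \ref{le:minimalStrong} to $X_n\cup\bZ$. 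This gives $\B_n\leq\A$ with $\B_n=\ag{X_n}^{\pc}_{\B_n}$ and $\kex^{\B_n}=X_n\cap\kex^\A$, which has finite $\bQ$-dimension. So each $\B_n$ is finitely generated.
  \item The $\B_n$ are increasing: $\B_n$ is the $K$-powered field generated by $X_n$ inside $\B_{n+1}$, with kernel constraints compatible. They are strong in $\A$, hence strong in each other by Fact \ref{prop:str}(1).
  \item Their union is $\A$. Since $\A$ is not finitely generated, no $\B_n$ equals $\A$, so the chain never stabilizes, contradicting finite $\leq$-generation.
\end{enumerate}

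\emph{Main obstacle.} The delicate point is the case where $\A$ is uncountable, so no $\omega$-chain exhausts it. Here I would first show directly that a finitely $\leq$-generated $\A$ must be countably generated. Use Proposition \ref{prop:minimalExtension} together with abundance: any $\A$ is the union of a directed system of finitely generated strong substructures obtained via hulls and Lemma \ref{le:minimalStrong}. If this system has no maximum, one can extract an $\omega$-chain whose union $\A'\leq\A$ is proper.

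That alone is not yet a contradiction, because the chain must have union exactly $\A$. So I would instead use a transfinite chain and argue by cofinality: if $\A$ is not finitely generated, choose the chain so that it has countable cofinality. Alternatively, prove directly that finite $\leq$-generation forces finite kernel rank. If $\kex^\A$ had infinite rank, kernel-type minimal extensions (case 1 of Proposition \ref{prop:minimalExtension}) would yield an $\omega$-chain by adding kernel elements one at a time, since $\A=\ag{V^\B}^{\pc}$ at each step. This kernel part is the step I expect to need the most care.
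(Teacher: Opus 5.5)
\textbf{Direction ($\Rightarrow$).} Your argument is correct, and it is more direct than the paper's, which only points to the classification of minimal strong extensions (Proposition \ref{prop:minimalExtension}). You use only one thing: a member $\X_k$ of the chain that contains $\bx$ and a $\bQ$-basis of $\kex^\A$ is a $K$-powered substructure of $\A$ containing $\bx\cup\kex^\A$. Hence $\X_k\supseteq\ag{\bx}^{\pc}_\A=\A$ by the definition of $\ag{\cdot}^{\pc}_\A$, which makes your step 3 superfluous. The argument never uses strongness of the chain, so it shows more: a finitely generated $\A$ is not the union of any strictly increasing $\omega$-chain of $K$-powered substructures.

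\textbf{Direction ($\Leftarrow$): step 5.} Here you follow the paper's route (hulls via Fact \ref{le:min}, then Lemma \ref{le:minimalStrong}), and you handle infinite kernel rank and non-finite $\pc$-generation uniformly. Step 5 is not justified as written, for two reasons.
\begin{enumerate}
  \item Lemma \ref{le:minimalStrong} involves arbitrary choices of preimages, so $\B_n$ and $\B_{n+1}$ built independently from $X_n$ and $X_{n+1}$ need not be nested.
  \item $\B_n$ is not $\ag{X_n}^{\pc}_{\B_{n+1}}$, because the latter contains $\kex^{\B_{n+1}}=X_{n+1}\cap\kex^\A$, which can be strictly larger than $\kex^{\B_n}$.
\end{enumerate}
The fix is to apply Lemma \ref{le:minimalStrong} to $V^{\B_n}+X_{n+1}$ instead.
\begin{itemize}
  \item By the lemma following Lemma \ref{le:pc-closure}, $X_n\le_0 V^{\B_n}$. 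So for $\bv\se V^{\B_n}$ there is $\bz$ with $\delta(\bv\bz/X_n)=0$.
  \item Submodularity applied to $\ag{X_n,\bv,\bz}^\bQ$ and $X_{n+1}$, together with strongness of $X_n$, gives $\delta(\bv\bz/X_{n+1})\le 0$. Hence $X_{n+1}\le_0 V^{\B_n}+X_{n+1}$, and Fact \ref{le:prealg} shows this set is strong.
  \item The resulting $\B_{n+1}$ contains $\B_n$ and has finite-rank kernel. It equals $\ag{X_{n+1}}^{\pc}_{\B_{n+1}}$, so it is still finitely generated.
\end{itemize}

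\textbf{Direction ($\Leftarrow$): the uncountable case.} This is the more serious gap. Your chain exhausts $\A$ only when $V^\A$ has countable $\bQ$-dimension, but finite $\le$-generation refers to $\omega$-chains whose union is exactly $\A$. Neither suggested remedy closes this.
\begin{itemize}
  \item A transfinite exhausting chain of finitely generated substructures has cofinality $\operatorname{cf}(|\A|)$. You cannot choose that.
  \item Adding kernel elements one at a time gives a proper chain, but not one with union $\A$.
\end{itemize}
The paper's sketch (``construct an infinite proper $\le$-chain in $\A$'') is equally silent on exhaustion, so neither text supplies this argument.

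Part of it can be handled with $\Cl$. For any $X$, the set $\Cl(X)$ is strong. Every $w$ lying in $\kex^\A$, in $\ag{\Cl(X)}^K$, or in $\ex^{-1}(\ag{\ex(\Cl(X))}^S)$ satisfies $\delta(w/\Cl(X))\le 0$, so $w\in\Cl(X)$. Thus $\Cl(X)$ is the $V$-part of a strong $K$-powered substructure. Now suppose $V^\A$ has infinite $\Cl$-dimension over $V^\I$. Take a $\Cl$-basis $E$ and distinct $e_0,e_1,\dots\in E$. Then the sets $\Cl(V^\I\cup E\setminus\{e_m:m\ge n\})$, for $n<\omega$, give an exhausting $\omega$-chain of proper strong substructures. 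The case of finite $\Cl$-dimension with $\A$ uncountable still needs its own argument.
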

\begin{proof}
  $(\Rightarrow)$ follows from the classification of minimal strong extensions.
  
  $(\Leftarrow)$ If there is no $\bx\se V^\A$ such that $\A=\ag{\bx}^{\pc}_\A$, then we can construct an infinite proper $\leq$-chain in $\A$ by applying Fact \ref{le:min} and Lemma \ref{le:minimalStrong} repeatedly. If $\ld^\bQ(\kex^\A)$ is infinite, then we can construct a chain with infinitely growing $\kex$.
\end{proof}

Recall that $\Cf_\I$ denotes the finitely $\leq$-generated structures in $\C_\I$.

\subsection{Rich structures}

In this subsection, we show that $\C_\I$ contains a rich structure, and establish basic properties of rich structures in $\C_\I$.

\begin{lemma}
  $\C_\I$ has the amalgamation property for strong embeddings.
\end{lemma}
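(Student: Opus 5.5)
We build a free amalgam. Let $\X\leq\Y$ and $\X\leq\Z$ be strong embeddings in $\C_\I$, and identify $\X$ with its images. By transitivity of strong extensions (Fact \ref{prop:str}), a strong extension of $\Y$ or $\Z$ automatically lies in $\C_\I$. So it suffices to find $\W\in\C$ together with embeddings of $\Y$ and $\Z$ over $\X$ whose images are strong in $\W$.

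\textbf{Step 1: the vector-space sort.} Let $V_0'$ be the $K$-vector space pushout $V^\Y\oplus_{V^\X}V^\Z$, so that $V^\Y\cap V^\Z=V^\X$ and the two are $K$-linearly free over $V^\X$. Let $V_0=V^\Y+V^\Z\se V_0'$, a $\bQ$-subspace with $\ag{V_0}^K=V_0'$.

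\textbf{Step 2: the field sort.} Take the free amalgam of the real closed fields $R^\Y$ and $R^\Z$ over $R^\X$, which exists since o-minimal theories have the free amalgamation property. The resulting $R^\Y$ and $R^\Z$ are algebraically independent over $R^\X$. Inside it we define $\ex_0:V_0\to S$ by $\ex_0(y+z)=\ex^\Y(y)\ex^\Z(z)$.

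This is well defined: if $y+z=0$, then $y=-z\in V^\Y\cap V^\Z=V^\X$, and there $\ex^\Y$ and $\ex^\Z$ agree. The kernel is $\kex^\Y+\kex^\Z$. Indeed, if $\ex^\Y(y)=\ex^\Z(z)^{-1}$, then this common value lies in $S^\Y\cap S^\Z$. Algebraic independence over the algebraically closed $\ag{R^\X}^{\ac}$ forces $S^\Y\cap S^\Z=S^\X$. So we may adjust by an element of $V^\X$ to land in $\kex^\Y+\kex^\Z$.

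A sum of two $\bZ$-groups amalgamated over the common $\bZ$-group $\kex^\X$ is again a $\bZ$-group. This is the point where we need care: one checks that the pushout of $\bZ$-groups over $\kex^\X$ with $\kex^\Y\cap\kex^\Z=\kex^\X$ is a $\bZ$-group. We expect to argue that $\kex^\Y/\kex^\X$ is torsion-free divisible modulo nothing, using $\ag{\kex^\X}^\bQ\cap\kex^\Y=\kex^\X$, which follows from $\X\leq\Y$ via condition (1) of Lemma \ref{le:equivC_I}. We then verify the $n$-divisibility residues directly.

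After this we apply Proposition \ref{prop:extendToStructure} to $(V_0',R_0,\ex_0)$, with $R_0$ the amalgamated field (first adjusting so that $\ag{\ex_0(V_0)}^S=S_0$). This yields $\W\in\C$ in which $V_0$ is strong, the kernel is unchanged, and $\W=\ag{V_0}^{\pc}_\W$.

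\textbf{Step 3: strongness, the main computation.} By Corollary \ref{cor:strongStructure}, it suffices to show $V^\Y\leq V_0$; then transitivity with $V_0\leq V^\W$ gives $\Y\leq\W$, and symmetrically $\Z\leq\W$. Take a finite tuple $\bz\se V^\Z$ and compute $\delta(\bz/V^\Y)$ inside $V_0$, using the localization formula componentwise:
\begin{itemize}
  \item $\ld^K(\bz/V^\Y)=\ld^K(\bz/V^\X)$ by $K$-linear freeness;
  \item $\trd(\ex(\bz)/\ex(V^\Y))=\trd(\ex(\bz)/\ex(V^\X))$ by algebraic independence of the fields;
  \item $\ld^\bQ(\bz/V^\Y)=\ld^\bQ(\bz/V^\X)$ since $V^\Y\cap V^\Z=V^\X$.
\end{itemize}
Hence $\delta(\bz/V^\Y)=\delta(\bz/V^\X)\geq0$ because $\X\leq\Z$. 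A general tuple in $V_0$ over $V^\Y$ is equivalent modulo $V^\Y$ to such a $\bz$, and $\delta$ is preserved by $\cl$, so $V^\Y\leq V_0$.

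The main obstacle we anticipate is the kernel bookkeeping in Step 2. Specifically, we must ensure that the amalgamated kernel is a $\bZ$-group and that no extra kernel or torsion identifications arise, so that the hypotheses of Proposition \ref{prop:extendToStructure} hold.
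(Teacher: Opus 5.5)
Your proposal is correct and follows the same route as the paper. Both take free amalgams of the $K$-vector spaces and of the real closed fields, extend the induced homomorphism on $V^\Y+V^\Z$ via Proposition \ref{prop:extendToStructure}, and check strongness by the freeness computation $\delta(\bz/V^\Y)=\delta(\bz/V^\X)\geq 0$. Your kernel bookkeeping fills in what the paper leaves to a citation of Nadel--Stavi: you show the kernel is $\kex^\Y+\kex^\Z$, and it is a $\bZ$-group because $\kex^\Y/\kex^\X$ is torsion-free divisible, so the quotient by $n$ is $\kex^\Z/n\kex^\Z\cong\bZ/n\bZ$. One small correction: $\ag{\kex^\X}^\bQ\cap\kex^\Y=\kex^\X$ follows simply from $V^\X$ being a $\bQ$-space with $V^\X\cap\kex^\Y=\kex^\X$, not from condition (1) of Lemma \ref{le:equivC_I}.
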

\begin{proof}
  Given $\{\A_i=(V_i,R_i,\ex_i)\in\C_\I\mid i=0,1,2\}$ and strong embeddings $f_1:\A_0\to\A_1,\,f_2:\A_0\to\A_2$, we construct their free amalgam (not unique) $\A_1\otimes_{\A_0}\A_2$ as follows:

  Let $V_1\otimes_{V_0}V_2$ and $R_1\otimes_{R_0}R_2$ be free amalgams of $K$-vector spaces and real closed fields, and let $\ex:V_1+V_2\to S_1\cdot S_2$ be the group homomorphism extending $\ex_1:V_1\to S_1,\,\ex_2:V_2\to S_2$. To see that $\ker(\ex)$ is again a $\bZ$-group, one may verify the axioms in \cite{Na90}. Applying Proposition \ref{prop:extendToStructure} to $(V_1\otimes_{V_0}V_2,R_1\otimes_{R_0}R_2,\ex)$, we obtain a $K$-powered field $\A_3$ generated by $V_1+V_2$ as desired. To show $g_1(\A_1)\leq\A_3$ and $g_2(\A_2)\leq\A_3$, it suffices to show that $V_1$ and $V_2$ are strong in $V_1+V_2$. For every $\bx\se V_2$,
  \begin{align*}
    \delta(\bx/V_1)&=\ld^K(\bx/V_1)+\trd(\ex(\bx)/R_1)-\ld^\bQ(\bx/V_1)\\
    &=\ld^K(\bx/V_0)+\trd(\ex(\bx)/R_0)-\ld^\bQ(\bx/V_0)&&\text{by freenesss}\\
    &=\delta(\bx/V_0)\\
    &\geq 0.&&\qedhere
  \end{align*}
\end{proof}

\begin{proposition}
  $\C_\I$ contains a rich structure.
\end{proposition}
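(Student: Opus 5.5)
The plan is to invoke Fact \ref{le:rs}. It suffices to check that $\C_\I$ is an amalgamation class in the sense of Definition \ref{def:amalgamationClass}. We must show that $\C_\I$ is non-empty, that $\Cf_\I$ has bounded cardinality, that $\C_\I$ is closed under unions of $\leq$-chains, and that it has APS. Non-emptiness is immediate, since $\I\leq\I$, so $\I\in\C_\I$. APS is the preceding lemma. It applies within $\C_\I$ because the free amalgam of strong extensions of $\I$ strongly extends each factor, and so by transitivity (Fact \ref{prop:str}) it strongly extends $\I$.

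For the cardinality bound, we use the preceding lemma identifying finitely $\leq$-generated structures with finitely generated ones. If $\A\in\Cf_\I$, then $\kex^\A$ has finite $\bQ$-dimension, and $\A=\ag{\bx}^{\pc}_\A$ for a finite tuple $\bx$. By Lemma \ref{le:pc-closure}, $\A$ is built in $\omega$ steps. Each step takes a $K$-span, which is countable since $K$ is finitely generated, or a real closure or algebraic closure of a countable set, which is again countable. Note that $\kex^\A$ is countable: it is a $\bZ$-group of finite $\bQ$-rank, and as a subgroup of a finite-dimensional $\bQ$-space it is countable. So every structure in $\Cf_\I$ is countable, and $\kappa=\aleph_0$ works.

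For chains, let $\A_0\leq\A_1\leq\cdots$ (or any ordinal-indexed $\leq$-chain) lie in $\C_\I$, and let $\A=\bigcup\A_i$. We need three things.
\begin{enumerate}
\item $\A$ is a $K$-powered field. The union of $K$-vector spaces is a $K$-vector space, and a union of real closed fields is real closed. The map $\ex$ is a surjective homomorphism, since each element of $S^\A$ lies in some $S^{\A_i}$.
\item $\kex^\A=\bigcup\kex^{\A_i}$ is a $\bZ$-group. A union of a chain of elementary extensions is elementary, but here we must check that $\kex^{\A_i}\preceq\kex^{\A_j}$. Since $\Th(\bZ,+,0,1)$ is model complete in the Presburger language with congruence predicates, it suffices to note that $\kex^{\A_i}$ is pure in $\kex^{\A_j}$: if $nx=y$ with $y\in\kex^{\A_i}$ and $x\in\kex^{\A_j}$, then $x\in V^{\A_i}$ by divisibility of the $K$-space $V^{\A_i}$, so $x\in\kex^{\A_i}$. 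Alternatively, a union of a chain of models of the $\forall\exists$-axiomatized theory of $\bZ$-groups \cite{Na90} is again a model.
\item $\A$ strongly extends $\I$. By Fact \ref{prop:str}(3), each $V^{\A_i}\leq V^\A$, and transitivity then gives $\I\leq\A$.
\end{enumerate}

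The main obstacle is the $\bZ$-group condition in the chain step, specifically the purity and elementarity of the kernels. I would handle it as in step 2, using purity coming from divisibility of $V$ together with the inductive axiomatization of $\bZ$-groups. The remaining checks are routine, and Fact \ref{le:rs} then yields a rich structure in $\C_\I$.
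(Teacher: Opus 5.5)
Your proposal is correct and follows the same route as the paper, which likewise checks the three conditions of Definition \ref{def:amalgamationClass} (countability of finitely generated $K$-powered fields, closure under unions of $\leq$-chains, and APS from the preceding lemma) and then applies Fact \ref{le:rs}. Your chain step also fills in what the paper simply calls ``clear'': the kernels are pure in one another by divisibility of $V$, so the union of the kernels is still a $\bZ$-group.
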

\begin{proof}
  Verify that $\C_\I$ is an amalgamation class (recall Definition \ref{def:amalgamationClass}): Every finitely generated $K$-powered field has cardinality $\leq\aleph_0$. Condition 2 is clear, and Condition 3 follows from the previous lemma.
\end{proof}

\begin{lemma}
  $\C_\I$ is prime and abundant. Therefore, rich structures in $\C_\I$ are back-and-forth equivalent and $\leq$-existentially closed.
\end{lemma}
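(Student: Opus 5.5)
We have to check the two properties, prime and abundant; the conclusions then follow directly from Fact \ref{le:bf}, Corollary \ref{le:elemEq}, and the fact that rich structures are $\leq$-existentially closed, all applied to the amalgamation class $\C_\I$.

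\emph{Primeness.} By the definition of $\C_\I$, we have $\I\leq\X$ for every $\X\in\C_\I$. Since $\I\leq\I$ trivially, $\I$ itself lies in $\C_\I$, so $\I$ is an initial structure. The one point to confirm is that strongness is measured with respect to the localized predimension $\delta_\I$. But $\delta_\I(\bx)=\delta(\bx/V^\I)$, and $\I\leq\X$ in $\C$ means exactly that $\delta(\bx/V^\I)\ge 0$, which is the same condition.

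\emph{Abundance.} Let $\X\leq\A\in\C_\I$ with $\X\in\Cf_\I$, and let $a$ be an element of $\A$.
\begin{enumerate}
  \item If $a$ lies in the $R$-sort, choose a finite $\bQ$-independent tuple $\bc\se V^\A$ with $a\in\ag{\ex(\bc)}^{\rc}$. This is possible because $\ex$ is surjective, so $R^\A$ is generated by $\ex(V^\A)$ as a real closed field. We then work with $\bc$ in place of $a$.
  \item By the earlier lemma, $\X$ is finitely generated: $\X=\ag{\bx}^{\pc}$ for a finite tuple $\bx$, and $\ld^\bQ(\kex^\X)<\infty$.
  \item By Fact \ref{le:min}, extend $\bx\bc$ to a finite tuple $\bx\bc\by$ with $[\bx\bc]_{V^\A}=\ag{\bx\bc\by}^\bQ$. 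This uses that $\delta$ is bounded below on $\C_\I$, which holds because $\delta_\I\ge 0$, and that $\delta(\cdot/V^\I)$ differs from $\delta$ by a bounded amount since $V^\I$ is generated by finitely many elements over $\kex$.
  \item Let $W:=\ag{V^\X\cup\bx\bc\by}^\bQ$. Submodularity and the strongness of $V^\X$ and $[\bx\bc]$ show that $W$ is strong in $V^\A$: it is the hull of $V^\X\cup\{\bc\}$. Lemma \ref{le:minimalStrong} then gives $\Y\leq\A$ with $\Y=\ag{W}^{\pc}_\Y$ and $\kex^\Y=W\cap\kex^\A$.
  \item Since $\X\se\Y\leq\A$ and $\X\leq\A$, the subset property in Fact \ref{prop:str} gives $\X\leq\Y$.
  \item $\Y$ is finitely generated. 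Indeed, $\kex^\Y=W\cap\kex^\A$ has finite $\bQ$-dimension over $\kex^\X$, because $W$ is finite-dimensional over $V^\X$. Moreover, $\Y$ is generated by $\bx\bc\by$ together with $\kex^\Y$, and $\kex^\Y$ is finitely generated modulo $\kex^\X$.
  \item Hence $\Y\in\Cf_\I$ and $a\in\Y$, which proves abundance.
\end{enumerate}

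The main obstacle is the bookkeeping for the kernel. Lemma \ref{le:minimalStrong} requires $W$ to contain $\bZ$; this is fine since $\kex^\I=\bZ\se V^\X$. One must also ensure that $\kex^\Y$ stays finite-dimensional and does not drag in all of $\kex^\A$. This relies on $\kex^\Y=W\cap\kex^\A$, where $W$ is a finite-dimensional extension of $V^\X$, together with the fact that the pc-closure in $\Y$ is computed without adding further kernel elements.
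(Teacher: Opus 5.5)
Your outline follows the paper's: use Fact \ref{le:min} to get a finite strong piece, then close it off with Lemma \ref{le:minimalStrong}. Two of your choices are more careful than the paper's one-line proof.
\begin{enumerate}
  \item You treat $a$ in the $R$-sort, whereas the paper only takes $a\in V^\A$.
  \item You feed Lemma \ref{le:minimalStrong} a set containing all of $V^\X$, so $\X\se\Y$ comes for free. The paper applies that lemma to $\ag{a,\bb,\bc,\kex^\X}^\bQ$. There, whether $\X\se\Y$ holds depends on how the preimages $a_i\in\ex^{-1}(b_i)$ are chosen in the lemma's construction, and those choices are arbitrary.
\end{enumerate}

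The weak point is step 4. The sum of two strong subspaces is not strong in general, and ``submodularity plus strongness of $V^\X$ and $[\bx\bc]$'' does not close the gap. Write $H=[\bx\bc]$ and $E:=V^\X\cap H$. Strongness of $V^\X$ only gives $\delta(\bw/W)\ge-\delta(H/V^\X)$. Submodularity bounds $\delta(H/V^\X)$ from above by $\delta(H/E)$, which can be positive, not by $0$.

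What actually makes your $W$ strong is that $\bx$ generates $\X$. The argument runs as follows.
\begin{enumerate}
  \item $E$ is strong by Fact \ref{prop:str} and contains $\bx$, so $\ag{E}^{\pc}_\X=\X$.
  \item By the lemma preceding Corollary \ref{cor:strongStructure}, $V^\X$ is prealgebraic over $E$.
  \item Take $\by\bz\se V^\X$ with $\delta(\by\bz/E)=0$ and put $P=\ag{E,\by\bz}^\bQ$, so that $P\cap H=E$. Submodularity gives $0\le\delta(\by\bz/H)\le\delta(\by\bz/E)=0$. Hence $H\le_0 W$.
  \item Fact \ref{le:prealg} then yields $W\le V^\A$.
\end{enumerate}

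A simpler repair is to localize at $V^\X$ from the start. Since $\X\le\A$, the function $\delta(\cdot/V^\X)$ is $\bN$-valued on $V^\A$, so you can choose a finite $\by$ minimizing $\delta(\bc\by/V^\X)$. Then $W:=V^\X+\ag{\bc\by}^\bQ$ is strong, because $\delta(\bw/W)=\delta(\bw\bc\by/V^\X)-\delta(\bc\by/V^\X)\ge 0$.

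This also makes your step 3 unnecessary, and its justification is loose anyway. It is false that $\delta$ and $\delta(\cdot/V^\I)$ differ by a bounded amount. What is true is $\delta(\bx)\ge\delta(\cl(\bx)\cap V^\I)$, and the right-hand side is bounded below because $\ag{\bb}^\bQ\le V^\I$.

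Once $W$ is known to be strong, your steps 5--7 go through. In particular, $\X\se\Y$ gives $\ag{\bx}^{\pc}_\Y\supseteq V^\X$, so $\Y=\ag{\bx\bc\by}^{\pc}_\Y$ with $\kex^\Y$ of finite rank.
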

\begin{proof}
  Clearly, $\I$ is an initial structure. To see that $\C_\I$ is abundant, let $\X\leq\A\in\C_\I$ with $\X=\ag{\bb}^{\pc}_\X\in\Cf_\I$ and let $a\in V^\A$. Apply Fact \ref{le:min} to find a tuple $\bc\se V^\A$ such that $\ag{a,\bb,\bc}^\bQ$ is strong in $V^\A$. By applying Lemma \ref{le:minimalStrong} to $\ag{a,\bb,\bc,\kex^\X}^\bQ$ we obtain a desired $\Y\in\Cf_\I$ with $\X\leq\Y\leq\A$ and $a\in V^\Y$.
\end{proof}

\section{Predimension inequality}\label{sec:C_I}

This subsection is devoted to first order characterization of $\delta$. One can find such a characterization for the complex case in Zilber's paper \cite{Zi15}, by which some of our argument is inspired. The goal is to express the two conditions in Lemma \ref{le:equivC_I}.

Let $\A=(V,R,\ex)\in\C_\I'$ be a $K$-powered field extending $\I$ and let $F=R^2$. Take a $K$-basis $\bb'$ of $\bb$, and denote $k=|\bb'|$. Clearly, Condition 1 is equivalent to $\ld^K(\bx/\bb')=\ld^\bQ(\bx/\bb')$ for all $\bx\se\kex^\A$.

For each $K$-linear subspace $D\se V^n$, let $\phi^{\kex}_D$ be the following sentence:
\[\forall\bx\in\kex^{n-k},\,\bx\bb'\in D\rightarrow\bx\bb'\in D_\bQ,\]
where $D_\bQ$ is defined in Corollary \ref{co:maxQ-linear}. Let $\Phi^{\kex}$ consist of all $\phi^{\kex}_D$ with $\dim(D)<n$.

\begin{lemma}
  For every $\A\in\C_\I'$,
  \[\A\vDash\Phi^{\kex}\text{ if and only if }\ld^K(\ba\bb')=\ld^\bQ(\ba\bb')\text{ for all }\ba\se\kex^\A.\]
\end{lemma}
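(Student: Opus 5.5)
We prove the two directions separately. Both rest on the description of linear dimension from the fact in the vector space subsection: $\ld^K(\bx)$ is the minimal dimension of a $K$-linear $D$ containing $\bx$, and likewise for $\ld^\bQ$ with $\bQ$-linear subspaces. We also use the observation that a tuple lies in a $\bQ$-linear subspace $E$ if and only if it lies in $D_\bQ$ for every $K$-linear $D\supseteq E$. Throughout, $\ba\bb'$ has length $n$, with $\ba$ of length $n-k$, and we order coordinates so that $\bb'$ occupies the last $k$ places, as in $\phi^{\kex}_D$.

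For the direction ($\Leftarrow$), assume $\ld^K(\ba\bb')=\ld^\bQ(\ba\bb')$ for all $\ba\se\kex^\A$. Let $D$ be a $K$-linear subspace with $\dim D<n$, and let $\ba\in(\kex^\A)^{n-k}$ with $\ba\bb'\in D$.
\begin{itemize}
\item Let $E$ be the smallest $\bQ$-linear subspace containing $\ba\bb'$, so $\dim E=\ld^\bQ(\ba\bb')$.
\item Let $E^K$ be its $K$-span, that is, the $K$-linear subspace cut out by the same rational equations viewed over $K$. Then $\dim E^K=\dim E$.
\item The smallest $K$-linear subspace containing $\ba\bb'$ is contained in $E^K$ and has dimension $\ld^K(\ba\bb')=\ld^\bQ(\ba\bb')=\dim E^K$. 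Hence it equals $E^K$.
\item Therefore $E^K\se D$, since $D$ is $K$-linear and contains $\ba\bb'$.
\item $E$ is a $\bQ$-linear subspace of $D$, so $E\se D_\bQ$ by the maximality in Corollary \ref{co:maxQ-linear}. Hence $\ba\bb'\in D_\bQ$.
\end{itemize}
So $\phi^{\kex}_D$ holds for every such $D$.

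For the direction ($\Rightarrow$), argue by contraposition. Suppose $\ld^K(\ba\bb')<\ld^\bQ(\ba\bb')$ for some $\ba\se\kex^\A$; the inequality $\leq$ always holds, so this is the only failure. Take $D$ to be the minimal $K$-linear subspace containing $\ba\bb'$.
\begin{itemize}
\item We need $\dim D<n$ so that $\phi^{\kex}_D\in\Phi^{\kex}$. If $\ba\bb'$ were $K$-independent it would be $\bQ$-independent, giving $\ld^K=\ld^\bQ=n$ and contradicting the assumption. So $\dim D<n$.
\item Suppose $\ba\bb'\in D_\bQ$. Then $\ld^\bQ(\ba\bb')\leq\dim D_\bQ\leq\dim D=\ld^K(\ba\bb')$, a contradiction.
\end{itemize}
Hence $\phi^{\kex}_D$ fails in $\A$.

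The main obstacle is the step $\dim E^K=\dim E$, together with minimality, in the first direction. Here the rank of a rational matrix must be the same over $\bQ$ and over $K$, and the $K$-span of $E$ must be exactly the $K$-kernel of the rational matrix $M_E$. Both are standard linear algebra (rank is invariant under field extension), but they should be stated explicitly.

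One possible subtlety is whether $\bb'$ must be $K$-independent for the coordinate split to be sensible. This holds by the choice of $\bb'$ as a $K$-basis, and it is not actually needed for the argument.
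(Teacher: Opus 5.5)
Your proof is correct and follows essentially the same route as the paper. In ($\Rightarrow$) you produce a $K$-linear $D$ of dimension $<n$ with $\ba\bb'\in D\setminus D_\bQ$; the paper first reduces to a $\bQ$-independent $\ba\bb'$, whereas you take $D$ minimal. In ($\Leftarrow$) your observation that the $\bQ$-linear hull of $\ba\bb'$, having dimension $\ld^K(\ba\bb')$, is already the minimal $K$-linear subspace containing $\ba\bb'$ gives $D'\se D\cap D_\bQ$ directly. The paper instead runs an induction on $\dim D$, and its case $D'\not\se D$ cannot actually occur, for exactly the reason you give.
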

\begin{proof}
  $(\Rightarrow)$ Suppose on the contrary that there is $\ba\in\kex^{n-k}$ such that $\ba\bb'$ is $\bQ$-independent but $K$-dependent. Then $\ba\bb'$ lies in some $K$-linear subspace $D$ of dimension $<n$. The sentence $\phi^{\kex}_D$ implies $\ba\bb'\in D_\bQ$, and hence $\ld^\bQ(\ba\bb')\leq\dim(D_\bQ)<n$. Contradiction.

  $(\Leftarrow)$ Let $\ba\in\kex^{n-k}$ such that $\ba\bb'\in D$ for some $K$-linear subspace $D$ of dimension $l<n$. We show $\ba\bb'\in D_\bQ$ by induction on $l$. The case $l=0$ is trivial. Take a $\bQ$-linear subspace $D'$ containing $\ba\bb'$ with $\dim(D')=\ld^\bQ(\ba\bb')=\ld^K(\ba\bb')\leq l$. If $D'\se D$ then we are done. If $D'\nsubseteq D$, then $\dim(D'\cap D)<l$, and we conclude by induction hypothesis.
\end{proof}

This finishes the first condition in Lemma \ref{le:equivC_I}. To express the second condition, we need to study intersections of varieties with algebraic subgroups of $(F^\times)^n$.

\begin{lemma}
  The map $ex^n:V^n\to S^n$ induces a bijection between $\bQ$-linear subspaces of $V^n$ and intersections of $S^n$ with irreducible algebraic subgroups of $(F^\times)^n$.
\end{lemma}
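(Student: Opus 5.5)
The plan is to make the correspondence explicit. To each $\bQ$-linear subspace $D=\ker(M_D)\se V^n$ we assign $\ex^n(D)$, and we show that this equals $S^n\cap A$, where $A=\{\by\in(F^\times)^n\mid \by^{M}=\mathds{1}\}$ and $M$ is a suitably normalized integer matrix with the same kernel as $M_D$. Clearing denominators, we may take $M_D\in\bZ^{k\times n}$ of full rank $k$. We may further replace it by a matrix whose rows form a \emph{saturated} lattice, i.e.\ the row $\bQ$-span intersected with $\bZ^n$ equals the row $\bZ$-span. This does not change $D$.

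The standard fact about tori is that $\{\by^M=\mathds{1}\}$ is irreducible (connected) exactly when the row lattice of $M$ is saturated, since the torsion of $\bZ^n/\text{rowlattice}$ gives the component group. Hence irreducible algebraic subgroups correspond exactly to saturated full-rank integer matrices up to $GL_k(\bZ)$, i.e.\ to their row $\bQ$-spaces, i.e.\ to $\bQ$-linear subspaces $D$. So the map $D\mapsto S^n\cap A_D$ is well defined, and it remains to prove $\ex^n(D)=S^n\cap A_D$ and injectivity.

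For $\ex^n(D)\se S^n\cap A_D$: $\ex$ is a homomorphism, so $\ex(\bx)^M=\ex(M\bx)=\mathds{1}$ whenever $M\bx=0$.

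For the reverse inclusion, let $\bs\in S^n\cap A_D$ and use surjectivity of $\ex$ to pick $\bx\in V^n$ with $\ex(\bx)=\bs$. Then $M\bx\in(\kex)^k$, and we must correct $\bx$ by some $\bz\in(\kex)^n$ with $M\bz=M\bx$. By saturation, $M=[I_k\;0]\,U$ for some $U\in GL_n(\bZ)$ (Smith normal form with unit invariant factors). Hence $M$ induces a surjection $G^n\to G^k$ for every abelian group $G$, in particular for $G=\kex$. So such $\bz$ exists, and $\bx-\bz\in D$ with $\ex(\bx-\bz)=\bs$. This step is where the irreducibility hypothesis is genuinely used: for a non-saturated $M$, one only gets a finite-index piece of $S^n\cap A$.

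Injectivity is the step I expect to be the subtle one. One might try to recover $D$ as the largest divisible subgroup of $\ex^{-1}(S^n\cap A_D)=D+(\kex)^n$, but this fails because $\bZ$-groups can have nonzero divisible subgroups (e.g.\ $\bZ\oplus\bQ$ ordered lexicographically). Instead I would argue directly. Suppose $D\not\se D'$, and pick an integer row $\bm$ vanishing on $D'$ but not on $D$. The functional $\bm:V^n\to V$ restricted to $D\cong V^{\dim D}$ is a nonzero rational linear map, hence surjective onto $V$. On the other hand, $\bm(D'+(\kex)^n)\se\kex$. Since $\ex$ maps onto the infinite group $S$, we have $\kex\neq V$. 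Therefore $D\not\se D'+(\kex)^n$, i.e.\ $\ex^n(D)\not\se\ex^n(D')$. By symmetry, distinct $D$ give distinct images, which completes the bijection.
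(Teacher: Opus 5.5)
Your proposal is correct and follows the same route as the paper, which simply invokes the correspondence between primitive (saturated) sublattices of $\bZ^n$ and subtori of $(F^\times)^n$. Beyond that, you supply the two verifications the paper leaves implicit: that $\ex^n(D)$ is all of $S^n\cap A_D$ (lifting through $\kex^n$ via $M=[I_k\;0]U$ and surjectivity of $\ex$), and that $D\mapsto\ex^n(D)$ is injective (via $\kex\neq V$); one small fix to your aside is that $\bZ\oplus\bQ$ is a $\bZ$-group only when the $\bQ$-coordinate is the dominant one in the lexicographic order, since otherwise there is no least positive element.
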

\begin{proof}
  Follows from the bijection between primitive subgroups of $\bZ^n$ and linear tori in $(F^\times)^n$.
\end{proof}

For each variety $W\se F^n$ over $\bQ(\ex(\bb))$, the weak CIT (Fact \ref{fact:WCIT}) gives a finite collection $\mu(W)=\{B_1,...,B_s\}$ of poper algebraic subgroups, where each $B_i$ is defined by a full-rank integer matrix $M_i$. Let $D\se V^n$ be a $K$-linear subspace defined by full-rank matrix $M_D\in K^{m\times n}$. By Corollary \ref{co:maxQ-linear}, for each $i$ there is $(M_iD)_\bQ$ defined by full-rank integer matrix $N_i$. By Fact \ref{fact:ml} applied to $(W^{M_i})^{N_i}$ and $\ex(\ag{\bb}^\bQ)$, we obtain $r_i$, $\{\gamma_{i,1},...,\gamma_{i,r_i}\}$, and $\{A_{i,1},...,A_{i,r_i}\}$. Let $\phi_{DW}$ be the following sentence:
\begin{align*}
  &\forall\bx\in V^n,\left(M_D\bx\se\ag{\bb\cup\kex}^K\bigwedge\ex(\bx)\in W\right)\\
  &\longrightarrow\bigvee_{i=1}^s\bigvee_{j=1}^{r_i}\left(\dim(A_{i,j})<\rk(N_i)\bigwedge\ex(N_iM_i\bx)\in\gamma_{i,j}A_{i,j}\right).
\end{align*}
Let $\Phi_\I$ consist of all $\phi_{DW}$ satisfying $\dim D+\dim W<n$.

\begin{lemma}
  For every $\A\in\C_\I'$,
  \[\A\vDash\Phi_\I\text{ if and only if }\delta(\ba/\bb\cup\kex)\geq 0\text{ for all }\ba\se V^\A.\]
\end{lemma}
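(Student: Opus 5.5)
We prove the two directions separately, and in both we rewrite the predimension over $\bb\cup\kex$ in geometric terms. Fix $\ba\in V^n$ that is $\bQ$-independent over $\ag{\bb\cup\kex}^\bQ$; this is harmless, since $\delta$ is preserved by $\cl$. Let $D$ be the smallest $K$-linear subspace with $\ba\in D+\ag{\bb\cup\kex}^K$, i.e.\ $M_D\ba\se\ag{\bb\cup\kex}^K$ and $\dim D=\ld^K(\ba/\bb\cup\kex)$. Let $W$ be the locus of $\ex(\ba)$ over $\bQ(\ex(\bb))$, so $\dim W=\trd(\ex(\ba)/\ex(\bb))$. Since $\ex(\kex)=1$, the torsion points lie in $\bQ^{\ac}$, and $\ld^\bQ(\ba/\bb\cup\kex)=\md(\ex(\ba)/\ex(\bb))$, we get
\[\delta(\ba/\bb\cup\kex)=\dim D+\dim W-n .\]
So $\delta<0$ is witnessed exactly by a point satisfying the hypothesis of some $\phi_{DW}$ with $\dim D+\dim W<n$.

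For $(\Leftarrow)$, assume $\A\vDash\Phi_\I$ and suppose some $\ba$ has $\delta(\ba/\bb\cup\kex)<0$. Choose such an $\ba$ with $n$ minimal. With $D,W$ as above, the sentence $\phi_{DW}$ gives $i,j$ with $\ex(N_iM_i\ba)\in\gamma_{i,j}A_{i,j}$ and $\dim A_{i,j}<\rk N_i$. Here $\gamma_{i,j}\in\ex(\ag{\bb}^\bQ)$, so the point $\ex(N_iM_i\ba)$ lies in a coset over $\ex(\bb)$ of a proper subgroup of the target torus. Equivalently, $N_iM_i\ba$ satisfies a nontrivial $\bQ$-linear relation modulo $\ag{\bb\cup\kex}^\bQ$. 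This contradicts the $\bQ$-independence of $\ba$ over $\bb\cup\kex$, because a nonzero integer row applied to $\ba$ gives a nonzero combination. The work here is checking that $N_iM_i$ has full rank on the relevant coordinates, so the relation is nontrivial. If independence only fails up to a smaller tuple, we use the minimality of $n$.

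For $(\Rightarrow)$, assume $\delta\geq 0$ everywhere and fix $\phi_{DW}$ with $\dim D+\dim W<n$ and $\bx$ satisfying its antecedent. Put $U=\ex(D+\ag{\bb\cup\kex}^K)\cap W$ near $\ex(\bx)$, and let $T$ be the minimal coset of an algebraic subgroup defined over $\ex(\ag{\bb}^\bQ)$ containing $\ex(\bx)$. Then
\[\delta(\bx/\bb\cup\kex)\leq\dim D+\trd(\ex(\bx)/\ex(\bb))-\md(\ex(\bx)/\ex(\bb)),\]
and nonnegativity forces $\md(\ex(\bx)/\ex(\bb))<n$. So the component $S$ of $W\cap T$ through $\ex(\bx)$ is an atypical intersection of $W$ with a coset of a proper subgroup. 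Fact \ref{fact:WCIT} then puts $S$ inside $\beta B_i$ for some $B_i\in\mu(W)$, typically. Projecting by $M_i$ and then $N_i$ sends $\ex(\bx)$ into the variety $(W^{M_i})^{N_i}$, and the point lies in the finite-rank group generated by $\ex(\ag{\bb}^\bQ)$ and the image of $D$ modulo $K$-span. Fact \ref{fact:ml} then places $\ex(N_iM_i\bx)$ in some $\gamma_{i,j}A_{i,j}$. Strongness (equivalently, typicality inside $\beta B_i$ together with $\delta\geq0$ applied to $N_iM_i\bx$) shows $A_{i,j}$ cannot be all of the image torus, so $\dim A_{i,j}<\rk N_i$.

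The main obstacle is this last step. The Mordell--Lang group must have finite rank, which relies on $(M_iD)_\bQ$ capturing exactly the $\bQ$-part of the $K$-linear constraint. We also need the dimension count that rules out $A_{i,j}$ being full. I would first try to derive both from applying $\delta\ge0$ to the projected tuple $N_iM_i\bx$, using submodularity to compare it with $\delta(\bx)$.
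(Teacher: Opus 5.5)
Your argument that $\A\vDash\Phi_\I$ forces $\delta\ge 0$ is the paper's argument with the details supplied. (You label it $(\Leftarrow)$; your two labels are swapped.) The rank check you flag is immediate: $M_i$ and $N_i$ have full row rank, hence so does $N_iM_i$. So a nontrivial character vanishing on the proper subgroup $A_{i,j}$ yields a nonzero integer combination of $\ba$ lying in $\ag{\bb\cup\kex}^\bQ$. The minimality of $n$ is not needed.

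The converse has a genuine gap, at exactly the step you call the main obstacle. The Mordell--Lang data $\gamma_{i,j},A_{i,j}$ written into $\phi_{DW}$ were produced for the group $\ex(\ag{\bb}^\bQ)$, taken coordinatewise. So you must show that $\ex(N_iM_i\bx)$ lies in that group itself. Your ``finite-rank group generated by $\ex(\ag{\bb}^\bQ)$ and the image of $D$ modulo $K$-span'' is not a well-defined finite-rank group. In any case, the decomposition built into $\phi_{DW}$ says nothing about points outside $\ex(\ag{\bb}^\bQ)$.

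The missing idea is to use what weak CIT actually gives:
\begin{enumerate}
\item $S\se\beta B_i$ means $\bz\mapsto\bz^{M_i}$ is constant on $S$. Since $S$ is a component of $W\cap T$ with $W,T$ defined over the algebraic closure of $\bQ(\ex(\bb))$, the value $\ex(M_i\bx)$ is algebraic over $\bQ(\ex(\bb))$.
\item Hence $\delta(M_i\bx/\bb\cup\kex)=\ld^K(M_i\bx/\bb\cup\kex)-\ld^\bQ(M_i\bx/\bb\cup\kex)\le 0$, and $\delta\ge 0$ forces equality.
\item Consequently the minimal rational subspace $E$ with $M_i\bx\in E+\ag{\bb\cup\kex}^\bQ$ is also the minimal $K$-linear subspace carrying $M_i\bx$ modulo $\ag{\bb\cup\kex}^K$. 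So $E\se M_iD$, hence $E\se(M_iD)_\bQ=\ker N_i$.
\item Therefore $N_iM_i\bx$ has coordinates in $\ag{\bb\cup\kex}^\bQ$. So $\ex(N_iM_i\bx)\in\ex(\ag{\bb}^\bQ)\cap(W^{M_i})^{N_i}$, which is where Fact~\ref{fact:ml} can be applied.
\end{enumerate}

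Even then, you still need $\dim(W^{M_i})^{N_i}<\rk N_i$, so that every $\gamma_{i,j}A_{i,j}\se(W^{M_i})^{N_i}$ has $\dim A_{i,j}<\rk N_i$. This includes $\rk N_i\ge 1$; otherwise the disjunct for this $i$ can never hold. Applying $\delta\ge 0$ to $N_iM_i\bx$, as you suggest, cannot deliver this. By the previous step that tuple lies in $\ag{\bb\cup\kex}^\bQ$ and has predimension $0$. Moreover, the required inequality is a statement about $W$ and the matrices, not about a single point.

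The paper obtains it by combining three ingredients:
\begin{enumerate}
\item the typicality of $S$ inside $\ex(\bx)B_i$;
\item $\delta(\bx/M_i\bx\cup\bb\cup\kex)\ge 0$, which follows from $\delta(M_i\bx/\bb\cup\kex)=0$;
\item the hypothesis $\dim D+\dim W<n$.
\end{enumerate}
Together these give $\dim(W^{M_i})^{N_i}\le\dim W^{M_i}<\rk N_i$. Without these two steps your sketch does not show that $\A$ satisfies $\phi_{DW}$.
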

\begin{proof}
  $(\Rightarrow)$ It suffices to show $\delta(\ba/\bb\cup\kex)\geq 0$ for all $\bQ$-independent $\ba\in V^n$ over $\bb\cup\kex$. Suppose on the contrary that there exists $\bQ$-independent $\ba\in V^n$ over $\bb\cup\kex$ with $\ld^K(\ba/\bb\cup\kex)+\trd(\ex(\ba)/\ex(\bb))<n$. Let $D\se V^n$ be a $K$-linear subspace such that $M_D\bx\se\ag{\bb\cup\kex}^K$ and $\ld^K(\ba/\bb\cup\kex)=\dim D$. Let $W\se F^n$ be the algebraic locus of $\ex(\ba)$ over $\bQ(\ex(\bb))$. Because
  \[\dim D+\dim W=\ld^K(\ba/\bb\cup\kex)+\trd(\ex(\ba)/\ex(\bb))<n,\]
  the formula $\phi_{DW}$ states that $\ex(\ba)$ must be multiplicatively dependent over $\ex(\bb)$. Contradiction.

  $(\Leftarrow)$ Let $D\se V^n$ be a $K$-linear subspace and $W\se F^n$ be a variety over $\bQ(\ex(\bb))$ with $\dim D+\dim W<n$. Suppose $M_D\ba\se\ag{\bb\cup\kex}^K$ and $\ex(\ba)\in W$. Then,
  \[\ld^\bQ(\ba/\bb\cup\kex)\leq\ld^K(\ba/\bb\cup\kex)+\trd(\ex(\ba)/\ex(\bb))\leq\dim D+\dim W<n.\]
  Take a coset $\alpha A$ of an algebraic subgroup $A$ such that $\ex(\ba)\in\alpha A$, $\alpha\se\ex(\ag{\bb}^\bQ)$, and $\dim(A)=\md(\ex(\ba)/\ex(\bb))$. Let $P$ be an irreducible component of $W\cap\alpha A$ containing $\ex(\ba)$. The following calculation indicates that $P$ is atypical:
  \begin{align*}
    \dim P&\geq\trd(\ex(\ba)/\ex(\bb))\\
    &\geq\md(\ex(\ba)/\ex(\bb))-\ld^K(\ba/\bb\cup\kex)\\
    &\geq\dim A-\dim D\\
    &>\dim A+\dim W-n.
  \end{align*}

  By weak CIT (Fact \ref{fact:WCIT}), there is $B_i\in\mu(V)$ and $\gamma$ such that $\bx^{M_i}=\gamma$ for all $\bx\in P$, and in particular, $\ex(M_i\ba)=\gamma$. Since $\gamma$ is defined by $W$ and $\alpha A$, it must be algebraic over $\bQ(\bb)$, i.e., $\trd(\gamma/\bb)=0$. As a consequence, $\delta(\gamma/\bb\cup\kex)=0$. Let $A',B_i'\se V^n$ denote the $\bQ$-linear subspaces defined by $M_A,M_i$ respectively. We have
  \[\dim(M_i A'\cap M_i D)\geq\ld^K(M_i\ba/\bb\cup\kex)=\md(\ex(M_i\ba)/\bb)=\dim A^{M_i}=\dim M_i A'.\]
  Therefore, $M_i A'\cap M_i D=M_i A'$, which is contained in $(M_i D)_\bQ$. Hence,
  \[(\ba^{M_i})^{N_i}\in(\alpha^{M_i})^{N_i}(A^{M_i})^{N_i}=(\alpha^{M_i})^{N_i}\se\ex(\ag{\bb}^\bQ).\]
  It suffices to show $\dim(W^{M_i})^{N_i}<\rk(N_i)$.
  
  Fact \ref{fact:WCIT} says that $P$ is typical with respect to $\ex(\ba)\cdot B_i$, i.e.,
  \begin{align*}
    \dim P&=\dim(W\cap\ex(\ba)\cdot B_i)+\dim(\ex(\ba)(A\cap B_i))-\dim(\ex(\ba)\cdot B_i)\\
    &=\dim W-\dim W^{M_i}+\dim(A'\cap B_i')-\dim B_i'.
  \end{align*}
  Also, $\delta(M_i\ba/\bb\cup\kex)=0$ implies $\delta(\ba/M_i\ba\cup\bb\cup\kex)=\delta(\ba/\bb\cup\kex)\geq 0$. Thus,
  \begin{align*}
    &\dim P+\dim(D\cap B_i')\\
    \geq&\trd(\ex(\ba)/\ex(\bb))+\ld^K(\ba/M_i\ba\cup\bb\cup\kex)\\
    \geq&\ld^\bQ(\ba/M_i\ba\cup\bb\cup\kex)\\
    =&\ld^\bQ(\ba/\bb\cup\kex)-\ld^\bQ(M_i\ba/\bb\cup\kex)\\
    =&\dim A'-\dim(M_i A')\\
    =&\dim(A'\cap B_i').
  \end{align*}
  Combining these two,
  \begin{align*}
    0&\leq\dim W-\dim W^{M_i}+\dim(D\cap B_i')-\dim B_i'\\
    &=\dim W+\dim D-\dim W^{M_i}-\dim M_i D-n+\rk M_i\\
    &<\rk N_i+\dim(M_iD)_\bQ-\dim W^{M_i}-\dim M_i D\\
    &\leq\rk N_i-\dim W^{M_i}.
  \end{align*}
  We conclude that
  \[\dim(W^{M_i})^{N_i}\leq\dim W^{M_i}<\rk N_i.\qedhere\]
\end{proof}

Let $T_\I$ be the conjunction of $T_\I'$, $\Phi^{\kex}$, and $\Phi_\I$.

\begin{corollary}
  The class $\C_\I$ is axiomatized by $T_\I$.
\end{corollary}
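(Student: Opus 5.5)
The corollary is a matter of assembling what has already been proved. By definition $\C_\I=\{\A\in\C\mid\I\leq\A\}$, and $\C_\I'$, the class of $K$-powered fields containing $\I$ as a substructure, is axiomatized by $T_\I'$. So I will show that for $\A\vDash T_\I'$, the condition $\I\leq\A$ holds exactly when $\A\vDash\Phi^{\kex}\cup\Phi_\I$.

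Fix $\A\in\C_\I'$. By Corollary \ref{cor:strongStructure}, applied with $X=\ag{\bb}^\bQ$ (which is strong in $V^\I$ and generates $\I$), we have $\I\leq\A$ if and only if $\ag{\bb}^\bQ\leq V^\A$. Lemma \ref{le:equivC_I} splits this into two conditions. Condition~1 says $\ld^K(\bx/\bb)=\ld^\bQ(\bx/\bb)=\ld^\bQ(\bx)$ for all $\bx\se\kex^\A$. Condition~2 says $\delta(\bx/\bb\cup\kex^\A)\geq0$ for all $\bx\se V^\A$.

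Condition~1 should be equivalent to $\ld^K(\ba\bb')=\ld^\bQ(\ba\bb')$ for all $\ba\se\kex^\A$. The forward direction is immediate: $\bb'$ is a $K$-basis of $\bb$, and $\bb$ is $\bQ$-independent from $\kex^\A$ by Lemma \ref{le:equivC_I}.

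The reverse direction needs a small check. Fix $\ba\se\kex^\A$. The hypothesis for $\ba$ gives $\ld^\bQ(\ba\bb')=\ld^K(\ba\bb')$. The hypothesis for the empty tuple gives that $\bb'$ is $\bQ$-independent. Hence $\ld^\bQ(\ba/\bb')=\ld^K(\ba/\bb')$.

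I then have to pass from $\bb'$ to $\bb$ and drop the base. Since $\ag{\bb}^K=\ag{\bb'}^K$, we get $\ld^K(\ba/\bb)=\ld^K(\ba/\bb')$. Also $\ld^\bQ(\ba/\bb)\leq\ld^\bQ(\ba)$, and this equals $\ld^\bQ(\ba/\bb')$ because $\ba\se\kex^\A$ and $\bb'\se\ag{\bb}^\bQ$, which meets $\kex^\A$ trivially.

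This gives the chain
\[\ld^\bQ(\ba/\bb)\leq\ld^\bQ(\ba)=\ld^\bQ(\ba/\bb')=\ld^K(\ba/\bb')=\ld^K(\ba/\bb)\leq\ld^\bQ(\ba/\bb),\]
so all terms are equal, which is Condition~1. By the lemma on $\Phi^{\kex}$, this is in turn equivalent to $\A\vDash\Phi^{\kex}$.

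Condition~2 is exactly the statement proved equivalent to $\A\vDash\Phi_\I$ in the preceding lemma.

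Combining the two equivalences, for $\A\vDash T_\I'$ we get $\I\leq\A$ if and only if $\A\vDash\Phi^{\kex}\cup\Phi_\I$. Hence $\C_\I$ is exactly the class of models of $T_\I$.

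The only step needing care is the reverse direction for Condition~1, that is, checking that the reformulation via $\bb'$ really recovers both equalities in Condition~1. That is handled by the $\bQ$-independence of $\bb$ from $\kex^\A$ together with $\ag{\bb}^K=\ag{\bb'}^K$.
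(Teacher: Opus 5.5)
Your proposal is correct and follows essentially the paper's route. The corollary is just the assembly of Corollary \ref{cor:strongStructure}, Lemma \ref{le:equivC_I}, and the two lemmas characterizing $\Phi^{\kex}$ and $\Phi_\I$. Your chain of (in)equalities spells out the step the paper dismisses with ``Clearly'': that Condition~1 matches the $\bb'$-formulation. It rightly relies on $\bb'$ being a subtuple of $\bb$, so that $\ag{\bb'}^\bQ\se\ag{\bb}^\bQ$ meets $\kex^\A$ trivially.
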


Following the same proof, we obtain the following proposition, which provides first-order characterization for purely transcendental strong extensions.

\begin{proposition}
  \label{prop:ptType}
  Let $\A$ be a $K$-powered field and let $\bc\se V^\A$. There is a partial type $\Psi(x)$ over $\bc$ such that for every $t\in V^\A$, we have that $\A\vDash\Psi(t)$ if and only if $\delta(\bx,t/\bc\cup\kex)>0$ for all $\bx\se V^\A$.
\end{proposition}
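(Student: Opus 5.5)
We adapt the construction of $\Phi_\I$: replace the base $\bb$ by $\bc$, replace the nonnegativity statement by strict positivity, and add one variable $t$ to every tuple. First we reduce the condition. Since $\delta$ is preserved by $\cl$, the condition ``$\delta(\bx,t/\bc\cup\kex)>0$ for all $\bx$'' only concerns tuples with $\bx t$ $\bQ$-independent over $\bc\cup\kex$. In particular $t$ itself must lie outside $\ag{\bc\cup\kex}^\bQ$. That requirement is expressed by the partial type $\{\,t-q\cdot\bc\notin\kex : q\in\bQ^{|\bc|}\,\}$; since $\kex$ is a $\bQ$-subspace only up to the $\bZ$-group structure, we also include the versions with multiples $N t$. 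So it suffices to express, for each $n$, that no $\bQ$-independent $\bx\in V^{n-1}$ over $\ag{\bc,t}\cup\kex$ gives
\[\ld^K(\bx t/\bc\cup\kex)+\trd(\ex(\bx t)/\ex(\bc))\leq\ld^\bQ(\bx t/\bc\cup\kex)=n.\]

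For each $K$-linear $D\se V^n$ and each variety $W\se F^n$ over $\bQ(\ex(\bc))$ with $\dim D+\dim W\leq n$, we form a formula $\psi_{DW}(t)$ as follows. Take the finite family $\mu(W)=\{B_1,\dots,B_s\}$ from weak CIT (Fact \ref{fact:WCIT}), applied to the constructible family of all such $W$ (more precisely to $W$ viewed in a family parametrized by $\ex(\bc)$). Take the matrices $N_i$ coming from $(M_iD)_\bQ$ via Corollary \ref{co:maxQ-linear}. Take the Mordell--Lang data from Fact \ref{fact:ml} for $(W^{M_i})^{N_i}$ and the finite-rank group $\ex(\ag{\bc}^\bQ)$. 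Then $\psi_{DW}(t)$ is
\[\forall\bx\in V^{n-1}\Bigl(M_D(\bx t)\se\ag{\bc\cup\kex}^K\wedge\ex(\bx t)\in W\Bigr)\rightarrow\bigvee_{i,j}\Bigl(\dim A_{i,j}<\rk N_i\wedge\ex(N_iM_i(\bx t))\in\gamma_{i,j}A_{i,j}\Bigr),\]
except that each inner inequality is shifted by one, so that the conclusion expresses multiplicative dependence of $\ex(\bx t)$ over $\ex(\bc)$ rather than a violation of strict inequality. The quantification ``$\se\ag{\bc\cup\kex}^K$'' is first-order, since it says that each coordinate equals a $K$-combination of $\bc$ plus a $K$-combination of kernel elements, and finitely many kernel witnesses suffice. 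We set $\Psi(t)=\{\psi_{DW}(t)\}$ together with the nondegeneracy condition on $t$.

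For $(\Rightarrow)$ we argue as in the $(\Rightarrow)$ direction for $\Phi_\I$. Suppose $\delta(\bx,t/\bc\cup\kex)\leq 0$ with $\bx t$ independent. Let $D$ be the minimal $K$-linear locus of $\bx t$ over $\bc\cup\kex$ and let $W$ be the algebraic locus of $\ex(\bx t)$ over $\bQ(\ex(\bc))$. Then $\dim D+\dim W\leq n$, and $\psi_{DW}$ forces $\ex(\bx t)$ into a coset that contradicts independence. For $(\Leftarrow)$ we assume strict positivity and show that any $\bx$ satisfying the hypothesis of $\psi_{DW}$ has $\ld^\bQ(\bx t/\bc\cup\kex)\leq n-1$. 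Modulo the proper multiplicative relation we then get an atypical component $P$ of $W\cap\alpha A$, with atypicality now coming from $\dim P\geq\dim A-\dim D\geq\dim A+\dim W-n$ plus the strictness. We then run the same chain of inequalities with CIT typicality inside $\beta B_i$ to land in some $\gamma_{i,j}A_{i,j}$ with $\dim A_{i,j}<\rk N_i$.

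The main obstacle is the boundary case $\dim D+\dim W=n$. There a component is merely typical, not atypical, so weak CIT does not apply directly. We would handle it by using the hypothesis differently: a $\bx$ landing in such $D,W$ witnesses $\delta(\bx t/\bc\cup\kex)\leq 0$ exactly when $\ex(\bx t)$ is multiplicatively independent. So we would let $\psi_{DW}$ assert dependence, and derive that dependence in $(\Leftarrow)$ by applying the nonnegativity $\Phi$-type argument to the projection $M_i(\bx t)$ together with the fact that strict positivity excludes equality. Checking that this bookkeeping actually closes, as the final inequality string does in the $\Phi_\I$ lemma, is where we expect the work to be.
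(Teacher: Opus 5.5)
Your route differs from the paper's. The paper splits off $t$. Since $\delta(t/Y)\le 1$ over any base $Y$ (as $\ld^K\le\ld^\bQ$ and $\trd(\ex(t)/\cdot)\le 1$) and $\delta$ is integer-valued, the condition is equivalent to two parts. The first is $\ld^K(t/\bc\cup\kex)=\trd(\ex(t)/\ex(\bc))=\ld^\bQ(t/\bc\cup\kex)=1$. The second is $\delta(\bx/t,\bc,\kex)\ge 0$ for all $\bx$, which is the non-strict $\Phi_\I$-scheme with $\bb$ replaced by $t\bc$. So no boundary case ever arises. You instead keep the base $\bc$, put $t$ into the tuple, and allow $\dim D+\dim W\le n$. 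This has the mild advantage that $W$, $\mu(W)$ and the Mordell--Lang data live over $\ex(\ag{\bc}^\bQ)$ and do not involve $t$, so $\Psi$ is visibly over $\bc$. But it forces you to handle $\dim D+\dim W=n$. There your proposal stops: you leave open whether the final chain of inequalities closes, and your diagnosis of the difficulty is mistaken. As written, the $(\Leftarrow)$ direction is therefore not established.

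In the boundary case the component $P$ is not ``merely typical''. Also, no ``shift by one'' of the formula's conclusion is needed: the conclusion of $\phi_{DW}$ already expresses multiplicative dependence, and only the range of $(D,W)$ changes. The missing observation is that the hypothesis of $(\Leftarrow)$ gives $\delta(\by t/\bc\cup\kex)\ge 1$ for every $\by$. This extra $1$ replaces the strictness $\dim D+\dim W<n$ everywhere it was used.

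(i) For every $\by$, $\delta(\by/\bc\cup\kex)=\delta(\by t/\bc\cup\kex)-\delta(t/\by\cup\bc\cup\kex)\ge 1-1=0$. Hence $\delta(M_i(\bx t)/\bc\cup\kex)=0$ once $\ex(M_i(\bx t))=\gamma$ is algebraic over $\ex(\bc)$. This gives $\ld^K=\ld^\bQ$ for $M_i(\bx t)$, as used in the paper.

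(ii) We have $\dim P\ge\trd(\ex(\bx t)/\ex(\bc))\ge\dim A-\dim D+1>\dim A+\dim W-n$. So $P$ is atypical and weak CIT applies.

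(iii) By (i), $\delta(\bx t/M_i(\bx t)\cup\bc\cup\kex)=\delta(\bx t/\bc\cup\kex)\ge 1$. So the paper's estimate $\dim P+\dim(D\cap B_i')\ge\dim(A'\cap B_i')$ improves by $1$, and the final chain becomes
\[1\le\dim W+\dim D-\dim W^{M_i}-\dim M_iD-n+\rk M_i\le\rk N_i-\dim W^{M_i},\]
using only $\dim D+\dim W\le n$.

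Thus $\dim A_{i,j}\le\dim(W^{M_i})^{N_i}<\rk N_i$. So $\psi_{DW}$ can be taken verbatim as $\phi_{DW}$, with $\bb\mapsto\bc$ and the tuple $\bx t$. With this supplied, your scheme (plus $t\notin\ag{\bc\cup\kex}^\bQ$) does give the required partial type.
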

\begin{proof}
  The desired property is equivalent to the conjunction of the followings:
  \begin{itemize}
    \item $\ld^K(t/\bc\cup\kex)=\trd(\ex(t)/\ex(\bc))=\ld^\bQ(t/\bc\cup\kex)=1$,
    \item $\delta(\bx/t,\bc,\kex)\geq 0$ for all $\bx\se V^\A$.
  \end{itemize}
  The second can be stated by an axiom scheme similar to $\Phi_\I$ with $\bb$ replaced by $t\bc$.
\end{proof}

\section{Kronecker property}\label{sec:kro}

We isolate a set of axioms describing Kronecker's theorem -- Fact \ref{fact:Kronecker}. For all $r_1,...,r_n\in K$, let $f_{\br}$ be the following map:
\begin{align*}
  V&\longrightarrow\;S^n\\
  x&\longmapsto(\ex(r_1x),...,\ex(r_nx)).
\end{align*}

Let KR be the set of axioms stating that, for each $r_1,...,r_n\in K$ such that $(1,r_1,...,r_n)$ is $\bQ$-independent, we have $f_{\br}(\kex)$ is dense in $S^n$. Observe that KR ensures $f_{\br}(k\cdot\kex)$ being dense in $S^n$ for all such $\br$ and $k\in\bN_{>0}$.

Let $\A,\B$ be $K$-powered fields, let $V_0^\A,V_0^\B$ be $K$-subspaces of $V^\A,V^\B$ respectively, and let $\iota:V_0^\A\to V_0^\B$ be a $K$-linear isomorphism such that $\ex(V_0^\A)\equiv^{\rcf}\ex(V_0^\B)$, i.e., the map $\ex(V_0^\A)\to\ex(V_0^\B)$ induces an isomorphism as real closed fields. Let us identify $V_0^\A$ with $V_0^\B$ and simply write $V_0$. Assume $\A$ satisfies KR and is $|V_0|^+$-saturated.

\begin{lemma}\label{le:extendKernel}
  Take a $\bQ$-basis $(r_j)_{j\in J}$ of $K$ over $\bQ$. Let $y\in\kex^\B\setminus V_0$ such that $\ex(r_jy)_{j\in J}$ is algebraically independent over $\ex(V_0)$. Then there is $x\in\kex^\A\setminus V_0$ such that
  \[\ex\ag{x}^K\equiv^{\rcf}_{\ex(V_0)}\ex\ag{y}^K.\]
\end{lemma}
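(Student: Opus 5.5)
Since $y\in\kex^\B$, the $K$-span $\ag{y}^K=\{ry\mid r\in K\}$ is governed by the $\bQ$-basis $(r_j)_{j\in J}$: every $ry$ is a $\bQ$-combination of the $r_jy$. Moreover $\ex(qy)$ is determined by $q$ for $q\in\bQ$, because $y\in\kex$ and $\kex$ is a $\bZ$-group. Hence $\ex\ag{y}^K$ lies in the divisible hull of the multiplicative group generated by $\ex(r_jy)_{j\in J}$. We may take $r_0=1$, so that $\ex(r_0y)=1$ and the algebraic independence hypothesis concerns $j\neq 0$. The real closed field generated by $\ex(V_0)$ and $\ex\ag{y}^K$ is then determined up to isomorphism over $\ex(V_0)$ by two pieces of data:
\begin{itemize}
\item the cut (order type) of the tuple $(\ex(r_jy))_{j\neq 0}$ over $\ex(V_0)$;
\item the compatible choice of roots $\ex(\tfrac{r_j}{m}y)$, $m\in\bN_{>0}$, which are $\ex$ of $K$-multiples of $y$ and encode a coherent system of roots.
\end{itemize}
So I would reduce the statement to realizing, by some $x\in\kex^\A\setminus V_0$, the partial type $p(x)$ consisting of the following:
\begin{itemize}
\item $x\in\kex$;
\item for each finite $J_0\se J\setminus\{0\}$, each $m\in\bN_{>0}$ and each $\ex(V_0)$-definable (semialgebraic over $\ex(V_0)$) open box $U$ containing $(\ex(\tfrac{r_j}{m}y))_{j\in J_0}$, the condition $(\ex(\tfrac{r_j}{m}x))_{j\in J_0}\in U$.
\end{itemize}
By $|V_0|^+$-saturation of $\A$ it suffices to check finite satisfiability, and $x\notin V_0$ will follow automatically, since the realized tuple is algebraically independent over $\ex(V_0)$.

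For finite satisfiability, fix finitely many conditions and combine them into one $m$, one finite $J_0=\{j_1,\dots,j_n\}$ and one box $U$. Note that $(1,r_{j_1}/m,\dots,r_{j_n}/m)$ is $\bQ$-linearly independent. The axiom scheme KR, in the strengthened form observed after its statement (density of $f_{\br}(k\cdot\kex)$), then says that $\{(\ex(\tfrac{r_{j_1}}{m}x),\dots,\ex(\tfrac{r_{j_n}}{m}x))\mid x\in\kex\}$ is dense in $S^n$. Since $U\cap S^n$ is a nonempty relatively open subset of $S^n$ (it contains the point coming from $y$), some $x\in\kex^\A$ lands in it. This handles every finite subset of $p$.

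It remains to confirm that realizing the cut together with the compatible roots really gives the isomorphism $\ex\ag{x}^K\equiv^{\rcf}_{\ex(V_0)}\ex\ag{y}^K$. By o-minimality, the type over $\ex(V_0)$ of an algebraically independent tuple is determined by the cells it lies in, so the open-box conditions pin down $\tp^{\rcf}((\ex(\tfrac{r_j}{m}y))_{j,m}/\ex(V_0))$. Two consequences follow:
\begin{itemize}
\item The realized tuple $(\ex(\tfrac{r_j}{m}x))$ is also algebraically independent over $\ex(V_0)$, since its type forbids every algebraic relation.
\item The map $\ex(\tfrac{r_j}{m}y)\mapsto\ex(\tfrac{r_j}{m}x)$ is a partial elementary map in $\rcf$. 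It extends to the generated real closed fields, and it sends $\ex(ry)$ to $\ex(rx)$ for all $r\in K$ because both sides are the same $\bQ$-power products.
\end{itemize}

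The main obstacle, I expect, is the bookkeeping showing that the type is genuinely described by open conditions, which is what KR requires, as opposed to containing closed conditions such as equalities. Algebraic independence handles this: any nonempty set in the type is open-dense in the relevant cell. Care is also needed that the roots of unity and torsion are consistent, which holds because $x,y\in\kex$ give $\ex(qx)=\ex(qy)=1$ for $q\in\bZ$ and matching values for rational $q$.
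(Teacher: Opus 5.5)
There is a genuine gap in the torsion bookkeeping. Your closing claim that $x,y\in\kex$ gives ``matching values for rational $q$'' is false. Since $\kex$ is a $\bZ$-group, you can write $x=ms+i$ with $s\in\kex$ and $i\in\{0,\dots,m-1\}$, and then $\ex(\tfrac{x}{m})=\ex(\tfrac{i}{m})$. So $\ex(\tfrac xm)$ depends on the residue of $x$ modulo $m$. For instance, if $y\in 1+2\kex^\B$ and $x\in 2\kex^\A$, then $\ex(\tfrac y2)=-1$ while $\ex(\tfrac x2)=1$. Your type $p(x)$ does not constrain these residues. It only mentions $\ex(\tfrac{r_j}{m}x)$ for $j\neq 0$, and for $x=ms+i$ this equals $\ex(r_js)\,\ex(\tfrac{r_ji}{m})$. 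By KR, every residue class therefore contains realizations of every finite part of $p$. A realization may thus have the wrong residues. In that case $\ex(ry)\mapsto\ex(rx)$ is not even injective, since both $1=\ex(0\cdot y)$ and $-1=\ex(\tfrac y2)$ go to $1$, so no $\rcf$-isomorphism over $\ex(V_0)$ results. Your ``same $\bQ$-power products'' step fails exactly on the rational component $q_0$ of $r=q_0+\sum_j q_jr_j$.

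The missing ingredient is the paper's condition $t\in y+k\cdot\kex$. Add to $p(x)$, for each $m$, the formula $x\in i_m+m\cdot\kex$, where $i_m$ is the residue of $y$ modulo $m$. Finite satisfiability still follows from KR. Pass to a common multiple $m$, then choose $s\in\kex^\A$ with $(\ex(r_js))_{j\in J_0}$ in the translate $(\ex(-\tfrac{r_ji_m}{m}))_{j\in J_0}\cdot U$; this is possible because $f_{\br}(\kex)$ is dense for $\br=(r_j)_{j\in J_0}$, and a translate of a nonempty open set is nonempty open. Then put $x=ms+i_m$. With this addition your argument goes through. Your level-$m$ conditions on $\ex(\tfrac{r_j}{m}x)$ are in fact a worthwhile refinement: they fix the compatible choice of roots, which the paper's type (recording only $\ex(r_jt)$ for basis elements plus the congruences) leaves implicit.

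One smaller correction: use arbitrary $\ex(V_0)$-definable open sets rather than boxes. Boxes with endpoints in the real closure of $\ex(V_0)$ cannot, for example, decide the order of two coordinates lying in the same cut. By contrast, any $\ex(V_0)$-definable set containing a generic tuple contains its interior, which is an $\ex(V_0)$-definable open neighbourhood of the tuple, and KR handles any nonempty open set equally well.
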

\begin{proof}
  Let $p(t)$ be the type stating that
  \begin{enumerate}
    \item $t\in V\setminus V_0$,
    \item $t\in y+k\cdot\kex$ for all $k\in\bN$,
    \item $\ex((r_jt)_{j\in J})\equiv_{\ex(V_0)}^{\rcf}\ex((r_jy)_{j\in J})$.
  \end{enumerate}
  Because $\ex(r_jy)_{j\in J}$ is algebraically independent over $\ex(V_0)$, every finite segment of $p(t)$ says $t$ lies in an open subset of $S^n$ for some $n$, which is realized in $\A$ by KR.

  Let $x\in\kex^\A$ be a realization of p. By 2, we have $\ex(\frac{x}{k})=\ex(\frac{y}{k})$ for all $k\in\bN$, and the desired isomorphism follows from 3.
\end{proof}

The following lemma states that partial isomorphisms of strong subsets can be extended to its $K$-span by adding a kernel element.

\begin{lemma}\label{le:addKernel}
  Let $\bx\in(V^\A)^n$, $\by\in(V^\B)^n$ be $K$-independent tuples over $V_0$ such that $\ag{\bx,V_0}^\bQ\leq V^\A$ and $\ag{\by,V_0}^\bQ\leq V^\B$. If the map $\bx\mapsto\by$ induces an isomorphism $\ex(\bx)\equiv^{\rcf}_{\ex(V_0)}\ex(\by)$, then there is $\bx':=\bx+\bz$ for some $\bz\se\kex^\A$ such that $\ag{\bx',V_0}^\bQ\leq V^\A$, $\bx'$ is $K$-independent over $V_0$, and $\ex\ag{\bx'}^K\equiv^{\rcf}_{\ex(V_0)}\ex\ag{\by}^K$.
\end{lemma}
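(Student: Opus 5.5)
The plan is to obtain $\bz$ as a realization of a type over $V_0\cup\bx$ in $\A$, in the spirit of Lemma \ref{le:extendKernel}, and then to verify strongness separately. Fix a $\bQ$-basis $(r_j)_{j\in J}$ of $K$ with $r_{j_0}=1$, and write $J^*=J\setminus\{j_0\}$.

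\emph{Step 1 (genericity of the new exponentials).} Since $\by$ is $K$-independent over $V_0$, the elements $(r_jy_i)_{i\le n,\,j\in J^*}$ are $\bQ$-independent over $\ag{\by,V_0}^\bQ$. Their $K$-span adds nothing over $\ag{\by,V_0}^K$. So for any finite subtuple $\bu$, strongness of $\ag{\by,V_0}^\bQ$ gives
\[0\le\delta(\bu/\ag{\by,V_0}^\bQ)=\trd(\ex(\bu)/\ex(\by,V_0))-|\bu|.\]
Hence $(\ex(r_jy_i))_{i,\,j\in J^*}$ is algebraically independent over $\ag{\ex(V_0),\ex(\by)}^{\rc}$. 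The same argument shows that $(\ex(r_jx_i))$ is algebraically independent over $\ag{\ex(V_0),\ex(\bx)}^{\rc}$.

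The field $\ex\ag{\by}^K$ is generated over $\ex(V_0)$ by $\ex(\by)$, the elements $\ex(r_jy_i)$, and their roots $\ex(r_jy_i/k)$. Its real closed type is therefore fixed by three pieces of data:
\begin{itemize}
  \item the type of $\ex(\by)$, which is already matched by $\ex(\bx)$;
  \item a coherent choice of roots $\ex(y_i/k)$;
  \item the cuts of the algebraically independent elements $\ex(r_jy_i/k)$ over the preceding field.
\end{itemize}

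\emph{Step 2 (the type).} Let $p(\bt)$ say that:
\begin{enumerate}
  \item $\bt\in\kex^n$;
  \item for each $k\in\bN$, $\ex((x_i+t_i)/k)$ realizes the same real closed type over $\ex(V_0)\ex(\bx)$ as $\ex(y_i/k)$. This pins down $t_i$ modulo $k\cdot\kex$, because changing $t_i$ by one kernel element multiplies $\ex(t_i/k)$ by a primitive $k$-th root of unity. The residues form a coherent system, and each finite piece is realized by some integer combination.
  \item For $j\in J^*$, the tuple $\ex(r_j(x_i+t_i)/k)=\ex(r_jx_i/k)\,\ex(r_jt_i/k)$ lies in the cut over $\ex(V_0)\ex(\bx)$ prescribed by $\ex(r_jy_i/k)$.
\end{enumerate}

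By Step 1 and the independence of $\ex(r_jx_i)$, each finite fragment of condition (3) requires the vector $(\ex(r_jt_i/k))$ to lie in a nonempty open box of $S^{N}$. The box is nonempty because algebraically independent points satisfy only open conditions.

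Finite satisfiability is then obtained coordinatewise. Write $t_i=s_i+k m_i$ with $s_i$ an integer representative of the residue required by (2) and $m_i\in\kex$. Apply KR to $k\cdot\kex$, as observed after its statement, with the $\bQ$-independent tuple $(1,(r_j)_{j\in J^*})$, once for each $i$. The product of these dense sets is dense in the product of tori, so it meets the box. By $|V_0|^+$-saturation, $p$ is realized by some $\bz$; put $\bx'=\bx+\bz$.

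\emph{Step 3 (checking the conclusions).} The map $\bx'\mapsto\by$ extends, via conditions (2) and (3), to a real closed field isomorphism $\ex\ag{\bx'}^K\cong\ex\ag{\by}^K$ over $\ex(V_0)$. $K$-independence of $\bx'$ over $V_0$ then follows, since any $K$-relation would force an algebraic relation among the $\ex(r_jy_i)$ that Step 1 excludes.

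For strongness, note that $\ag{\bx',V_0,\kex}^\bQ=\ag{\bx,V_0,\kex}^\bQ$. Moreover, $\ag{\bx,V_0}^\bQ$ and $\ag{\bx',V_0}^\bQ$ are both prealgebraic over $\ag{V_0}^\bQ$ plus kernel elements, in the sense that $\delta$ of $\bz$ over either vanishes. Using Fact \ref{le:prealg} together with the computation $\delta(\bx'/V_0)=\delta(\bx/V_0)$, both sides equal $n\cdot(\text{stuff})$ via the matched transcendence degrees, and from this I expect to derive $\ag{\bx',V_0}^\bQ\le V^\A$.

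The main obstacle is the bookkeeping in Step 2(2) and in this final strongness check. The roots of unity have to be handled coherently across all $k$, and it must be confirmed that adding kernel elements does not create new $\delta$-drops. If the strongness check via Fact \ref{le:prealg} does not go through, I would instead compute $\delta(\bw/\ag{\bx',V_0}^\bQ)$ directly, using the algebraic independence from Step 1 transported to $\bx'$.
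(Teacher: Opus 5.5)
\noindent Your construction of $\bz$ is sound and uses the same mechanism as the paper: realize, via KR and saturation, a kernel tuple whose $K$-multiples move $\ex\ag{\bx+\bz}^K$ into the transported type. Your condition (2), which matches the $k$-th roots coherently, handles a point the paper's proof does not spell out. Your derivation of the $K$-independence of $\bx'$ from the transported algebraic independence also works. One detail: a relation $\lambda\bx'\in V_0$ with $\lambda\in\bQ^n$ must first be multiplied by some $r\in K\setminus\bQ$ before Step 1 gives a contradiction. This is allowed because $V_0$ is a $K$-space.

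The gap is the strongness of $\ag{\bx',V_0}^\bQ$, which you only expect to hold. The equality $\delta(\bx'/V_0)=\delta(\bx/V_0)$ is not the relevant quantity, since strongness concerns $\delta(\bu/\ag{\bx',V_0}^\bQ)$ for all $\bu\se V^\A$. Your assertion that $\delta(\bz/\ag{\bx',V_0}^\bQ)=0$ is exactly what must be proved. It says that no $\bQ$-combination of $\bz$ lies in $\ag{\bx',V_0}^K$ without lying in $\ag{\bx',V_0}^\bQ$.

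The paper secures this by adding to the type that $\bt$ is $\bQ$-independent over $\ag{\bx,V_0}^\bQ$. Since $\ex(\bz)=1$ and $\ag{\bx,V_0}^\bQ\le V^\A$, this forces $\ld^K(\bz/\bx V_0)=n$. Hence $\bx\bz$, and so $\bx'\bz$, is $K$-independent over $V_0$. Both $\le_0$ relations are then immediate, and Fact \ref{le:prealg} concludes.

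In your setup you can close the gap as follows.
\begin{itemize}
\item Strongness of $\ag{\bx,V_0}^\bQ$ together with $\ex(\bz)=1$ gives $\ld^K(\bz/\bx V_0)=\ld^\bQ(\bz/\bx V_0)$. Hence $\ag{\bx,V_0}^\bQ\le_0\ag{\bx\bz,V_0}^\bQ$, and so $\ag{\bx\bz,V_0}^\bQ\le V^\A$.
\item Since $\ag{\bx\bz,V_0}^\bQ=\ag{\bx'\bz,V_0}^\bQ$ and $\bx$ is $K$-independent over $V_0$, compute $\ld^K$ and $\ld^\bQ$ of this space over $V_0$ once through $\bx$ and once through $\bx'$. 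This yields
\[\ld^K(\bz/\bx'V_0)-\ld^\bQ(\bz/\bx'V_0)=\ld^\bQ(\bx'/V_0)-\ld^K(\bx'/V_0).\]
The right side vanishes precisely because $\bx'$ is $K$-independent over $V_0$, which you did establish.
\item $\ld^K\le\ld^\bQ$ holds on every intermediate extension. The transcendence term is $0$, because $\ex\ag{\bx\bz,V_0}^\bQ$ is algebraic over $\ex\ag{\bx',V_0}^\bQ$. Together with the previous point this gives $\ag{\bx',V_0}^\bQ\le_0\ag{\bx\bz,V_0}^\bQ$, and Fact \ref{le:prealg} finishes.
\end{itemize}
So strongness genuinely depends on the $K$-independence of $\bx'$. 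It cannot come from $\delta$-bookkeeping over $V_0$ alone.
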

\begin{proof}
  Let $h=(h_j:V^n\to V)_{j\in J}$ be a set of $K$-linear maps such that $\{\bx,h(\bx)\}$ is a $\bQ$-basis of $\ag{\bx}^K$. Strongness implies that $\ex(h_j(\bx))_{j\in J}$ is algebraically independent over $\ex(\bx\cup V_0)$, and the same for $\by$. Let $R'\vDash\rcf$ be an extension of $\ag{\ex\ag{\bx,V_0}^\bQ}^{\rc}$ containing $(s_j)_{j\in J}$ such that
  \[\ex(\bx),(s_j)_{j\in J}\equiv^{\rcf}_{\ex(V_0)}\ex(\by),\ex(h_j(\by))_{j\in J}.\]
  Let $p((t_i)_{i<n})$ be the type stating the follows:
  \begin{enumerate}
    \item $\bt\se\kex$,
    \item $\bt$ is $\bQ$-independent over $\ag{\bx,V_0}^\bQ$.
    \item $\ex(h_j(\bt))_{j\in J}\equiv_{\ex(\bx\cup V_0)}^{\rcf}\left(\frac{s_j}{\ex(h_j(\bx))}\right)_{j\in J}$.
  \end{enumerate}
  It is clear by KR that $p(\bt)$ is satisfiable in $\A$. Let $\bz\se\kex^\A$ be a realization of $p$. Thus,
  \[\ex(\bx',h(\bx'))=\ex(\bx,h(\bx+\bz))\equiv^{\rcf}_{\ex(V_0)}\ex(\by,h(\by)).\]
  Since $\ag{\bx,V_0}^\bQ\leq V^\A$, we have
  \[\ld^K(\bz/\bx,V_0)=\ld^\bQ(\bz/\bx,V_0)=n.\]
  In other words, $\bz$ is $K$-independent over $\bx\cup V_0$, and therefore, $\bx\bz$ is $K$-independent over $V_0$.
  Lastly, since $\ag{\bx,V_0}^\bQ\leq_0\ag{\bx\bz,V_0}^\bQ$ and $\ag{\bx',V_0}^\bQ\leq_0\ag{\bx\bz,V_0}^\bQ$, we conclude by Fact \ref{le:prealg} that $\ag{\bx',V_0}^\bQ\leq V^\A$.
\end{proof}

We now present the main lemma of this section, which allows us to extend $V_0^\A\to V_0^\B$ to a $K$-powered field isomorphism.

\begin{proposition}\label{prop:vsToField}
  Let $\A,\B$ be $K$-powered fields, let $V_0^\A,V_0^\B$ be strong $K$-subspaces of $V^\A,V^\B$ respectively, and let $\iota:V_0^\A\to V_0^\B$ be a $K$-linear isomorphism such that $\ex(V_0^\A)\equiv^{\rcf}\ex(V_0^\B)$. Assume $\A$ satisfies KR and is $|V^\B|^+$-saturated. If $\B=\ag{V_0^\B}^{\pc}_\B$ then there is $\B'\leq\A$ and a $K$-powered field isomorphism $f:\B'\to\B$ extending $\iota$.
\end{proposition}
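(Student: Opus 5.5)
We build $\B'$ by transfinite induction, following the generation of $\B$ from $V_0^\B$ in Lemma \ref{le:pc-closure}. At each stage we extend the partial map $\iota$ by one new element or one small tuple, keeping the following invariant. At stage $\alpha$ we have a $K$-subspace $U_\alpha^\B\se V^\B$ containing $V_0^\B$, with $U_\alpha^\B\leq V^\B$. We also have a $K$-subspace $U_\alpha^\A\leq V^\A$ and a $K$-linear isomorphism $\iota_\alpha:U_\alpha^\A\to U_\alpha^\B$ extending $\iota$. This isomorphism commutes with the kernels: $\iota_\alpha(U_\alpha^\A\cap\kex^\A)=U_\alpha^\B\cap\kex^\B$. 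Finally, it induces an isomorphism $\ex(U_\alpha^\A)\equiv^{\rcf}\ex(U_\alpha^\B)$ of the generated real closed fields. Since $|V^\B|$ bounds the number of stages and every type used will be over fewer than $|V^\B|^+$ parameters, saturation of $\A$ applies at every step. At limits we take unions; strongness is preserved by Fact \ref{prop:str}(3). At the end, $U^\B=V^\B$ because $\B=\ag{V_0^\B}^{\pc}_\B$. We set $\B':=\ag{U^\A}^{\pc}_\A$ with its real closed field. Then $V^{\B'}=U^\A$, and $f:=\iota^{-1}$ on $V$ together with the induced field isomorphism is a $K$-powered field isomorphism. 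Strongness of $\B'$ in $\A$ follows from Corollary \ref{cor:strongStructure}.

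For the successor step, we pick the next element $b\in V^\B$ to add. Following the construction in Lemma \ref{le:pc-closure}, $\ex(b)$ lies in $\ag{\ex(U_\alpha^\B)}^S$, or $b$ is a new kernel element. There are two cases.

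\emph{Kernel case.} Suppose $b\in\kex^\B\setminus U^\B_\alpha$. Since $U_\alpha^\B$ is strong and $\delta(b/U_\alpha^\B)\geq0$ with $\trd$ contributing nothing from $\ld^\bQ$, the elements $\ex(r_jb)_{j\in J}$ are algebraically independent over $\ex(U_\alpha^\B)$, using strongness and $K$-independence. So Lemma \ref{le:extendKernel} applies with $V_0:=U_\alpha$. It gives $a\in\kex^\A\setminus U_\alpha^\A$ with $\ex\ag{a}^K\equiv^{\rcf}_{\ex(U_\alpha)}\ex\ag{b}^K$. We set $U_{\alpha+1}=U_\alpha+\ag{b}^K$ and extend $\iota$. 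We must check that the new $U^\A_{\alpha+1}$ is strong and that the kernels still match. The first holds because $\delta(a/U_\alpha^\A)=\delta(b/U^\B_\alpha)$ by the type equality, and strongness of $U_\alpha^\A$ gives this via Fact \ref{le:prealg}. The second requires that $\ag{a,U_\alpha^\A}^K$ contains no extra kernel elements. This should follow from the type condition (2) in Lemma \ref{le:extendKernel} together with matching $\ex$-values on the $K$-span.

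\emph{Algebraic case.} Suppose $\ex(b)\in\ag{\ex(U^\B_\alpha)}^S$. We choose a multiplicative basis $\ex(\by)$ of such elements over $\ex(U_\alpha^\B)$. Strongness of $U_\alpha^\B$ forces $\by$ to be $K$-independent over $U_\alpha^\B$, and $\ag{\by,U_\alpha^\B}^\bQ\leq V^\B$. The field isomorphism carries $\ex(\by)$ to elements of $\ag{\ex(U_\alpha^\A)}^S\se S^\A$. By surjectivity of $\ex^\A$ we pick preimages $\bx$, which are likewise $K$-independent with $\ag{\bx,U_\alpha^\A}^\bQ\leq V^\A$. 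Only the $\bQ$-span is controlled so far. We then apply Lemma \ref{le:addKernel}, correcting $\bx$ by kernel elements $\bz$, so that the whole $K$-span matches: $\ex\ag{\bx'}^K\equiv^{\rcf}_{\ex(U_\alpha)}\ex\ag{\by}^K$. We set $U_{\alpha+1}=U_\alpha+\ag{\by}^K$.

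The main obstacle is maintaining the kernel invariant and strongness simultaneously. For instance, an element of $\ag{\bx',U_\alpha^\A}^K$ that is a kernel element must correspond to a kernel element on the $\B$ side, and vice versa. The plan is to read this off from the $\rcf$-isomorphism: $v\in\kex$ iff $\ex(v)=1$, and $\ex$ of every element of the $K$-span is determined by the matched real closed field data. This handles elements in the span. Strongness then follows from equality of $\delta$ computed on both sides.
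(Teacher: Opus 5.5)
Your plan is essentially the paper's proof. The paper first brings all of $\kex^\B$ into the domain of the map using Lemma~\ref{le:extendKernel}; you interleave the kernel steps with the others, which changes nothing. It then repeatedly lifts a multiplicative basis of $\ag{\ex(V_n)}^S$ over $\ex(V_n)$ and corrects the lifts by kernel elements via Lemma~\ref{le:addKernel}, exactly as in your algebraic case. Your kernel invariant is, as you observe, automatic from injectivity of the $\rcf$-isomorphism; the paper leaves it implicit.

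One step is wrong as written: the choice $\B':=\ag{U^\A}^{\pc}_\A$.

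\begin{itemize}
\item By definition this structure contains all of $\kex^\A$. Since $\kex^\A=\ex^{-1}(1)$ is an infinite definable set in a $|V^\B|^+$-saturated structure, $|\kex^\A|>|V^\B|=|U^\A|$. Hence $V^{\B'}\neq U^\A$, and this $\B'$ is not even isomorphic to $\B$, because its kernel is too large.
\item Take instead, as the paper does, $\B'=(U^\A,\ag{\ex(U^\A)}^{\rc})$ with $\ex$ restricted. This is a $K$-powered field. The final $\rcf$-isomorphism carries $\ex(U^\A)$ onto $\ex(V^\B)=S^\B$, so $\ex(U^\A)$ is the whole unit circle of $\ag{\ex(U^\A)}^{\rc}$. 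It also carries the kernel $U^\A\cap\kex^\A$ onto $\kex^\B$.
\item With this choice, $\B'\leq\A$ is just $U^\A\leq V^\A$, so Corollary~\ref{cor:strongStructure} is not needed.
\item The map $f$ should be the union of the $\iota_\alpha$, not $\iota^{-1}$, which goes the wrong way.
\end{itemize}

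For $U^\A\leq V^\A$ at limit stages, note that Fact~\ref{prop:str}(3) only says each $U^\A_\alpha$ is strong in the union, not that the union is strong in $V^\A$. The latter still holds. Any finite-dimensional $Y'\se U^\A$ with $\cl(\bw)\cap U^\A\se Y'$ lies in some $U^\A_\beta$, so $\delta(\bw/Y')\geq\delta(\bw/U^\A_\beta)\geq 0$.
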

\begin{proof}
  First, let us extend $\iota$ to include $\kex^\B$ in its image. Take a $\bQ$-basis $\by$ of $\kex^\B$ over $V_0^\B$. Strongness of $V_0^\B$ implies that $\by$ is $K$-independent over $V_0^\B$. Inductively applying Lemma \ref{le:extendKernel} and compactness, we conclude that there is $K$-independent $\bx\se\kex^\A$ such that $\ex\ag{\bx,V_0^\A}^K\equiv^{\rcf}\ex\ag{\by,V_0^\B}^K$. Thus, we may assume that $\kex^\B\se V_0^\B$.

  We construct inductively strong $K$-subspace $V_n$ of $V^\A$ and $f_n:V_n\to V^\B$ inducing isomorphisms $V_n\equiv^{\KVS}f(V_n)$ and $\ex(V_n)\equiv^{\rcf}\ex(f(V_n))$. Let $f_0=\iota$.

  Suppose we already have $V_n$ and $f_n$. Let $R_n:=\ag{\ex(V_n)}^{\rc}$ and $f_n':R_n\to R^\B$ extending $\ex(V_n)\to\ex(f_n(V_n))$. Take a multiplicative basis $(m_i)_{i\in I}$ of $\ag{\ex(V_n)}^S$ over $\ex(V_n)$. For each $i$, take $a_i\in\ex^{-1}(m_i)$ and $b_i\in\ex^{-1}(f_n'(m_i))$. Strongness implies that $(a_i)_{i\in I}$ is $K$-independent over $V_n$ and $(b_i)_{i\in I}$ is $K$-independent over $f(V_n)$. By Lemma \ref{le:addKernel}, we obtain $K$-independent $(a'_i)_{i\in I}=(a_i+z_i)_{i\in I}$ over $V_n$ such that $\ag{V_n,(a_i')_{i\in I}}^\bQ$ is strong and
  \[\ex\ag{(a_i')_{i\in I}}^K\equiv^{\rcf}_{\ex(V_n)}\ex\ag{(b_i)_{i\in I}}^K.\]
  Now let $V_{n+1}:=\ag{(a_i')_{i\in I},V_n}^K$ and $f_{n+1}$ extend $f_n$ mapping $(a_i')_{i\in I}$ to $(b_i)_{i\in I}$.

  Take $V'=\cup_n V_n,R'=\cup_n R_n,\B'=(V',R')$, and $f=\cup_nf_n$.
\end{proof}

\section{Strong pairs}\label{sec:str}
Let $V$ be a $K$-vector space, $R$ be a real closed field, and let $S$ be the unit circle of $R^2$. In this section, we introduce a pair of definable subsets of $V^n$ and $S^n$ to characterize a strong extension.

Let $D+\ba\se V^n$ be an affine $K$-linear subspace, and let $W_{\bb}\se S^n$ be a semialgebraically connected Nash manifold (semialgebraic analytic manifold) defined over $\bb$, which is given by an $\emptyset$-definable set $W(\bx;\by)\se S^n\times R^m$ and parameters $\bb\in R^m$.

\begin{fact}[{\cite[Proposition 8.4.1]{Bo98}}]\label{fact:Nash}
  Let $W\se R^n$ be a semialgebraically connected Nash manifold. Then the Zariski closure $W^{\zar}$ of $W$ in $R^n$ is irreducible, and $W^{\zar}$ has the same dimension as $W$.
\end{fact}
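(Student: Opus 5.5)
The plan is to treat the two assertions separately: first irreducibility of $W^{\zar}$ from semialgebraic connectedness together with analyticity, then the dimension equality from the basic dimension theory of semialgebraic sets.

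\emph{Irreducibility.} Suppose $W^{\zar}=Z_1\cup Z_2$ with $Z_1,Z_2$ proper Zariski closed subsets; we want a contradiction. Since $W\subseteq Z_1\cup Z_2$, we have $W=(W\cap Z_1)\cup(W\cap Z_2)$. The key step is to show that for any polynomial $P$, the set $W\cap\{P=0\}$ is either all of $W$ or has empty interior in $W$. This follows from the identity principle for Nash (real analytic) functions on a connected analytic manifold. Consider the set $U$ of points of $W$ near which $P|_W$ vanishes identically. It is open by definition. It is also closed in $W$, because a real analytic function whose derivatives all vanish at a limit point vanishes on a neighbourhood of that point. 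A semialgebraically connected Nash manifold is connected in the sense needed here: $U$ is semialgebraic, being defined by the vanishing of all derivatives up to some bounded order, which is a first-order condition by Noetherianity. Hence $U=\emptyset$ or $U=W$.

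Now apply this to generators of the ideals of $Z_1$ and $Z_2$. If neither $Z_i$ contains $W$, then each $W\cap Z_i$ is a closed semialgebraic subset of $W$ with empty interior in $W$. A finite union of closed nowhere dense sets is nowhere dense by Baire for semialgebraic sets (or simply by dimension count: each has dimension $<\dim W$). This contradicts $W=(W\cap Z_1)\cup(W\cap Z_2)$. So some $Z_i\supseteq W$, hence $Z_i\supseteq W^{\zar}$, and $W^{\zar}$ is irreducible.

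\emph{Dimension.} We know $\dim W\le\dim W^{\zar}$, since semialgebraic dimension is monotone and coincides with Krull dimension for algebraic sets. Conversely, stratify $W^{\zar}$ and work at a point $w\in W$ where $W$ is an analytic manifold of dimension $d=\dim W$. Take a generic point of $W$ over the parameters, with $\trd=d$, using the fact that $\dim X=\max\trd$. Its Zariski locus over the parameters is an irreducible variety $Z$ of dimension $d$. A neighbourhood of the generic point in $W$ is contained in $Z$: by genericity, every defining equation of $Z$ vanishes on an open piece of $W$, so the identity principle above gives $W\subseteq Z$. Therefore $W^{\zar}\subseteq Z$, and $\dim W^{\zar}\le d$.

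The main obstacle I expect is justifying the identity principle with only semialgebraic connectedness over an arbitrary real closed field $R$. There one cannot use topological connectedness, so the closedness of $U$ has to be argued via definability of the vanishing locus of jets of $P$ together with Taylor expansion of Nash functions. I would try to handle this by citing the standard Nash-function facts from \cite{Bo98}.
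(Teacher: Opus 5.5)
The paper gives no proof of this statement; it only cites \cite[Proposition 8.4.1]{Bo98}, so the comparison is with the standard argument there. Your irreducibility part is essentially that argument. The identity principle for Nash functions on a semialgebraically connected Nash manifold says that a Nash function vanishing on a nonempty open subset vanishes identically, because a Nash germ is determined by its Taylor series. This makes the ideal of polynomials vanishing on $W$ prime, i.e.\ $W^{\zar}$ is irreducible.

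Two small corrections in that part:
\begin{enumerate}
\item Your set $U$ is semialgebraic simply because it is defined by the first-order formula $\exists\varepsilon>0\,\forall y\in W\,(|y-x|<\varepsilon\rightarrow P(y)=0)$. The detour through bounded-order jets and ``Noetherianity'' is unnecessary. It is also not clearly justified, since derivatives along $W$ are Nash functions, not polynomials.
\item The union step needs no Baire-type principle. If $A,B\subseteq W$ are closed with empty interior and $O\subseteq A\cup B$ is open, then $O\setminus A$ is an open subset of $B$, hence empty. So $O\subseteq A$, and therefore $O=\emptyset$.
\end{enumerate}

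For the dimension, you take a genuinely different and heavier route. The equality $\dim S=\dim S^{\zar}$ holds for every semialgebraic set $S$, with no Nash or connectedness hypothesis. In \cite{Bo98} the dimension of a semialgebraic set is in fact defined via its Zariski closure and then shown to agree with cell and manifold dimension. So this half of the statement is just general semialgebraic dimension theory.

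Your generic-point argument is valid, but making it rigorous needs three things you did not state:
\begin{enumerate}
\item You must pass to an elementary extension $R^*\succeq R$, since $R$ itself may have no generic points (e.g.\ $R=\bR\cap\overline{\bQ}$ has no points of positive transcendence degree).
\item You must check that $W(R^*)$ is still a semialgebraically connected Nash manifold, so that the identity principle applies there.
\item You must spell out ``by genericity'': a point of $W$ of transcendence degree $\dim W$ over the parameter field $k$ lies in the relative interior of every $k$-definable subset of $W$ containing it. This holds because the non-interior part of such a subset has dimension $<\dim W$.
\end{enumerate}

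What your route buys is a bound on $W^{\zar}$ by the Zariski locus of a generic point. This is the same generic-point viewpoint the paper uses right afterwards, in the lemma on $\widehat{W_{\bb}}$. The standard route needs nothing beyond the identity principle and general semialgebraic dimension theory.
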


\begin{definition}
  We say
  \begin{itemize}
    \item $D+\ba$ is \emph{free} if $\bv(D)$ has dimension $1$ for every $\bv\in\bZ^n\setminus\{0\}$,
    \item $W_{\bb}$ is \emph{free} if $(W_{\bb})^{\bv}$ has dimension $1$ for every $\bv\in\bZ^n\setminus\{0\}$,
    \item $(D+\ba,W_{\bb})$ is \emph{free} if both $D+\ba$ and $W_{\bb}$ are free,
    \item $(D+\ba,W_{\bb})$ is \emph{rotund} if \[\dim M(D)+\dim(W_{\bb})^M\geq k\] for every $M\in\bZ^{k\times n}$ of rank $k$.
    \item $(D+\ba,W_{\bb})$ is a \emph{strong pair} if it is free and rotund.
  \end{itemize}
  The dimension of $(D+\ba,W_{\bb})$ is the sum $\dim D+\dim W_{\bb}$.
\end{definition}

We can alternatively define freeness and rotundity from the perspective of generic points.

\begin{lemma}\label{le:free}
  Let $\ba\se A\se V$ and $\bb\se B\se R$. Suppose $\bc$ is generic in $D+\ba$ over $A$ (or in $W_{\bb}$ over $B$). Then $\bc$ is $\bQ$-independent over $A$ (or multiplicatively independent over $B$) if and only if $D+\ba$ (or $W_{\bb}$) is free.
\end{lemma}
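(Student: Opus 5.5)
The plan is to treat the two cases separately and, in each, reduce $\bQ$-dependence of a generic point to the condition that an integer projection has dimension $0$. The guiding principle is that a $\bQ$-linear (respectively multiplicative) relation among the coordinates of $\bc$ over $A$ (respectively over $B$) is exactly a vector $\bv\in\bZ^n\setminus\{0\}$ with $\bv(\bc)\in\ag{A}^\bQ$ (respectively $\bc^{\bv}\in\ag{B}^S$, the multiplicative hull being contained in the real/algebraic closure of $B$). We then compare this with genericity. In the linear case, the image $\bv(D+\ba)=\bv(D)+\bv(\ba)$ is an affine $K$-subspace of $V$ of dimension $\dim\bv(D)\in\{0,1\}$. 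In the torus case, $(W_{\bb})^{\bv}$ is a semialgebraic subset of $S$ of dimension $0$ or $1$.

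\emph{Linear case.} Suppose $D+\ba$ is not free, so $\dim\bv(D)=0$ for some $\bv\neq0$. Then $\bv(\bx)=\bv(\ba)$ for every $\bx\in D+\ba$, so $\bv(\bc)\in\ag{A}^K$. This is a $K$-relation, not yet a $\bQ$-relation, so the interpretation of ``over $A$'' must be settled first. I would read the lemma with $A$ a $K$-subspace (or $\bQ$-span of a set that is used inside $K$-linear spans) and independence meant modulo $\ag{A}^K$; this is consistent with how $\ld^K(\cdot/V_0)$ is used throughout. With that reading, $\bc$ is $\bQ$-dependent over $A$.

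Conversely, suppose $\bv(\bc)\in\ag{A}^K$ for some $\bv\neq0$. Genericity gives $\ld^K(\bc/A)=\dim D$. The point $\bc$ lies in $(D\cap\ker\bv)+\bc$, and $\bc\in D+\ba$ with $\ba\subseteq A$. If $\dim\bv(D)=1$, then $D\cap\ker\bv$ has dimension $\dim D-1$. Since $\bv(\bc)\in\ag{A}^K$, the point $\bc$ lies in a translate of $D\cap\ker\bv$ by a vector in $\ag{A}^K$, which forces $\ld^K(\bc/A)\le\dim D-1$. This contradicts genericity by the Fact characterising $\ld^K(\bx/V_0)$ as the minimal dimension of an affine $K$-linear subspace containing the point.

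\emph{Torus case.} This is the same argument with transcendence degree in place of $\ld^K$. If $\dim(W_{\bb})^{\bv}=0$, then since $W_{\bb}$ is semialgebraically connected, $(W_{\bb})^{\bv}$ is a single point $\beta\in\dcl(B)$. Hence $\bc^{\bv}=\beta$ is a multiplicative dependence over $B$, again read as relative to $\ag{B}^S$. Conversely, suppose $\bc^{\bv}=\beta$ with $\beta$ algebraic over $B$. Then $\bc$ lies in $W_{\bb}\cap\{\bx^{\bv}=\beta\}$. If $\dim(W_{\bb})^{\bv}=1$, this fibre has dimension $\dim W_{\bb}-1$: the map $\bx\mapsto\bx^{\bv}$ on $W_{\bb}$ has image of dimension $1$, so its generic fibres have dimension $\dim W_{\bb}-1$, and by o-minimal fibre dimension every fibre over a $B$-algebraic value has dimension at most $\dim W_{\bb}-1$ once we pass to the Zariski closure using Fact~\ref{fact:Nash}. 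This gives $\trd(\bc/B)\le\dim W_{\bb}-1$, contradicting genericity.

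The main obstacle is this last fibre bound. A single special fibre of a semialgebraic map can be larger than the generic one, so I would argue with the irreducible Zariski closure $W^{\zar}$ instead. Since $\dim W^{\zar}=\dim W_{\bb}$ by Fact~\ref{fact:Nash}, the hypersurface $\{\bx^{\bv}=\beta\}$ does not contain $W^{\zar}$: otherwise $(W_{\bb})^{\bv}$ would be a point. Irreducibility then makes $W^{\zar}\cap\{\bx^{\bv}=\beta\}$ a proper closed subset, of dimension at most $\dim W_{\bb}-1$ and defined over $\ag{B}^{\ac}$. Consequently $\trd(\bc/B)<\dim W_{\bb}$.
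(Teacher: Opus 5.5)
Your proposal is correct and follows essentially the paper's argument: the paper also derives the contradiction with genericity by intersecting $D+\ba$ with a coset of $\ker\bv$, and it only says the torus case is ``similar'', which your Zariski-closure argument via Fact~\ref{fact:Nash} fills in. One correction: no reinterpretation of ``over $A$'' is needed, because $\bv\in\bZ^n$ and $\ba\se A$ give $\bv(\bc)=\bv(\ba)\in\ag{A}^\bZ$, which is already a $\bQ$-relation over $A$ and not merely a $K$-relation.
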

\begin{proof}
  $(\Rightarrow)$ is clear. We show $(\Leftarrow)$. Suppose $\bv\cdot\bc=\alpha\in A$ for some $\bv\in\bZ^n\setminus\{0\}$. Let $E=\ker(\bv:V^n\to V)$. Both $E+\bc$ and $D+\bc$ are definable over $A$, and hence,
  \[\dim(E+\bc)\cap(D+\bc)\geq\ld^K(\bc/A)=\dim(D+\bc).\]
  Therefore, $D\se E$. The proof for $W_{\bb}$ is similar.
\end{proof}

\begin{proposition}
  \label{prop:rotundStrong}
  Let $\X\se\Y\in\C_\I$ and let $(D+\ba,W_{\bb})$ be a free pair with $\ba\se V^\X$ and $\bb\se R^\X$. Let $\bc\se V^\Y$ satisfying $\bc$ is generic in $D+\ba$ over $V^\X$ and $\ex(\bc)$ is generic in $W_{\bb}$ over $R^\X$. Then $(D+\ba,W_{\bb})$ is rotund if and only if $V^\X\leq\ag{V^\X,\bc}^\bQ$.
\end{proposition}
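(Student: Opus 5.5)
The plan is to unwind the definition of strongness for the extension $V^\X\se\ag{V^\X,\bc}^\bQ$ and compute the predimension of each relevant tuple directly from genericity. Every finite tuple in $\ag{V^\X,\bc}^\bQ$ spans, modulo $V^\X$, the same $\bQ$-space as $M\bc$ for some $M\in\bZ^{k\times n}$ of rank $k$. Since $\delta$ is preserved by $\cl$ and localizes componentwise, $V^\X\leq\ag{V^\X,\bc}^\bQ$ is equivalent to
\[\delta(M\bc/V^\X)=\ld^K(M\bc/V^\X)+\trd(\ex(M\bc)/\ex(V^\X))-\ld^\bQ(M\bc/V^\X)\geq 0\]
for every such $M$. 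The infimum over finite-dimensional $Y'$ in the localization causes no trouble, because each of the three terms localizes separately. We also have $\ex(M\bc)=\ex(\bc)^M$. Since $\X$ is a $K$-powered field, $R^\X=\ag{\ex(V^\X)}^{\rc}$, so transcendence degree over $\ex(V^\X)$ equals transcendence degree over $R^\X$.

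Next I would identify the three terms. First, by freeness of $D+\ba$ and Lemma \ref{le:free}, $\bc$ is $\bQ$-independent over $V^\X$. Hence $M\bc$ has $\bQ$-rank $k$ over $V^\X$, so $\ld^\bQ(M\bc/V^\X)=k$. Second, $M\bc$ lies in the affine $K$-linear space $M(D)+M\ba$, which is defined over $V^\X$. All fibres of $M$ restricted to $D$ are translates of $D\cap\ker M$, so additivity of $\ld^K$ gives
\[\ld^K(M\bc/V^\X)=\ld^K(\bc/V^\X)-\dim(D\cap\ker M)=\dim M(D).\]
Third, I claim $\trd(\ex(\bc)^M/R^\X)=\dim(W_{\bb})^M$. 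This is the step I expect to be the main obstacle. The issue is that the image of a generic point under a semialgebraic map need not be generic if fibre dimensions jump.

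To handle this, I would use that $W_{\bb}$ is a semialgebraically connected Nash manifold, so its Zariski closure is irreducible of the same dimension (Fact \ref{fact:Nash}). The map $\bx\mapsto\bx^M$ is Nash, so the locus where its fibre dimension exceeds the minimal value $e_0$ is a semialgebraic subset of $W_{\bb}$ of lower dimension, defined over $\bb$. Then $\dim(W_{\bb})^M=\dim W_{\bb}-e_0$. The generic point $\ex(\bc)$ avoids the lower-dimensional locus, so its fibre has dimension $e_0$ and is defined over $R^\X(\ex(\bc)^M)$. By additivity of $\trd$,
\[\trd(\ex(\bc)/R^\X)=\trd(\ex(\bc)^M/R^\X)+\trd(\ex(\bc)/R^\X(\ex(\bc)^M)),\]
and the second term is at most $e_0$. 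This gives $\trd(\ex(\bc)^M/R^\X)\geq\dim W_{\bb}-e_0=\dim(W_{\bb})^M$. The reverse inequality holds because $\ex(\bc)^M\in(W_{\bb})^M$.

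Putting the three identifications together gives
\[\delta(M\bc/V^\X)=\dim M(D)+\dim(W_{\bb})^M-k.\]
So the condition $\delta(M\bc/V^\X)\geq 0$ for all full-rank $M\in\bZ^{k\times n}$ is literally the rotundity of $(D+\ba,W_{\bb})$. Both implications of the proposition follow at once.
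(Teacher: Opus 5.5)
Your proof has the same skeleton as the paper's. You reduce $V^\X\leq\ag{V^\X,\bc}^\bQ$ to $\delta(M\bc/V^\X)\geq 0$ for full-rank integer matrices $M$, get $\ld^\bQ(M\bc/V^\X)=k$ from freeness, and identify $\delta(M\bc/V^\X)$ with $\dim M(D)+\dim(W_{\bb})^M-k$. The paper asserts the $\ld^K$ and $\trd$ identifications without comment. Your additivity argument for $\ld^K(M\bc/V^\X)=\dim M(D)$ and your remark that $R^\X=\ag{\ex(V^\X)}^{\rc}$ are fine.

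\textbf{The flaw.} Your justification of $\trd(\ex(\bc)^M/R^\X)=\dim(W_{\bb})^M$ uses a false claim. Over a real closed field, fibre dimension of a Nash map on a connected Nash manifold is not upper semicontinuous. So the minimal fibre dimension $e_0$ need not be attained generically, and $\dim(W_{\bb})^M=\dim W_{\bb}-e_0$ can fail. This already happens in the present setting.

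Near the identity of $S^3$, in angle coordinates, let $W$ be a small connected piece of $\{\sin\theta_1=2-\cos\theta_2-\cos\theta_3\}$; in Cartesian coordinates this is $y_1=2-x_2-x_3$. It is the graph of a Nash map $(z_2,z_3)\mapsto z_1$ over a neighbourhood of the identity in $S^2$. It is free, because $\theta_1$ vanishes to second order in $(\theta_2,\theta_3)$. Take $M=(1\;0\;0)$. The fibre over $\theta_1=c$ is a closed curve when $c>0$ is small, but a single point when $c=0$. So $e_0=0$, while $\dim W^M=1=\dim W-1$, and the locus where the fibre dimension exceeds $e_0$ is $W$ minus a point.

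\textbf{The repair.} The identity you want is still true; run the fibre argument where upper semicontinuity does hold.
By Fact \ref{fact:Nash}, $\widehat{W_{\bb}}$ is irreducible of dimension $\dim W_{\bb}$. Since $\bar z=z^{-1}$ on $S$, the point $\ex(\bc)$, viewed in $F^n$, is generic in $\widehat{W_{\bb}}$ over $R^\X(i)$. Hence $\ex(\bc)^M$ is generic in the Zariski closure $Z$ of the image of $\widehat{W_{\bb}}$ under $\bz\mapsto\bz^M$. This gives $\trd(\ex(\bc)^M/R^\X)=\dim Z\geq\dim(W_{\bb})^M$. The last inequality holds because a semialgebraic subset of $S^k$ has dimension at most that of its Zariski closure in $F^k$, again via $\bar z=z^{-1}$. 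Your trivial upper bound then gives equality. This is the identity $\dim(W_{\bb})^{M}=\dim(\widehat{W_{\bb}})^{M}$ that the paper uses implicitly in the next subsection.
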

\begin{proof}
  $(\Rightarrow)$ For every $\bx\se\ag{V^\X,\bc}^\bQ$, there is some $M\in\bZ^{k\times n}$ of rank $k$ satisfying
  \[V^\X+\ag{\bx}^\bQ=V^\X+\ag{M(\bc)}^\bQ.\]

  Freeness implies $\ld^\bQ(M(\bc)/V^\X)=k$, and rotundity implies
  \begin{align*}
    \delta(\bx/V^\X)&=\delta(M(\bc)/V^\X)\\
    &=\ld^K(M(\bc)/V^\X)+\trd(\ex(\bc)^M/R^\X)-\ld^\bQ(M(\bc)/V^\X)\\
    &=\dim M(D)+\dim(W_{\bb})^M-k\\
    &\geq 0
  \end{align*}

  $(\Leftarrow)$ For every $M$ of rank $k$, check that
  \[\ld^K(M(\bc)/V^\X)+\trd(\ex(\bc)^M/R^\X)\geq k.\qedhere\]
\end{proof}

\subsection{Definability of strongness}

In this subsection, we show that strongness is a definable property of the pair $(D+\ba,W_{\bb})$ when parameters vary. Since strongness does not depend on the parameter $\ba$, we may assume $\ba=0$. Let $\widehat{W_{\bb}}\se F^n$ denote the Zariski closure of $W_{\bb}\se S^n\se R^{2n}$ viewed as a subset of $F^n$.

\begin{lemma}
  Let $W_{\bb}$ be a semialgebraically connected Nash manifold. Then $\widehat{W_{\bb}}$ is irreducible, and the dimension of $W_{\bb}\se S^n$ equals the dimension of $\widehat{W_{\bb}}$ as a subvariety of $F^n$.
\end{lemma}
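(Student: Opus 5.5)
We reduce to Fact \ref{fact:Nash} by comparing the real Zariski closure of $W_{\bb}$ in $R^{2n}$ with its complex Zariski closure in $F^n$. We identify $F=R^2=R(i)$, so that $F^n$, viewed as a variety over $R$, is $R^{2n}$ via real and imaginary parts. The unit circle $S\se F^\times$ is the real algebraic set $\{x^2+y^2=1\}$, so $W_{\bb}\se S^n\se R^{2n}$ is a semialgebraically connected Nash manifold in $R^{2n}$. By Fact \ref{fact:Nash}, its real Zariski closure $W^{\zar}\se R^{2n}$ is irreducible and $\dim W^{\zar}=\dim W_{\bb}$.

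First I would identify the complex variety $\widehat{W_{\bb}}$. Let $I\se R[x_1,y_1,\dots,x_n,y_n]$ be the ideal of polynomials vanishing on $W_{\bb}$. By Fact \ref{fact:Nash}, $I$ is a real prime ideal, since it is the ideal of an irreducible real algebraic set. The Zariski closure of $W_{\bb}$ in $F^n$ is cut out by the ideal $J\se F[z_1,\dots,z_n]$ of complex polynomials vanishing on $W_{\bb}$.

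Write $f\in J$ as $f=g+ih$ with $g,h$ real polynomials in the $x_j,y_j$, using $z_j=x_j+iy_j$. Since $W_{\bb}$ consists of real points, $f$ vanishes on $W_{\bb}$ exactly when $g$ and $h$ do. Hence $J\otimes$-corresponds to $I\otimes_R F$ under the substitution $x_j=(z_j+\bar z_j)/2$ and $y_j=(z_j-\bar z_j)/2i$.

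The cleanest formulation is on points of $S^n$. There $\bar z_j=z_j^{-1}$, so the coordinate ring of $S^n$ over $F$ is $F[z_j^{\pm1}]$, which is also $R[x_j,y_j]/(x_j^2+y_j^2-1)\otimes_R F$. The closure $\widehat{W_{\bb}}$ is then the complexification $W^{\zar}_F$ of the real variety $W^{\zar}\cap S^n$, taken inside the torus $(F^\times)^n$ and then closed in $F^n$.

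Next I would show irreducibility and equality of dimension. The complexification of a real variety whose real points are Zariski dense, as they are here because $W^{\zar}$ is the closure of the real set $W_{\bb}$, is irreducible. Indeed, if $I$ is real prime then $I\otimes_R F$ is prime. Otherwise a decomposition of $W^{\zar}_F$ would be interchanged by complex conjugation, and the real points, which are fixed by conjugation, would lie in the intersection of two conjugate components. That intersection is a proper subvariety defined over $R$, contradicting density.

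Krull dimension is preserved under the base change $R\to F$, so $\dim\widehat{W_{\bb}}=\dim W^{\zar}=\dim W_{\bb}$.

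The main obstacle is the bookkeeping of the identification between $F^n$ and $R^{2n}$. Zariski closure in $F^n$ uses $F$-polynomials in the $z_j$ only, not in the $\bar z_j$. On $S^n$, however, $\bar z_j=z_j^{-1}$ lets us rewrite every real polynomial in $x_j,y_j$ as a Laurent polynomial in $z_j$. Clearing denominators by monomials, which do not vanish on $S^n$, shows that the two closures agree.
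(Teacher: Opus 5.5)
Your proof is correct, but it takes a different route from the paper.

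\textbf{The paper's route.} The paper settles irreducibility in one line by citing Fact~\ref{fact:Nash}. It obtains the dimension from a generic-point computation: if $(\bx,\by)$ is generic in $W_{\bb}$ over $\bb$, then $\bz=\bx+i\by$ is generic in $\widehat{W_{\bb}}$ and $\trd(\bx,\by/\bb)=\trd(\bz/\bb)$. The last equality holds, implicitly, because $x_j-iy_j=z_j^{-1}$ on the circle, so $\bx,\by$ and $\bz$ generate the same field over $\bQ(\bb,i)$.

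\textbf{Your route.} You work with vanishing ideals instead. The ideal $I\se R[\bx,\by]$ of $W_{\bb}$ is real and prime. The complex vanishing ideal is $J=\phi^{-1}(I\otimes_R F)$ for $\phi\colon F[\bz]\to F[\bx,\by]$, $z_j\mapsto x_j+iy_j$; this is the precise content of your correspondence between $J$ and $I\otimes_R F$. The same identity $\bar z_j=z_j^{-1}$ identifies the complexified circle with $F^\times$. Hence $\widehat{W_{\bb}}$ is the closure in $F^n$ of the complexification of $W^{\zar}$, and its Krull dimension is unchanged by the finite base change $R\to F$.

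\textbf{Comparison.} Both proofs rest on the same fact, that the complexified circle is a torus. Yours actually justifies the step the paper leaves implicit: why irreducibility of the real Zariski closure in $R^{2n}$ transfers to the closure in $F^n$.
\begin{itemize}
\item The clean reason for that step is that $R[\bx,\by]/I$ has a formally real fraction field. So $-1$ is not a square there, $I\otimes_R F$ remains prime, and $J$ is prime as the preimage of a prime ideal.
\item Your conjugate-components heuristic is therefore unnecessary. As written it is also incomplete, since it ignores components stable under conjugation.
\end{itemize}
The paper's version is shorter. It also gives directly what the next proof uses: $\bx+i\by$ is generic in $\widehat{W_{\bb}}$ whenever $(\bx,\by)$ is generic in $W_{\bb}$. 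To get this from your argument you still need the observation $\trd(\bx,\by/\bb)=\trd(\bz/\bb)$.
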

\begin{proof}
  Irreducibility follows from Fact \ref{fact:Nash}. For the dimension, take a generic point $(\bx,\by)\in W_{\bb}$. Then $\bx+i\by$ is a generic in $\widehat{W_{\bb}}$ and $\trd(\bx,\by/\bb)=\trd(\bx+i\by/\bb)$.
\end{proof}

Let $(X_{\bt})_{\bt\in F^m}\se F^n$ be a constructible family defined over $\bQ$. Applying Fact \ref{fact:WCIT} to $(X_{\bt})_{\bt\in F^m}$, we obtain a finite set $\mu(X)$ of proper algebraic subgroups of $(F^\times)^n$. For each $B\in\mu(X)$, we write $M_B$ for a full-rank matrix defining $B$. Let $\phi_{DWX}(\bs\bt)$ be the conjunction of the following formulas:
\begin{enumerate}
  \item $W_{\bs}\se X_{\bt}\;\bigwedge\;\dim W_{\bs}=\dim X_{\bt}$.
  \item $\bigvee_{B\in\mu(X)}\dim(X_{\bt})^{M_B}\geq 1$.
  \item $\bigvee_{B\in\mu(X)}\dim(X_{\bt})^{M_B}+\dim M_B(D)\geq\rk M_B$.
\end{enumerate}

\begin{lemma}
  Suppose $W_{\bb}$ is a semialgebraically connected analytic manifold, $X_{\bc}$ is an irreducible variety, and $(D,W_{\bb})$ is $n$-dimensional. Then $\phi_{DWX}(\bb\bc)$ holds if and only if all the followings hold:
  \begin{enumerate}
    \item $\widehat{W_{\bb}}=X_{\bc}$.
    \item $W_{\bb}$ is free.
    \item $(D+\ba,W_{\bb})$ is rotund.
  \end{enumerate}
\end{lemma}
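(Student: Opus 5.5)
The plan is to match the three clauses of $\phi_{DWX}(\bb\bc)$ with the three conditions one at a time. Clause (1) is handled by the preceding lemma: $\widehat{W_{\bb}}$ is irreducible of dimension $\dim W_{\bb}$. If $W_{\bb}\se X_{\bc}$ and the dimensions agree, then $\widehat{W_{\bb}}\se X_{\bc}$ are irreducible varieties of the same dimension, so they are equal. Conversely, $\widehat{W_{\bb}}=X_{\bc}$ gives clause (1) directly. From now on we assume (1) and so identify $X_{\bc}$ with $\widehat{W_{\bb}}$.

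The next step is to observe that $(W_{\bb})^M$ and $(X_{\bc})^M$ have the same dimension for every integer matrix $M$. The map $\bx\mapsto\bx^M$ is algebraic, so the Zariski closure of $(W_{\bb})^M$ is the Zariski closure of the image of $X_{\bc}$. We then take a generic point of $W_{\bb}$, which is generic in $X_{\bc}$ as in the preceding lemma, and compare transcendence degrees of its image. Since the dimension is $\max\trd$ of the image of a generic point, the two dimensions agree.

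With this in hand, freeness of $W_{\bb}$ says $\dim(X_{\bc})^{\bv}=1$ for all nonzero $\bv\in\bZ^n$, and rotundity says $\dim M(D)+\dim(X_{\bc})^M\ge\rk M$ for all full-rank $M$. Both are infinite conjunctions, and clauses (2) and (3) test only the finitely many $B\in\mu(X)$. So the heart of the proof is to show that failure of freeness or rotundity is always witnessed by some $M_B$ with $B\in\mu(X)$. This is where weak CIT (Fact \ref{fact:WCIT}) and the hypothesis $\dim D+\dim W_{\bb}=n$ enter. Suppose some $M$ of rank $k$ gives $\dim M(D)+\dim X_{\bc}^M<k$, and let $A=\ker M$. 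Take a generic fibre of $X_{\bc}\to X_{\bc}^M$; it is a component of $X_{\bc}\cap\alpha A$ of dimension $\dim X_{\bc}-\dim X_{\bc}^M$. The inequality together with $\dim D+\dim X_{\bc}=n$ gives
\[\dim X_{\bc}-\dim X_{\bc}^M>\dim X_{\bc}+(n-k)-n,\]
where we use $\dim M(D)\ge \dim D-(n-k)$, so this fibre is atypical. Weak CIT then places it in a coset $\beta B$ with $B\in\mu(X)$, and says it is typical there. We then redo the dimension bookkeeping from the proof of the $\Phi_\I$ lemma, with $D$ in place of the $\ld^K$ term, to conclude that $M_B$ also violates the inequality, i.e.\ $\dim X_{\bc}^{M_B}+\dim M_B(D)<\rk M_B$. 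Non-freeness is the special case where $\dim X_{\bc}^{\bv}=0$ for some $\bv$. The fibre then has codimension $1$ while $A$ has codimension $1$, but we still need atypicality. I expect to obtain it from the same inequality with $k=1$, using $\dim \bv(D)\le1$; if that fails, I will handle freeness separately via Fact \ref{fact:WCIT} applied to the fibres $X_{\bc}\cap\alpha\ker\bv$, giving $\dim X_{\bc}^{M_B}=0$ for some $B$.

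One more point to settle is the direction of clauses (2) and (3): as written they are disjunctions. I will read them as stating that the inequality holds for the relevant $B$, which is the form the CIT reduction supports, and I will adjust that reading if the reduction forces it. Then the direction ``conditions $\Rightarrow\phi$'' is immediate by specialising freeness and rotundity to $M_B$. The reverse direction is the CIT reduction above. The main obstacle is that reduction step, that is, showing the typicality in $\beta B$ transfers the violated inequality to $M_B$ with the correct ranks. I would model it line by line on the final computation in the proof of the $\Phi_\I$ lemma.
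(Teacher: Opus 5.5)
Your decomposition matches the paper's. You handle clause (1) by irreducibility and equal dimension, you use the identity $\dim(W_{\bb})^M=\dim(X_{\bc})^M$, and you apply weak CIT to show that any failure of freeness or rotundity is already witnessed by some $M_B$ with $B\in\mu(X)$. Reading clauses (2) and (3) as conjunctions over $\mu(X)$ is also what the paper's proof tacitly does.

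The gap is the step you defer as ``the main obstacle'', which is the whole content of the rotundity direction. Copying the final computation of the $\Phi_\I$ lemma does not supply it. That computation is driven by a point satisfying $\delta\geq 0$, and nothing of that kind is available here. Write $X$ for $X_{\bc}$ and $S$ for the component of $X\cap\bz A$ through a generic $\bz$. Typicality in $\bz B$ only gives $\dim S=\dim(X\cap\bz B)+\dim(A\cap B)-\dim B$. This involves $A\cap B$ rather than $A$, and it yields no comparison between $M(D)$ and $M_B(D)$, which is exactly what you need.

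The missing idea is to shrink $A$, before invoking weak CIT, to the smallest algebraic subgroup $A'$ (defined by a full-rank $M'$) having a coset through $S$. Then:
\begin{itemize}
\item $S$ is still an atypical component of $X\cap\bz A'$, and the fibre dimension at $\bz$ does not drop, so $\dim X^{M'}=\dim X^M$.
\item Since $\ker M'\se\ker M$ in $V^n$, the quotient $(D\cap\ker M)/(D\cap\ker M')$ embeds in $\ker M/\ker M'$. Hence $\rk M'-\dim M'(D)\geq\rk M-\dim M(D)$, and $M'$ still violates rotundity.
\item Weak CIT now gives $B$ with $S\se\bz B$. Minimality forces $A'\se B$, hence $\ker M'\se\ker M_B$ and $\dim M_B(D)\leq\dim M'(D)$.
\item Typicality now reads $\rk M'-\dim X^{M'}=\rk M_B-\dim X^{M_B}$.
\end{itemize}
Therefore $\dim M_B(D)+\dim X^{M_B}\leq\dim M'(D)+\dim X^{M'}+\rk M_B-\rk M'<\rk M_B$, contradicting clause (3). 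This is the paper's argument.

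There are also two smaller slips.
\begin{itemize}
\item If $\dim X^{\bv}=0$, then all of $X$ lies in one coset of $\ker\bv$. The intersection is $X$ itself, not of codimension $1$, so it is trivially atypical, and weak CIT gives $\dim X^{M_B}=0$ for some $B$. That is your fallback and the paper's proof. Your $k=1$ route through rotundity fails whenever $\dim\bv(D)=1$.
\item Atypicality of the generic fibre of $X\to X^M$ needs only $\dim X^M<k$. This follows from the violated inequality and $\dim M(D)\geq 0$. Your chain through $\dim M(D)\geq\dim D-(n-k)$ and $\dim D+\dim X=n$ only gives $\dim X^M<\dim X$.
\end{itemize}
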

\begin{proof}
  The equivalence of line 1 is clear. Let us assume $\widehat{W_{\bb}}=X_{\bc}$ and show that the remaining lines are equivalent. $(\Leftarrow)$ is clear by the definition and the fact that $\dim(W_{\bb})^{M_B}=\dim(\widehat{W_{\bb}})^{M_B}$.

  $(\Rightarrow)$ of line 2: Suppose on the contrary that there is $\bv\in\bZ^n\setminus\{0\}$ such that $W_{\bb}\se\bd\cdot\ker(\bv:F^n\to F)$ for some $\bd\in S^n$. Taking closure on both sides, we get $\widehat{W_{\bb}}\se\bd\cdot\ker(\bv)$, and therefore, $\widehat{W_{\bb}}\cap\bd\cdot\ker(\bv)$ is atypical. Fact \ref{fact:WCIT} implies that $(\widehat{W_{\bb}})^{M_B}$ is constant for some $B\in\mu(X)$.

  $(\Rightarrow)$ of line 3: Towards a contradiction, suppose $(D+\ba,W_{\bb})$ is not rotund. Then there is a full-rank matrix $M$ such that $\dim(W_{\bb})^M+\dim M(D)<\rk M<n$. Let $A\se(F^\times)^n$ be the algebraic subgroup defined by $\bz^M=\mathds{1}$.

  Let $(\bx,\by)$ be generic in $W_{\bb}$ over $\bb$. Then $\bz:=\bx+i\by$ is generic in $\widehat{W_{\bb}}$ over $\bb$. Let $S$ be an irreducible component of $\widehat{W_{\bb}}\cap\bz A$ containing $\bz$. The following calculation shows that $S$ is an atypical component of the intersection:
  \begin{align*}
    \dim S&=\dim(\widehat{W_{\bb}}\cap\bz A)\\
    &=\dim\widehat{W_{\bb}}-\dim(\widehat{W_{\bb}})^M&&\text{by fiber dimension}\\
    &>\dim\widehat{W_{\bb}}-\rk M\\
    &=\dim\widehat{W_{\bb}}+\dim\bz A-n.
  \end{align*}
  By Fact \ref{fact:WCIT}, there exists $B\in\mu(X)$ such that $S$ is contained in $\bz B$.

  For every coset of algebraic subgroup $\bz'A'\se\bz A$ such that $S\se\bz'A'$, we have
  \[\dim(\widehat{W_{\bb}}\cap\bz A)\geq\dim(\widehat{W_{\bb}}\cap\bz'A')\geq\dim S=\dim(\widehat{W_{\bb}}\cap\bz A).\]
  Therefore, we may assume $A$ to be the smallest algebraic subgroup having a coset containing $S$. Thus, $\bz A\se\bz B$. The typicality of $S$ with respect to $\bz B$ gives the following equality:
  \[\dim(\widehat{W_{\bb}}\cap\bz A)=\dim S=\dim(\widehat{W_{\bb}}\cap\bz B)+\dim A-\dim B,\]
  and by fiber dimension,
  \[\dim(\widehat{W_{\bb}})^{M_B}-\dim(\widehat{W_{\bb}})^M=\rk M_B-\rk M.\]
  Putting everything together, we get
  \begin{align*}
    \dim M(D)&<\rk M-\dim(\widehat{W_{\bb}})^M&&\text{by assumption}\\
    &=\rk M_B-\dim(\widehat{W_{\bb}})^{M_B}\\
    &\leq\dim M_B(D),&&\text{by }\phi_{DWX}(\bb\bc)
  \end{align*}
  contradicting $A\se B$.
\end{proof}

\begin{corollary}\label{cor:rotundDefinable}
  There is a set of formulas $\Phi_{DW}(\bs)$ such that for every $\bb\se R$ we have $\Phi_{DW}(\bb)$ holds if and only if and $(D,W_{\bb})$ is an $n$-dimensional strong pair.
\end{corollary}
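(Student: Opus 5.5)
The plan is to assemble $\Phi_{DW}(\bs)$ from the formulas $\phi_{DWX}$ of the preceding lemma, with a disjunction over finitely many constructible families $X$ and a separate clause for freeness of $D$. First, $D$ is a fixed $K$-linear subspace, so freeness of $D$ does not depend on $\bs$. It is a property of $D$ alone: for each $\bv\in\bZ^n\setminus\{0\}$, the image $\bv(D)$ is either $0$ or all of $V$. So freeness of $D$ is either true or false outright. If it is false, we let $\Phi_{DW}$ be $\{\bot\}$; otherwise this clause contributes nothing.

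Next we define the Nash-manifold condition and the dimension condition. Since $W(\bx;\by)$ is $\emptyset$-definable in a real closed field, the property "$W_{\bs}$ is a semialgebraically connected Nash manifold" is first-order in $\bs$. This uses the standard uniform definability of dimension, of connectedness via cell decomposition, and of the Nash property within semialgebraic families. The condition $\dim D+\dim W_{\bs}=n$ is likewise definable, since $\dim W_{\bs}$ takes only finitely many values, each definable.

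The main step is to handle the Zariski closure uniformly. By uniform bounds on the complexity of Zariski closures in a semialgebraic family, there are finitely many $\bQ$-definable constructible families $(X^{(j)}_{\bt})_{\bt}$, $j\le N$, with the following property: for every $\bb$, the closure $\widehat{W_{\bb}}$ equals $X^{(j)}_{\bc}$ for some $j$ and some $\bc$ with $X^{(j)}_{\bc}$ irreducible. This follows from bounding the degrees of defining polynomials of $\widehat{W_{\bb}}$ uniformly in $\bb$, and irreducibility of $X^{(j)}_{\bt}$ is definable in $\bt$ within each family. We then set
\[\Phi_{DW}(\bs):=\text{``}W_{\bs}\text{ Nash, connected, }\dim=n-\dim D\text{''}\wedge\bigvee_{j\le N}\exists\bt\,\bigl(X^{(j)}_{\bt}\text{ irreducible}\wedge\phi_{DWX^{(j)}}(\bs\bt)\bigr).\]
If the uniform bound is only available as a type rather than a single formula, we instead write this as a set of formulas, which the statement allows.

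For correctness, suppose $\Phi_{DW}(\bb)$ holds, witnessed by $\bc$. By the preceding lemma, $\widehat{W_{\bb}}=X_{\bc}$, $W_{\bb}$ is free, and $(D,W_{\bb})$ is rotund. Together with freeness of $D$, this makes $(D,W_{\bb})$ an $n$-dimensional strong pair. Conversely, if $(D,W_{\bb})$ is an $n$-dimensional strong pair, choose $j$ and $\bc$ with $\widehat{W_{\bb}}=X^{(j)}_{\bc}$; the lemma then gives $\phi_{DWX^{(j)}}(\bb\bc)$.

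The expected obstacle is the uniform choice of the families $X^{(j)}$, that is, showing the Zariski closures of the fibres $W_{\bb}$ are uniformly parametrized. I would first try effective degree bounds: $W_{\bb}$ is cut out by polynomials of bounded degree, and the closure of its semialgebraic part has bounded degree. Alternatively, I would use compactness in a saturated real closed field, where $\widehat{W_{\bb}}$ is defined over $\bb$.
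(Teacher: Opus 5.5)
Your proposal is correct and follows the route the paper intends. The paper reads the corollary off the preceding lemma, and you do the same: you quantify existentially over the parameter $\bt$ of the Zariski closure in $\phi_{DWX}(\bs\bt)$ and add the side conditions the paper leaves implicit, namely freeness of $D$ (which depends only on $D$), $W_{\bs}$ being a connected Nash manifold, and $\dim D+\dim W_{\bs}=n$. Your one extra ingredient is the uniform bound on the complexity of $\widehat{W_{\bb}}$, which is true by the compactness argument you sketch and yields a single formula; the paper's phrase ``set of formulas'' leaves room to let $X$ range over all $\bQ$-definable constructible families without such a bound.
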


\subsection{Existence of strong pairs}

\begin{lemma}
  There is an irreducible subvariety $W\se S^n$ such that $\dim W=n-1$ and $\dim W^M=k$ for every $M\in\bZ^{k\times n}$ of rank $k<n$.
\end{lemma}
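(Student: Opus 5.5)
We will take $W$ to be a generic hypersurface of $S^n$: the zero set in $S^n$ of a single polynomial with algebraically independent coefficients. Concretely, $S^n$ is an $n$-dimensional real algebraic variety (a product of circles), hence irreducible. Via stereographic projection $p_{\st}$ it is birational to $R^n$. We will fix a polynomial $g(u_1,\dots,u_n)$ of degree $d\ge 2$ in the stereographic coordinates $u_i=p_{\st}(x_i,y_i)$. Its coefficients $\bc$ will be chosen algebraically independent over $\bQ$; to find them we may pass to an elementary extension of $R$, or, if $R$ is small, use the real parameters directly. We then let $W$ be the Zariski closure in $S^n$ of $\{g(\bu)=0\}$.

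First we check irreducibility and dimension. A generic polynomial with all monomials of degree $\le d$ is absolutely irreducible, by Bertini or by a direct specialization argument: a factorization would persist under specialization to a known irreducible polynomial such as $u_1-u_2^d\cdots$. So $W$ is irreducible of dimension $n-1$. We must also make sure $W$ has real points of full dimension. We arrange this by choosing the constant term so that $g$ changes sign, e.g. $g=h-c_0$ with $c_0$ a generic value attained by $h$. The implicit function theorem then gives an $(n-1)$-dimensional smooth real locus.

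Second, we fix $M\in\bZ^{k\times n}$ of rank $k<n$ and show that the image $W^M\se S^k$ (or its Zariski closure in $F^k$, as used earlier for $\widehat{W_{\bb}}$) has dimension $k$. Since $S^M$ is a $k$-dimensional subtorus-image, $\dim W^M\le k$, and it suffices to show that the generic fibre of $(-)^M$ restricted to $W$ has dimension $n-1-k$. The fibres of $(-)^M$ on $S^n$ are cosets $\bz A$ of the $(n-k)$-dimensional subgroup $A=\ker((-)^M)$. Hence it suffices that $W$ does not contain a generic coset $\bz A$ of $A$, i.e. that $\dim(W\cap\bz A)=n-k-1$ for generic $\bz$. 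Because $n-k\ge1$, it is enough that $W$ does not contain an entire coset of a positive-dimensional subtorus.

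We will argue this by a transcendence count, using generic points. Let $\bz$ be generic in $W$ over $\bQ(\bc)$, so $\trd(\bz/\bc)=n-1$. Suppose the component of the fibre through $\bz$ were all of $\bz A^\circ$. Then $g$ would vanish identically on the translate $\bz A^\circ$. Parametrizing $A^\circ$ by a one-parameter subgroup $t\mapsto(t^{v_1},\dots,t^{v_n})$ (in multiplicative coordinates on $F^n$), the vanishing imposes at least $d+1$ independent polynomial conditions on the coefficients $\bc$ relative to $\bz$. This is the step requiring care: the number of distinct monomials $\prod z_i^{m_i}t^{\bv\cdot\bm}$ that appear must exceed the available freedom. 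Counting gives $\trd(\bc/\bz)\le|\bc|-(\text{number of conditions})$, while additivity gives $\trd(\bc/\bz)=|\bc|+(n-1)-\trd(\bz)\ge|\bc|-1$. Two independent conditions therefore yield a contradiction. Since $\bv\neq0$ and $d\ge2$, at least two monomials of $g$ have distinct $t$-degrees along the curve, so we do get two independent linear conditions on $\bc$ with coefficients in $\bQ(\bz)$.

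The main obstacle is making this independence precise uniformly in $\bv$. The degrees $\bv\cdot\bm$ could collide, and the stereographic coordinates are not multiplicative. To avoid the second issue, we will instead define $g$ directly in the complex coordinates $z_j=x_j+iy_j$, together with their conjugates $z_j^{-1}$. For $W$ we take the real form $\{g(\bz)+\overline{g}(\bz^{-1})=0\}$, with $g$ having generic coefficients and the monomials $1,z_1,\dots,z_n$. Along $\bz\cdot t^{\bv}$, the monomials $z_j t^{v_j}$ with $v_j\neq0$ and the constant then have distinct behaviour, which provides the needed two independent conditions. Finally, the properties irreducible, $\dim W=n-1$ and $\dim W^M=k$ are all checked generically, so specializing $\bc$ to a generic real tuple yields the required $W$.
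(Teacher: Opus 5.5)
Your reduction matches the paper's: it suffices that no coset of a $1$-dimensional subtorus through a generic point lies in $W$. The transcendence count in your final Laurent version is sound. For every $\bv\neq0$, the support $\{0,\pm e_1,\dots,\pm e_n\}$ meets at least two levels of $\mathbf m\mapsto\bv\cdot\mathbf m$. This gives two nonzero linear conditions on $\bc$ over $\bQ(i,\bz)$ in disjoint sets of coefficients, which contradicts $\trd(\bc/\bz)\geq|\bc|-1$. Discard the stereographic version: there the restriction of $g$ to a coset is not a sum of monomials in $t$, and the claimed ``$d+1$ conditions'' has no basis. Your $W=\{\operatorname{Re}(c_0+\sum_j c_jz_j)=0\}$ is a generic member of a family whose rational member $c_0=-(n-1)$, $c_j=1$ is exactly the paper's $x_1+\dots+x_n=n-1$. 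The paper checks that one member directly with the cosine argument.

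The gap is where your parameters live. Your argument only certifies $W_{\bc}$ when the $2n+1$ real parameters are algebraically independent over $\bQ$. That is fine over $\bR$. But in the paper $R$ is an arbitrary real closed field (e.g.\ the real algebraic numbers). Downstream, in Corollary \ref{cor:existStrong} and the purely transcendental case of Theorem \ref{thm:implyRich}, $W$ must be defined over $R^\X$ (Proposition \ref{prop:rotundStrong} requires this), and $R^\X$ need not contain enough algebraically independent elements.

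Neither of your fallbacks helps. An elementary extension puts $W$ in the wrong $S^n$. And ``$\dim W^M=k$ for every $M$'' is an infinite conjunction, so the bad parameters form a union of infinitely many proper constructible sets. Nothing in your argument prevents that union from containing every rational (or real algebraic) parameter. The paper avoids this by taking $W$ over $\bQ$ and transferring from $\bR$.

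You can repair your route in either of two ways:
\begin{enumerate}
\item Apply Fact \ref{fact:WCIT} to the family $\{G_{\bc}=0\}$. This makes the bad set a finite union of proper constructible sets, so rational parameters survive.
\item Drop genericity. If $G$ is irreducible with support not contained in an affine hyperplane, then $Z=\{G=0\}$ is not stable under $A=\{t^{\bv}\}$. Stability would make $G(\bz t^{\bv})$ a unit multiple $\lambda\bz^{\bu}G(\bz)$; comparing supports forces $\bu=0$, and then $t^{\bv\cdot\mathbf m}=\lambda$ on the support, which is impossible for $t$ of infinite order. Since $\{\bz\in Z:\bz A\subseteq Z\}$ is closed, containing the coset through a generic point would already force stability.
\end{enumerate}

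In either case you still owe two facts for the Laurent form, since your Bertini remark concerned the stereographic polynomial. First, $(n-1)$-dimensional real points: the condition $|\operatorname{Re}c_0|<\sum_j|c_j|$ gives this. Second, irreducibility: for $n\geq2$ this follows once you check that the discriminant of $z_1G$, viewed as a quadratic in $z_1$, is not a square.
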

\begin{proof}
  Let $W(x_1,y_1,...,x_n,y_n)$ be the subvariety of $S^n$ defined by
  \[x_1+...+x_n=n-1.\]
  Let us show that $\dim W^M=n-1$ for all $M\in\bZ^{(n-1)\times n}$ of rank $n-1$. It suffices to consider the $\bR$-points of $W$.

  By fiber dimension, we need to show that the intersection of $W$ and any coset of a $1$-dimensional subtorus has dimension 0. Suppose on the contrary that it intersects with a coset parametrized by
  \[\{(e^{i(v_1\theta+\alpha_1)},...,e^{i(v_n\theta+\alpha_n)})\mid\theta\in\bR\},\]
  where $(v_1,...,v_n)\in\bZ\setminus\{0\}$ and $\alpha_1,...,\alpha_n\in\bR$. If $v_1,...,v_k$ are non-zero, then
  \[f(\theta):=\cos(v_1\theta+\alpha_1)+...+\cos(v_k\theta+\alpha_k)\]
  must be a constant for all $\theta$ in some open interval. This is only possible when they cancel each other, which implies $f(\theta)=0$ and $k>1$. But then
  \[\cos\alpha_{k+1}+...+\cos\alpha_n\leq n-k<n-1,\]
  contradicting the definition of $W$.
\end{proof}

Since $K$ is a proper extension of $\bQ$, we can always find a $1$-dimensional free $K$-linear subspace $D\se V^n$.

\begin{corollary}\label{cor:existStrong}
  There is an $n$-dimensional strong pair $(D,W)\se V^n\times S^n$ such that $\dim D+\dim W=n$ and $\dim M(D)+\dim W^M>k$ for every $M\in\bZ^{k\times n}$ of rank $k<n$.
\end{corollary}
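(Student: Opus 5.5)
We take the pair $(D,W)$ where $W\se S^n$ is the irreducible subvariety given by the preceding lemma, so $\dim W=n-1$ and $\dim W^M=k$ for every $M\in\bZ^{k\times n}$ of rank $k<n$. For $D$ we take a $1$-dimensional free $K$-linear subspace of $V^n$. Concretely, since $K\supsetneq\bQ$, fix $r\in K\setminus\bQ$. Choose $\lambda_1,\dots,\lambda_n\in K$ that are $\bQ$-linearly independent, for instance $\lambda_i=r^{i-1}$ if $[K:\bQ]\ge n$, or in general elements of a $\bQ$-basis of $K$ chosen suitably. Set
\[D=\{(\lambda_1t,\dots,\lambda_nt)\mid t\in V\}.\]
For $\bv\in\bZ^n\setminus\{0\}$ we have $\bv(D)=\{(\sum v_i\lambda_i)t\}$. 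This is all of $V$, of dimension $1$, exactly when $\sum v_i\lambda_i\ne0$.

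Freeness of $D$ therefore needs $\lambda_1,\dots,\lambda_n$ to be $\bQ$-independent. This is the one point where care is required, because $K$ may have small degree over $\bQ$. Since $K$ is finitely generated and properly contains $\bQ$, it is either transcendental over $\bQ$, in which case $r$ transcendental gives infinitely many independent powers, or a number field of some degree $d\ge2$. If $n>d$, independence of $n$ elements fails, so the literal claim of freeness for all $n$ is problematic.

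The fallback is that freeness only requires $\bv(D)\neq 0$. Choose $\lambda$ with $\sum v_i\lambda_i\neq0$ for all nonzero integer $\bv$, which means $\bQ$-independence in any case. So I would use the standing assumption implicit in the sentence before the corollary, namely that such a $D$ exists, and simply invoke it. Freeness of $W$ follows from the lemma with $k=1$, since $\dim W^{\bv}=1$.

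For rotundity and the strict inequality, take $M\in\bZ^{k\times n}$ of rank $k$. If $k<n$, then $\dim M(D)\ge1$, because $M(D)\neq0$ by freeness: some row $\bv$ of $M$ already has $\bv(D)=V$. Hence
\[\dim M(D)+\dim W^M\ge 1+k>k.\]
If $k=n$, then $M$ is injective on $V^n$ and surjective onto a finite-index subgroup of the torus. So $\dim M(D)=1$ and $\dim W^M=\dim W=n-1$, giving a sum of $n\ge k$. Thus the pair is rotund and has dimension $1+(n-1)=n$.

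The main obstacle is ensuring $D$ is free, that is, the existence of $n$ $\bQ$-independent scalars in $K$. Everything else is the direct computation above.
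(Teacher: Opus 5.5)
Your construction is exactly the paper's: $W$ from the preceding lemma, a $1$-dimensional free $D$, then the dimension count. The count itself is fine. Freeness gives $\dim M(D)=1$ for every $M$ of positive rank, the lemma gives $\dim W^M=k$ for $0<k<n$, and a rank-$n$ map is an isogeny. (You do need $n\ge 2$ to get freeness of $W$ from the lemma with $k=1<n$.)

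The gap is the point you flagged yourself and then assumed away. A $1$-dimensional $D=\{(\lambda_1t,\dots,\lambda_nt)\mid t\in V\}$ is free iff $\lambda_1,\dots,\lambda_n$ are $\bQ$-linearly independent, so such a $D$ exists iff $[K:\bQ]\ge n$. Invoking a ``standing assumption'' that it exists is not an argument. You have just shown that assumption is false when $K$ is a number field of degree $d<n$. The sentence before the corollary that you lean on is unjustified in exactly that case.

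No cleverer choice of $D$ closes the gap, because the statement itself fails there. Take $K=\bQ(\sqrt2)$ and $n=3$, and let $e=\dim D$, so $\dim W=3-e$. Freeness of $W$ forces $\dim W\ge 1$, and freeness of $D$ forces $D\ne 0$, so $e\in\{1,2\}$.
\begin{itemize}
\item For $e=1$, you would need three $\bQ$-independent elements of the $2$-dimensional $\bQ$-space $K$.
\item For $e=2$, $D=\ker(\bw)$ for some $\bw\in K^3$. The $\bQ$-linear map $\bQ^3\to K$, $\bu\mapsto\bw\cdot\bu$, has a nonzero kernel element $\bu$. Choosing $M\in\bZ^{2\times 3}$ of rank $2$ with $M\bu=0$ gives $\dim M(D)=1$ and $\dim W^M\le\dim W=1$. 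So $\dim M(D)+\dim W^M\le 2=k$, violating the strict inequality.
\end{itemize}
More generally, let $[K:\bQ]=d$. Freeness of $D$ forces $\dim D\ge n/d$. The strict inequality at $k=n-1$ forbids $D$ from containing any line $\{t\bu\mid t\in V\}$ with $0\ne\bu\in\bQ^n$, which forces $\dim D\le n-n/d$. For $d=2$ this excludes every odd $n\ge 3$.

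So your argument, like the paper's, proves the corollary only under the extra hypothesis $[K:\bQ]\ge n$. That holds for all $n$ exactly when $K$ is transcendental over $\bQ$. For number fields the statement must be weakened. Its application with arbitrarily large $N$ in the purely transcendental case of Theorem~\ref{thm:implyRich} would then need a separate argument.
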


\section{Axiomatization}\label{sec:axi}

This section is devoted to a complete axiomatization of rich structures in $\C_\I$. We already see in Section \ref{sec:C_I} that $\Th(\C_\I)=T_\I$. The following property characterizes the $\leq$-existential closedness of rich structures.

\begin{definition}
  A structure $\A\in\C$ has the \emph{EC-property} if for every $n$-dimensional strong pair $(D+\ba,W_{\bb})\se V^n\times S^n$ and proper affine $K$-linear subspaces $D_1+\ba_1,...,D_k+\ba_k$ of $D+\ba$ where $\ba,\ba_1,...,\ba_k\se V^\A$ and $\bb\se R^\A$, the following set is non-empty:
  \[\big((D+\ba)\setminus\cup_{i=1}^k(D_i+\ba_i)\big)\cap\ex^{-1}(W_{\bb}).\]
\end{definition}

Corollary \ref{cor:rotundDefinable} enables us to express the EC-property in a set of first order sentences EC. Define the theory $T^{\rich}_\I$ as the union of $T_\I$, KR, and EC.

\begin{theorem}
  \label{thm:implyRich}
  Let $\U$ be an $\aleph_0^+$-saturated model of $T_\I^{\rich}$. Then $\U$ is a rich structure in $\C_\I$.
\end{theorem}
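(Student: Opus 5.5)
Since $\U\vDash T_\I$, the Corollary at the end of Section \ref{sec:C_I} gives $\U\in\C_\I$. It remains to verify richness. Take $\X,\Y\in\Cf_\I$ with strong embeddings $f:\X\to\Y$ and $g:\X\to\U$. Identify $\X$ with $g(\X)\leq\U$ and with $f(\X)\leq\Y$. Since $\Y$ is finitely $\leq$-generated, the chain
\[\X=\Y_0\leq\Y_1\leq\dots\leq\Y_m=\Y\]
of minimal strong extensions is finite; this follows from finite generation together with the fact that $\delta$ and $\ld^\bQ(\kex)$ are bounded. Transitivity of $\leq$ (Fact \ref{prop:str}) then lets us treat one minimal step at a time. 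By Proposition \ref{prop:minimalExtension}, each step is of kernel, prealgebraic, or purely transcendental type. In every case the goal is to find, inside $\U$, a $K$-subspace $V_0'\leq V^\U$ together with a $K$-linear isomorphism onto $\ag{V^\X,\ldots}^K\se V^\Y$ that respects $\ex$ up to real-closed-field isomorphism. Proposition \ref{prop:vsToField}, which uses KR and saturation, then extends this to a strong copy of the generated $K$-powered field. Saturation is needed here because $\X$ is countable, so types over $\X$ must be realized in $\U$. For this I would assume $\U$ is saturated enough, or else observe that only countably many parameters occur and pass to an $\aleph_1$-saturated elementary extension, which should be what $\aleph_0^+$ means.

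\emph{Kernel step.} Pick a new $y\in\kex^\Y$. Strongness forces $\ex(r_jy)_j$ to be algebraically independent over $\ex(V^\X)$, so Lemma \ref{le:extendKernel} produces a matching $x\in\kex^\U$. The construction in Lemma \ref{le:extendKernel} keeps $x$ generic in the required sense. Then Proposition \ref{prop:vsToField} applies, and strongness of the image follows from Corollary \ref{cor:strongStructure}.

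\emph{Purely transcendental step.} Use the type $\Psi$ of Proposition \ref{prop:ptType} over a finite generating tuple of $\X$, together with the formulas saying that $\ex(t)$ is transcendental and of the correct rcf-type. Finite satisfiability comes from EC applied to the $1$-dimensional strong pair $(V,S)$ with finitely many points removed, or more safely to strong pairs in higher dimension from Corollary \ref{cor:existStrong}. By saturation some $t$ realizes $\Psi$; then $\ag{V^\X,t}^\bQ\leq V^\U$, and Proposition \ref{prop:vsToField} finishes.

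\emph{Prealgebraic step.} This is the main obstacle. Let $\ba\in V^\Y$ be $\bQ$-independent over $V^\X$ with $\delta(\ba/V^\X)=0$, and chosen so that $\ld^\bQ$ is minimal, making the extension minimal. Let $D+\bd$ be the $K$-affine locus of $\ba$ over $V^\X$, and let $W_{\bb}$ be a semialgebraically connected Nash cell containing $\ex(\ba)$ that is generic over $R^\X$. Then $\dim D+\dim W_{\bb}=n$. Lemma \ref{le:free} gives freeness, and Proposition \ref{prop:rotundStrong} gives rotundity, so $(D+\bd,W_{\bb})$ is an $n$-dimensional strong pair. Now apply EC to shrinking Nash neighbourhoods $W'\se W_{\bb}$ of the type of $\ex(\ba)$, removing proper affine subspaces of $D+\bd$ over $V^\X$. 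By compactness and saturation this yields $\bc\in V^\U$ that is generic in $D+\bd$ over $V^\X$ with $\ex(\bc)$ realizing the same rcf-type over $R^\X$ as $\ex(\ba)$.

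The delicate points are twofold. First, genericity of $\ex(\bc)$ must be expressible by avoiding lower-dimensional semialgebraic sets; these are open conditions, so EC with $W'$ shrunk to the complement works, provided the resulting subsets are still strong pairs. Corollary \ref{cor:rotundDefinable} helps here, since freeness and rotundity depend only on the Zariski closure, which an open subset preserves. Second, one must check that $\ag{V^\X,\bc}^\bQ$ is strong in $V^\U$, not merely over $V^\X$. Since $\delta(\bc/V^\X)=0$ and $\X\leq\U$, strongness of $\ag{V^\X,\bc}^\bQ$ is automatic: for any $\bx$ we have $\delta(\bx/V^\X\bc)=\delta(\bx\bc/V^\X)\ge0$. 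After matching $\bc\mapsto\ba$ with $\ex(\bc)\equiv^{\rcf}_{\ex(V^\X)}\ex(\ba)$, use Lemma \ref{le:addKernel} to pass to the $K$-span and Proposition \ref{prop:vsToField} to extend to the generated $K$-powered field. Composing over the finitely many steps gives the strong embedding $h:\Y\to\U$ with $h\circ f=g$.
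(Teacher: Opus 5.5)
Your overall reduction to minimal strong extensions and your kernel step are fine. There are genuine gaps in the purely transcendental and prealgebraic steps.

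\textbf{Purely transcendental step.} This is where the real work lies, and your sketch does not supply it. The pair $(V,S)\se V^1\times S^1$ is not an instance of EC at all: it has dimension $2$, and EC only concerns $n$-dimensional strong pairs in $V^n\times S^n$. More fundamentally, every finite fragment of $\Psi$ asserts $\delta(\bw,t/V^\X)>0$ for \emph{all} $\bw\in V^m$. That is a universal condition over the whole of $\U$, whereas an EC witness only satisfies a quantifier-free condition. EC by itself says nothing about how other elements of $\U$ relate to that witness.

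Your fallback to Corollary \ref{cor:existStrong} points the right way, but the mechanism is missing. In the paper one takes an $N$-dimensional strong pair from Corollary \ref{cor:existStrong} whose proper projections satisfy the \emph{strict} inequality $\dim M(D)+\dim W^M>k$, with the cut of $\ex(b)$ imposed on the first coordinate. One then builds the corresponding prealgebraic extension $\X'$ of $\X$ by Proposition \ref{prop:extendToStructure} and embeds it strongly into $\U$ using the prealgebraic case. Its generic point $\ba$ then satisfies $\ag{V^\X,\ba}^\bQ\leq V^\U$.

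Now take $\bw$ of length at most $N-2$, and let $\bw'$ be a basis of $\ag{V^\X,\bw}^\bQ\cap\ag{V^\X,\ba}^\bQ$ over $V^\X$. Submodularity and strongness give $\delta_{V^\X}(\bw/\bw',a_1)\geq\delta_{V^\X}(\bw/\ba)\geq0$. On the other hand, $\delta_{V^\X}(\bw',a_1)>0$, because $\ag{V^\X,\bw',a_1}^\bQ=\ag{V^\X,M(\ba)}^\bQ$ for some $M$ of rank $k\in(0,N)$. Only this argument shows that $a_1$ realizes the corresponding fragment of $\Psi$. Afterwards you still need Lemma \ref{le:addKernel} to match $\ex$ on all of $\ag{t}^K$ before Proposition \ref{prop:vsToField} applies.

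\textbf{Prealgebraic step.} Producing a generic $\bc$ via EC, and noting that $\delta(\bc/V^\X)=0$ makes $\ag{V^\X,\bc}^\bQ$ strong, is fine and matches the paper. The last move fails, though. Lemma \ref{le:addKernel} needs the tuple to be $K$-independent over $V^\X$, but $\ld^K(\ba/V^\X)=\dim D=n-\dim W_{\bb}<n$ as soon as $\dim W_{\bb}>0$.

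Kernel corrections also cannot be made to respect the $K$-linear relations. For $\ba=(a,ra)$ with $r\in K\setminus\bQ$, a correction would have to be of the form $(z,rz)$ with $z,rz\in\kex^\U$, and condition 1 of Lemma \ref{le:equivC_I} excludes this for $z\neq0$. Yet the cuts of $\ex$ on elements of $\ag{\bc}^K$ outside $\ag{V^\X,\bc}^\bQ$ are not determined by the type of $\bc$, so they genuinely need fixing.

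The fix, as in the paper, is to run your EC/compactness argument for every finite tuple $\ba'\se\ag{\ba}^K$ that extends $\ba$ and is $\bQ$-independent over $V^\X$. Each such tuple has $\delta(\ba'/V^\X)=0$: $\ld^K$ does not grow, $\trd$ grows at most as fast as $\ld^\bQ$, and $\X\leq\Y$. Then $\aleph_1$-saturation realizes the type of the whole countable $K$-span over $\X$. The resulting $\ag{V^\X,\bc}^K$ is prealgebraic over $V^\X$, hence strong, and Proposition \ref{prop:vsToField} applies to it directly, with no appeal to Lemma \ref{le:addKernel}.
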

\begin{proof}
  Let $\X\leq\Y$ in $\Cf_\I$ be a minimal strong extension. Assume that $\X$ is strongly embedded into $\U$. To simplify the notation, let us further assume that $V^\X\se V^\U$ and $R^\X\se R^\U$. We need to show that $\Y$ is strongly embedded into $\U$. By Proposition \ref{prop:minimalExtension}, there are three cases:
  \begin{enumerate}[leftmargin=*]
    \item Kernel extension: $\ld^\bQ(\kex^\Y/V^\X)=1$ and $\Y=\ag{V^\X}^{\pc}_\Y$.
    
    Let $b\in\kex^\Y\setminus V^\X$. By Lemma \ref{le:extendKernel}, there is $a\in\kex^\U\setminus V^\X$ such that $\ex\ag{a}^K\equiv^{\rcf}_{R^\X}\ex\ag{b}^K$. Clearly, $V^\X\leq_0\ag{V^\X,a}^K\leq V^\U$. Applying Proposition \ref{prop:vsToField} to $\ag{V^\X,a}^K$, we obtain $\Y'\leq\U$ and $\Y'\cong\Y$ fixing $V^\X\se V_0$.
    
    \item Prealgebraic extension: $\kex^\X=\kex^\Y$ and $\Y=\ag{V^\X,\bb}^{\pc}_\Y$ for some $\bQ$-independent tuple $\bb=(b_1,...,b_n)\se V^\Y$ over $V^\X$ with $\delta(\bb/V^\X)=0$. \label{it:delta0}
    
    \begin{claim}
      For every $\bQ$-independent tuple $\bb'=(b_1,...,b_n,b_{n+1},...,b_m)\se\ag{\bb}^K$ over $V^\X$ extending $\bb$, there exists $\ba'\se V^\U$ such that $\ba'\equiv_{V^\X}^{\KVS}\bb'$ and $\ex(\ba')\equiv_{R^\X}^{\rcf}\ex(\bb')$.
    \end{claim}
    \begin{pfcl}
      By $\aleph_0^+$-saturation, it suffices to show that the type
      \[\tp^{\KVS}(\bb'/V^\X)\cup\tp^{\rcf}(\ex(\bb')/V^\X)\]
      is finitely satisfiable. Let $\mu$ be a finite subset of this type. By analytic cell decomposition, there exist affine $K$-linear subspaces $A,A_1,...,A_k\se V^n$ and an analytic $n$-cell $W\se S^n$ defined over $\X$, such that $\mu(\bx)$ can be deduced from
      \[\bx\in A\setminus\cup_{i}^k A_i\,\bigwedge\,\ex(\bx)\in W.\]
      By definition, analytic cells are connected analytic manifolds. Since $\X\leq_0\Y$, by Proposition \ref{prop:rotundStrong}, the pair $(A,W)$ is strong, and we can take $(A,W)$ to be $m$-dimensional because $\delta(\bb')=0$. The conclusion follows from the EC-property.\hfill$\blacksquare$
    \end{pfcl}

    By $\aleph_0^+$-saturation, there is $\ba\se V^\U$ such that $\ag{\ba}^K\equiv_{V^\X}^{\KVS}\ag{\bb}^K$ and $\ex\ag{\ba}^K\equiv_{R^\X}^{\rcf}\ex\ag{\bb}^K$. As a result, $V^\X\leq_0\ag{V^\X,\ba}^K\leq V^\U$. We conclude by Proposition \ref{prop:vsToField} applied to $\ag{V^\X,\ba}^K$.

    \item Purely transcendental extension: $\kex^\X=\kex^\Y$ and $\Y=\ag{V^\X,x}^{\pc}_\B$ for every $x\in V^\Y\setminus V^\X$.
    
    Take $b\in V^\Y\setminus V^\X$.
    \begin{claim}
      There exists $a\in V^\U$ such that
      \begin{itemize}
        \item $\ex(a)\equiv_{R^\X}^{\rcf}\ex(b)$, and
        \item $\forall\,\bw\in(V^\U)^n,\,\delta(\bw,a/V^\X)>0$.
      \end{itemize}
    \end{claim}
    \begin{pfcl}
      Since $V^\X\leq_0 V^\X\cup\kex^\U$, the second line is equivalent to
      \[\forall\,\bw\in(V^\U)^n,\,\delta(\bw,a/V^\X\cup\kex^\U)>0,\]
      which is type-definable by Proposition \ref{prop:ptType}. We prove finite satisfiability. Let $c_1,c_2\in\R^\X$ be such that $c_1<p_{\st}(\ex(b))<c_2$ and let $N\in\bN$. Consider the following partial type $\psi(t)$:
      \begin{itemize}
        \item $c_1<p_{\st}(\ex(t))<c_2$, for some $c_1,c_2\in R^\X$, and
        \item for all $n\leq N-2$, $\forall\,\bw\in V^n,\,\delta(\bw,a/V^\X)>0$.
      \end{itemize}

      By Corollary \ref{cor:existStrong}, there is an $N$-dimensional strong pair $D\times W$ with a generic point
      \[(\ba,\br)=(a_1,a_2,...,a_N,r_1,...,r_N)\in V^N\times S^N\] such that
      \begin{itemize}
        \item $c_1<p_{\st}(r_1)<c_2$,
        \item $\ld^K(\ba/V^\X)+\trd(\br/R^\X)=N$, and
        \item $\ld^K(M(\ba)/V^\X)+\trd(\br^M/R^\X)>k$, for every $M\in\bZ^{k\times N}$ of rank $k\in(0,N)$.
      \end{itemize}
      Applying Proposition \ref{prop:extendToStructure} to $V_0=\ag{V^\X,\ba}^\bQ,R_0=\ag{R^\X,\br}^{\rc}$ and $\ex_0$ mapping $\ba$ to $\br$, we obtain a structure $\X'\in\Cf_\I$ such that $\ag{V^\X,\ba}^\bQ\leq V^{\X'}$, $\kex^{\X'}=\kex^\X$, and $\ag{V^\X,\ba}^{\pc}_{\X'}=\X'$. By Proposition \ref{prop:rotundStrong}, we have $\X\leq\X'\in\Cf_\I$. Since $\delta(\ba/V^\X)=N$, case \eqref{it:delta0} implies that $\X'$ is strongly embedded into $\U$. Identify $\X'$ with its image in $\U$.

      Let us show that $\U\vDash\psi(a_1)$. For the second line, take any $\bw\in(V^\U)^n$, where $n\leq N-2$. Take a $\bQ$-basis $\bw'$ of $\bw\cap\ag{V^\X,\ba}^\bQ$ over $V^\X$. It follows from submodularity and $\ag{V^\X,\ba}\leq V^\U$ that
      \[\delta_{V^\X}(\bw/\bw',a_1)\geq\delta_{V^\X}(\bw/\ba)\geq 0.\]
      Take a matrix $M\in\bZ^{k\times N}$ of rank $k\in(0,N)$ such that $\ag{V^\X,\bw',a_1}^\bQ=\ag{V^\X,M(\ba)}^\bQ$. We have
      \begin{align*}
        \ld^K(\bw',a_1/V^\X)+\trd(\ex(\bw'),r_1/R^\X)=\ld^K(M(\ba)/V^\X)+\trd(\br^M/R^\X)>k,
      \end{align*}
      i.e., $\delta_{V^\X}(\bw',a_1)>0$. Combining the two inequalities, we get
      \[\delta_{V^\X}(\bw,a_1)=\delta_{V^\X}(\bw/\bw',a_1)+\delta_{V^\X}(\bw',a_1)>0.\tag*{$\blacksquare$}\]
    \end{pfcl}

    Take such an $a\in V^\U$. The following inequality shows that the set $\ag{V^\X,a}^\bQ$ is strong in $V^\U$:
    \[\delta(\bw/V^\X,a)=\delta(\bw,a/V^\X)-\delta(a/V^\X)=\delta(\bw,a/V^\X)-1\geq 0.\]
    Apply Lemma \ref{le:addKernel} to $a\in V^\U$, $b\in V^\B$ to obtain $a'\in V^\U$ such that $\ex\ag{a'}^K\equiv^{\rcf}_{R^\X}\ex\ag{b}^K$, and then conclude by Proposition \ref{prop:vsToField}. \qedhere
  \end{enumerate}
\end{proof}

\begin{corollary}[Theorem \ref{thm:axiomatization}]
  The theory of rich structures in $\C_\I$ is axiomatized by $T_\I^{\rich}$.
\end{corollary}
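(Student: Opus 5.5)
The statement to prove is that $T_\I^{\rich}=T_\I\cup\operatorname{KR}\cup\operatorname{EC}$ axiomatizes the theory of rich structures in $\C_\I$. I will prove two things. First, every rich structure in $\C_\I$ is a model of $T_\I^{\rich}$. Second, any two models of $T_\I^{\rich}$ are elementarily equivalent to a rich structure, and hence (by Corollary \ref{le:elemEq}) to each other. Then $T_\I^{\rich}$ is complete and its models are exactly the structures elementarily equivalent to the rich ones.

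For the second part, take $\M\vDash T_\I^{\rich}$ and pass to an $\aleph_0^+$-saturated elementary extension $\U\succeq\M$. By Theorem \ref{thm:implyRich}, $\U$ is rich in $\C_\I$, so $\M\equiv\U$. Any two rich structures in $\C_\I$ are elementarily equivalent by Corollary \ref{le:elemEq}, since $\C_\I$ is prime and abundant. This part is immediate.

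For the first part, let $\U$ be rich in $\C_\I$. Then $\U\vDash T_\I$ by the corollary that $T_\I$ axiomatizes $\C_\I$. I will prove KR and EC using richness, in the form of $\leq$-existential closedness, by building suitable finitely generated strong extensions.

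For EC, I am given an $n$-dimensional strong pair $(D+\ba,W_{\bb})$ with parameters in $\U$ and proper subspaces $D_i+\ba_i$. First, choose by abundance some $\X\in\Cf_\I$ with $\X\leq\U$ containing $\ba,\ba_i$ and with $\bb\se R^\X$. Then take $\bc$ generic in $D+\ba$ over $V^\X$ in a $K$-vector space extension, and $\br$ generic in $W_{\bb}$ over $R^\X$ in a real closed field extension, chosen so that $\br$ is algebraically independent of $R^\X$ in the appropriate sense. Define $\ex(\bc)=\br$. This is legitimate because, by freeness and Lemma \ref{le:free}, $\bc$ is $\bQ$-independent over $V^\X$ and $\br$ is multiplicatively independent over $R^\X$, so no new kernel is created. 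Proposition \ref{prop:extendToStructure} then gives $\Y\supseteq\X$ generated by $\ag{V^\X,\bc}^\bQ$ with kernel unchanged. By Proposition \ref{prop:rotundStrong}, rotundity gives $V^\X\leq\ag{V^\X,\bc}^\bQ$, and with Corollary \ref{cor:strongStructure} this gives $\X\leq\Y$. Since $\bc$ is generic in $D+\ba$, it avoids every proper $D_i+\ba_i$. Richness embeds $\Y$ strongly into $\U$ over $\X$, so the image of $\bc$ witnesses EC. The one point needing care is that the amalgam really lies in $\C_\I$: the kernel is unchanged and strongness over $\I$ follows by transitivity.

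For KR, fix $\br\in K^n$ with $(1,r_1,\dots,r_n)$ $\bQ$-independent and an open box $U\se S^n$ with parameters in $\U$. I need some $z\in\kex^\U$ with $f_{\br}(z)\in U$. I will build a kernel extension. Take $\X\leq\U$ finitely generated containing the parameters, and adjoin a new kernel element $z$ that is $K$-independent over $V^\X$. Set $\ex(r_jz)$, for a $\bQ$-basis $(r_j)$ of $K$ extending $(1,r_1,\dots,r_n)$, to be algebraically independent generic points, with $(\ex(r_1z),\dots,\ex(r_nz))\in U$. Genericity in the open set $U$ is consistent with this, since $U$ is open in the real closed field. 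The element $z$ has $\ld^K=1$ and $\ld^\bQ=1$. The infinitely many new $\bQ$-independent values $r_jz$ each add one to $\ld^\bQ$ and one to $\trd$, so $\delta(\cdot/V^\X)\ge0$. The kernel must remain a $\bZ$-group; I will handle this by making $z$ a generic element of a $\bZ$-group extension, e.g. with $\ex(z/k)$ taken as arbitrary roots of unity compatibly, as in the free amalgam. Proposition \ref{prop:extendToStructure} then completes this to a kernel extension $\Y\geq\X$ in $\Cf_\I$, and richness transfers $z$ into $\U$. This construction, and checking that $\delta$ stays nonnegative and the kernel stays a $\bZ$-group, is the main obstacle.
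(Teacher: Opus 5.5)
Your proposal is correct, and your second part is exactly how the paper obtains the corollary: it is stated immediately after Theorem \ref{thm:implyRich} and follows from it together with Corollary \ref{le:elemEq}. The difference is your first part. The paper never checks directly that rich structures satisfy KR and EC. It only remarks that EC expresses $\leq$-existential closedness, and it produces a model of $T_\I^{\rich}$ in Section \ref{sec:mod} only for those $\I$ strong in $\bR^K$, and only under SC$_K$. You instead verify both axioms by richness, so your argument does not depend on exhibiting a model of $T_\I^{\rich}$.

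Your EC argument is the same construction as in case 3 of Theorem \ref{thm:implyRich}, via Propositions \ref{prop:extendToStructure} and \ref{prop:rotundStrong}. Your KR argument via a kernel extension also works, and the two points you leave open are routine:

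\begin{enumerate}
  \item \emph{Strongness.} A $k$-dimensional $\bQ$-subspace $L$ of $\ag{z}^K$ meets $\bQ z$ in dimension at most $1$. Since the $\ex(r_jz)$ with $r_j\neq 1$ are algebraically independent over $R^\X$, this gives $\trd(\ex(L)/R^\X)\geq k-1$, hence $\delta\geq 1+(k-1)-k=0$.
  \item \emph{Kernel.} $\ex$ already maps $\ag{\kex^\X}^\bQ$ onto all roots of unity. So the new kernel $G$ satisfies $G/\kex^\X\cong\bQ$, hence $G/nG\cong\kex^\X/n\kex^\X$ for all $n$. Thus $G$ is a $\bZ$-group for any compatible choice of the roots $\ex(z/k)$.
\end{enumerate}
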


\section{Open core}\label{sec:ope}

In this section, we show that every open subset of $R^n$ definable in $T_\I^{\rich}$ must be semialgebraic. We use a criterion by Boxall and Hieronymi \cite{Bo12} for an expansion of a topological structure $\M$ to preserve the open core.

\begin{fact}[{\cite[Corollary 3.1]{Bo12}}]\label{le:openCore}
  Let $\M^*$ be an expansion of $\M$ by language, and let $C$ be a set of parameters in $\M$. Assume $\M^*$ is sufficiently saturated and strongly homogeneous. Suppose that for all sorts $S_1,...,S_n$ there is a set $D_{S_1...S_n}\se S_1^\M\times...\times S_n^\M$ such that the following conditions hold:
  \begin{enumerate}
    \item $D_{S_1...S_n}$ is dense in $S_1^\M\times...\times S_n^\M$.
    \item For every $\ba\in D_{S_1...S_n}$ and every open $U\se S_1^\M\times...\times S_n^\M$, if $\tp^\M(\ba/C)$ is realized in $U$ then $\tp^\M(\ba/C)$ is realized in $U\cap D_{S_1...S_n}$.\label{it:open2}
    \item For every $\bx\in D_{S_1...S_n}$, the conjunction of $\tp^\M(\bx/C)$ and $\bx\in D_{S_1...S_n}$ implies $\tp^{\M^*}(\bx/C)$.\label{it:open3}
  \end{enumerate}
  Then every open set definable over $C$ in $\M^*$ is definable over $C$ in $\M$.
\end{fact}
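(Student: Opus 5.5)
The plan is to show that an open set $X$, definable over $C$ in $\M^*$, coincides with an open set built in $\M$ from a single $\M$-formula over $C$. The set $D=D_{S_1\dots S_n}$ is used to move between the two languages.

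\textbf{Step 1: on $D$, membership in $X$ is an $\M$-type condition.} Let $\bd\in D\cap X$. By condition~\eqref{it:open3}, $\tp^\M(\bd/C)\cup\{\bx\in D\}$ implies $\tp^{\M^*}(\bd/C)$, which contains the formula defining $X$. Hence the partial type
\[\tp^\M(\bd/C)\cup\{\bx\in D\}\cup\{\bx\notin X\}\]
is not realized. By saturation of $\M^*$ it is inconsistent, so compactness gives an $L^\M(C)$-formula $\varphi_{\bd}\in\tp^\M(\bd/C)$ with $\varphi_{\bd}(\M)\cap D\se X$. The same argument for $\bd\in D\setminus X$ gives $\psi_{\bd}$ with $\psi_{\bd}(\M)\cap D\cap X=\emptyset$. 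So $D\cap X$ and $D\setminus X$ are both unions of traces on $D$ of $L^\M(C)$-definable sets.

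\textbf{Step 2: reduce to a single formula.} Applying compactness once more, I expect to obtain one $L^\M(C)$-formula $\varphi$ with $\varphi(\M)\cap D=X\cap D$. The two families $\{\varphi_{\bd}\}$ and $\{\psi_{\bd}\}$ cover $D$ with disjoint traces. Saturation and strong homogeneity of $\M^*$ should make the ``inside'' family and the ``outside'' family finitely separated by a single formula. If this fails directly, I would instead work with complete $\M$-types over $C$. The set of types realized in $D\cap X$ is then closed and open in the type space relativized to $D$, so it is given by one formula.

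\textbf{Step 3: recover $X$ from $\varphi$.} Set $Y:=\operatorname{int}\bigl(\operatorname{cl}(\varphi(\M))\bigr)$, which is $\M$-definable over $C$. Since $X$ is open and $D$ is dense by condition~1, we get $X\se\operatorname{int}\operatorname{cl}(X\cap D)\se Y$.

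The reverse inclusion $Y\se X$ is where condition~\eqref{it:open2} enters, and I expect it to be the main obstacle. Suppose $\by\in Y\setminus X$. I would first like to replace $\by$ by a point of $D$ with the same $\M$-type over $C$ lying near $\by$. Then I would argue that points of $\varphi(\M)\setminus D$ accumulating there force some $\M$-type realized in $D\setminus X$ to be realized in the open set $Y\cap U$, for a small box $U$ around $\by$. Condition~\eqref{it:open2} then transfers that realization into $U\cap D$, where Step~1 gives a contradiction with $\varphi(\M)\cap D=X\cap D$.

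The delicate points are two. First, $X$ need not be closed, so one must control the boundary of $X$ carefully. Second, the types in condition~\eqref{it:open2} must be chosen to witness the failure. My first attempt would be to use the type of a generic point over $C$ of the $\M$-definable set $Y\setminus\operatorname{cl}(\varphi(\M)\cap X)$, which has nonempty interior if the claim fails.
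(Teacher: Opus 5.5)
Judged on its own terms (the paper quotes this Fact from \cite{Bo12} without proof), your proposal has a genuine gap in Step 3. The inclusion $Y\se X$ is not merely delicate; it is false in general. For any set $P$, $\operatorname{int}(\operatorname{cl}(P))$ is regular open, whereas a definable open set need not be.

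Take $\M^*=\M$ a saturated real closed field, $C=\emptyset$, and $D_n$ the set of $n$-tuples algebraically independent over $\bQ$; conditions 1--3 are easily checked. For $X=(0,\tfrac12)\cup(\tfrac12,1)$, Step 2 may return $\varphi(x):=0<x<1$, because $\tfrac12\notin D_1$. Even the choice $\varphi:=X$ gives $Y=(0,1)\nsubseteq X$.

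The trace $X\cap D$ simply does not determine $X$: here $(0,1)$ and $(0,1)\setminus\{\tfrac12\}$ have the same trace on $D_1$. So no reconstruction of $X$ from a formula $\varphi$ with $\varphi(\M)\cap D=X\cap D$ can work. Your intended use of condition 2 at a point $\by\in Y\setminus X$ breaks down exactly at such points: $\tp^\M(\tfrac12/C)$ is not realized in $D_1$, so condition 2 never applies to it.

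There is also a separate problem in Step 1. You treat ``$\bx\in D$'' as part of a partial type and invoke saturation. But $D$ is an arbitrary set, and condition 3 only speaks about realizations inside $D$, so that compactness step is not justified by the hypotheses.

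The missing idea is to apply the hypotheses on the parameter space of basic open boxes, not on the space where $X$ lives. That is what the assumption for all tuples of sorts is for. In the one-sorted ordered case the argument runs as follows.

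For $\ba\in X$, the set of $(\bb,\bc)$ with $\ba\in B(\bb,\bc)\se X$ contains a nonempty open subset of $M^{2n}$. By condition 1 you may therefore take such a $(\bb,\bc)\in D_{2n}$. Put $Z=\{(\by,\bz)\mid B(\by,\bz)\se X\}$, which is $\M^*$-definable over $C$.

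Every realization $(\bb',\bc')$ of $\tp^\M(\bb\bc/C)$ lies in $Z$. Otherwise pick $\bw\in B(\bb',\bc')\setminus X$. The open set $\{(\by,\bz)\mid\bw\in B(\by,\bz)\}$ contains a realization, so by condition 2 it contains one in $D_{2n}$. By condition 3 that realization lies in $Z$, which forces $\bw\in X$, a contradiction. So condition 2 is what removes the restriction to $D$ before compactness is applied.

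Saturation of $\M^*$ then gives $\theta\in\tp^\M(\bb\bc/C)$ with $\theta(\M)\se Z$. Hence $\bigcup_{\theta(\by,\bz)}B(\by,\bz)$ is $\M$-definable over $C$, contains $\ba$, and is contained in $X$. One more compactness argument writes $X$ as a finite union of such sets.
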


Let $\M^*=(V,R,\ex)$ be a saturated model of $T_\I^{\rich}$ where $V$ is equipped with the trivial topology, and let $\M$ be the reduct $(V,R)$. Let $\bc\se V$ be a tuple of parameters. We may assume $\ag{\bc}^\bQ\leq V$ by extending $\bc$ via Fact \ref{le:min}. Applying Lemma \ref{le:minimalStrong}, we obtain a $K$-powered subfield $\A\in\Cf$ strong in $\M^*$ such that $\A=\ag{\bc}^{\pc}_\A$. We want to show that every open set definable over $\A$ in $\M^*$ is definable over $\A$ in $\M$.

Let $D_n'$ be the following set:
\[\{(a_1,...,a_n)\mid(a_1,...,a_n)\in V^n\text{ such that }\forall\,\bw\in V^m,\,\delta(\bw\ba/V^\A)\geq n\}.\]
In other words, $D_n'$ consists of all $\Cl$-independent tuples over $V^\A$. Let $D_n\se R^n$ be the set $(p_{\st}\circ\ex)^n(D_n')$.

\begin{lemma}\label{le:open1}
  Let $U$ be an open rectangle in $R^n$. Then $U\cap D_n\neq\emptyset$.
\end{lemma}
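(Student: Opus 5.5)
We plan to realize a point of $D_n$ in $U$ by strongly embedding a suitable finitely generated extension of $\A$ into $\M^*$, using richness, exactly as in the purely transcendental case of Theorem \ref{thm:implyRich}. Write $U=\prod_{i=1}^n(c_i,d_i)$. Since $\M^*$ is saturated, we may assume the endpoints $c_i,d_i$ lie in $R^\A$ after possibly shrinking $U$. If they do not, we instead work over a finitely generated strong extension $\A_0\geq\A$ whose real field contains them, obtained via Fact \ref{le:min} and Lemma \ref{le:minimalStrong}. We then must check at the end that $\Cl$-independence over $V^{\A_0}$ implies $\Cl$-independence over $V^\A$; this holds because $\delta(\bw\ba/V^\A)\geq\delta(\bw\ba/V^{\A_0})$ once everything is localized at a strong set. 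Alternatively, we can avoid this by noting that the construction below never needs the endpoints to lie in $\A$, since rectangles with endpoints in $\A$ are dense.

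The key step is to build an abstract extension $\X\geq\A$ in $\Cf_\I$ containing a tuple $\ba\in V^n$ with $\delta(\ba/V^\A)=n$ and all $(p_{\st}\circ\ex)(a_i)\in(c_i,d_i)$. Concretely, we take a real closed field extension of $R^\A$ containing elements $r_1,\dots,r_n\in S$ that are algebraically independent over $R^\A$ with $p_{\st}(r_i)\in(c_i,d_i)$. Such elements exist in an elementary extension, because a box in $S^n$ has nonempty interior and hence contains generic points. We also take new elements $\ba$ that are $K$-independent over $V^\A$. We then apply Proposition \ref{prop:extendToStructure} to $V_0=\ag{V^\A,\ba}^\bQ$, $R_0=\ag{R^\A,\br}^{\rc}$, with $\ex_0$ sending $a_i\mapsto r_i$. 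This yields $\X$ with $\kex^\X=\kex^\A$, with $V_0\leq V^\X$, and generated by $V_0$. For every $M\in\bZ^{k\times n}$ of rank $k$, we get $\delta(M\ba/V^\A)=k+k-k=k\geq0$, so $V^\A\leq V_0$, and by transitivity and Corollary \ref{cor:strongStructure}, $\A\leq\X$. Moreover, for every $\bw\se V^\X$, strongness of $V_0$ in $V^\X$ gives
\[\delta(\bw\ba/V^\A)\geq\delta(\ba/V^\A)=n.\]

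By richness of $\M^*$ (Theorem \ref{thm:implyRich}, since $\M^*$ is saturated), the strong embedding $\A\leq\M^*$ extends to a strong embedding $\X\to\M^*$; we identify $\X$ with its image. The image of $\ba$ now lies in $V$ and satisfies $(p_{\st}\circ\ex)^n(\ba)\in U$. It remains to show that $\ba\in D_n'$. For this, we need $\delta(\bw\ba/V^\A)\geq n$ for all $\bw\se V$ in $\M^*$, not merely for $\bw$ in $\X$. We get it from $V_0\leq V^\X\leq V$ (transitivity, Fact \ref{prop:str}). By the definition of localization we have $\delta(\bw\ba/V^\A)=\delta(\bw/V_0)+\delta(\ba/V^\A)$ up to replacing $\bw$ by its part independent from $V_0$, and submodularity gives the correct inequality direction. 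Hence $\delta(\bw\ba/V^\A)\geq n$.

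The main obstacle we expect is this last bookkeeping step: making the localization identity rigorous when $\bw$ meets $\ag{V^\A,\ba}^\bQ$ nontrivially and $V^\A$ is infinite-dimensional. We handle it by choosing a $\bQ$-basis of $\bw$ modulo $V_0$, as in the claim in case 3 of Theorem \ref{thm:implyRich}, and applying submodularity there.
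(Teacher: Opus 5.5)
Taken through the $\A_0$ branch, your argument is the paper's proof. The paper passes to a finitely generated strong $\B\geq\A$ with $\B=\ag{V^\A,\bd}^{\pc}_\B$ containing the parameters of the box. It builds $\B_1$ by Proposition~\ref{prop:extendToStructure} with $\ba$ generic and $\ex(a_i)$ in the prescribed arcs, embeds $\B_1$ by richness, and reads off $\delta(\bw\ba/V^\B)\geq n$ from strongness. Your explicit check that $\Cl$-independence over $V^{\A_0}$ implies $\Cl$-independence over $V^\A$ is a step the paper leaves implicit.

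However, that branch is mandatory, not a fallback. Saturation makes $R^\A$ far from dense in $R$: for instance $U=(t,t+1)^n$ with $t>R^\A$ contains no point with coordinates in $R^\A$. So both the ``shrink $U$'' reduction and the ``Alternatively'' remark are false. The construction genuinely needs the endpoints in the base structure, because the strong embedding $\X\to\M^*$ only controls the type of $\ba$ over that base.
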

\begin{proof}
  We show that for every open arcs $U_1,...,U_n\se S$ defined over $\bd$, the set $(U_1\times...\times U_n)\cap\ex(D_n')$ is non-empty. By extending $\bd$, we may assume $\ag{V^\A,\bd}^\bQ\leq V$. Lemma \ref{le:minimalStrong} yields a $K$-powered subfield $\B\in\Cf$ strong in $\M^*$ such that $\B=\ag{V^\A,\bd}^{\pc}_\B$. By Proposition \ref{prop:extendToStructure}, there exists a structure $\B_1\in\Cf$ extending $\B$ and $a_1,...,a_n\in V^{\B_1}$ such that:
  \begin{itemize}
    \item $\ag{V^\B,\ba}^\bQ\leq V^{\B_1}$,
    \item $\kex^{\B_1}=\kex^\B$,
    \item $\B_1=\ag{V^\B,\ba}^{\pc}_{\B_1}$,
    \item $\ld^K(\ba/V^\B)=\trd(\ex(\ba)/R^\B)=n$, and
    \item $\ex(a_i)\in U_i$ for $i=1,...,n$.
  \end{itemize}
  Richness of $\M^*$ implies there is a strong embedding $\B_1\to\M^*$ extending $\B\leq\M^*$. Let us identify $\B_1$ with its image in $\M^*$. For every $\bw\in V^m$, strongness gives
  \[\delta(\bw\ba/V^\B)=\delta(\bw/\ba\cup V^\B)+\delta(\ba/V^\B)\geq n.\]
  Hence, $\ba$ is $\Cl$-independent over $V^\B$. We conclude that $\ba\in(U_1\times...\times U_n)\cap\ex(D_n')$.
\end{proof}

\begin{lemma}\label{le:open3}
  Let $\ba,\bb\in D_n$ satisfying $\ba\equiv^{\rcf}_{R^\A}\bb$. Then there is a $K$-powered field isomorphism
  \[f:\X\xrightarrow{\sim}\Y,\text{ where }\X,\Y\leq\M^*\text{ and }\X,\Y\in\Cf,\]
  sending $\ba$ to $\bb$ and fixing $\A$.
\end{lemma}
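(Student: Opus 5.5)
Interpret $\ba,\bb\in D_n$ through preimages: pick $\ba',\bb'\in D_n'$ with $(p_{\st}\circ\ex)^n(\ba')=\ba$ and $(p_{\st}\circ\ex)^n(\bb')=\bb$. The plan is to show that $\ag{V^\A,\ba'}^\bQ$ and $\ag{V^\A,\bb'}^\bQ$ are strong in $V$, match them by a $K$-linear isomorphism compatible with $\ex$, and then generate $K$-powered fields using Lemma \ref{le:minimalStrong} and Proposition \ref{prop:vsToField}.

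\emph{Step 1: strongness and independence.} Since $\ba'\in D_n'$, we have $\delta(\bw\ba'/V^\A)\ge n$ for all $\bw$. Taking $\bw$ empty and using $\delta(\ba'/V^\A)\le n$ gives $\delta(\ba'/V^\A)=n$. Hence
\[
\ld^K(\ba'/V^\A)=\trd(\ex(\ba')/R^\A)=\ld^\bQ(\ba'/V^\A)=n,
\]
because each term is at most $n$ and $\ld^\bQ$ enters with a minus sign. Moreover $\delta(\bw/V^\A,\ba')=\delta(\bw\ba'/V^\A)-n\ge0$, so $\ag{V^\A,\ba'}^\bQ\le V$. The same holds for $\bb'$. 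In particular $\ex(\ba')$ and $\ex(\bb')$ are each algebraically independent over $R^\A$. Since $\ba\equiv^{\rcf}_{R^\A}\bb$ and $p_{\st}$ is $\emptyset$-definable and invertible, $\ex(\ba')\equiv^{\rcf}_{R^\A}\ex(\bb')$.

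\emph{Step 2: pass to $K$-spans with matching real-closed types.} Set $V_0^1=\ag{V^\A,\ba'}^K$ and $V_0^2=\ag{V^\A,\bb'}^K$, with $\iota$ the $K$-linear map fixing $V^\A$ and sending $\ba'\mapsto\bb'$; this is an isomorphism by $K$-independence. We still need $\ex(V_0^1)\equiv^{\rcf}\ex(V_0^2)$ via $\iota$. The images $\ex(h_j(\ba'))$ of further $K$-combinations are algebraically independent over $\ex(\ag{V^\A,\ba'}^\bQ)$ by strongness, but their real-closed types need not match. To fix this, apply Lemma \ref{le:addKernel} in $\M^*$ (which satisfies KR and is saturated). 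This replaces $\ba'$ by $\ba'+\bz$ with $\bz\subseteq\kex$, so that $\ex\ag{\ba'+\bz}^K\equiv^{\rcf}_{\ex(V^\A)}\ex\ag{\bb'}^K$, while keeping strongness and $K$-independence. Adding kernel elements does not change $\ex(\ba')$, so $\ba$ is unaffected. The lemma works over the $K$-subspace $V^\A$; this is fine since $\A$ is a $K$-powered field. The new span is still strong and prealgebraic over $\ag{V^\A,\ba'+\bz}^\bQ$, so the $D'_n$ property persists.

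\emph{Step 3: build $\X,\Y$.} By Lemma \ref{le:minimalStrong}, take $\Y\le\M^*$ generated by $V_0^2$, with $\kex^\Y=V_0^2\cap\kex$. Then $\Y\in\Cf$, since the kernel meets a finitely generated $K$-span (over finitely generated $\A$) in finite $\bQ$-rank; the strongness of $\A$ and of the span bounds this. Next apply Proposition \ref{prop:vsToField} with $\A:=\M^*$, $\B:=\Y$, the strong $K$-subspaces $V_0^1$ and $V_0^2$, and $\iota^{-1}$. This yields $\X\le\M^*$ and an isomorphism $\X\to\Y$ extending $\iota$, which fixes $\A$ and sends $\ba'+\bz$ to $\bb'$, hence $\ba$ to $\bb$.

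\emph{Main obstacle.} The delicate point is Step 2: ensuring the real-closed type of the full $K$-span matches rather than only that of $\ex(\ba')$. I expect Lemma \ref{le:addKernel} to handle exactly this, provided the hypotheses that $\ex(V^\A)$ agrees on both sides and that $V^\A$ is a $K$-subspace are verified. A secondary point is checking that $\kex^\Y$ has finite rank, so that $\Y\in\Cf$.
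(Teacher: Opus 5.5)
Your proposal is correct and follows essentially the same route as the paper. You lift $\ba,\bb$ to tuples in $D_n'$ and observe that these are $K$-independent over $V^\A$, with strong $\bQ$-spans and algebraically independent images of the same real-closed type over $R^\A$; you then shift by kernel elements via Lemma \ref{le:addKernel} and conclude with Proposition \ref{prop:vsToField}.

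Your use of Lemma \ref{le:minimalStrong} to produce $\Y$ makes explicit what the paper leaves implicit; the finite-rank check holds because strongness of $V^\A$ gives $\ld^\bQ(\kex^\Y/V^\A)\le n$. There are two small slips:
\begin{enumerate}
\item In Step 1, the equalities follow from $\ld^K,\trd\le\ld^\bQ\le n$, not merely from each term being at most $n$.
\item The map fed to Proposition \ref{prop:vsToField} is $\iota$ itself, not $\iota^{-1}$.
\end{enumerate}
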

\begin{proof}
  Let $\bx,\by\in V$ such that $p^n(\ex(\bx))=\ba$ and $p^n(\ex(\by))=\bb$. We show there is such an isomorphism sending $\ex(\bx)$ to $\ex(\by)$. First, notice that $D_n'$ implies both $\ag{\bx,V^\A}^\bQ$ and $\ag{\by,V^\A}^\bQ$ are strong in $V$, both $\bx,\by$ are $K$-independent tuples over $V^\A$, and both $\ex(\bx),\ex(\by)$ are algebraically independent tuples over $R^\A$. Therefore, $\bx\equiv^{\KVS}_{V^\A}\by$ and $\ex(\bx)\equiv^{\rcf}_{R^\A}\ex(\by)$. Now apply Lemma \ref{le:addKernel} to $\ag{V^\A,\bx}^\bQ$ and $\ag{V^\A,\by}^\bQ$, and then conclude by Proposition \ref{prop:vsToField}.
\end{proof}

\begin{proposition}
  Every open set definable over $\A$ in $\M^*$ is definable over $\A$ in $\M$.
\end{proposition}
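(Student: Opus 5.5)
We will apply the Boxall--Hieronymi criterion (Fact \ref{le:openCore}) with $\M^*=(V,R,\ex)$, its reduct $\M=(V,R)$, and parameter set $C=V^\A\cup R^\A$. Since $V$ carries the trivial topology, only the real sort matters. For sorts consisting of $R$ only we take the dense set to be $D_n$. If a sort $V$ appears, we take the full product, or equivalently we reduce to the $R$-sort: an open subset of $V^m\times R^n$ in the trivial topology is a union of sets $V^m\times U$ with $U\se R^n$ open. So the real work is to verify conditions (1)--(3) for $D_n\se R^n$. We may take $\M^*$ saturated and strongly homogeneous, since rich models of $T_\I^{\rich}$ are back-and-forth equivalent.

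Condition (1) is exactly Lemma \ref{le:open1}, since open rectangles form a basis of $R^n$.

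For condition (3), take $\ba,\bb\in D_n$ with $\tp^\M(\ba/C)=\tp^\M(\bb/C)$. In particular $\ba\equiv^{\rcf}_{R^\A}\bb$. Lemma \ref{le:open3} then gives an isomorphism $f:\X\to\Y$ between finitely generated strong substructures fixing $\A$ and sending $\ba$ to $\bb$. By Corollary \ref{le:elemEq} (with $\U_1=\U_2=\M^*$), $f$ is elementary. Hence $\tp^{\M^*}(\ba/C)=\tp^{\M^*}(\bb/C)$, which is what (3) asks.

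For condition (2), take $\ba\in D_n$ and an open $U$ containing some realization $\ba'$ of $\tp^\M(\ba/C)$. Since $\ba$ is generic over $R^\A$, so is $\ba'$; note that $\ex(D_n')$ already forces algebraic independence over $R^\A$. The semialgebraic type of a generic tuple is determined by the open semialgebraic cells over $R^\A$ that contain it. So it suffices to find a point of $D_n$ inside $U\cap\bigcap\varphi$ for every finite conjunction $\varphi$ of formulas in $\tp^\M(\ba/C)$. Each such set $U\cap\varphi(R)$ is a nonempty open set containing $\ba'$, hence contains an open rectangle. Lemma \ref{le:open1} then provides a point of $D_n$ there.

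To get a single point realizing the whole type inside $U\cap D_n$, we use saturation of $\M^*$. Membership in $D_n$ is type-definable over $C$: the defining condition $\forall\bw\,\delta(\bw\ba/V^\A)\geq n$ is expressed by a scheme of the kind in Proposition \ref{prop:ptType}. Thus $\tp^\M(\ba/C)\cup\{\bx\in U\}\cup\{\bx\in D_n\}$ is finitely satisfiable, hence realized. If $U$ is not definable, we first shrink it to a rectangle around $\ba'$ with endpoints in $R^\A$ so that everything stays definable over $C$.

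The main obstacle is this last step: checking that $D_n$, and the relevant part of $U$, are type-definable over $C$ so that compactness applies. The definability of $\delta$-inequalities from Section \ref{sec:C_I} is what should make this work.
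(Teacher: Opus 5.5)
Your proposal is essentially the paper's proof. The paper applies Fact \ref{le:openCore} with $C=V^\A\cup R^\A$, takes condition (1) from Lemma \ref{le:open1} and condition (3) from Lemma \ref{le:open3} (leaving implicit that the resulting partial isomorphism is elementary by Corollary \ref{le:elemEq}), and settles condition (2) in one line via algebraic independence of points of $D_n$ over $R^\A$; your open-cell and compactness argument spells that line out, and the type-definability of $D_n$ you rely on is the $n$-variable analogue of Proposition \ref{prop:ptType}. There are two harmless slips: you cannot in general shrink $U$ to a box around $\ba'$ with endpoints in $R^\A$, since $R^\A$ is small in a saturated field, but a box with arbitrary endpoints in $R$ works just as well for compactness; and for mixed sorts only your second option, reducing to $R^n$ via the indiscrete topology on $V$, is valid, because $V^m\times D_n$ would violate condition (3).
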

\begin{proof}
  Apply Fact \ref{le:openCore} with $C=V^\A\cup R^\A$. The first and third lines are Lemma \ref{le:open1} and \ref{le:open3} respectively. The second line follows from the fact that $\ba\in D_n$ implies $\ba$ is algebraically independent over $R^\A$.
\end{proof}

\begin{corollary}
  Every open subset of $R^n$ definable in $\M^*$ is semialgebraic.
\end{corollary}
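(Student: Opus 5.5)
The plan is to reduce the corollary to the preceding proposition by absorbing arbitrary parameters into a finitely generated strong substructure. Let $U\se R^n$ be open and definable in $\M^*$ by a formula $\phi(\bx,\bc)$. Since the sort $V$ carries the trivial topology and $R$ carries the order topology, $U$ is an open subset of the product $S_1^\M\times\dots\times S_n^\M$ with all $S_i$ equal to $R$. The parameters $\bc$ may come from both sorts, so I first normalise them.

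First, for each parameter from $R$, I pick a preimage under $p_{\st}\circ\ex$ in $V$; this is possible by surjectivity of $\ex$, with the single point $(1,0)$ handled by a $\emptyset$-definable value. After replacing each such parameter by a term in a new parameter from $V$, I may assume $\bc\se V$. Next, exactly as in the setup preceding the proposition, I extend $\bc$ using Fact \ref{le:min} so that $\ag{\bc}^\bQ\leq V$. Then Lemma \ref{le:minimalStrong} gives a strong $K$-powered subfield $\A\in\Cf$ with $\A=\ag{\bc}^{\pc}_\A$. This step needs $\ag{\bc}^\bQ$ to contain $\bZ$; if necessary I add $1$ to $\bc$, which preserves strongness because $1$ lies in $\kex$. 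Now $U$ is definable over $\A$ in $\M^*$.

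By the proposition, $U$ is then definable over $\A$ in the reduct $\M=(V,R)$. In $\M$ the two sorts are orthogonal: the pure $K$-vector space and the real closed field have no function between them. By a standard quantifier-elimination or product-structure argument, every subset of $R^n$ definable in $\M$ with parameters from both sorts is definable in $R$ alone using the $R$-parameters. So $U$ is definable in the real closed field $R$, hence semialgebraic.

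To pass from the saturated $\M^*$ to an arbitrary model of $T_\I^{\rich}$, such as a structure one actually cares about, I use that the statement "if $\phi(-,\bc)$ defines an open set then it equals $\psi(-,\bd)$ for some semialgebraic $\psi$" is a statement about parameters. Completeness of $T_\I^{\rich}$ (Corollary \ref{le:elemEq}) together with compactness then yields finitely many candidate $\psi$ uniformly in the parameters, which transfers the result.

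I expect this transfer to be the main obstacle. It requires a uniform finite family of semialgebraic formulas, obtained by a compactness argument over the saturated model, and that model must also be strongly homogeneous, as Fact \ref{le:openCore} requires. The orthogonality step in the reduct $\M$ is routine.
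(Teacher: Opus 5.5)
Your argument is correct and follows the paper's route: you normalise the parameters into a strong finitely generated $\A$ as in the setup before Lemma \ref{le:open1}, apply the preceding proposition, and then use that the two sorts of the reduct $\M=(V,R)$ do not interact, so its definable subsets of $R^n$ are semialgebraic. Your handling of $R$-parameters via $p_{\st}\circ\ex$ and of $1\in\kex$ fills in details the paper leaves implicit; the final transfer paragraph is a correct compactness argument but is not needed here, since the corollary concerns only the saturated $\M^*$ (it matters only when deducing the main theorem for $\bR^K$).
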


\section{Model}\label{sec:mod}

Recall $\bR^K=(\bR_{\KVS},\bR_{\oring},\exp)$, where $\exp(t)=e^{2\pi it}$.

In this section, we show $\bR^K$ is a model of $T_\I^{\rich}$ for some $\I$ under the assumption SC$_K$ (Conjecture \ref{con:SCK}). In fact, we only need to assume $\delta$ has a lower bound.

\begin{lemma}
  Assume there is $d\in\bZ$ such that for all $\bx\se\bR$ we have
  \[\delta(\bx)=\ld^K(\bx)+\trd(e^{2\pi i\bx})-\ld^\bQ(\bx)\geq d.\]
  Then $\bR^K$ contains a finitely generated $K$-powered subfield $\I$ such that $\I\leq\bR^K$.
\end{lemma}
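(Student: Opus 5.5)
Since $\delta$ takes integer values and is bounded below by $d$ on $\bR$, the infimum $\min\{\delta(\bx)\mid\bx\se\bR\}$ is attained by some finite tuple. The plan is to choose $\bb_0$ realizing this minimum and take its $\bQ$-span as the strong generating set of $\I$. The minimum is taken over all of $\bR$, including elements of $\kex=\bZ$. Note that $\kex^{\bR^K}=\bZ$ is already a $\bZ$-group and is finite-dimensional over $\bQ$. So we may replace $\bb_0$ by a $\bQ$-basis of $\ag{\bb_0}^\bQ$ over $\bZ$, or equivalently add $1$, which only adds an element of $\delta$-value $\ld^K(1/\bb_0)+0-\ld^\bQ(1/\bb_0)\le 0$. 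With that adjustment we may assume $X:=\ag{\bb_0,1}^\bQ$ still attains the minimum.

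\emph{Strongness.} For any $\bx\se\bR$, we have $\delta(\bx/X)=\delta(\bx\bb_0 1)-\delta(\bb_0 1)\geq 0$ by minimality. Localization is computed through finite-dimensional closed sets, and $X$ is finite-dimensional, so this gives $X\leq V^{\bR^K}$ in the sense of Definition \ref{def:ss}.

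\emph{Constructing $\I$.} We apply Lemma \ref{le:minimalStrong} to the strong $\bQ$-subspace $X\ni 1$ (which contains $\bZ$). This yields a $K$-powered subfield $\I\leq\bR^K$ with $\I=\ag{X}^{\pc}_\I$ and $\kex^\I=X\cap\bZ=\bZ$. To see that $\I$ is finitely generated, note that $\ld^\bQ(\kex^\I)=1$ is finite, and $\I$ is generated as a powered field by the finite tuple $\bb_0,1$. Hence $\I\in\Cf$ by the lemma equating finite generation with finite $\leq$-generation.

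\emph{Main obstacle.} The potential subtlety is that $\bR^K$ must genuinely be a $K$-powered field, so that Lemma \ref{le:minimalStrong} applies. Its ingredients are: $\bR$ is a $K$-vector space; $\exp$ maps surjectively onto $S^1$; and $\ker\exp=\bZ$, which is trivially a $\bZ$-group. A second subtlety is checking that adjoining $1$ to $\bb_0$ preserves minimality. This holds because $\delta(1/\bb_0)\le 0$ in all cases: if $1\in\ag{\bb_0}^\bQ$ the value is $0$, and otherwise $\ld^K(1/\bb_0)\le 1=\ld^\bQ(1/\bb_0)$ while $\trd(\exp(1)/\cdot)=0$. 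By minimality the value is then exactly $0$, so nothing else needs to be checked.
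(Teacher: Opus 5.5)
Your proof is correct and is essentially the paper's argument. The paper applies Fact~\ref{le:min} to the empty set and then Lemma~\ref{le:minimalStrong}; you unpack the first step by choosing a $\delta$-minimizing tuple, and your check that $\delta(1/\bb_0)\le 0$ (so adjoining $1$ keeps $\delta$ minimal and gives the hypothesis $\bZ\se X$ of Lemma~\ref{le:minimalStrong}) fills in a step the paper leaves implicit, though your aside calling this \emph{equivalent} to passing to a basis over $\bZ$ is inaccurate and not used.
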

\begin{proof}
  Apply Fact \ref{le:min} to the empty set and then apply Lemma \ref{le:minimalStrong}.
\end{proof}

As a result, $\bR^K\vDash T_\I$. The property KR is clear for $\bR^K$ by Fact \ref{fact:Kronecker}. We prove EC for $\bR^K$.

Let $(D+\ba,W_{\bb})\se\bR^n\times(S^1)^n$ be an $n$-dimensional strong pair and let $D_1+\ba_1,...,D_k+\ba_k$ be proper affine $K$-linear subspaces of $D+\ba$. We may assume $\ba=0$ by replacing $W_{\bb}$ with $\exp(-\ba)\cdot W_{\bb}$, and each $D_i$ has dimension $\dim D-1$ by taking a larger $D_i$. We need to show that $D\setminus\cup_{i=1}^k(D_i+\ba_i)\cap\exp^{-1}(W_{\bb})$ is non-empty.

\begin{lemma}
  There is a semigroup $D'\se D$ open in $D$ such that $D'$ does not intersect with any $D_i+\ba_i$ for $i=1,...,k$.
\end{lemma}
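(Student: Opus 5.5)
The plan is to reduce to linear algebra over $\bR$. Here $D\se\bR^n$ is a $K$-linear subspace and each $D_i+\ba_i$ is a proper affine $K$-linear subspace of $D$; by the reduction just made, each has dimension $\dim D-1$ as a $K$-subspace. Since $K$ is a field, $D$ is also a real vector subspace of $\bR^n$. Choose a $K$-basis $\bu_1,\dots,\bu_d$ of $D$, where $d=\dim D$. Then $D=\{\sum_j \lambda_j\bu_j\mid\lambda_j\in\bR\}$, because $D=\ker M_D$ with $M_D$ having entries in $K\se\bR$, so the real kernel has the same dimension $d$. Each $D_i$ is the kernel in $D$ of a nonzero $K$-linear functional $\ell_i:D\to\bR$ with coefficients in $K$. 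Hence $D_i+\ba_i=\{\bx\in D\mid \ell_i(\bx)=c_i\}$ for some $c_i\in\bR$. Each $D_i+\ba_i$ is thus a real affine hyperplane in the $d$-dimensional real space $D$ (or $D$ itself when $d=0$; the case $d=0$ needs no $D_i$ since proper subspaces of a point are empty).

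Next, choose a point $\bp\in D$ with $\ell_i(\bp)\neq 0$ for all $i$. This is possible because finitely many proper real hyperplanes $\ker\ell_i$ cannot cover $D$. Then set
\[D':=\{\bx\in D\mid \operatorname{sgn}\ell_i(\bx)=\operatorname{sgn}\ell_i(\bp)\text{ and }|\ell_i(\bx)|>|c_i|\text{ for all }i\}.\]
Each condition is a strict linear inequality on $D$: it reads $\ell_i(\bx)>|c_i|$ or $\ell_i(\bx)<-|c_i|$ according to the sign of $\ell_i(\bp)$. Therefore $D'$ is open in $D$ and convex, and it misses every $D_i+\ba_i$, since on $D'$ we have $\ell_i(\bx)\neq c_i$. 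It is nonempty because $t\bp\in D'$ for large $t>0$.

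It remains to check that $D'$ is closed under addition, i.e.\ a semigroup. If $\ell_i(\bx),\ell_i(\by)>|c_i|\geq 0$, then $\ell_i(\bx+\by)>2|c_i|\geq|c_i|$, and symmetrically in the negative case. So $D'$ is an open cone-like region, closed under addition and under multiplication by scalars $\geq1$.

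I see no real obstacle here. The only point needing care is to note that $K$-linear subspaces are real linear subspaces of the expected dimension, so that "open in $D$" makes sense in the Euclidean topology, and that the translates $\ba_i$ only shift the hyperplanes by constants, which the threshold $|c_i|$ absorbs.
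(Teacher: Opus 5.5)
Your argument is correct, and it is more direct than the paper's.

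\textbf{How the two routes differ.} The paper cuts $D$ successively by the open half-spaces $E_i$ lying on the side of $D_i+\ba_i$ away from $0$, skipping a hyperplane if it misses the region built so far. This produces an unbounded open polyhedron $P$. The paper then takes $D'=C\setminus B_r$, where $C\se D$ is an open cone and $B_r$ a large ball with $C\setminus B_r\se P$. You never pass to a cone. You fix all the sides at once with a single direction $\boldsymbol{p}$ satisfying $\ell_i(\boldsymbol{p})\neq 0$, and you raise the thresholds to $|c_i|\geq 0$. The polyhedron $\{\varepsilon_i\ell_i>|c_i|\}$ is then itself closed under addition. Nonemptiness ($t\boldsymbol{p}$ for large $t$), openness, disjointness from each $D_i+\ba_i$, and the semigroup property each take one line.

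\textbf{What each buys.} Your version avoids three points the paper's sketch leaves implicit:
\begin{enumerate}
\item The side ``not containing $0$'' is undefined when some $D_i+\ba_i$ passes through $0$.
\item The unboundedness of $P$ is asserted without argument.
\item $C\setminus B_r$ is closed under addition only if the convex cone is narrow enough, for instance $\bx\cdot\by\geq 0$ on $C$, so that $|\bx+\by|\geq|\bx|>r$.
\end{enumerate}
The cone picture gives nothing extra for the later application, which only uses that $D'$ is open in $D$ and that $\ba\bN_{>0}\se D'$ for $\ba\in D'$.

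\textbf{One wording fix.} As a set of real points, $D\se\bR^n$ is not finite-dimensional over $K$; the paper's $\dim D=n-m$ is not a $K$-dimension. So ``a $K$-basis of $D$'' should read ``a $K$-basis of $\ker M_D\cap K^n$''. Your stated reason is the right one: $M_D$ has the same rank over $K$ as over $\bR$, so the real span of that basis is exactly $D$.
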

\begin{proof}
  We show that there is an open cone $C\se D$ and a closed ball $B_r\se\bR^n$ of radius $r$ centered at origin such that $C\setminus B_r$ satisfies the requirement. Each $D_i+\ba_i$ cut $D$ into two open subsets, and we let $E_i$ to be the one not containing $0$. Construct polygon $P_i$'s as follows:
  \begin{itemize}
    \item $P_0:=D$.
    \item If $D_i+\ba_i\cap P_{i-1}=\emptyset$ then $P_i:=P_{i-1}$.
    \item If $D_i+\ba_i\cap P_{i-1}\neq\emptyset$ then $P_i:=P_{i-1}\cap E_i$.
  \end{itemize}
  Note that $P_n$ is an unbounded polygon open in $D$. Hence, $P_n\cup B_r$ contains a cone open in $D$ for large enough $r\in\bR$.
\end{proof}

Take an open semigroup $D'\se D$ satisfying the lemma.

\begin{lemma}
  If $D$ is free, then $\exp(D')$ is dense in $(S^1)^n$.
\end{lemma}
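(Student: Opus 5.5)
The plan is to reduce density of $\exp(D')$ to Kronecker's theorem (Fact \ref{fact:Kronecker}) along a single well-chosen ray inside $D'$. Recall that $D=\ker(M_D)$ is a real subspace of $\bR^n$ cut out by a $K$-matrix, and that $D'=C\setminus B_r$ for a cone $C$ open in $D$, as built in the previous lemma. Freeness of $D$ says that $\bv(D)\neq 0$ for every $\bv\in\bZ^n\setminus\{0\}$. Equivalently, $D\cap\ker(\bv)$ is a proper subspace of $D$ for every such $\bv$.

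The first step is to pick a direction $\bw\in C$ together with a scalar $s>0$ such that $(1,sw_1,\dots,sw_n)$ is $\bQ$-linearly independent. Consider the set of bad pairs $(\bw,s)\in C\times\bR_{>0}$, namely those for which some nontrivial relation $q_0+\sum_i q_i s w_i=0$ with $(q_0,\bq)\in\bQ^{n+1}\setminus\{0\}$ holds.
\begin{itemize}
\item If $\bq=0$, then $q_0\neq 0$ and the relation is impossible.
\item If $\bq\neq 0$, the relation says $s\,\bq\cdot\bw=-q_0$. When $q_0=0$ this forces $\bw\in\ker(\bq)\cap D$, which is a proper subspace of $D$ by freeness. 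When $q_0\neq0$, for each fixed $\bw$ with $\bq\cdot\bw\neq 0$ it determines $s$ uniquely.
\end{itemize}
So the bad set is a countable union of measure-zero sets, namely proper subspaces times $\bR_{>0}$ and graphs of functions of $\bw$. Since $C\times\bR_{>0}$ is open of positive measure in $D\times\bR$, a good pair $(\bw,s)$ exists.

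Next, apply Fact \ref{fact:Kronecker} with $a_i=sw_i$. It gives that $\{[m s\bw]\mid m\in\bN\}$ is dense in $(\bR/\bZ)^n$. Removing the finitely many $m$ with $ms|\bw|\le r$ keeps the set dense, because the torus has no isolated points. For the remaining $m$, the point $ms\bw$ lies in $C$, since $C$ is a cone, and lies outside $B_r$; hence it lies in $D'$. Applying $\exp$ coordinatewise identifies $(\bR/\bZ)^n$ with $(S^1)^n$, so $\exp(D')\supseteq\{\exp(ms\bw)\mid m\in\bN,\ ms|\bw|>r\}$ is dense in $(S^1)^n$.

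The main point needing care is the genericity step. It is exactly here that freeness is used, to guarantee that each $\ker(\bq)\cap D$ is a proper subspace of $D$ and hence null. The rest is a direct application of Kronecker's theorem.
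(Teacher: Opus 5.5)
Your proof is correct and follows essentially the same route as the paper: find a point of $D'$ whose coordinates together with $1$ are $\bQ$-linearly independent, then apply Kronecker's theorem along the ray of its positive integer multiples, which stays inside $D'$. The differences are minor. The paper takes a $K$-generic point of $D$, turns genericity into $\bQ$-independence via Lemma \ref{le:free}, and keeps the ray in $D'$ by the semigroup property. You instead count null sets directly using freeness and use the cone-minus-ball shape of $D'$. Your auxiliary scalar $s$ is harmless but unnecessary, since the same count already produces a suitable $\bw\in D'$ with $s=1$.
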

\begin{proof}
  Since $D'$ is open, it contains a generic point $\ba$ of $D$ over $\bQ$. By Lemma \ref{le:free}, the set $\ba$ is $\bQ$-linearly independent over $\bQ$. By Fact \ref{fact:Kronecker}, the set $\exp(\ba\bN_{>0})$ is dense in $(S^1)^n$, and it is clear that $\ba\bN_{>0}\se D'$.
\end{proof}

Define the following map:
\begin{align*}
  f:D'\times W&\longrightarrow\;(S^1)^n\\
  (\bx,\by)\;&\longmapsto\exp(-\bx)\cdot\by.
\end{align*}
Let $D^*\times W^*$ be an open subset of $D'\times W$ such that $f|_{D^*\times W^*}$ is definable in $\bR_{\operatorname{an}}$.

\begin{lemma}
  For every generic $(\bx^*,\by^*)\in D^*\times W^*$, the fiber $f^{-1}(f(\bx^*,\by^*))$ has dimension $0$.
\end{lemma}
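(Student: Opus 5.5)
We argue by contradiction, using Ax--Schanuel (Fact \ref{fact:AxSch}) together with rotundity of the strong pair $(D,W_{\bb})$.

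\textbf{Setup.} Fix a generic $(\bx^*,\by^*)\in D^*\times W^*$ over a countable field of definition $L\supseteq\bQ(\bb)$ containing $K$ and all parameters. Let $\bz^*:=f(\bx^*,\by^*)$. Suppose the fiber
\[F:=\{(\bx,\by)\in D^*\times W^*\mid \exp(-\bx)\cdot\by=\bz^*\}\]
has positive dimension. Since $f|_{D^*\times W^*}$ is definable in $\bR_{\operatorname{an}}$, $F$ is a subanalytic set. Pick an irreducible analytic germ $U\se F$ through $(\bx^*,\by^*)$ with $\dim U\geq1$, and parametrize it by real power series in $m=\dim U$ variables. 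Along $U$, $\by=\bz^*\exp(\bx)$. Hence the coordinate functions $x_1,\dots,x_n$ (restricted to $U$) satisfy
\[\trd(\bx,\exp(\bx)/\bC)\le \ld^K\text{-part}+\text{algebraic part}.\]
Concretely: $\bx$ ranges in the $K$-linear space $D$, and $\exp(\bx)$ ranges in $(\bz^*)^{-1}\widehat{W_{\bb}}$. So, after complexifying, the transcendence degree of $(\bx,\exp(\bx))$ over $\bC$ is at most $\dim D+\dim W$ minus the constraints. The working bound is
\[\trd(\bx,e^{2\pi i\bx}/\bC)\le \dim(\text{Zariski closure of }U),\]
where this closure lies in $D\times(\bz^*)^{-1}\widehat{W_{\bb}}$.

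\textbf{Applying Ax--Schanuel.} Let $E$ be the largest $\bQ$-linear space modulo which the coordinates $\bx|_U$ are constant. Choose $M\in\bZ^{k\times n}$ of rank $k$ so that $M\bx|_U$ is $\bQ$-linearly independent modulo $\bC$ and $\bx|_U$ is constant modulo $\ker M$. Then $k\ge1$, since $\bx|_U$ is nonconstant ($\bx\mapsto\by$ is determined by $\bx$ on $U$). Ax--Schanuel gives
\[\trd(M\bx,\exp(M\bx)/\bC)\geq k+\rk J\geq k+1,\]
using that $U$ is positive dimensional and projects to a positive dimensional set in the $\bx$-coordinates.

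On the other hand, $M\bx|_U$ lies in a translate of $M(D)$, and $\exp(M\bx)|_U$ lies in a translate of $(W_{\bb})^M$. So the genericity of $(\bx^*,\by^*)$ is needed to bound the left side. The key point is that $U$ lies in a fiber of $f$, so that
\[\dim U\le\dim D+\dim W-\dim(\text{image of }f)=n-\dim\overline{f(D^*\times W^*)}.\]
Thus the real content is that $f$ has image of full dimension $n$. I would instead argue directly via genericity. Because $(\bx^*,\by^*)$ is generic, $\bz^*$ is generic in the image and $\dim F=2n-n_{\mathrm{im}}$. If $\dim F>0$, then the image has dimension $<n$. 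This is equivalent to saying that for generic $\bz$, the intersection $\exp(D)\cap \bz^{-1}W$ has positive-dimensional components. Ax--Schanuel in its uniform form (the Uniform Ax-Schanuel fact, applied to the family $D\times \bz^{-1}W_{\bb}$) then forces $\exp(\bx)$ on such a component to lie in a coset $\beta B$ of a proper algebraic subgroup $B=\ker(\cdot)^{M}$ from a finite list. This yields a dimension count contradicting rotundity: we get
\[\dim M(D)+\dim(W_{\bb})^M<k,\]
since on the component the $M$-image contributes less than expected.

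\textbf{Main obstacle.} The hard step is this final counting. It requires converting ``atypical intersection of $\Gamma_{\exp}$ with $D\times\bz^{-1}W$ lies in $\beta B$'' into a violation of the rotundity inequality $\dim M(D)+\dim (W_{\bb})^M\ge k$. I would mimic the fiber-dimension computation from the definability-of-strongness lemma. Pass to the quotient by $B$, i.e.\ apply $M_B$. The component is then constant in the quotient, and fiber dimension gives
\[\dim U\le(\dim D-\dim M_B D)+(\dim W-\dim W^{M_B})-(n-\rk M_B).\]
Combining this with the original $\dim D+\dim W=n$ and $\dim U>0$ gives $\dim M_BD+\dim W^{M_B}<\rk M_B$, contradicting rotundity.
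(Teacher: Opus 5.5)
Your proposal has a genuine gap, and it sits exactly at the step you flagged as the main obstacle. Uniform Ax--Schanuel gives $p(U)\se\beta B$ with $B=\ker(\cdot)^{M_B}$ from the finite list $\mu(V)$. You then say fiber dimension yields
\[\dim U\le(\dim D-\dim M_BD)+(\dim W-\dim W^{M_B})-(n-\rk M_B).\]
Fiber dimension only gives less. It tells you that $U$ lies in $(D\cap(\bx^*+\ker M_B))\times(W\cap\by^*\ker(\cdot)^{M_B})$, and (using genericity of $\by^*$) that this set has dimension $(\dim D-\dim M_BD)+(\dim W-\dim W^{M_B})$. Subtracting the dimension $n-\rk M_B$ of the coset is a separate claim: that $U$ is not atypical \emph{inside} $\beta B$. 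That is an Ax--Schanuel statement, not a fiber count, and the uniform fact as stated provides no such typicality. Nothing in it rules out that $U$ lies in a much smaller coset and is atypical relative to $\beta B$. To get the inequality within the coset from Ax--Schanuel, you need the coordinates of $\bx|_U$ along $\ker M_B$ to be $\bQ$-linearly independent modulo $\bC$. That holds only when $B$ is the \emph{minimal} subgroup with a coset containing $p(U)$, and an element of $\mu(V)$ need not be minimal.

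The missing idea is to take the subgroup minimal, via the relations of $U$ itself. Let $N\in\bZ^{(n-k)\times n}$ of full rank encode all $\bQ$-linear relations satisfied by $\bx|_U$ modulo constants. Then $\bx|_U\se\bx^*+\ker N$, and a $\bQ$-basis of the remaining $k\ge 1$ coordinate functions is $\bQ$-linearly independent modulo $\bC$.
\begin{itemize}
\item Ordinary Ax--Schanuel gives $\trd(\bx|_U,\exp(\bx|_U)/\bC)\ge k+1$.
\item The containment in $(D\cap(\bx^*+\ker N))\times(W\cap\by^*\ker(\cdot)^N)$, together with genericity and rotundity applied to $N$, bounds the same quantity by $(\dim D+\dim W)-(\dim N(D)+\dim W^N)\le n-(n-k)=k$.
\end{itemize}
This contradiction is the paper's argument. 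It needs neither the uniform version nor $\mu(V)$, and no reduction to the dimension of $f(D^*\times W^*)$; that reduction is circular in spirit, since this lemma is what the paper uses to show the image has dimension $n$.

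Your first attempt in the ``Applying Ax--Schanuel'' paragraph fails for the mirror-image reason. You bounded $\trd(M\bx,\exp(M\bx))$ by the \emph{images} $\dim M(D)+\dim W^M$ of the independent combinations. Combined with Ax--Schanuel this only gives $\dim M(D)+\dim W^M\ge k+1$, which is compatible with rotundity. The upper bound has to come from the \emph{fibers} of the relation matrix, where rotundity enters with the useful sign.

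Two smaller slips:
\begin{itemize}
\item ``$\bx|_U$ is constant modulo $\ker M$'' would make $M\bx|_U$ constant, contradicting its independence modulo $\bC$.
\item $\dim F$ should be $n-\dim f(D^*\times W^*)$, not $2n-n_{\mathrm{im}}$, since $\dim(D^*\times W^*)=n$.
\end{itemize}
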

\begin{proof}
  Let $\bz^*=f(\bx^*,\by^*)$. Then
  \[f^{-1}(f(\bx^*,\by^*))=\{(\bx,\by)\in D^*\times W^*\mid\by=\bz^*\exp(\bx)\}.\]
  Suppose the contrary. Since $(\bx^*,\by^*)$ is generic, $f^{-1}(f(\bx^*,\by^*))$ contains an analytic curve through $(\bx^*,\by^*)$ parametrized by $(\tilde{\bx},\tilde{\by})(t):(-1,1)\to\bR^n\times(S^1)^n$ with $\tilde{\bx}(0)=\bx^*$ and $\tilde{\by}(t)=\bz^*\exp(\tilde{\bx})$. Clearly, $\tilde{x}_1(t),...,\tilde{x}_n(t)\in\bR[[t]]$ and $\rk J(\tilde{x}_1(t),...,\tilde{x}_n(t))>0$.

  Let $\hat{\bx}=\tilde{\bx}-\bx^*$. After swapping the coordinates, we may assume that the first $k$-terms $\hat{x}_1,...,\hat{x}_k$ form a $\bQ$-linear basis of $\hat{\bx}$, i.e., for each $j\leq n-k$ we have that $\hat{x}_{k+j}=a_{1j}\hat{x}_1+...+a_{kj}\hat{x}_k$ for some $a_{ij}\in\bQ$. Hence, there is a full-rank matrix $M\in\bZ^{(n-k)\times n}$ such that $\hat{\bx}\in\ker(M)$. By fiber dimension,
  \begin{align*}
    &\dim(D^*\cap(\bx^*+\ker M))=\dim D^*-\dim M(D^*)=\dim D-\dim M(D),\\
    &\dim(W^*\cap\by^*\ker((\cdot)^M))=\dim W^*-\dim(W^*)^M=\dim W-\dim(W)^M.
  \end{align*}
  Rotundity of $D\times W$ implies
  \begin{align*}
    \trd(\tilde{\bx}/\bC)+\trd(\tilde{\by}/\bC)&\leq\dim(D^*\cap(\bx^*+\ker M))+\dim(W^*\cap\by^*\ker((\cdot)^M))\\
    &=(\dim D+\dim W)-(\dim M(D)+\dim(W)^M)\\
    &\leq k.
  \end{align*}
  However, Ax-Schanuel (Fact \ref{fact:AxSch}) implies
  \[\trd(\tilde{\bx}\tilde{\by}/\bC)=\trd\big(\hat{x}_1,...,\hat{x}_k,\exp(\hat{x}_1),...,\exp(\hat{x}_k)/\bC\big)>k.\]
  Contradiction.
\end{proof}

\begin{lemma}
  $D'\cap\exp^{-1}(W)$ is non-empty.
\end{lemma}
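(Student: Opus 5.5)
We need to show that $D'\cap\exp^{-1}(W)\neq\emptyset$, i.e.\ that there is $\bx\in D'$ with $\exp(\bx)\in W$. Equivalently, $\mathds{1}=(1,\dots,1)$ lies in the image $f(D'\times W)$, since $f(\bx,\by)=\mathds{1}$ means exactly $\by=\exp(\bx)$. The plan is to show that the image of $f$ has non-empty interior in $(S^1)^n$, and then use the density of $\exp(D')$ together with the semigroup property of $D'$ to translate some point of this open set to $\mathds{1}$.

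\textbf{Step 1: the image of $f$ contains an open set.} Since $(D,W)$ is an $n$-dimensional pair, $\dim(D^*\times W^*)=\dim D+\dim W=n$. The map $f|_{D^*\times W^*}$ is definable in the o-minimal structure $\bR_{\operatorname{an}}$. By the previous lemma, its fiber through a generic point has dimension $0$. By the o-minimal fiber-dimension theorem, the image $f(D^*\times W^*)$ therefore has dimension $n$. A definable subset of $(S^1)^n$ of full dimension has non-empty interior, so we obtain an open set $U\se(S^1)^n$ with $U\se f(D'\times W)$. This step needs the generic-fiber statement to apply on a definable set of generics of full dimension; that holds because the points with $0$-dimensional fibers form a definable set containing every generic point, so its complement has lower dimension.

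\textbf{Step 2: translate into $U$.} Pick $\bz\in U$, say $\bz=\exp(-\bx_0)\by_0$ with $\bx_0\in D'$ and $\by_0\in W$. Shrinking $U$, take an open neighbourhood $U'$ of $\mathds{1}$ with $\bz\cdot U'\se U$. The freeness of $D$ is part of the strong-pair hypothesis, so by the preceding lemma $\exp(D')$ is dense in $(S^1)^n$. Hence there is $\bx_1\in D'$ with $\exp(\bx_1)\in\bz^{-1}U$, a non-empty open set.

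Then $\exp(\bx_1)\bz\in U$ cannot be used directly, so we rearrange the choice instead. Choose $\bx_1\in D'$ with $\exp(\bx_1)\in \bz\cdot U^{-1}\bz$... To avoid this bookkeeping, it is cleaner to choose $\bx_1\in D'$ with $\exp(-\bx_1)\in U$. This is possible because $\exp(D')$ is dense, so $\exp(-D')$ is dense as well. Write $\exp(-\bx_1)=\exp(-\bx)\by$ with $\bx\in D'$ and $\by\in W$. Then $\by=\exp(\bx-\bx_1)$, which is not of the required form unless we can arrange the signs.

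We fix the signs as follows. Since $\exp(D')$ is dense, pick $\bx_1\in D'$ with $\exp(\bx_1)\in U$, so that $\exp(\bx_1)=\exp(-\bx)\by$ for some $(\bx,\by)\in D'\times W$. Then $\by=\exp(\bx+\bx_1)$. Because $D'$ is a semigroup, $\bx+\bx_1\in D'$, and so $\bx+\bx_1\in D'\cap\exp^{-1}(W)$, as required.

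The main obstacle is Step 1. Passing from zero-dimensional generic fibers to an image of full dimension requires $f$ to be definable in an o-minimal expansion on a full-dimensional piece. The map $f$ involves the unrestricted $\exp$ on $D'$, which is why the argument restricts to $D^*\times W^*$ (for example, a bounded box inside $D'$ times $W$) where $f$ is $\bR_{\operatorname{an}}$-definable. The image of $f$ on this restricted set is enough for the argument.
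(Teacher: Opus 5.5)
Your proof is correct and follows the paper's argument. Fiber dimension on $D^*\times W^*$ gives $\dim f(D^*\times W^*)=n$, hence non-empty interior. Density of $\exp(D')$ then yields $\bx_1\in D'$ with $\exp(\bx_1)=\exp(-\bx)\by$, so $\bx+\bx_1\in D'\cap\exp^{-1}(W)$ by the semigroup property; you should delete the abandoned detours in Step 2, which add nothing.
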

\begin{proof}
  Let $X$ denote $D^*\times W^*$. The set
  \[X_0:=\{(\bx,\by)\in X|\dim f^{-1}(f(\bx,\by))=0\},\]
  is definable. By the previous lemma, $\dim(X\setminus X_0)<\dim X$. By fiber dimension,
  \[\dim f(X)\geq\dim f(X_0)=\dim X_0-0=\dim X=n.\]
  Since $\exp(D')$ is dense in $(S^1)^n$, there is $\alpha\in f(D^*\times W^*)\cap\exp(D')$, i.e., there are $\bu,\bv\in D',\bw\in W$ such that
  \[\exp(-\bu)\cdot\bw=\alpha=\exp(\bv).\]
  Hence,
  \[\exp(\bu+\bv)=\bw,\]
  where $\bu+\bv\in D'$.
\end{proof}

\begin{proposition}
  The structure $\bR^K$ satisfies EC.
\end{proposition}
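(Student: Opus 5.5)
The plan is to assemble the lemmas just proved in this section. Fix an $n$-dimensional strong pair $(D+\ba,W_{\bb})\se\bR^n\times(S^1)^n$ and proper affine subspaces $D_1+\ba_1,\dots,D_k+\ba_k$ of $D+\ba$. We need a point of $(D+\ba)\setminus\bigcup_i(D_i+\ba_i)$ whose image under $\exp$ lies in $W_{\bb}$.

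First we make the reductions already indicated. Replacing $W_{\bb}$ by $\exp(-\ba)\cdot W_{\bb}$, and the $D_i+\ba_i$ by their translates by $-\ba$, we may assume $\ba=0$. Translation by $\exp(-\ba)$ is a multiplicative translate, so it preserves freeness, rotundity and dimension, and the new pair is still a strong pair. Enlarging each $D_i$ to a hyperplane of $D$ only shrinks the allowed set, so we may assume $\dim D_i=\dim D-1$.

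Next we apply the lemma producing an open semigroup $D'\se D$ disjoint from every $D_i+\ba_i$. Since the pair is strong, $D$ is free, so by the density lemma $\exp(D')$ is dense in $(S^1)^n$. We consider $f(\bx,\by)=\exp(-\bx)\cdot\by$ on $D'\times W$ and restrict to an open $D^*\times W^*$ on which $f$ is $\bR_{\operatorname{an}}$-definable. Because $W$ is a Nash manifold, such a box exists by taking bounded pieces. The Ax--Schanuel lemma gives that generic fibres of $f$ are finite. The last lemma then yields $\bu+\bv\in D'$ with $\exp(\bu+\bv)\in W$. Its argument uses $\dim D+\dim W=n$ together with fibre dimension in the o-minimal structure $\bR_{\operatorname{an}}$ to see that $f(D^*\times W^*)$ has nonempty interior, and density of $\exp(D')$ to hit that interior.

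Since $D'\cap\bigcup_i(D_i+\ba_i)=\emptyset$, this point lies in the required set, and translating back by $\ba$ gives EC. The semigroup property of $D'$ is what guarantees $\bu+\bv\in D'$.

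The only delicate point is the step using that $f(D^*\times W^*)$ has nonempty interior in $(S^1)^n$: an $n$-dimensional definable subset of an $n$-manifold has interior. This is standard o-minimality, so the proposition reduces to combining the preceding lemmas, with care that the reductions preserve strongness.
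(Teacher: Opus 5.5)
Your proposal is correct and follows essentially the same route as the paper. You translate to $\ba=0$ and enlarge the $D_i$ to hyperplanes, take the open semigroup $D'$, get density of $\exp(D')$ from freeness of $D$ and Kronecker, show via Ax--Schanuel and rotundity that generic fibres of $f$ are finite, and conclude from $\dim f(D^*\times W^*)=n$ and the semigroup property. One small correction: $f$ is $\bR_{\operatorname{an}}$-definable on $D^*\times W^*$ because you choose $D^*$ bounded, so that $\exp$ becomes a restricted analytic function, not because $W$ is a Nash manifold.
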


We conclude that if $\bR^K\vDash T_\I$ then $\bR^K\vDash T_\I^{\rich}$.

\bibliography{power}

@article{Kr84,
 author = {Kronecker, Leopold},
 title = {N{\"a}herungsweise ganzzahlige {Aufl{\"o}sung} linearer {Gleichungen}.},
 journal = {Sitzungsberichte der K{\"o}niglich Preussischen Akademie der Wissenschaften},
 volume = {1884},
 pages = {1179--1193},
 year = {1884},
 language = {German},
 JFM = {16.0083.02}
}

@article{Na90,
 author = {Nadel, Mark and Stavi, Jonathan},
 title = {On models of the elementary theory of {{\((\mathbb{Z}{},+,1)\)}}},
 fjournal = {The Journal of Symbolic Logic},
 journal = {J. Symb. Log.},
 issn = {0022-4812},
 volume = {55},
 number = {1},
 pages = {1--20},
 year = {1990},
 language = {English},
 doi = {10.2307/2274950},
 zbMATH = {5700},
 Zbl = {0739.03023}
}

@article{Ki09,
 author = {Kirby, Jonathan},
 title = {The theory of the exponential differential equations of semiabelian varieties},
 journal = {Selecta Mathematica, New Series},
 issn = {1022-1824},
 volume = {15},
 number = {3},
 pages = {445--486},
 year = {2009},
 language = {English},
 doi = {10.1007/s00029-009-0001-7},
 zbMATH = {5640212},
 Zbl = {1263.12003}
}

@article{Ki10,
    author = {Bays, Martin and Kirby, Jonathan and Wilkie, A. J.},
    title = "{A Schanuel property for exponentially transcendental powers}",
    journal = {Bulletin of the London Mathematical Society},
    volume = {42},
    number = {5},
    pages = {917-922},
    year = {2010},
    month = {08},
    issn = {0024-6093},
    doi = {10.1112/blms/bdq054},
}

@article{La83,
 ISSN = {09895558},
 URL = {http://www.jstor.org/stable/44165476},
 author = {Michel Laurent},
 journal = {Séminaire de Théorie des Nombres de Bordeaux},
 pages = {1--8},
 publisher = {Société Arithmétique de Bordeaux},
 title = {EQUATIONS DIOPHANTIENNES EXPONENTIELLES},
 urldate = {2024-12-15},
 year = {1983}
}

@misc{Zi15,
      title={The theory of exponential sums}, 
      author={Boris Zilber},
      year={2015},
      HowPublished = {Preprint, {arXiv}:1501.03297 [math.{LO}]},
      url={https://arxiv.org/abs/1501.03297},
}

@article{Zi02,
author = {Zilber, Boris},
year = {2002},
pages = {27-44},
title = {Exponential sums equations and the {S}chanuel conjecture},
volume = {65},
number ={2},
journal = {Journal of The London Mathematical Society},
doi = {10.1112/S0024610701002861}
}

@article{Zi03,
 author = {Zilber, Boris},
 title = {Raising to powers in algebraically closed fields},
 journal = {Journal of Mathematical Logic},
 issn = {0219-0613},
 volume = {3},
 number = {2},
 pages = {217--238},
 year = {2003},
 language = {English},
 doi = {10.1142/S0219061303000273}
}

@article{Mi11,
 ISSN = {00029939, 10886826},
 URL = {http://www.jstor.org/stable/41059220},
 author = {Miller, Chris},
 journal = {Proceedings of the American Mathematical Society},
 number = {1},
 pages = {319--330},
 publisher = {American Mathematical Society},
 title = {EXPANSIONS OF O-MINIMAL STRUCTURES ON THE REAL FIELD BY TRAJECTORIES OF LINEAR VECTOR FIELDS},
 urldate = {2023-03-16},
 volume = {139},
 year = {2011}
}

@book{Dr98, place={Cambridge}, series={London Mathematical Society Lecture Note Series}, title={Tame Topology and O-minimal Structures}, DOI={10.1017/CBO9780511525919}, publisher={Cambridge University Press}, author={Dries, Lou van den}, year={1998}, collection={London Mathematical Society Lecture Note Series}}

@book{Bo06, place={Cambridge}, series={New Mathematical Monographs}, title={Heights in Diophantine Geometry}, DOI={10.1017/CBO9780511542879}, publisher={Cambridge University Press}, author={Bombieri, Enrico and Gubler, Walter}, year={2006}, collection={New Mathematical Monographs}}

@article{Ax71,
 ISSN = {0003486X},
 URL = {http://www.jstor.org/stable/1970774},
 author = {James Ax},
 journal = {Annals of Mathematics},
 number = {2},
 pages = {252--268},
 publisher = {Annals of Mathematics},
 title = {On {S}chanuel's {C}onjectures},
 urldate = {2023-03-31},
 volume = {93},
 year = {1971}
}

@book{Ma02,
  year = {2002},
  publisher = {Springer New York},
  author = {David Marker},
  title = {Model Theory : An Introduction}
}

@book{Bo98,
  title={Real Algebraic Geometry},
  author={Jacek Bochnak and Michel Coste and Marie-Françoise Roy},
  series={Ergebnisse der Mathematik und ihrer Grenzgebiete. 3. Folge / A Series of Modern Surveys in Mathematics},
  year={1998},
  publisher={Springer Berlin Heidelberg}
}

@article{Bo12,
 ISSN = {00224812},
 author = {Gareth Boxall and Philipp Hieronymi},
 journal = {The Journal of Symbolic Logic},
 number = {1},
 pages = {111--121},
 publisher = {Association for Symbolic Logic},
 title = {EXPANSIONS WHICH INTRODUCE NO NEW OPEN SETS},
 urldate = {2023-04-18},
 volume = {77},
 year = {2012}
}

@article{Pi88,
title = {On groups and fields definable in o-minimal structures},
journal = {Journal of Pure and Applied Algebra},
volume = {53},
number = {3},
pages = {239-255},
year = {1988},
issn = {0022-4049},
doi = {https://doi.org/10.1016/0022-4049(88)90125-9},
url = {https://www.sciencedirect.com/science/article/pii/0022404988901259},
author = {Anand Pillay}
}
\bibliographystyle{plain}

\end{document}